\theoremstyle{definition}
\newtheorem{defn}[equation]{Definition}
\newtheorem{defn'}[equation]{Definition'}
\theoremstyle{plain}
\newtheorem{thm}[equation]{Theorem}
\newtheorem{prop}[equation]{Proposition}
\newtheorem{fact}[equation]{Fact}
\newtheorem{cor}[equation]{Corollary}
\newtheorem{lem}[equation]{Lemma}
\newtheorem{conj}[equation]{Conjecture}
\theoremstyle{remark}
\newtheorem{rem}[equation]{Remark}
\newtheorem{ex}[equation]{Example}
\newcommand{\Z}{\mathbb{Z}}
\newcommand{\R}{\mathbb{R}}
\newcommand{\C}{\mathbb{C}}
\newcommand{\Q}{\mathbb{Q}}
\newcommand{\pt}{\mathrm{pt}}
\newcommand{\del}{\partial}
\newcommand{\dR}{\mathrm{dR}}
\newcommand{\Bord}{\mathrm{Bord}}
\newcommand{\cw}{\mathrm{cw}}
\newcommand{\hBord}{h\Bord^{G_\nabla}}
\newcommand{\Ori}{\mathrm{Ori}}
\newcommand{\HS}{\mathrm{HS}}
\newcommand{\hBordph}{h\Bord^{G_{\mathrm{ph}}}}
\newcommand{\Thom}{\mathrm{Thom}}
\newcommand{\id}{\mathrm{id}}
\newcommand{\hBordone}{h\Bord^{(G_1)_\nabla}}
\newcommand{\hBordthree}{h\Bord^{(G_3)_\nabla}}
\newcommand{\pullbackcorner}[1][dr]{\save*!/#1-1.2pc/#1:(-1,1)@^{|-}\restore}
\newcommand{\Sp}{\mathsf{Sp}}
\newcommand{\fr}{\textup{fr}}
\newcommand{\st}{\textup{st}}
\DeclareMathOperator{\Ho}{Ho}
\DeclareMathOperator{\Hom}{Hom}
\def\beq#1\eeq{\begin{align}#1\end{align}}
\begin{document}

\title[Differential models for $I\Omega^G$, I]{Differential models for the Anderson dual to bordism theories and invertible QFT's, I}

\author[M. Yamashita]{Mayuko Yamashita}
\address{Department of Mathematics, Kyoto University, 
Kita-shirakawa Oiwake-cho, Sakyo-ku, Kyoto, 606-8502, Japan
}
\email{yamashita.mayuko.2n@kyoto-u.ac.jp}
\author[K. Yonekura]{Kazuya Yonekura}
\address{Department of Physics, Tohoku University, Sendai 980-8578, Japan}
\email{yonekura@tohoku.ac.jp}
\subjclass[]{}
\maketitle

\begin{abstract}
    In this paper, we construct new models for the Anderson duals $(I\Omega^G)^*$ to the stable tangential $G$-bordism theories and their differential extensions. 
    The cohomology theory $(I\Omega^G)^*$ is conjectured by Freed and Hopkins \cite{Freed:2016rqq} to classify deformation classes of possibly non-topological invertible quantum field theories (QFT's). 
    Our model is made by abstracting certain properties of invertible QFT's, thus supporting their conjecture. 
\end{abstract}

\tableofcontents

\section{Introduction}
In this paper, we construct new models for the Anderson duals $(I\Omega^G)^*$ to the stable tangential $G$-bordism theories and their differential extensions. 
Freed and Hopkins \cite{Freed:2016rqq} conjectured that the generalized cohomology theory $(I\Omega^G)^*$ classifies deformation classes of possibly non-topological invertible 
quantum field theories (QFT's) on stable tangential $G$-manifolds. 
Our model is motivated by this conjecture, since it is made by abstracting certain properties of invertible QFT's.

Associated to a generalized cohomology theory $E^*$, its {\it Anderson dual} (\cite[Appendix B]{HopkinsSinger2005}, \cite[Appendix B]{FMS07}) is a generalized cohomology theory which we denote by $IE^*$. 
The crucial property of this theory is that it fits into the following exact sequence for any spectrum $X$.  
\begin{align}\label{eq_exact_DE_abst}
    \cdots \to \mathrm{Hom}(E_{n-1}(X), \R) &\to \Hom(E_{n-1}(X), \R/\Z) \to IE^n(X) \\ & \to \mathrm{Hom}(E_{n}(X), \R) \to \Hom(E_n(X), \R/\Z) \to \cdots \ (\mbox{exact}). \notag
\end{align}
In this paper we are interested in the Anderson dual to {\it stable tangential $G$-bordism theories} $\Omega^G$. 
Here
$G = \{G_d, s_d, \rho_d\}_{d \in \Z_{\ge 0}}$ is a sequence of compact Lie groups equipped with homomorphisms $s_d \colon G_d \to G_{d+1}$ and $\rho_d \colon G_d \to \mathrm{O}(d, \R)$ for each $d$ which are compatible with the inclusion $\mathrm{O}(d, \R) \hookrightarrow \mathrm{O}(d+1, \R)$. 
The homology theory corresponding to $\Omega^G$ is given by the stable tangential $G$-bordism groups $\Omega^G_*(X)$, and the exact sequence \eqref{eq_exact_DE_abst} becomes
\begin{align}\label{eq_exact_DOmega_abst}
    \cdots \to \mathrm{Hom}(\Omega^G_{n-1}(X), \R) &\to \Hom(\Omega^G_{n-1}(X), \R/\Z) \to(I\Omega^G)^n(X) \\ & \to \mathrm{Hom}(\Omega^G_{n}(X), \R) \to \Hom(\Omega^G_n(X), \R/\Z) \to \cdots \ (\mbox{exact}). \notag
\end{align}

The starting point of this work is the following conjecture by Freed and Hopkins. 
\begin{conj}[{\cite[Conjecture 8.37]{Freed:2016rqq}}] \label{conj_intro}
There is a $1 : 1$ correspondence\footnote{Here {\it symmetry types} of QFT's in \cite{Freed:2016rqq} are certain classes of $G$'s in this paper which satisfy an additional set of conditions.     }
\begin{align}\label{eq_conj_intro}
    \left\{
    \parbox{18em}{deformation classes of reflection positive invertible $n$-dimensional fully extended field theories with symmetry type $G$}
    \right\} \simeq (I\Omega^G)^{n+1}(\pt). 
\end{align}
\end{conj}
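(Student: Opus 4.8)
The plan is to turn both sides of \eqref{eq_conj_intro} into homotopy groups of explicitly constructed spectra and then exhibit an equivalence between those spectra, reducing the asserted bijection to a computation of $\pi_0$. The right-hand side is already of this form: unwinding the Anderson dual in the exact sequence \eqref{eq_exact_DOmega_abst} gives the identification
\[
(I\Omega^G)^{n+1}(\pt) \;=\; \pi_{-(n+1)} F(MTG, I\Z) \;=\; [MTG,\ \Sigma^{n+1} I\Z],
\]
where $MTG$ is the Madsen--Tillmann spectrum representing the homology theory $\Omega^G_*$ and $I\Z$ is the Anderson dual of the sphere. The first task is therefore to equip the left-hand side with the structure of $\pi_0$ of a mapping spectrum. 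I would define an $(\infty,n)$-category $\Bord_n^{G}$ of fully extended bordisms carrying the stable tangential $G$-structure together with the Riemannian/geometric data needed to accommodate \emph{non-topological} theories, and define an $n$-dimensional field theory of symmetry type $G$ as a symmetric monoidal functor out of it into a suitable geometric target $\mathcal{C}$, with \emph{invertible} meaning the functor factors through the Picard $\infty$-groupoid $\mathcal{C}^\times$.

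Next I would reduce invertible theories to maps of spectra. The space of invertible theories is the mapping space out of the maximal subgroupoid of $\Bord_n^{G}$; by the cobordism hypothesis of Baez--Dolan and Lurie, together with the Galatius--Madsen--Tillmann--Weiss theorem, this subgroupoid is the infinite loop space of a shift of $MTG$, so the space of invertible field theories valued in $\mathcal{C}$ becomes $\mathrm{Map}_{\Sp}(MTG,\ \mathfrak{pic}(\mathcal{C}))$ up to the appropriate suspension, where $\mathfrak{pic}(\mathcal{C})$ is the Picard spectrum of $\mathcal{C}$. The reflection structure is then encoded as compatibility with the orientation-reversing involution making the theory hermitian, and reflection \emph{positivity} forces the partition functions on closed manifolds to have unit norm; following Freed--Hopkins, the Pontryagin--Anderson self-duality of the relevant Picard spectrum pins the target down to $\Sigma^{n+1} I\Z$. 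Taking $\pi_0$ --- which is by definition the set of deformation classes --- yields $[MTG,\ \Sigma^{n+1} I\Z] = (I\Omega^G)^{n+1}(\pt)$, giving a candidate comparison map.

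The hard part will be twofold, and it is here that the honest obstruction to proving (rather than supporting) the conjecture lives. First, there is no agreed rigorous definition of a general non-topological QFT, so making the left-hand side precise --- the very category $\Bord_n^{G}$ with its geometric data and the admissible targets $\mathcal{C}$ --- is itself the crux, and any functorial axiomatization must be argued to capture the genuine physical notion. Second, even granting such a definition, one must prove the comparison map is a bijection, and the difficult direction is surjectivity onto the non-torsion classes: these are exactly the classes detected by the $\Hom(\Omega^G_n(\pt),\R)$ terms in \eqref{eq_exact_DOmega_abst}, which correspond to genuinely non-topological invertible theories whose partition function is a non-trivial characteristic-form density. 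Realizing every such class by an \emph{honest} reflection positive invertible theory, and showing that deformation-equivalence matches equality in $(I\Omega^G)^{n+1}(\pt)$, cannot be done with topological input alone.

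This is precisely where the differential model constructed in this paper is intended to do the work. The differential extension of $(I\Omega^G)^*$ supplies the closed-form data parametrizing the free part, and one would match it against explicit, manifestly reflection positive constructions --- free-field and Chern--Simons-type density theories --- to certify both surjectivity and injectivity of the comparison on the non-topological part. The realistic status of the argument is thus: the spectrum-level identification and the $\pi_0$ computation are rigorous once the target and the bordism category are fixed, and the realization/faithfulness step can be carried out entirely within the abstracted differential model; the residual gap is the physical input identifying that model with the space of genuine reflection positive invertible QFTs, which is what the construction is designed to make plausible rather than to prove outright.
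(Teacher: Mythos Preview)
The statement you are attempting to prove is a \emph{conjecture}, not a theorem, and the paper does not prove it. Immediately after stating Conjecture~\ref{conj_intro}, the authors explicitly say that the left-hand side of \eqref{eq_conj_intro} is not a mathematically well-defined object, because there are no agreed-upon axioms for non-topological fully extended QFT's. The paper's contribution is to construct a concrete differential model for the \emph{right-hand side} $(I\Omega^G)^{n+1}(\pt)$ whose ingredients are abstracted from expected properties of invertible QFT's, thereby \emph{supporting} the conjecture; it does not claim, and does not attempt, to establish the bijection.

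Your proposal is not so much wrong as it is a sketch of the Freed--Hopkins strategy together with an honest acknowledgment of where it stalls. You correctly identify the two obstructions the paper itself names: the absence of a rigorous definition of the domain category $\Bord_n^G$ with geometric data, and the difficulty of realizing the non-torsion classes by genuine reflection positive theories. But your final paragraph overstates what the differential model accomplishes: it does not ``certify both surjectivity and injectivity of the comparison on the non-topological part,'' because there is no comparison map to a space of honest QFT's --- that space has not been defined. The paper's differential model $(\widehat{I\Omega^G_\dR})^*$ is a replacement for, not a bridge to, the undefined left-hand side. So there is nothing in the paper to compare your argument against; the correct response to this item is that it remains open.
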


There are many difficulties in Conjecture \ref{conj_intro}, and here we point out two of them. 
First, we do not have the axioms for non-topological fully extended QFT's.
Thus the left hand side of \eqref{eq_conj_intro} is not a mathematically well-defined object.\footnote{
On the other hand, there is the axiom system by Kontsevich and Segal \cite{Kontsevich:2021dmb} 
for non-extended QFT's which are physically reasonable. 
It would be interesting to prove the modified version of Conjecture \ref{conj_intro} by using it. 
}
Second, although the cohomology theory $(I\Omega^G)^*$ is mathematically defined, its definition is abstract. 
So the right hand side of \eqref{eq_conj_intro} is difficult to treat directly, in particular from the physical point of view. 
Actually, those difficulties are overcome if we are interested in {\it topological} QFT's, and
Freed and Hopkins prove the version of Conjecture \ref{conj_intro} for {\it topological} QFT's, where the right hand side of \eqref{eq_conj_intro} is replaced by its torsion part \cite[Theorem 1.1]{Freed:2016rqq}. 

This work is intended to overcome the second difficulty mentioned above, and to give a new approach to Conjecture \ref{conj_intro}. 
We construct a physically motivated model for the theory $(I\Omega^G)^*$, which is made by abstractizing certain properties of invertible QFT's. 
This result can be regarded as supporting Conjecture \ref{conj_intro}. 
On the other hand, our results also turn out to be mathematically interesting, in view of their relation to {\it differential cohomology theories}. 

In the rest of the introduction, we first explain the main results of this paper in Subsection \ref{subsec_intro_mainthm}, and then explain its physical and mathematical significance in Subsections \ref{subsec_intro_phys} and \ref{subsec_intro_math}, respectively. 

\subsection{A sketch of the main results}\label{subsec_intro_mainthm}
The first main result of this paper is the construction of models for the generalized cohomology theory $(I\Omega^G)^*$ and its differential extension. 
The model of $(I\Omega^G)^*$ is denoted by $(I\Omega_{\mathrm{dR}}^G)^*$, and the differential extension is denoted by $(\widehat{I\Omega_{\mathrm{dR}}^G})^*$. 
Both are defined on the category $\mathrm{MfdPair}$ of pairs of manifolds. 
The precise definition is given in Definition \ref{def_hat_DOmegaG}. 
The physical meaning of this construction is explained in Subsection \ref{subsec_intro_phys} below. 
For simplicity, in this subsection we concentrate on the absolute case, and we only consider the case where $G$ is oriented, i.e., the image of $\rho_d \colon G_d \to \mathrm{O}(d, \R)$ lies in $\mathrm{\mathrm{SO}}(d, \R)$ for each $d$. 

Let $X$ be a manifold and let $n$ be a nonnegative integer. 
The differential group $(\widehat{I\Omega_{\mathrm{dR}}^G})^n(X)$ consists of pairs $(\omega, h)$, where
\begin{itemize}
    \item $\omega \in \Omega^n_{\mathrm{clo}}(X; \varprojlim_d(\mathrm{Sym}^{\bullet/2}\mathfrak{g}_d^*)^{G_d})$, i.e., $\omega$ is a closed differential form on $X$ with values in invariant polynomials on $\mathfrak{g} := \varinjlim_d \mathfrak{g}_d$, of total degree $n$,
    where $\mathfrak{g}_d$ is the Lie algebra of $G_d$. 
    \item $h$ is a map which assigns an $\R/\Z$-value to a triple $(M, g, f)$, where $M$ is a closed $(n-1)$-dimensional manifold with a stable tangential $G$-structure with connection, which we call a {\it differential stable tangential $G$-structure} and symbolically denoted by $g$, and a smooth map $f \colon M \to X$. 
    \item $\omega$ and $h$ should satisfy the following compatibility condition. 
    Suppose we have two triples $(M_-, g_-, f_-)$ and $(M_+, g_+, f_+)$ as above, and a bordism $(W, g_W, f_W)$ between them, by a compact $n$-dimensional manifold with differential stable tangential $G$-structure and a map to $X$. 
    The data of $g_W$ allows us to define a top form on $W$,
    \begin{align}\label{eq_chernweil_intro}
        \mathrm{cw}_{g_W}(f_W^*\omega) \in \Omega^n(W),
    \end{align}
    by applying the Chern-Weil construction with respect to $g_W$ to the coefficient of $f_W^* \omega$. 
    We require that, 
    \begin{align}\label{eq_compatibility_intro}
        h([M_+, g_+, f_+]) - h([M_-, g_-, f_-]) = \int_W \mathrm{cw}_{g_W}(f_W^*\omega) \pmod \Z, 
    \end{align}
\end{itemize}
To define $({I\Omega_{\mathrm{dR}}^G})^n(X)$, we introduce the equivalence relation $\sim $ on $(\widehat{I\Omega_{\mathrm{dR}}^G})^n(X)$. 
We set $(\omega, h) \sim (\omega', h')$ 
if there exists $\alpha \in \Omega^{n-1}(X; \varprojlim_d(\mathrm{Sym}^{\bullet/2}\mathfrak{g}_d^*)^{G_d})$ such that 
\begin{align}\label{eq_er_intro}
    \omega' &= \omega + d\alpha \nonumber, \\
h'\left( [M,g,f] \right) &= h\left( [M,g,f] \right) + \int_M  \mathrm{cw}_{g}(f^*\alpha).
\end{align}
We define
\begin{align}\label{eq_equivalence_intro}
    (I\Omega^G_{\mathrm{dR}})^n(X) := (\widehat{I\Omega^G_{\mathrm{dR}}})^n(X)/\sim. 
\end{align}

For the functor $(\widehat{I\Omega^G_\dR})^*$, we construct the structure homomorphisms $R$, $I$ and $a$ (Definition \ref{def_str_map_IOmega_dR}) as well as the $S^1$-integration map $\int$ (Definition \ref{def_integration_dR}). 
The main result of this paper concerning the differential model is the following. 
\begin{thm}[{Theorem \ref{thm_IOmega_dR=IOmega}}]\label{thm_intro_main}
$(I\Omega^G_{\mathrm{dR}})^*$ gives a model for the generalized cohomology theory $(I\Omega^G)^*$, restricted to the category of manifolds. 
Moreover, $(\widehat{I\Omega^G_{\mathrm{dR}}}, R, I, a, \int)$ is a differential extension with $S^1$-integration (Definition \ref{def_diff_integration}) of $\left( (I\Omega^G)^*, \mathrm{ch}' \right)$. 
\end{thm}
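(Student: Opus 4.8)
The plan is to verify the axioms of a differential extension with $S^1$-integration (in the sense of Bunke–Schick, adapted to $\mathrm{MfdPair}$) for the quadruple $(\widehat{I\Omega^G_\dR}, R, I, a, \int)$, and then to identify the associated cohomology theory $(I\Omega^G_\dR)^*$ with $(I\Omega^G)^*$ using the Anderson-dual exact sequence \eqref{eq_exact_DOmega_abst}. The first task is largely formal once the structure maps have been defined: one checks that $R$ is a natural transformation to closed forms, that $I$ lands in (the de Rham image of) $(I\Omega^G)^*$, that $a$ is additive and that the diagrams relating $R \circ a$, $I \circ a$, $R$ and $I$ commute, and that the sequence
\beq
\Omega^{n-1}(X; V^\bullet_G)/\mathrm{im}\, \mathrm{cw} \xrightarrow{a} (I\Omega^G_\dR)^n(X) \xrightarrow{I} (I\Omega^G)^n(X) \to 0
\eeq
is exact, where I abbreviate $V^\bullet_G := \varprojlim_d(\mathrm{Sym}^{\bullet/2}\mathfrak{g}_d^*)$. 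Functoriality, the homotopy formula, and the compatibility of $\int$ with the structure maps are then checked directly from the definitions \eqref{eq_er_intro} and \eqref{eq_compatibility_intro}, using that the Chern–Weil forms $\mathrm{cw}_{g_W}(f_W^*\omega)$ behave well under pullback, product with $S^1$, and gluing of bordisms.

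The substantive point is the identification $(I\Omega^G_\dR)^* \cong (I\Omega^G)^*$, and the natural strategy is to construct an explicit isomorphism by exhibiting both sides as fitting into compatible instances of \eqref{eq_exact_DOmega_abst} and invoking the five lemma. Concretely, I would produce a natural transformation $(I\Omega^G_\dR)^n(X) \to (I\Omega^G)^n(X)$ as follows: given a class $(\omega, h)$, the form $\omega$ determines via Chern–Weil a real cohomology class, i.e.\ an element of $\mathrm{Hom}(\Omega^G_n(X), \R)$ (pairing a cycle $[M, g, f]$ against $\int_M \mathrm{cw}_g(f^*\omega)$), while $h$ restricted to $G$-cycles that bound \emph{topologically} but for which $\omega$ integrates to zero gives a well-defined $\R/\Z$-valued homomorphism — these two pieces of data are exactly what is needed to build a class in $IE^n(X)$ from the description of the Anderson dual in \cite{HopkinsSinger2005, FMS07}. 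One then checks this transformation commutes with $R$ and with the maps in the two exact sequences; since the flat part of $(I\Omega^G_\dR)^*$ (classes with $\omega = 0$) should match $\Hom(\Omega^G_{*-1}(X), \R/\Z)$ modulo the image of $\Hom(\Omega^G_{*-1}(X), \R)$, and the curvature part matches $\mathrm{Hom}(\Omega^G_*(X),\R)$, the five lemma applies. Alternatively, and perhaps more cleanly, one can cite the uniqueness theorem for differential extensions with $S^1$-integration: a generalized cohomology theory whose coefficient groups $E^*(\mathrm{pt})$ are finitely generated admits an essentially unique differential extension with integration, so it suffices to exhibit \emph{any} such extension of $(I\Omega^G)^*$ and show ours is of that form — but this still requires first knowing $(I\Omega^G_\dR)^*$ is a cohomology theory isomorphic to $(I\Omega^G)^*$, so the exact-sequence argument seems unavoidable as the core.

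I expect the main obstacle to be showing that $(I\Omega^G_\dR)^*$ is a \emph{cohomology theory} at all — that is, verifying homotopy invariance, the Mayer–Vietoris / exactness axiom for pairs, and the suspension isomorphism — directly from the bordism-theoretic definition. Homotopy invariance for the $h$-component is subtle because $h$ is defined on bordism data over $X$, not on $X$ itself, so one needs that a homotopy $X \times [0,1] \to X$ induces, via bordisms, the identity on equivalence classes; this uses \eqref{eq_er_intro} in an essential way. Exactness will require constructing, for a pair $(X, A)$, the relevant long exact sequence, which means carefully setting up the relative version (manifolds with boundary mapping to $(X,A)$) and checking the connecting homomorphism — this is where the \emph{a priori} clumsiness of working with geometric bordism representatives is felt most. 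A secondary difficulty is the $\varprojlim_d$ over the tower of groups $G_d$: one must ensure all Chern–Weil constructions, and the bordism groups themselves, are compatible with the structure maps $s_d$, and that the relevant $\varprojlim^1$ terms vanish or are harmless. I would handle these by reducing, wherever possible, to the corresponding statements for the Hopkins–Singer differential function spectrum or for ordinary differential cohomology with coefficients in $V^\bullet_G$, for which the axioms are known, and transporting them across the natural maps relating the models.
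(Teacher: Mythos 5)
There is a genuine gap at the crux of your argument: the construction of the natural transformation $(I\Omega^G_\dR)^n(X) \to (I\Omega^G)^n(X)$. You assert that the curvature datum $\cw(\omega) \in \mathrm{Hom}(\Omega^G_n(X), \R)$ together with the restriction of $h$ to suitable flat/bounding cycles ``is exactly what is needed to build a class in $IE^n(X)$'', but the Anderson dual is defined abstractly as a function spectrum, and the exact sequence \eqref{eq_exact_DOmega_abst} only tells you how classes are constrained, not how to manufacture one functorially from such invariants: the pair of invariants you name determines a class only up to the ambiguity of the extension, and there is no canonical recipe turning it into a homotopy class of maps $X_+ \wedge MTG \to \Sigma^n I\Z$ without a concrete model of $I\Z$. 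This is precisely where the paper's proof inserts its key input: the Hopkins--Singer description $I\Z^n(X) \simeq \pi_0\mathrm{Fun}_{\mathrm{Pic}}\left(\pi_{\le 1}(X_{1-n}), (\R \to \R/\Z)\right)$ (Fact \ref{fact_IZ_model}) combined with the Pontryagin--Thom equivalence $h\Bord^{G_\nabla}_{n-1}(X, Y) \simeq \pi_{\le 1}(L((X/Y)\wedge MTG)_{-n})$ (Lemma \ref{lem_cat_equivalence}). With these, a pair $(\omega, h)$ \emph{tautologically} defines a functor of Picard groupoids --- $h$ on objects, $\cw(\omega)$ on bordism classes of morphisms, the compatibility condition \eqref{eq_compatibility_intro} being exactly functoriality (Lemma \ref{lem_functor}) --- and hence an Anderson-dual class; the five lemma applied to the diagram \eqref{diag_mainthm}, whose top row is Proposition \ref{prop_exact_DOmegaG_dR}, then gives the isomorphism. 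Without some such model of $I\Z$, your five-lemma step has no map of exact sequences to feed into it, and as you yourself note, the Bunke--Schick uniqueness route is circular here.

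A secondary point: what you flag as the ``main obstacle'' --- verifying homotopy invariance, Mayer--Vietoris/exactness for pairs, and suspension for $(I\Omega^G_\dR)^*$ directly from the bordism-theoretic definition --- is not needed and is never done in the paper. Once the natural isomorphism onto the already-known cohomology theory $(I\Omega^G)^*$ (restricted to manifold pairs) is in hand, all Eilenberg--Steenrod-type properties are inherited for free; likewise the worries about $\varprojlim^1$ over the tower $G_d$ do not arise in this approach. Your treatment of the formal differential-extension axioms and of the $S^1$-integration compatibilities by direct computation is fine and matches the paper's Proposition \ref{prop_axiom_diffcoh} (the paper ultimately treats $\int$ as a special case of its differential pushforwards), but the identification with the Anderson dual needs the Picard-groupoid mechanism, or an equivalent concrete model, to be a proof.
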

Here the homomorphism $\mathrm{ch}'$ is defined in \eqref{eq_ch_N_G} and coincides with the Chern-Dold homomorphism for examples of $G$ we are usually interested in. 

For example, in the case $G = \mathrm{SO}$, if we have a hermitian line bundle with unitary connection $(L, \nabla)$ over $X$, the pair of first Chern form $c_1(\nabla) = \frac{\sqrt{-1}}{2\pi} F_\nabla \in \Omega_{\mathrm{clo}}^2(X)$ and the holonomy functional with respect to $\nabla$ gives an element $(c_1(\nabla), \mathrm{Hol}_{\nabla}) \in (\widehat{I\Omega^{\mathrm{\mathrm{SO}}}_{\mathrm{dR}}})^{2}(X)$. See Example \ref{ex_hol_target} for details. 
The compatibility condition \eqref{eq_compatibility_intro} follows from the relation of curvature and holonomy. 
For more examples, see Subsection \ref{subsec_examples}. 

We remark that in this paper we mainly focus on {\it tangential} $G$-bordism theories, as opposed to {\it normal} $G$-bordism theories which we denote by $\Omega^{G^\perp}$. 
But a straightforward modification of the tangential case gives a model for the Anderson dual to normal $G$-bordism theories $(I\Omega^{G^\perp})^*$, as explained in Subsection \ref{subsec_normal}. 

With these physically-motivated models at hand, we can sometimes understand known operations in physics regarding QFT's as natural transformations between differential cohomology theories, thus giving a mathematical understanding. 
As we now explain, the results in Section \ref{sec_module} and Section \ref{sec_push} are such examples. 
The subsequent paper \cite{Yamashita2021} by one of the authors of this paper is devoted to another type of such transformations, which is related to differential refinements of multiplicative genera. 

Assume we have a homomorphism $\mu \colon G_1 \times G_2 \to G_3$ of tangential structure groups. 
Typical examples arise from multiplicative tangential structure groups $G$ such as $\mathrm{SO}$ and $\mathrm{Spin}$, where we set $G = G_1 = G_2 = G_3$. 
On the topological level, the homomorphism $\mu$ induces the following. 
\begin{itemize}
    \item The natural transformation
    \begin{align}\label{eq_module_first_intro}
        (I\Omega^{G_3})^n(-) \otimes (\Omega^{G_2})^{-r}(-) \to (I\Omega^{G_1})^{n-r}(-), 
    \end{align}
    where $(\Omega^{G_2})^{-r}(X)$ is the stable tangential $G_2$-bordism {\it co}homology theory group. 
    This comes from the map of spectra \eqref{eq_module_IMTG} and can be thought of as a kind of cup product. 
    \item The {pushforward maps} for tangentially stably $G_2$-oriented proper maps $(p \colon N \to X, g_p^{\mathrm{top}})$, 
    \begin{align}\label{eq_push_intro}
        (p, g_p^{\mathrm{top}})_* \colon (I\Omega^{G_3})^{n}(N) \to (I\Omega^{G_1})^{n-r}(X).  
    \end{align}
    We remark that our terminology ``pushforward'', explained in Subsection \ref{subsec_push_general}, is a certain generalization of the most usual notion of pushforwards, which is associated to multiplicative genera. 
\end{itemize}
In Section \ref{sec_module} and Section \ref{sec_push}, we construct differential refinements of each of the above maps, respectively. 
Here, for the differential refinement $(\widehat{\Omega^{G_2}})$, we use a tangential variant of the cycle-based model constructed by Bunke, Schick, Schr\"{o}der and Wiethaup \cite{BunkeSchickSchroderMU}, which turns out to be very much suited to our model of Anderson duals. 
The model $(\widehat{\Omega^{G_2}})$ is defined in terms of {\it differential relative stable tangential $G_2$-cycles}, and the differential refinements of \eqref{eq_module_first_intro} and \eqref{eq_push_intro} are given in terms of fiber products between differential relative stable tangential $G_2$-cycles and differential stable tangential $G_1$-cycles. 
As we explain in Subsubsection \ref{subsubsec_compactification}, in the physical interpretation, these homomorphisms correspond to {\it compactifications} of invertible QFT's.

\subsection{Physical significance}\label{subsec_intro_phys}

Our results are motivated by the problem of classification of invertible field theories, and this
problem is also related to the classification of action functionals of background fields up to continuous deformations.
We mention only most important points in this subsection, and discuss more details in Section~\ref{sec:physics}. 
A reader who is interested in physics may directly go to Section~\ref{sec:physics} after this subsection.

For an $(n-1)$-dimensional closed manifold $M$, let $g_\mathrm{ph}$ denote a {\it physical tangential 
$G$-structure} on $M$ (Definition~\ref{def_phys_Gstr}),
such as a Riemannian metric, a $G$ fiber bundle and its connection. 
Also, for a given manifold $X$, let $f : M \to X$ denote a smooth map.
Physically, $X$ may be the target space of a background sigma model or a parameter space of 
coupling constants which possibly depend on spacetime positions.  

An $(n-1)$-dimensional (non-anomalous) QFT that can be coupled to the above geometric data
assigns a complex number to such a triple $(M,g_\mathrm{ph}, f)$
and this assignment is called the {\it partition function} of the theory which we denote by $Z$,
\beq
Z : (M,g_\mathrm{ph}, f) \mapsto Z(M,g_\mathrm{ph}, f) \in \mathbb{C}.
\eeq
There is a special class of QFT's called {\it invertible field theories} or {\it classical actions}.
Typical examples of our interest are given by exponentials of Chern-Simons terms and Wess-Zumino-Witten terms,
but terms which affect absolute values of $Z$ such as the cosmological constant term and the Einstein-Hilbert term are also possible.
This class of theories are important for describing anomalies of $(n-2)$-dimensional theories.
It is also important for action functionals of $(n-1)$-dimensional dynamical gauge, gravitational, and sigma model fields. 
In this class of theories, the partition function takes values in $\mathbb{C}^*= \mathbb{C}\setminus \{0\}$.
By continuous deformations which eliminate terms contributing to the absolute value of $Z$, 
the partition function may be deformed to satisfy $|Z|=1$.
In this case we may write
\beq\label{eq:Z-h}
Z(M,g_\mathrm{ph}, f) = \exp\left( 2\pi \sqrt{-1} h([M,g_\mathrm{ph}, f]) \right),
\eeq
where $h$ takes values in $\mathbb{R}/\mathbb{Z}$.

We will see at the end of Subsection~\ref{subsec_model} 
that the difference between a differential stable tangential $G$-structure and a physical tangential $G$-structure 
is not at all significant for our final results. Thus we consider $g$ instead of $g_\mathrm{ph}$ in the rest of this subsection. 

Empirically in physics, it is known that $h$ satisfies the property \eqref{eq_compatibility_intro}.
For instance, this is indeed the case for Chern-Simons terms and Wess-Zumino-Witten terms.
Therefore, the group $(\widehat{I\Omega_{\mathrm{dR}}^G})^n(X)$ introduced in Subsection~\ref{subsec_intro_mainthm} describes
the phase part of possible partition functions via the relation \eqref{eq:Z-h}.
Moreover, two partition functions related by \eqref{eq_er_intro} are continuously deformed to each other.
Therefore, the group $({I\Omega_{\mathrm{dR}}^G})^n(X)$ defined in \eqref{eq_equivalence_intro} gives the classification of 
invertible field theories or classical actions up to continuous deformations, under the physical hypothesis that \eqref{eq_compatibility_intro}
is the physically correct condition. One of our main results implies that it coincides with $({I\Omega_{}^G})^n(X)$
which is defined in algebraic topology.

\subsection{Mathematical significance}\label{subsec_intro_math}
In this subsection we explain our results from a mathematical point of view, especially their relation with the {\it differential cohomology theories}. 
Given a generalized cohomology theory $E^*$, a {\it differential extension} $\widehat{E}^*$, defined on manifolds,  refines $E^*$ with additional differential-geometric data.
$\widehat{E}^*$ itself is also called a {\it generalized differential cohomology theory}. 
For example, $H^2(X; \Z)$ classifies line bundles on $X$, whereas $\widehat{H}^2(X; \Z)$ classifies hermitian line bundles with connections on $X$. 
See \ref{subsec_diffcoh} for necessary backgrounds. 

One interesting point of the construction of $(\widehat{I\Omega_{\mathrm{dR}}^G})^*$ lies in its similarity with the {\it differential character group} of Cheeger-Simons \cite{CheegerSimonsDiffChar}, which is a model for differential ordinary cohomology theory. 
For the ordinary cohomology $H\Z^*$, there are several known models for its differential extension, and the relevant one for us is the {\it differential character group} $\widehat{H}_{\mathrm{CS}}^*(-; \Z)$ of Cheeger-Simons \cite{CheegerSimonsDiffChar} (Example \ref{ex_diff_character}). 
For a manifold $X$, 
$\widehat{H}_{\mathrm{CS}}^n(X; \Z)$ consists of pairs $(\omega, k)$, where
\begin{itemize}
    \item A closed form $\omega \in \Omega_{\mathrm{clo}}^n(X)$, 
    \item A group homomorphism $k \colon Z_{\infty, n-1}(X; \Z) \to \R/\Z$, 
    \item $\omega$ and $k$ satisfy the following compatibility condition. 
    For any $c \in C_{\infty, n}(X; \Z)$ we have
    \begin{align*}
        k(\del c) \equiv \langle c, \omega \rangle_X \pmod \Z. 
    \end{align*}
\end{itemize}
Here $Z_{\infty, *}$ and $C_{\infty, *}$ are the groups of smooth singular cycles and chains, respectively. 
We immediately see that the definition of the group $(\widehat{I\Omega_{\mathrm{dR}}^G})^n(X)$ explained in Subsection \ref{subsec_intro_mainthm} is analogous to that of $\widehat{H}_{\mathrm{CS}}^n(X; \Z)$. 
The essential difference is the domains of $h$ and $k$, which, in the physical interpretation explained in Subsection \ref{subsec_intro_phys}, play the roles of the partition functions of invertible QFT's. 
The authors feel that it is interesting that we found an analogy of differential characters out of classifications of invertible QFT's. 

This is actually not just the analogy, but related to the {\it Anderson self-duality} of the ordinary cohomology. 
By the universal coefficient theorem we have $H\Z^* \simeq IH\Z^*$, with the {\it duality element} $\gamma_H \in IH\Z^0(\pt)$, equivalently, the natural transformation $\gamma_H \colon H\Z^* \to (I\Omega^{\fr})^* ( = I\Z^*)$. 
The above analogy allows us to construct the differential refinement $\widehat{\gamma}_\mathrm{dR}^n \colon\widehat{H}_{\mathrm{CS}}^*(-; \Z) \to (\widehat{I\Z_\dR})^*(-) := (\widehat{I\Omega^\fr_{\mathrm{dR}}})^*(-)$ of the above transformation in Subsection \ref{subsec_self_duality}. 

Finally we comment on the subsequent paper \cite{Yamashita2021} by one of the authors of this paper. 
As explained in Subsection \ref{subsec_intro_phys}, there are many examples of elements in $(\widehat{I\Omega^G_\dR})^*(X)$ which have physical interpretations. 
Then, a natural mathematical question arises: {\it what are these elements mathematically?}
It is natural to expect a topological characterization of these elements. 
Questions of this kind also appears in \cite[Conjecture 9.70]{Freed:2016rqq}. 
Actually, it turns out that many interesting examples arise from {\it pushforwards} (also called {\it integration}) in differential cohomology theories. 
For example, recall the element $(c_1(\nabla), \mathrm{Hol}_{\nabla}) \in (\widehat{I\Omega^{\mathrm{\mathrm{SO}}}_{\mathrm{dR}}})^{2}(X)$ mentioned in Subsection \ref{subsec_intro_mainthm}. 
The holonomy function is the pushforward in second differential ordinary cohomology. 
Another example is $(( \mathrm{Todd})|_{2k}, \overline{\eta}) \in \left(\widehat{I\Omega^{\mathrm{Spin}^c}_\dR}\right)^{2k}(\pt) $, where $\mathrm{Todd} \in (\mathrm{Sym}(\mathfrak{spin^c})^*)^{\mathrm{Spin}^c}$ is the Todd polynomial and the reduced eta invariant $\overline{\eta}$ is the pushforward in differential $K$-theory for odd-dimensional closed differential Spin$^c$-manifolds by Freed and Lott \cite{FL2010}. 
In general, pushforwards in generalized differential cohomology theory are certain refinements of multiplicative genera, homomorphisms of ring spectra of the form $\mathcal{G} \colon MTG \to E$. 
In \cite{Yamashita2021} we will see that in terms of such a differential pushforward we can construct a natural transformation of differential cohomology theories of the form $\widehat{E}^*(-) \otimes IE^n(\pt) \to (\widehat{I\Omega^G_\dR})^{*+n}(-)$, which refines the transformation on the topological level coming from the Anderson dual to $\mathcal{G}$ and the $E$-module structure on $IE$. 
In this way, we get a unified understanding of an important class of elements in $(\widehat{I\Omega^G_\dR})^*(X)$ in terms of natural transformations between differential cohomology theories, as mentioned in Subsection \ref{subsec_intro_mainthm}.

\subsection{The structure of the paper}
The paper is organized as follows. 
We provide necessary preliminaries in Section \ref{sec_preliminary}. 
In Section \ref{sec_Gstr} introduce the differential cycles and related constructions which we use throughout this paper. 
Section \ref{sec_model} is the main section of this paper.  
We define the differential models $I\Omega^G_\dR$ and $\widehat{I\Omega_\dR}$ in Subsection \ref{subsec_model}, and prove the first main result, Theorem \ref{thm_intro_main}, in Subsection \ref{subsec_proof_isom}. 
Using these models, we give a refinement of the transformation \eqref{eq_module_first_intro} in Section \ref{sec_module} and of the pushforward map \eqref{eq_push_intro} in Section \ref{sec_push}. 

\begin{rem}
Here we remark on the first version of this article in arXiv (arXiv:2106.09270v1 , ``Differential models for the Anderson dual to bordism theories and invertible QFT's''). 
The content of this article has been greatly changed and improved from the first version. 
The first version was devoted to the constructions of the models and its verifications, and those results are contained in Sections \ref{sec_Gstr} and \ref{sec_model} of the current paper. 
The constructions are basically unchanged, but the proof has been completely changed and simplified. 
The content of Sections \ref{sec_module} and \ref{sec_push} are new. 
The second part of this article \cite{Yamashita2021} by one of the authors is also new. 
% The authors are grateful to the anonymous referees of the first version for suggesting the possibility of improvements. 

\end{rem}

\subsection{Notations and Conventions}

\begin{itemize}
\item By a topological space, we always mean a compactly generated topological space. 
\item By {\it manifolds}, we mean a smooth manifold with corners (Subsection \ref{subsec_mfd_corner}). 
A {\it pair of manifolds} $(X, Y)$ is a manifold $X$ and its submanifold $Y$, which is a closed subset of $X$.  
We set
\begin{align*}
    C^\infty((X, Y), (X', Y')) := \{f \colon X \to X' \ | \ f \mbox{ is smooth and } f(Y) \subset Y'\}. 
\end{align*}
The category of manifolds is denoted by $\mathrm{Mfd}$ and the category of pairs of manifolds is denoted by $\mathrm{MfdPair}$. 
\item The category of $\Z$-graded abelian groups is denoted by $\mathrm{Ab}^\Z$. 
\item 
The space of $\R$-valued differential forms on a manifold $X$ is denoted by $\Omega^*(X)$. 
For a pair of manifolds $(X, Y)$, 
    we set
    \begin{align*}
        \Omega^n(X, Y) := \{\omega \in \Omega^n(X) \ | \ \omega|_Y = 0 \}. 
    \end{align*}
    \item We also deal with differential forms with values in a graded real vector space $V^\bullet$. 
In the notation $\Omega^n(-; V^\bullet)$, $n$ means the {\it total} degree. 
In the case if $V^\bullet$ is infinite-dimensional, we topologize it as the colimit of all its finite-dimensional subspaces with the canonical topology, and set $\Omega^n(X; V^\bullet) := C^\infty(X; (\wedge T^*X \otimes_\R V^\bullet)^n)$. 
This means that, any element in $\Omega^n(X; V^\bullet)$ can locally be written as a finite sum $\sum_{i} \xi_i \otimes \phi_i$
with $\xi_i \in \Omega^{m_i}(X)$ and $\phi_i \in V^{n-m_i}$ for some $m_i$ for each $i$. 
The space of closed forms are denoted by $\Omega_{\mathrm{clo}}^n(-; V^\bullet)$. 
    \item For a manifold $X$ and a real vector space $V$, we denote by $\underline{V}$ the trivial bundle $\underline{V} := X \times V$ over $X$. 
\item For a topological space $X$, we denote by $p_X \colon X \to \pt$ the map to $\pt$. 
We set $X^+ := (X \sqcup \{*\}, \{*\})$. 
\item For two topological spaces $X$ and $Y$, we denote by $\mathrm{pr}_X \colon X \times Y \to X$ the projection to $X$. 
\item We set $I = [0, 1]$. 
\item For a real vector bundle $V$ over a topological space, we denote its orientation line bundle (rank-$1$ real vector bundle) by $\Ori(V)$. 
For a manifold $M$, we set $\Ori(M) := \Ori(TM)$. 
\item For a {\it spectrum} $\{E_n\}_{n \in \Z}$, we require the adjoints $E_n \to \Omega E_{n+1}$ of the structure homomorphisms are homeomorphisms. 
For a sequence of pointed spaces $\{E'_n\}_{n \in \Z_{\ge a}}$ with maps $\Sigma E'_n \to E'_{n+1}$, we define its {\it spectrification} $LE' :=\{(LE')_n\}_{n \in \Z}$ to be the spectrum given by
\begin{align*}
    (LE')_n := \varinjlim_k \Omega^k E'_{n+k}. 
\end{align*}
    
\end{itemize}

\section{Preliminaries}\label{sec_preliminary}

\subsection{The Anderson duals}\label{subsec_anderson}

In this subsection we collect basics on the Anderson duals for generalized cohomology theories. 
For more details, see for example \cite[Appendix B]{HopkinsSinger2005} and \cite[Appendix B]{FMS07}. 
In this subsection we entirely work with spectra. 
The corresponding statement for CW-pairs $(X, Y)$ is obtained by considering the suspension spectrum $\Sigma^\infty(X / Y)$. 
Remark that $\pi^\st_*(X, Y) = \pi_*(\Sigma^\infty(X/Y))$. 

First note that the functor $X \mapsto \mathrm{Hom}(\pi_*(X), \R/\Z)$ on the stable homotopy category of spectra $\Ho(\Sp)^{\mathrm{op}}$ satisfies the Eilenberg-Steenrood axioms, so it is represented by an $\Omega$-spectrum denoted by $I(\R/\Z)$.
We also have the functor $X \mapsto \mathrm{Hom}(\pi_*(X), \R) $, and the corresponding $\Omega$-spectrum $I\R$.
By the Hurewicz isomorphism we have $\mathrm{Hom}(\pi_*(X), \R)   = H^*(X; \R)$ and 
$I\R$ is isomorphic to the Eilenberg-MacLane spectrum $H\R$. 
Therefore we just take $I\R = H\R$.
The morphism in $\Ho(\Sp)$ representing the transformation 
$\mathrm{Hom}(\pi_*(-), \R)  \to \mathrm{Hom}(\pi_*(-), \R/\Z)$
is denoted by
\begin{align}\label{eq_def_pi}
    \pi \colon H\R \to I(\R/\Z). 
\end{align}
\begin{defn}[{The Anderson dual to the sphere spectrum}]\label{def_IZ}
The {\it Anderson dual to the sphere spectrum}, $I\Z$, is the homotopy fiber of the map \eqref{eq_def_pi}. 
\end{defn}

Applying the homotopy fiber exact sequence, for each spectrum $X$ we get the following exact sequence. 
\begin{align}\label{eq_exact_IZ}
    \cdots \to H^{n-1}(X; \R) &\xrightarrow{\pi} \Hom(\pi_{n-1}(X), \R/\Z) \to I\Z^n(X) \\ & \to H^n(X; \R) \xrightarrow{\pi} \Hom(\pi_n(X), \R/\Z) \to \cdots \ (\mbox{exact}). \notag
\end{align}

\begin{defn}[{The Anderson dual to a spectrum}]\label{def_Anderson_dual}
Let $E$ be a spectrum. 
The {\it Anderson dual to $E$}, denoted by $IE$, is a spectrum defined as the function spectrum from $E$ to $I\Z$, 
\begin{align*}
    IE := F(E, I\Z). 
\end{align*}
\end{defn}
This implies that we have the following exact sequence. 
\begin{align}\label{eq_exact_DE}
    \cdots \to \mathrm{Hom}(E_{n-1}(X), \R) &\xrightarrow{\pi} \Hom(E_{n-1}(X), \R/\Z) \to IE^n(X) \\ & \to \mathrm{Hom}(E_{n}(X), \R) \xrightarrow{\pi} \Hom(E_n(X), \R/\Z) \to \cdots \ (\mbox{exact}). \notag
\end{align}
% If $E$ is a ring spectrum, the cohomology theories in the exact sequence \eqref{eq_exact_DE} are module theories over $E^*$, and the sequence is compatible with the module structure. 

Hopkins and Singer gave a model for $I\Z^*(X)$ in terms of functors between Picard groupoids in \cite[Corollary B.17]{HopkinsSinger2005}. 
A symmetric monoidal category is called a {\it Picard groupoid} if all the objects are invertible under the monoidal product and all morphisms are invertible under the composition. 
For example, a homomorphism $\del \colon A \to B$ between abelian groups associates a Picard groupoid $(A \xrightarrow{\del} B)$. 
Namely, the objects are elements of $B$, and the morphism from $b$ to $b'$ is given by an element $a \in A$ such that $b' - b = \del(a)$. 
The monoidal structure is given by the addition. 
Another class of examples comes from spectra $X = \{X_n\}_{n \in \Z}$. 
The fundamental groupoid $\pi_{\le 1}(X_n)$ of its each $n$-th space $X_n$ can be equipped with a structure of a Picard groupoid, uniquely up to equivalence (\cite[Example B.7]{HopkinsSinger2005}). 

\begin{fact}[{\cite[Corollary B.17]{HopkinsSinger2005}}]\label{fact_IZ_model}
Let $X= \{X_n\}_{n \in \Z}$ be a spectrum and $n$ be an integer. 
We have an isomorphism
\begin{align}\label{eq_fact_IZ_model}
    I\Z^n(X) \simeq \pi_0 \mathrm{Fun}_{\mathrm{Pic}} \left(\pi_{\le 1}(X_{1-n}) , (\R \xrightarrow{\mathrm{mod} \ \Z} \R/\Z)
    \right), 
\end{align}
where the right hand side means the group of natural isomorphism class of functors of Picard groupoids\footnote{
In \cite[Corollary B.17]{HopkinsSinger2005} they use $(\Q \to \Q/\Z)$ instead of $(\R \to \R/\Z)$. We can use the latter because the inclusion $\Q \to \R$ induces an equivalence of groupoids $(\Q \to \Q/\Z) \to (\R \to \R/\Z)$. 
Also note that there is an obvious typo in the statement of \cite[Corollary B.17]{HopkinsSinger2005}, where $X_{1-n}$ is written as $X_{n-1}$
}. 
The isomorphism \eqref{eq_fact_IZ_model} fits into the commutative diagram, 
\begin{align}\label{diag_fact_IZ_model}
    \xymatrix{
    \Hom(\pi_{n-1}(X), \R/\Z) \ar[r] \ar@{=}[d] & I\Z^n(X) \ar[r] \ar[d]^-{\simeq} &  H^n(X; \R) \ar@{=}[d]\\
    \Hom(\pi_{n-1}(X), \R/\Z) \ar[r] & \pi_0\mathrm{Fun}_{\mathrm{Pic}} \left(\pi_{\le 1}(X_{1-n}) , (\R \to \R/\Z)
    \right)\ar[r] &  H^n(X; \R)
    }
\end{align}
Here the top row is \eqref{eq_exact_IZ}. The bottom left arrow is the obvious one and the right arrow sends a functor to the induced element $\Hom(\pi_1(X_{1-n}), \R) \simeq H^n(X; \R)$. 
\end{fact}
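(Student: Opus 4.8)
The plan is to turn the isomorphism \eqref{eq_fact_IZ_model} into a computation in the stable homotopy category, via the equivalence between Picard groupoids and spectra whose homotopy groups are concentrated in degrees $0$ and $1$, and then to match both sides against the defining fiber sequence of $I\Z$ from Definition \ref{def_IZ}.

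First I would recall the dictionary: that equivalence sends a Picard groupoid $\mathcal{C}$ to a connective spectrum $\mathfrak{h}\mathcal{C}$ with $\pi_0\mathfrak{h}\mathcal{C} = \pi_0\mathcal{C}$ and $\pi_1\mathfrak{h}\mathcal{C} = \mathrm{Aut}_{\mathcal{C}}(\mathbf{1})$ (with the categorical $k$-invariant), it carries the fundamental Picard groupoid of an infinite loop space $\Omega^\infty E$ to $\tau_{[0,1]}E$, and it carries the internal functor category to a truncated mapping spectrum, so that $\pi_0\mathrm{Fun}_{\mathrm{Pic}}(\mathcal{C}, \mathcal{D}) = [\mathfrak{h}\mathcal{C}, \mathfrak{h}\mathcal{D}]$. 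Applied to the two Picard groupoids in \eqref{eq_fact_IZ_model}: since $X_{1-n} = \Omega^\infty\Sigma^{1-n}X$ is the $(1-n)$-th space of the $\Omega$-spectrum $X$, we get $\mathfrak{h}(\pi_{\le 1}(X_{1-n})) \simeq \tau_{[0,1]}\Sigma^{1-n}X$, a spectrum with $\pi_0 = \pi_{n-1}(X)$ and $\pi_1 = \pi_n(X)$; and in $(\R \to \R/\Z)$ the structure map is surjective, so all objects are isomorphic and $\mathrm{Aut}(0) = \Z$, whence this Picard groupoid has vanishing $k$-invariant and $\mathfrak{h}(\R \to \R/\Z) \simeq \Sigma H\Z$. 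Therefore the right-hand side of \eqref{eq_fact_IZ_model} equals $[\tau_{[0,1]}\Sigma^{1-n}X, \Sigma H\Z]$.

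Next I would build a natural comparison map $I\Z^n(X) = [\Sigma^{1-n}X, \Sigma I\Z] \to [\tau_{[0,1]}\Sigma^{1-n}X, \Sigma H\Z]$. Here $\Sigma I\Z$ is $(\le 1)$-truncated (as $\pi_i(I\Z) = 0$ for $i \ge 1$) with connective cover $\Sigma H\Z$, so the Postnikov-truncation functors supply such a map: concretely, $[\Sigma^{1-n}X, \Sigma I\Z] \cong [\tau_{\le 1}\Sigma^{1-n}X, \Sigma I\Z]$, followed by restriction along $\tau_{[0,1]}\Sigma^{1-n}X \to \tau_{\le 1}\Sigma^{1-n}X$ and $[\tau_{[0,1]}(-), \Sigma I\Z] = [\tau_{[0,1]}(-), \Sigma H\Z]$. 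To see this map is an isomorphism I would compare two short exact sequences: the fiber sequence $I\Z \to H\R \xrightarrow{\pi} I(\R/\Z)$ presents $I\Z^n(X)$, via \eqref{eq_exact_IZ}, as an extension of $\Hom(\pi_n(X), \Z)$ by $\mathrm{Ext}^1_\Z(\pi_{n-1}(X), \Z)$; the Postnikov fiber sequence $\Sigma H\pi_n(X) \to \tau_{[0,1]}\Sigma^{1-n}X \to H\pi_{n-1}(X)$, together with $[HA, \Sigma H\Z] \cong \mathrm{Ext}^1_\Z(A, \Z)$ and the vanishing of $\mathrm{Ext}^2_\Z$, presents $[\tau_{[0,1]}\Sigma^{1-n}X, \Sigma H\Z]$ as the same extension; then one checks the comparison map carries one sequence to the other inducing the identity on sub- and quotient groups, and invokes the five lemma. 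Finally I would identify the bottom row of \eqref{diag_fact_IZ_model} with this sequence: unwinding the obvious definitions, the left map sends $\phi \in \Hom(\pi_{n-1}(X), \R/\Z)$ to the functor that is trivial on morphisms and twists objects by $\phi$, which under the dictionary is pullback along $\tau_{[0,1]}\Sigma^{1-n}X \to H\pi_{n-1}(X)$, while the right map records the action of a functor on automorphisms, $\pi_n(X) = \pi_1(\pi_{\le 1}(X_{1-n})) \to \mathrm{Aut}(0) = \Z \hookrightarrow \R = H^n(X;\R)$, which is the restriction to the fiber $\Sigma H\pi_n(X)$ followed by $\Z \hookrightarrow \R$; hence \eqref{diag_fact_IZ_model} commutes.

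The step I expect to be the main obstacle is the bookkeeping behind the first two steps: making the Picard-groupoid/stable-$1$-type dictionary precise enough to identify $\mathrm{Fun}_{\mathrm{Pic}}(\mathcal{C}, \mathcal{D})$ with a truncated mapping spectrum together with its coherence ($k$-invariant) data, and then checking that the comparison map intertwines the connecting homomorphism of the fiber sequence $I\Z \to H\R \to I(\R/\Z)$ with that of the Postnikov tower of $\tau_{[0,1]}\Sigma^{1-n}X$, so that the two presentations as extensions match on the nose. Once this is in place the remaining computations are routine, the only genuine input being that $\Z$ has global dimension one (so that $\mathrm{Ext}^2_\Z$ vanishes and the two extensions have identical sub- and quotient groups). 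An alternative, avoiding the dictionary, would be to construct the natural transformation $I\Z^n(X) \to \pi_0\mathrm{Fun}_{\mathrm{Pic}}(\pi_{\le 1}(X_{1-n}), (\R\to\R/\Z))$ by hand, producing an explicit monoidal functor from a pair (real cohomology class, $\R/\Z$-trivialization) and checking well-definedness and the five-lemma against \eqref{eq_exact_IZ}; there the obstacle shifts to the explicit functor construction.
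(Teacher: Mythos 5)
Your proposal is essentially correct in outline, but it is worth being clear that it takes a different route from the paper, because the paper does not prove this statement at all: it is imported verbatim as a Fact from \cite[Corollary B.17]{HopkinsSinger2005}. The only argument the paper supplies is for the commutativity of \eqref{diag_fact_IZ_model}, and that argument is the observation that the fiber sequence $\Sigma^{-1}I\R/\Z \to I\Z \to H\R$ induces the sequence $(0\to\R/\Z)\to(\R\to\R/\Z)\to(\R\to 0)$ on fundamental Picard groupoids of first spaces, the isomorphism itself being the map sending a class in $[X,\Sigma^n I\Z]$ to the induced functor $\pi_{\le 1}(X_{1-n})\to\pi_{\le 1}(I\Z_1)\simeq(\R\to\R/\Z)$. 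What you propose is a reconstruction of the Hopkins--Singer proof: you pass through the equivalence between Picard groupoids and stable $1$-types, identify the right-hand side with $[\tau_{[0,1]}\Sigma^{1-n}X,\Sigma H\Z]$, and run a five-lemma comparison against \eqref{eq_exact_IZ}. This buys a self-contained argument, but note that the dictionary step $\pi_0\mathrm{Fun}_{\mathrm{Pic}}(\mathcal{C},\mathcal{D})\cong[\mathfrak{h}\mathcal{C},\mathfrak{h}\mathcal{D}]$ is precisely the substance of Hopkins--Singer's Appendix B, so you are reproving the cited result with its own main ingredient rather than giving an independent shortcut; the paper's choice to quote the corollary and only verify the diagram is the lighter path.

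Two technical points in your sketch need repair. First, the justification of the extension presentation of $[\tau_{[0,1]}\Sigma^{1-n}X,\Sigma H\Z]$ via ``$[HA,\Sigma^k H\Z]\cong\mathrm{Ext}^k_\Z(A,\Z)$ and vanishing of $\mathrm{Ext}^2_\Z$'' is not right as stated: for $k\ge 2$ the group $[HA,\Sigma^k H\Z]$ is a group of stable cohomology operations, not an Ext group (e.g.\ $[H\Z/2,\Sigma^k H\Z/2]$ is large). What you actually need is that the pushforward of the $k$-invariant along any $\phi\colon\pi_n(X)\to\Z$ dies in $[H\pi_{n-1}(X),\Sigma^2 H\Z]$; this does hold because the stable group $H^2(HA;\Z)$ vanishes for every abelian group $A$, but that is a fact about Eilenberg--MacLane spectra, not about the global dimension of $\Z$. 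A cleaner fix, already implicit in your setup, is to use $[\tau_{[0,1]}\Sigma^{1-n}X,\Sigma H\Z]\cong[\tau_{[0,1]}\Sigma^{1-n}X,\Sigma I\Z]$ (connectivity of the source) and then quote \eqref{eq_exact_IZ} for the truncated spectrum, so both sides are presented as extensions by the same mechanism. Second, for the five lemma you must check that your composite comparison map is compatible with the two short exact sequences \emph{and} that the resulting isomorphism is the one appearing in \eqref{diag_fact_IZ_model}, i.e.\ the functor-induced map described above; otherwise you prove abstract isomorphy but not the commutativity statement. Your unwinding of the bottom-row maps (precomposition with $T\to H\pi_{n-1}(X)$ on the left, the action on $\mathrm{Aut}(\mathbf{1})$ composed with $\Z\hookrightarrow\R$ on the right) is the right thing to check here, and is the analogue of the paper's short argument via the induced sequence of fundamental Picard groupoids.
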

In the above model, the Picard groupoid $(\R \to \R/\Z)$ arises because we have
\begin{align*}
    \pi_{\le 1}(I\Z_{1}) \simeq (\Z \xrightarrow{0} 0) \simeq (\R \to \R/\Z). 
\end{align*}
The isomorphism \eqref{eq_fact_IZ_model} assigns, to an element $I\Z^n(X) = [ X, \Sigma^n I\Z]$, the induced functor between the fundamental Picard groupoids. 
The commutativity of \eqref{diag_fact_IZ_model} is implicit in the discussion in \cite[Appendix B.3]{HopkinsSinger2005}, 
but this commutativity easily follows by the fact that the homotopy fiber sequence $\Sigma^{-1}I\R/\Z \to I\Z \to H\R$ induces the sequence $(0 \to \R/\Z) \to (\R \to \R/\Z) \simeq (\Z \to 0) \to (\R \to 0)$ on the fundamental Picard groupoids of their first spaces. 
The above model for $I\Z$ is crucial to the proofs of our main results of this paper.

In this paper, we are interested in the case where $E$ is the {\it stable tangential $G$-bordism theory}. 
Let $G = \{G_d, s_d, \rho_d\}_{d \in \Z_{\ge 0}}$ be a sequence of compact Lie groups equipped with homomorphisms $s_d \colon G_d \to G_{d+1}$ and $\rho_d \colon G_d \to \mathrm{O}(d, \R)$ for each $d$ such that the following diagram commutes. 
 \begin{align*}
     \xymatrix{
 G_d \ar[r]^-{\rho_d} \ar[d]^{s_d} & \mathrm{O}(d, \R) \ar[d] \\
 G_{d+1} \ar[r]^-{\rho_{d+1}} & \mathrm{O}(d+1, \R)  \ar@{}[lu]|{\circlearrowright}
 }. 
 \end{align*}
Here we use the inclusion $\mathrm{O}(d, \R) \hookrightarrow \mathrm{O}(d+1, \R) $ defined by 
\begin{align}\label{eq_stabilization}
     A \mapsto \left[
     \begin{array}{c|c}
     1 & 0  \\ \hline
     0 &A 
     \end{array}
     \right]  
 \end{align}
throughout this paper. 
We call such $G$ {\it tangential structure groups}. 
Given $G$, the stable tangential $G$-bordism homology theory assigns the stable tangential bordism group $(\Omega^G)_*(-)$. 
It is represented by the {\it Madsen-Tillmann spectrum} $MTG$, which is a variant of the Thom spectrum $MG$. 
For details see for example \cite[Section 6.6]{Freed19}. 
In this paper we take $MTG$ and $MG$ to be a spectrum (as opposed to a prespectrum) as in \cite[(4.60)]{HopkinsSinger2005}. 
In this case, the exact sequence \eqref{eq_exact_DE} becomes
\begin{align}\label{eq_exact_IOmegaG}
    \cdots \to \mathrm{Hom}(\Omega^G_{n-1}(X), \R) &\xrightarrow{\pi} \Hom(\Omega^G_{n-1}(X), \R/\Z) \to (I\Omega^G)^n(X) \\ & \xrightarrow{\mathrm{ch}'} \mathrm{Hom}(\Omega^G_{n}(X), \R) \xrightarrow{\pi} \Hom(\Omega^G_n(X), \R/\Z) \to \cdots \ (\mbox{exact}). \notag
\end{align}

% We say that $G$ as above is {\it multiplicative} if it is equipped with a group homomorphism $G_d \times G_{d'} \to G_{d+d'}$ for each $(d, d')$ which are compatible with the structure homomorphisms $s_d$, $\rho_d$ and multiplication in $\mathrm{O}$. 
% This makes $MTG$ into a ring spectrum. 
% As a result, $(I\Omega^G)^*$ is a module theory over $(\Omega^G)^*$. 

\begin{ex}\label{ex_Gstr}
Here are some examples of tangential structure groups $G$ and the corresponding stable tangential $G$-structures. 
\begin{enumerate}
     \item The case where $G_d = \{1\}$ for all $d$. 
     Since the stable tangential $G$-structure is a stable tangential framing in this case, we denote this $G$ by $\mathrm{fr}$. 
 \item $\mathrm{SO} := \{\mathrm{SO}(d, \R)\}_d$ with $\rho_d$ and $s_d$ given by the inclusions. 
     A stable tangential $\mathrm{SO}$-structure is an orientation with a Riemannian metric. 
     \item $\mathrm{Spin} := \{\mathrm{Spin}(d)\}_d$ with $\rho_d \colon \mathrm{Spin}(d) \to \mathrm{O}(d, \R)$ given by the double covering of $\mathrm{SO}(d, \R)$ composed with the inclusion, and $s_d$ given by the inclusion. 
     A stable tangential Spin-structure is a $\mathrm{Spin}$-structure in the usual sense. 
     \item Let $H$ be a compact Lie group, and set $\mathrm{SO} \times H :=\{\mathrm{SO}(d, \R) \times H\}_d$ with $\rho_d$ given by the composition of the projection $\mathrm{SO}(d, \R) \times H \to \mathrm{SO}(d, \R) $ and the inclusion, and $s_d$ given by the inclusion. 
    A stable tangential $\mathrm{SO}\times H$-structure is an orientation with a Riemannian metric, together with a choice of principal $H$-bundle. 
    This group $H$ is called the {\it internal symmetry group} in physics. 
\end{enumerate}
\end{ex}

\subsection{Generalized differential cohomology theories}\label{subsec_diffcoh}

In this subsection we give a brief review of generalized differential cohomology theories, based on the axiomatic framework given in \cite{BSDiffKSurvey} (see also \cite{BunkeSchick2010}). 
A {\it differential extension} (also called a {\it smooth extension}) of a generalized cohomology theory $E^*$ is a refinement $\widehat{E}^*$ of the restriction of $E^*$ to the category of smooth manifolds, which containes differential-geometric data. 

Let $E^*$ be a generalized cohomology theory. 
Let $N^\bullet$ be a graded vector space over $\R$ equipped with a transformation of cohomology theories
\begin{align}
    \mathrm{ch} \colon E^* \to H^*(-; N^\bullet). 
\end{align}
The universal choice is $N^\bullet = E^\bullet(\pt) \otimes \R =: V_E^\bullet$ with $\mathrm{ch}$ the Chern-Dold homomorphism (\cite[Chapter II, 7.13]{rudyak1998}) for $E$. 

For a manifold $X$, set $\Omega^*(X; N^\bullet) := C^\infty(X; \wedge T^*M \otimes_\R N^\bullet)$ with the $\Z$-grading by the total degree. 
Let $d \colon \Omega^*(X; N^\bullet) \to \Omega^{*+1}(X; N^\bullet)$ be the de Rham differential. 
We have the natural transformation
\begin{align*}
    \mathrm{Rham} \colon \Omega^*_{\mathrm{clo}}(X; N^\bullet) \to H^*(X; N^\bullet). 
\end{align*}

\begin{defn}[{Differential extensions of a cohomology theory, \cite[Definition 2.1]{BSDiffKSurvey}}]\label{def_diffcoh}
A {\it differential extension} of the pair $(E^*, \mathrm{ch})$ is a quadruple $(\widehat{E}, R, I, a)$, where
\begin{itemize}
    \item $\widehat{E}$ is a contravariant functor $\widehat{E} \colon \mathrm{Mfd}^\mathrm{op} \to \mathrm{Ab}^\Z$. 
    \item $R$, $I$ and $a$ are natural transformations
    \begin{align*}
        R &\colon \widehat{E}^* \to \Omega_{\mathrm{clo}}^*(-; N^\bullet) \\
        I &\colon \widehat{E}^* \to E^*\\
        a &\colon \Omega^{*-1}(-; N^\bullet) / \mathrm{im}(d) \to \widehat{E}^* . 
    \end{align*}
\end{itemize}
We require the following axioms. 
\begin{itemize}
    \item $R \circ a = d$. 
    \item $\mathrm{ch} \circ I = \mathrm{Rham} \circ R$. 
   \item For all manifolds $X$, the sequence
   \begin{align}\label{eq_exact_diffcoh}
       E^{*-1}(X) \xrightarrow{\mathrm{ch}} \Omega^{*-1}(M ; N^\bullet) / \mathrm{im}(d) \xrightarrow{a} \widehat{E}(X) \xrightarrow{I} E^*(X) \to 0
   \end{align}
   is exact. 
\end{itemize}
In the case $N^\bullet = V_E^\bullet$ and $\mathrm{ch}$ is the Chern-Dold homomorphism, we simply call it a {\it differential extension of $E^*$}. 
\end{defn}
Such a quadruple $(\widehat{E}, R, I, a)$ itself is also called a {\it generalized differential cohomology theory}. 
We usually abbreviate the notation and just write a generalized cohomology theory as $\widehat{E}^*$.

\begin{ex}[Differential characters]\label{ex_diff_character}
Here we explain a model for a differential extension of $H\Z$ given by Cheeger and Simons, in terms of {\it differential characters} \cite{CheegerSimonsDiffChar}. 
Actually, our definition of the group $(\widehat{I\Omega^G_{\mathrm{dR}}})^*$ (Definition \ref{def_hat_DOmegaG}) is analogous to it. 
We relate them in Subsection \ref{subsec_self_duality}. 
For later use, we explain the relative version. 
For another formulation see \cite{BTreldiffchar}. 

For a pair of manifolds $(X, Y)$ and a nonnegative integer $n$, the group of {\it differential characters} $\widehat{H}_{\mathrm{CS}}^n(X, Y; \Z)$ is the abelian group consisting of pairs $(\omega, k)$, where
\begin{itemize}
    \item A closed differential form $\omega \in \Omega_{\mathrm{clo}}^n(X, Y)$, 
    \item A group homomorphism\footnote{
    Precisely speaking in \cite{CheegerSimonsDiffChar} they use normalized smooth singular cubic chains, but we can also work in terms of the usual smooth singular chains as in \cite{BTreldiffchar}. 
    } $k \colon Z_{\infty, n-1}(X, Y; \Z) \to \R/\Z$, 
    \item $\omega$ and $k$ satisfy the following compatibility condition. 
    For any $c \in C_{\infty, n}(X; \Z)$ we have
    \begin{align}\label{eq_compatibility_diffchar}
        k(\del c) \equiv \langle \omega, c \rangle_X \pmod \Z. 
    \end{align}
\end{itemize}
We have homomorphisms
\begin{align*}
R_{\mathrm{CS} } &\colon \widehat{H}_{\mathrm{CS}}^n(X, Y; \Z) \to \Omega_{\mathrm{clo}}^n(X, Y), \quad (\omega, k) \mapsto \omega \\
a_{\mathrm{CS}} &\colon \Omega^{n-1}(X, Y)/\mathrm{Im}(d) \to  \widehat{H}_{\mathrm{CS}}^n(X, Y; \Z) , \quad
    \alpha \mapsto (d\alpha, \alpha). 
\end{align*}
and the quotient map gives
\begin{align*}
    I_{\mathrm{CS}} \colon \widehat{H}_{\mathrm{CS}}^n(X, Y; \Z) \to\widehat{H}_{\mathrm{CS}}^n(X, Y; \Z) / \mathrm{Im}(a_{\mathrm{CS}}) \simeq  H^n(X, Y; \Z). 
\end{align*}
The quadruple $(\widehat{H}^*_{\mathrm{CS}}, R_{\mathrm{CS}}, I_{\mathrm{CS}} , a_{\mathrm{CS}} )$ is a differential extension of $H\Z$. 
\end{ex} 

We can also consider the differential refinement of $S^1$-integration maps in $E$.  
We have the $S^1$-integration map for differential forms, 
\begin{align}\label{eq_int_form}
    \int \colon \Omega^{n+1}(S^1 \times X; N^\bullet) \to \Omega^n(X; N^\bullet)
\end{align}
for any manifold $X$, which realizes the $S^1$-integration map in de Rham cohomology. 
The sign is defined so that
\begin{align}\label{eq_int_form_sign}
    \int \mathrm{pr}_{S^1}^*\tau_{S^1} \wedge \mathrm{pr}_{X}^* \omega_X = \omega_X
\end{align}
for $\omega_X \in \Omega^*(X; V)$ and $\tau_{S^1} \in \Omega^{1}_{\mathrm{clo}}(S^1)$ which represents the fundamental class of $S^1$ for the standard orientation. 
\begin{defn}[{Differential extensions with $S^1$-integrations, \cite[Definition 2.12]{BSDiffKSurvey}}]\label{def_diff_integration}
A differential extension {\it with integration} of $E^*$ is a quintuple $(\widehat{E}, R, I, a, \int)$, where $(\widehat{E}, R, I, a)$ is a differential extension of $E^*$ and $\int$ is a natural transformation
\begin{align*}
    \int \colon \widehat{E}^{*+1}(S^1 \times -) \to \widehat{E}^*
\end{align*}
such that
\begin{itemize}
    \item $\int \circ (t \times \mathrm{id})^* = -\int$, where $t \colon S^1 \to S^1$ is given by $t(x) = -x$ for $x \in [-1, 1]/\{-1, 1\} = S^1$. 
    \item $\int \circ (p_{S^1} \times \mathrm{id})^* = 0 $. 
    \item The diagram
    \begin{align}\label{diag_diff_integration}
    \xymatrix{
       \Omega^{*}(S^1 \times X; N^\bullet)  \ar[r]^-{a}\ar[d]^{\int } & \widehat{E}^{*+1}(S^1\times X) \ar[d]^{\int}\ar[r]^-{I} \ar@/^18pt/[rr]^R& E^{*+1}(S^1\times X)\ar[d]^{\int} &  \Omega_{\mathrm{clo}}^{*+1}(S^1\times X; N^\bullet)\ar[d]^{\int } \\
    \Omega^{*-1}(X; N^\bullet) \ar[r]^-{a} & \widehat{E}^{*}(X) \ar[r]^-{I} \ar@/_18pt/[rr]^R& E^{*}(X) &  \Omega_{\mathrm{clo}}^{*}(X; N^\bullet)
        }
    \end{align}
    commutes for all manifolds $X$. 
\end{itemize}
\end{defn}

\subsection{Manifolds with corners}\label{subsec_mfd_corner}

In this paper, we need to deal with manifolds with corners. 
There are some variants in the definition among the literatures. 
In this paper the appropriate notion is {\it $\langle k \rangle$-manifolds} defined in \cite{Janich1968}, which we recall here. 

A {\it manifold with corners} of dimension $n$ is a paracompact topological space $M$ with an equivalence class of local coordinate system $\{V_i, \varphi_i\}_{i \in I}$, where $V_i$ is an open subset of $M$ for each $i$ with $M = \cup_i V_i$, 
\begin{align*}
\varphi_i \colon V_i \to \R^n_{\le 0} := (-\infty, 0]^n
\end{align*}
is a homeomorphism onto an open subset of $\R^n_{\le 0}$ for each $i$, and 
\begin{align*}
    \varphi_i\varphi_j^{-1} \colon \varphi_j(V_i \cap V_j ) \to \varphi_i(V_i \cap V_j ) 
\end{align*}
is a diffeomorphism for all pairs $(i, j)$. 
In this paper, by {\it manifolds} we always mean manifolds with corners. 
For a point $x \in M$, we define the {\it depth} of $x$, denoted by $\mathrm{depth}(x)$, to be the number of zeros in a local chart. 
For each nonnegative integer $k$, we set 
\begin{align}
    S^k(M) := \{x \in M \ | \ \mathrm{depth}(x)  = k\}. 
\end{align}
Each $S^k(M)$ has the structure of $(n-k)$-dimensional manifold without boundary. 
The {\it tangent bundle} $TM \to M$ can be defined as a vector bundle of rank $n$ over $M$. 

A map $f \colon M \to N$ between manifolds with corners is called {\it smooth} if, taking a local coordinate system $\{V_i, \varphi_i\}_{i \in I}$ and $\{U_j, \psi_j\}_{j \in J}$ for $M$ and $N$ respectively, the map $\psi_j \circ f \circ \varphi_i^{-1} \colon\varphi_i^{-1}f^{-1}(U_j) \to \R_{\le 0}^{\dim N} $ is a restriction of a smooth map between open subsets of $\R^{\dim M}$ to $\R^{\dim N}$ for each $(i, j) \in I \times J$. 
A smooth map $f$ induces a vector bundle map $df \colon TM \to TN$. 
$f$ is called an {\it embedding} if $f$ is injective and $df \colon T_x M \to T_{f(x)}N$ is injective for all $x \in M$. 
In such a case, we also regard $M$ as a subspace of $N$ and call $M$ a {\it submanifold} of $N$. 
A smooth map $f \colon M \to N$ is called a {\it submersion} if $df \colon T_x M \to T_{f(x)}N$ and $df \colon T_x S^{\mathrm{depth}(x)}(X) \to T_{f(x)} S^{\mathrm{depth}(f(x))}(X)$ are surjective for all $x \in N$.

A manifold with corners is called a {\it manifold with faces} if each $x \in M$ belongs to the closure of $\mathrm{depth}(x)$-different components of $S^1(M)$\footnote{
For example, this excludes the case of the ``teardrop'', the $2$-dimensional disk with a corner. }. 
For a manifold with faces $M$ of dimension $n$, the closure of a connected component of $S^1(M)$ is called a {\it connected face} of $M$, which has the induced structure of an $(n-1)$-dimensional manifold with faces. 
Any union of pairwise disjoint connected faces is called a {\it face} of $M$. 

\begin{defn}[{$\langle k \rangle$-manifolds, \cite[Definition 1]{Janich1968}}]\label{def_k_mfds}
Let $k$ be a nonnegative integer. 
A {\it $\langle k \rangle$-manifold} is a manifold with faces $M$ together with an $k$-tuple $(\del_0 M, \del_{1} M, \cdots, \del_{k-1} M)$ of faces of $M$, satisfying
\begin{enumerate}
    \item $\del_0 M \cup \cdots \cup \del_{k-1} M = \del M$. 
    \item $\del_i M \cap \del_j M $ is a face of $\del_i M$ and  of $\del_j M$ for $i \neq j$. 
\end{enumerate}
\end{defn}
In particular, a $\langle 0 \rangle$-manifold is equivalent to a manifold without boundary, and a $\langle 1 \rangle$-manifold is equivalent to a manifold with boundary. 
See Figure \ref{fig_2_manifold} for $k = 2$. 
Note that each $\del_i M$ can be empty. 
For $I = \{0 \le i_1 < \cdots < i_m \le k - 1 \}$, the intersection $\del_I M := \del_{i_1} M \cap \cdots \cap \del_{i_m} M$ is called an $\langle k-m \rangle$-face of $M$. It has the induced structure of an $\langle k-m \rangle$-manifold so that the order of the labels of the faces are preserved. 

In this paper we are particularly interested in $\langle 0 \rangle$, $\langle 1 \rangle$ and $\langle 2 \rangle$-manifolds. 

\begin{figure}[htbp]
\centering
\includegraphics[width=8cm]{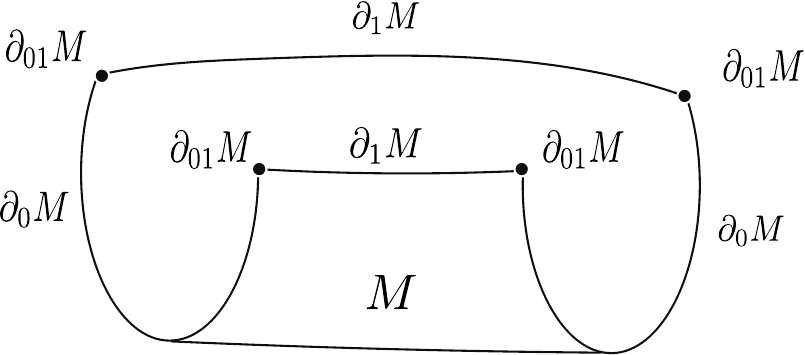}
\caption{A $\langle 2 \rangle$-manifold}
\label{fig_2_manifold}
\end{figure}

\section{Differential stable tangential $G$-structures}\label{sec_Gstr}

In this section we introduce the differential stable tangential $G$-cycles and related constructions which we use throughout this paper. 
Let $G= \{G_d, s_d, \rho_d\}_{d \in \Z_{\ge 0}}$ be tangential structure groups. 

\begin{defn}[{Differential stable $G$-structures on vector bundles}]\label{def_diff_Gstr_vec}
Let $V$ be a real vector bundle of rank $n$ over a manifold $M$. 
\begin{enumerate}
    \item 
A {\it representative of differential stable $G$-structure} on $V$ is a quadruple $\widetilde{g} = (d, P, \nabla, \psi)$, where $d \ge n$ is an integer, 
$(P, \nabla)$ is a principal $G_d$-bundle with connection over $M$ and
$\psi \colon P \times_{\rho_d} \R^d \simeq  \underline{\R}^{d-n} \oplus V $ is an isomorphism of vector bundles over $M$. 
\item We define the {\it stabilization} of such $\widetilde{g}$ by $\widetilde{g}(1) := (d+1, P(1) := P \times_{s_{d}}G_{d+1}, \nabla(1), \psi(1))$, where $\nabla(1)$ and $\psi(1)$ are naturally induced on $P(1)$ from $\nabla$ and $\psi$, respectively. 
\item A {\it differential stable $G$-structure} $g$ on $V$ is a class of representatives $\widetilde{g}$ under the relation $\sim_{\mathrm{stab}}$ defined by $\widetilde{g} \sim_{\mathrm{stab}} \widetilde{g}(1)$. 
\item Suppose we have two representatives of the forms $\widetilde{g} = (d, P, \nabla, \psi)$ and $\widetilde{g} = (d, P, \nabla, \psi')$, such that $\psi$ and $\psi'$ are homotopic. 
In this case, the resulting differential stable $G$-structures $g$ and $g'$ are called {\it homotopic}. 
\end{enumerate}
\end{defn}

Note that we do not require any condition on the connection $\nabla$, e.g., the compatibility with the Levi-Civita connections. 
Such a restriction is not necessary for our purpose, because what we need is the Chern-Weil construction in Definition \ref{def_chern_weil}, which works for any connection. 
This allows us to deal with the examples like $G=\fr = \{1\}$, where connections compatible to Levi-Civita connections rarely exist. 

If we forget the information of the connection $\nabla$, we get the corresponding notion of {\it (topological) differential stable $G$-structures}. 
For a differential stable $G$-structure $g$, we denote the underlying topological structure by $g^{\mathrm{top}}$. 
Similar remarks apply to the various definitions below. 

\begin{defn}[{Differential stable tangential $G$-structures}]\label{def_diff_Gstr_mfd}
Let $M$ be a manifold. 
A {\it differential stable tangential $G$-structure} is a differential stable $G$-structure on the tangent bundle $TM$. 
\end{defn}

\begin{defn}[{Opposite differential stable tangential $G$-structure}]
Let $M$ be an $n$-dimensional manifold. 
Given a differential stable tangential $G$-structure $g$ represented by $\widetilde{g} = (d, P, \nabla, \psi)$ on $M$ with $d > n$, 
we define its {\it opposite differential stable tangential $G$-structure} $g_{\mathrm{op}}$ to be the one represented by $\widetilde{g}_{\mathrm{op}} := \left(d, P, \nabla, (\mathrm{id}_{\underline{\R}^{d-n-1}} \oplus -\mathrm{id}_{\underline{\R}} \oplus\mathrm{id}_{TM}) \circ \psi\right)$. 
\end{defn}

\begin{defn}[Differential stable tangential $G$-cycles]\label{def_geom_smooth_Gcyc}
Let $(X, Y)$ be a pair of manifolds. 
A {\it differential stable tangential $G$-cycle} of dimension $n$ over $(X, Y)$ is a triple $(M, g, f)$, where
$M$ is an $n$-dimensional compact $\langle 1 \rangle$-manifold, 
$g$ is a differential stable tangential $G$-structure on $M$ and
$f \in C^\infty((M, \del M), (X, Y))$. 
We define isomorphisms between two differential stable tangential $G$-cycles in an obvious way. 

In the case $(X, Y) = \pt$, we use the notation $(M, g) := (M, g, p_M)$. 
\end{defn}

\begin{defn}[{$\mathcal{C}^{G_\nabla}_{n}(X, Y)$}]\label{def_mathcalC}
Let $(X, Y)$ be a pair of manifolds. 
We introduce the equivalence relation $\sim$ on differential stable tangential $G$-cycles of dimension $n$ over $(X, Y)$, generated by
\begin{itemize}
    \item Isomorphisms. 
    \item $(M, g, f)\sqcup (M, g_{\mathrm{op}},f) \sim \varnothing$, where $\sqcup$ is the disjoint union defined in the obvious way.
    \item $(M, g, f) \sim (M, g', f)$ if $g$ and $g'$ are homotopic (Definition \ref{def_diff_Gstr_vec} (4)). 
\end{itemize}
The set of equivalence classes under $\sim$
is denoted by $\mathcal{C}^{G_\nabla}_{n}(X, Y)$. 
The class of $(M, g, f)$ is denoted by $[M, g, f]$. 
We introduce an abelian group structure on $\mathcal{C}^{G_\nabla}_{n}(X, Y)$ by disjoint union. 
\end{defn}

Next we proceed to define the bordism Picard groupoid of differential stable tangential $G$-cycles. 
Let $J \subset \R$ be an interval. 
For an $M$ and a differential stable tangential $G$-structure $g$ on it represented by $\widetilde{g} = (d, P, \nabla, \psi)$ with $d > \dim M $, we define $g_J$ to be the differential stable tangential $G$-structure on $J \times M$ represented by
\begin{align}\label{eq_Gstr_cylinder}
    \widetilde{g}_J := (d, \mathrm{pr}_{M}^*P, \mathrm{pr}_{M}^*\nabla, \mathrm{pr}_{ M}^*\psi). 
\end{align}

\begin{defn}[{Bordism between differential stable tangential $G$-cycles}]\label{def_bordism_Gcyc}
Let $(M_-, g_-, f_-)$ and $(M_+, g_+, f_+)$ be two differential stable tangential $G$-cycles of dimension $n$ over $(X, Y)$. 
A {\it bordism} from $(M_-, g_-, f_-)$ to $(M_+, g_+, f_+)$ consists of the following set of data. 
\begin{itemize}
        \item An $(n+1)$-dimensional $\langle 2 \rangle$-manifold $W$ and a differential stable tangential $G$-structure $g_W$ on $W$. 
        \item A disjoint decomposition $\del_0 W = \del_{0, -} W \sqcup \del_{0, +} W$ and open neighborhoods $U_\pm$ of $\del_{0, \pm} W$, respectively, such that $U_+ \cap U_- = \varnothing$. 
        \item Isomorphisms $\varphi_- \colon (U_-, g_W|_{U_-}) \simeq \left([0, \epsilon) \times M_-, (g_-)_{[0, \epsilon)}\right)$ and $\varphi_+ \colon (U_+, g_W|_{U_+}) \simeq \left((-\epsilon, 0] \times M_+, (g_+)_{(-\epsilon, 0]}\right) $ of $\langle 2 \rangle$-manifolds with differential stable tangential $G$-structures for some $\epsilon > 0$. 
        \item A map $f_W \in C^\infty\left((W, \del_1 W), (X, Y)\right)$ such that $f_W|_{U_\pm} = f_\pm \circ \mathrm{pr}_{M_\pm} \circ \varphi_\pm$, respectively. 
\end{itemize}
Given such a set of data, for any $0 < \epsilon' < \epsilon$ we get another set of data by restriction. 
We regard them as defining the same bordism. 
We denote a bordism data typically as $(W, g_W, f_W)$, with the understanding that the collar structures $U_\pm$, $\varphi_\pm$ are also included.  
\end{defn}

Given $(M_-, g_-, f_-)$ and $(M_+, g_+, f_+)$, we introduce the bordism relation between two bordisms $(W_-, g_{W_-}, f_{W_-})$ and $(W_+, g_{W_+}, f_{W_+})$ between them in a routine way. 
Namely, they are called {\it bordant} if there exists the following set of data. 
\begin{itemize}
    \item an $(n+1)$-dimensional $\langle 3 \rangle$-manifold $N$ equipped with a differential stable tangential $G$-structure $g_N$ and $f_N \in C^\infty((N, \del_2 W), (X, Y))$. 
    \item a decomposition $\del_0 N = \del_{0, +}N \sqcup \del_{0, -}N$ and $\del_1 N = \del_{1, +}N \sqcup \del_{1, -}N$
with isomorphisms $\del_{0, \pm} N \simeq W_\pm$ and $\del_{1, \pm} N \simeq [0, 1] \times M_\pm$ of $\langle 2 \rangle$-manifolds.  
\item Collar structures near $\del_{0, \pm} N$ and $\del_{1, \pm} N$ which restricts to the collar structures on $W_\pm$ in Definition \ref{def_bordism_Gcyc}. 
\item Corresponding isomorphisms of differential stable tangential $G$-structures on the collars, extending those on $W_\pm$ in Definition \ref{def_bordism_Gcyc}. 
\end{itemize}
The bordism class of a bordism $(W, g_W, f_W) \colon (M_-, g_-, f_-) \to (M_+, g_+, f_+)$ is denoted by $[W, g_W, f_W]$. 

\begin{defn}[{$h\Bord^{G_\nabla}_n(X, Y)$}]\label{def_Bord_Pic_cat}
Let $n$ be a positive integer and $(X, Y)$ be a pair of manifolds. 
We define the symmetric monoidal category $h\Bord^{G_\nabla}_n(X, Y)$ by the following. 
\begin{itemize}
    \item The objects are differential stable tangential $G$-cycles $(M, g, f)$ of dimension $n$ over $(X, Y)$. 
    \item The morphisms from $(M_-, g_-, f_-)$ to $(M_+, g_+, f_+)$ are  the bordism classes\footnote{
Note that the morphisms are {\it bordism classes}, rather than diffeomorphism classes, of bordisms. 
This is why we obtain Picard groupoids (Corollary \ref{cor_hBord_Pic}). } $[W, g_W, f_W]$ of bordisms between them. 
    \item The identity morphism on $(M, g, f)$ is the cylinder $(I \times M, g_I, f \circ \mathrm{pr}_M)$ with the obvious collar structure. 
    \item The monoidal product is the disjoint union, with the unit $\varnothing$. 
\end{itemize}
\end{defn}

\begin{rem}
The notation is due to the fact that the above category $h\Bord^{G_\nabla}_n(X, Y)$ is the homotopy $1$-category of an $(\infty, 0)$-category version of it. 
Although we do not introduce the higher categories, we use this notation. 
\end{rem}

Recall the Madsen-Tillmann spectrum $MTG$ which represents the tangential $G$-bordism theory which appeared in Subection \ref{subsec_anderson}. 
The symmetric monoidal category $h\Bord^{G_\nabla}_n(X, Y)$ is equivalent to the fundamental groupoid of the $(-n)$-th space of the Madsen-Tillmann spectrum via the Pontryagin-Thom construction, as follows. 

\begin{lem}\label{lem_cat_equivalence}
Let $(X, Y)$ be a pair of manifolds. 
There is a equivalence of symmetric monoidal categories
\begin{align}\label{eq_lem_cat_equivalence}
    h\Bord^{G_\nabla}_n(X, Y) \simeq \pi_{\le 1}(L((X/Y)\wedge MTG)_{-n}), 
\end{align}
which is natural in $(X, Y)$. 
\end{lem}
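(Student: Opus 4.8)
The plan is to establish the equivalence \eqref{eq_lem_cat_equivalence} via the Pontryagin--Thom construction, carefully adapted to the setting of manifolds with corners and differential (i.e.\ connection-carrying) $G$-structures. First I would recall the classical Pontryagin--Thom picture: for a closed $n$-manifold $M$ with a stable tangential $G$-structure of level $d$ and a map $f\colon M \to X/Y$, one chooses an embedding $M \hookrightarrow \R^{d-n+k}$ for large $k$, so that the normal bundle $\nu$ together with the $G_d$-reduction $\psi$ of $\underline{\R}^{d-n} \oplus TM \cong \underline{\R}^{d-n} \oplus (\underline{\R}^k \ominus \nu)$ produces a classifying map into the Thom space of the canonical bundle over $BG_d$ twisted appropriately; collapsing a tubular neighborhood yields an element of $\pi_{n}\big((X/Y)\wedge MTG_{d+k}\big)$, hence after stabilization a point of $\big(L((X/Y)\wedge MTG)_{-n}\big)$, which is connected. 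The upshot of this on $\pi_0$ is the standard identification of $\Omega^G_n(X,Y)$ with $\pi_n((X/Y)\wedge MTG)$; what is new here is that I must do this at the level of the fundamental \emph{groupoid}, tracking both objects (the $n$-cycles $(M,g,f)$) and morphisms (bordism classes $[W,g_W,f_W]$ of $(n+1)$-dimensional $\langle 2\rangle$-bordisms).

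Concretely, I would construct a symmetric monoidal functor
\begin{align*}
\mathrm{PT}\colon h\Bord^{G_\nabla}_n(X,Y) \longrightarrow \pi_{\le 1}\big(L((X/Y)\wedge MTG)_{-n}\big)
\end{align*}
as follows. On objects, send $(M,g,f)$ to the homotopy class of the Pontryagin--Thom map described above; one checks it is independent of the auxiliary embedding (by the usual isotopy-uniqueness of embeddings in high codimension) and of the chosen representative $\widetilde g$ of the stable structure (stabilization $\widetilde g \sim_{\mathrm{stab}} \widetilde g(1)$ corresponds precisely to the spectrum structure maps, so the stabilized point is unchanged). On a morphism $[W,g_W,f_W]\colon (M_-,g_-,f_-)\to(M_+,g_+,f_+)$, the $\langle 2\rangle$-manifold $W$ with its collar structure gives, via Pontryagin--Thom applied relative to the collars, a path in the relevant space connecting the two classifying maps; the collar conditions of Definition~\ref{def_bordism_Gcyc} ensure the path has the right endpoints, and the $\langle 3\rangle$-bordism relation between bordisms translates into homotopy of paths rel endpoints, so this descends to a well-defined morphism in the fundamental groupoid. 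Symmetric monoidality is immediate since disjoint union of cycles corresponds to the smash/wedge-induced addition in the infinite loop space, and the unit $\varnothing$ maps to the basepoint.

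Then I would show $\mathrm{PT}$ is an equivalence of categories. For essential surjectivity: since $L((X/Y)\wedge MTG)_{-n}$ is an infinite loop space, every point is connected by a path to the basepoint, and any point is homotopic to one in the image of the classical Pontryagin--Thom construction (transversality: perturb a map $S^{d-n+k}\to (X/Y)\wedge MTG_{d+k}$ to be transverse to the zero section, extract the preimage manifold with its induced $G$-structure and map); a connection on the resulting $G_d$-bundle can always be chosen, and any two choices of connection are homotopic, matching the identification $g \sim g'$ for homotopic differential structures in Definition~\ref{def_mathcalC}. For fully faithfulness: on $\pi_0$ this is the classical statement that tangential $G$-bordism equals $\pi_*(MTG)$-homology, now with the $(X,Y)$-twist and the $\langle 1\rangle$-corner refinements, proved by transversality again; on automorphism groups (i.e.\ $\pi_1$ of the space based at a cycle $(M,g,f)$), one identifies $\pi_1$ of the $(-n)$-th space with $\pi_{n+1}$ of the spectrum and matches it with $(n+1)$-dimensional closed $\langle 2\rangle$-bordisms from $(I\times M, g_I, f\circ\mathrm{pr}_M)$ to itself modulo $\langle 3\rangle$-bordism — again a relative transversality argument.

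The main obstacle I expect is the bookkeeping of \emph{manifolds with corners and collar structures}: one must run the Pontryagin--Thom construction compatibly with the $\langle k\rangle$-manifold stratification, ensuring that the collar neighborhoods $U_\pm$ and the isomorphisms $\varphi_\pm$ of Definition~\ref{def_bordism_Gcyc} produce genuine \emph{paths} (not just homotopies up to reparametrization) with precisely the prescribed endpoints, and that the cylinder $(I\times M, g_I, f\circ\mathrm{pr}_M)$ really maps to a constant path so that identity morphisms are preserved. This requires a careful choice of embeddings that restrict correctly on collars and an argument that the space of such choices is contractible. The differential (connection) data, by contrast, contributes essentially no obstruction: since the space of connections on a principal bundle is an affine space, it is contractible, so forgetting connections induces an equivalence on fundamental groupoids, and one may reduce to the purely topological Pontryagin--Thom statement once the corner-and-collar formalism is set up. I would also remark that naturality in $(X,Y)$ is automatic from the construction, as $\mathrm{PT}$ is built functorially from the classifying maps and the smash product $(X/Y)\wedge MTG$ is functorial in $(X,Y)$.
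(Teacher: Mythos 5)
Your proposal is correct and takes essentially the same route as the paper, which simply cites the Pontryagin--Thom theorem for the natural equivalence $\pi_{\le 1}(L((X/Y)\wedge MTG)_{-n}) \simeq h\Bord^{G}_{n}(X, Y)$ with the topological bordism Picard groupoid and then observes that the connection-forgetting functor $h\Bord^{G_\nabla}_{n}(X, Y) \to h\Bord^{G}_{n}(X, Y)$ is an equivalence (the space of connections being a contractible affine space), exactly the reduction you make in your final paragraph. One minor slip: $L((X/Y)\wedge MTG)_{-n}$ is not connected in general (its $\pi_0$ is $\Omega^G_n(X,Y)$), but this is harmless since your transversality argument is what actually yields essential surjectivity.
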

\begin{proof}
The theorem of Pontryagin-Thom implies there is an equivalence of symmetric monoidal categories which is natural in $(X, Y)$, 
\begin{align}
    \pi_{\le 1}(L((X/Y)\wedge MTG)_{-n}) \simeq h\Bord^{G}_{n}(X, Y). 
\end{align}
where $h\Bord^{G}_{n}(X, Y)$ is obtained from $h\Bord^{G_\nabla}_{n}(X, Y)$ by forgeting the connections\footnote{
Remark that we obtain {\it stable} $G$-manifolds, as opposed to $G_n$-manifolds, by the Pontryagin Thom construction. This is because the spectrum $MTG$ classifies the stable tangential $G$-bordism theory, and we are taking the Omega-spectrification so that $\pi_{i}((L(X/Y) \wedge MTG)_{-n}) \simeq \Omega^{G}_{n+i}(X, Y)$ for $i = 0, 1$. 
}. 
In turn, the forgetful functor $h\Bord^{G_\nabla}_{n}(X, Y) \to h\Bord^{G}_{n}(X, Y)$ is an equivalence because the space of connections on a given principal bundle is contractible, so we get the result. 
\end{proof}

By \cite[Example B.7]{HopkinsSinger2005}, the right hand side of \eqref{eq_lem_cat_equivalence} is a Picard groupoid. 
Thus we get the following. 

\begin{cor}\label{cor_hBord_Pic}
The category $h\Bord^{G_\nabla}_n(X, Y)$ is a Picard groupoid. 
\end{cor}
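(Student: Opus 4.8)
\textbf{Proof plan for Corollary \ref{cor_hBord_Pic}.}
The statement is an immediate formal consequence of two facts already established in the excerpt: Lemma \ref{lem_cat_equivalence}, which gives a natural equivalence of symmetric monoidal categories $h\Bord^{G_\nabla}_n(X, Y) \simeq \pi_{\le 1}(L((X/Y)\wedge MTG)_{-n})$, and the cited \cite[Example B.7]{HopkinsSinger2005}, which asserts that the fundamental groupoid of the $m$-th space of any spectrum carries the structure of a Picard groupoid. The plan is therefore simply to observe that being a Picard groupoid is a property that transports across an equivalence of symmetric monoidal categories. First I would recall the definition: a symmetric monoidal category is a Picard groupoid if every object is invertible under the monoidal product and every morphism is invertible under composition. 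Then I would note that if $F \colon \mathcal{C} \to \mathcal{D}$ is an equivalence of symmetric monoidal categories and $\mathcal{D}$ is a Picard groupoid, so is $\mathcal{C}$: invertibility of morphisms is automatic since $F$ is essentially surjective and fully faithful, and for an object $c \in \mathcal{C}$, choosing $d \in \mathcal{D}$ with $F(d) \otimes F(c) \simeq \mathbf{1}$ and transporting back along the equivalence (using that $F$ reflects isomorphisms and is compatible with the monoidal unit) exhibits an inverse to $c$.

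Concretely, I would write: by \cite[Example B.7]{HopkinsSinger2005}, the category $\pi_{\le 1}(L((X/Y)\wedge MTG)_{-n})$ is a Picard groupoid; by Lemma \ref{lem_cat_equivalence} it is equivalent, as a symmetric monoidal category, to $h\Bord^{G_\nabla}_n(X, Y)$; hence the latter is a Picard groupoid as well. One could alternatively give a direct hands-on argument inside $h\Bord^{G_\nabla}_n(X, Y)$: every morphism $[W, g_W, f_W]$ has an inverse given by the reversed bordism $[\overline{W}, \overline{g_W}, f_W]$ (glue $W$ and $\overline{W}$ along a boundary component to obtain a cylinder, which represents the identity), and every object $(M, g, f)$ has inverse $(M, g_{\mathrm{op}}, f)$ because the relation $(M, g, f) \sqcup (M, g_{\mathrm{op}}, f) \sim \varnothing$ from Definition \ref{def_mathcalC} is realized by an explicit bordism to $\varnothing$. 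But this is more work and essentially re-proves content already packaged into Lemma \ref{lem_cat_equivalence}, so I would prefer the one-line transport-of-structure argument.

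There is essentially no obstacle here; the only point requiring a word of care is making sure the equivalence in Lemma \ref{lem_cat_equivalence} is an equivalence \emph{of symmetric monoidal categories} (not merely of underlying categories), so that the monoidal-invertibility of objects is genuinely transported — but this is exactly what Lemma \ref{lem_cat_equivalence} asserts, so nothing further is needed.
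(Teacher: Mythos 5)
Your proposal is correct and follows essentially the same route as the paper: the paper's proof of Corollary \ref{cor_hBord_Pic} consists precisely of invoking \cite[Example B.7]{HopkinsSinger2005} to see that the right-hand side of \eqref{eq_lem_cat_equivalence} is a Picard groupoid and transporting the property across the symmetric monoidal equivalence of Lemma \ref{lem_cat_equivalence}. One small caution about your parenthetical alternative: as the paper remarks right after the corollary, ``reversing'' $g_W$ only makes sense for special $G$ (such as $\mathrm{SO}$ or $\mathrm{Spin}$) and fails for general $G$ (e.g.\ $G=1$), so the hands-on inverse-bordism argument is subtler than you suggest --- but since you rely on the transport-of-structure argument, this does not affect your proof.
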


Explicitly, the inverse under the monoidal product of an object $(M, g, f)$ is $(M, g_{\mathrm{op}}, f)$. 
The inverse under the composition of a morphism $[W, g_W, f_W] \colon (M_-, g_-, f_-) \to (M_+, g_+, f_+)$ is essentially given by ``reversing $g_W$''. 
If $G$ is such that ``reversing'' $g_{W} \mapsto \overline{g}_{W}$ makes sense so that $(W, \overline{g}_W, f_W)$ is a bordism from $(M_+, g_+, f_+)$ to $(M_-, g_-, f_-)$, this represents the required inverse.
Such $G$ includes $\mathrm{SO}$ and $\mathrm{Spin}$. 
However, for general $G$, for example $G = 1$, such a reversal does not make sense, so the explicit description of the inverse is complicated (given by a suitable deformation of $g_W$ on the collar). 
In any case, Corollary \ref{cor_hBord_Pic} implies that we can take the inverse $[W, g_W, f_W]^{-1}$ on the level of bordism classes.

\section{Physically motivated models for the Anderson duals of $G$-bordisms}\label{sec_model}
Let $G= \{G_d, s_d, \rho_d\}_{d \in \Z_{\ge 0}}$ be tangential structure groups. 
In this section, we give models $(I\Omega^G_{\mathrm{dR}})^*$ and $(\widehat{I\Omega^G_{\mathrm{dR}}})^*$ of the Anderson dual to $G$-bordism theory and a differential extension of it, respectively. 

Let us define a $G_d$-module $\R_{G_d}$ with the underlying vector space $\R$, and the $G_d$-module structure given by the multiplication via $G_d \xrightarrow{\rho_d} \mathrm{O}(d, \R) \xrightarrow{ \mathrm{det}} \{\pm 1\}$.

\begin{lem}\label{lem_IOmega_R}
We have 
\begin{align*}
    H^*(MTG; \R) = \varprojlim_{d} H^*(BG_d; EG_d \times_{G_d} \R_{G_d}). 
\end{align*}
\end{lem}
\begin{proof}
Recall that the Madsen-Tillmann spectrum $MTG$ is defined as the direct limit of the Thom spaces of the normal bundles of the universal bundles over (approximations of) $BG_d$ \cite[Section 6.6]{Freed19}. 
We see that their orientation bundle is the pullback of the bundle $EG_d \times_{G_d} \R_{G_d}$ over $BG_d$. 
The result follows by the Thom isomorphism. 
\end{proof}

\begin{lem}\label{lem_sym_poly}
Fix a nonnegative integer $d$.  
For each integer $n$, we have
\begin{align*}
    H^{2n}(BG_d; EG_d \times_{G_d} \R_{G_d}) &\simeq (\mathrm{Sym}^n\mathfrak{g}_d^* \otimes_\R \R_{G_d})^{G_d}, \\
    H^{2n+1}(BG_d; EG_d \times_{G_d} \R_{G_d}) &=0. 
\end{align*}
Here $\mathfrak{g}_d$ is the Lie algebra of $G_d$ and $\mathfrak{g}_d^*$ is its dual. 
The notation $(-)^{G_d}$ means the $G_d$-invariant part, where $G_d$ acts on $\mathfrak{g}_d$ by the adjoint. 
\end{lem}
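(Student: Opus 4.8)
The plan is to interpret $H^*(G_d;\R_{G_d})$ as the cohomology $H^*(BG_d;\mathcal{L})$ of the classifying space with coefficients in the local system $\mathcal{L} := EG_d\times_{G_d}\R_{G_d}$ — the same interpretation used in the proof of Lemma \ref{lem_IOmega_R} — and then to reduce to the connected case via the group of components. Write $G_d^0$ for the identity component and $\pi := G_d/G_d^0$, a finite group. Since $\rho_d(G_d^0)\subset \mathrm{SO}(d,\R)$, the character $\det\circ\rho_d\colon G_d\to\{\pm1\}$ is trivial on $G_d^0$ and factors as $G_d\to\pi\xrightarrow{\bar\chi}\{\pm1\}$; in particular $G_d^0$ acts trivially on $\R_{G_d}$, and $\mathcal{L}$ is the pullback along $q\colon BG_d\to B\pi$ of the rank-one local system $\mathcal{L}_{\bar\chi}$ on $B\pi$ determined by $\bar\chi$.

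First I would run the Serre spectral sequence of the fibration $BG_d^0\to BG_d\xrightarrow{q}B\pi$ with coefficients in $\mathcal{L} = q^*\mathcal{L}_{\bar\chi}$. Because the coefficient system is pulled back from the base, the $E_2$-page is
\[
E_2^{p,q} = H^p\big(\pi;\ H^q(BG_d^0;\R)\otimes_\R\R_{\bar\chi}\big),
\]
where $\pi$ acts on $H^q(BG_d^0;\R)$ through the (homotopy) conjugation action of lifts of its elements. Since $\pi$ is finite and the coefficients are $\R$-vector spaces, $H^p(\pi;-)$ vanishes for $p>0$; hence the spectral sequence is concentrated in the column $p=0$, has no room for differentials, and degenerates, yielding
\[
H^n(BG_d;\mathcal{L}) \cong \big(H^n(BG_d^0;\R)\otimes_\R\R_{G_d}\big)^\pi.
\]

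Next I would invoke the Chern--Weil isomorphism for the compact connected group $G_d^0$: one has $H^*(BG_d^0;\R)\cong(\mathrm{Sym}^{\bullet}(\mathfrak{g}_d^0)^*)^{G_d^0}$ with $(\mathfrak{g}_d^0)^*$ placed in degree $2$, so $H^{2n+1}(BG_d^0;\R)=0$ and $H^{2n}(BG_d^0;\R)\cong(\mathrm{Sym}^n\mathfrak{g}_d^*)^{G_d^0}$, using $\mathfrak{g}_d^0=\mathfrak{g}_d$; moreover this isomorphism is natural in the group, hence equivariant for the residual $\pi$-action. Combining with the displayed formula gives $H^{2n+1}(G_d;\R_{G_d})=0$ and
\[
H^{2n}(G_d;\R_{G_d})\cong\big((\mathrm{Sym}^n\mathfrak{g}_d^*)^{G_d^0}\otimes_\R\R_{G_d}\big)^\pi = (\mathrm{Sym}^n\mathfrak{g}_d^*\otimes_\R\R_{G_d})^{G_d},
\]
the last equality holding because $G_d^0$ is normal in $G_d$, acts trivially on $\R_{G_d}$, and taking $G_d^0$-invariants and then $\pi$-invariants of the residual action computes $G_d$-invariants. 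The main point requiring care is the identification of the coefficient system on the base of the Serre spectral sequence — namely that the monodromy on $H^*(BG_d^0;\R)$ is precisely the residual adjoint action of $\pi$ — so that the final $\pi$-invariance genuinely cuts $(\mathrm{Sym}^n\mathfrak{g}_d^*)^{G_d^0}$ down to $(\mathrm{Sym}^n\mathfrak{g}_d^*)^{G_d}$; the remaining ingredients (the Chern--Weil isomorphism and the vanishing of higher cohomology of a finite group with $\R$-coefficients) are standard.
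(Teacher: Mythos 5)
Your argument is correct, but it is organized around a different decomposition than the paper's. The paper sets $K_d := \ker(\det\circ\rho_d \colon G_d \to \{\pm1\})$ and applies the Hochschild--Serre (Serre) spectral sequence to the extension $1 \to K_d \to G_d \to \{\pm1\} \to 1$: on $K_d$ the module $\R_{G_d}$ is trivial, the finite quotient $\{\pm1\}$ with real coefficients kills everything off the zero column, and one lands on $(H^*(K_d;\R)\otimes_\R \R_{G_d})^{\{\pm1\}}$, after which the Chern--Weil isomorphism is invoked for the compact, possibly \emph{disconnected} group $K_d$ (with invariants taken under all of $K_d$). You instead split off the identity component, running the Serre spectral sequence of $BG_d^0 \to BG_d \to B\pi_0(G_d)$ with the pulled-back rank-one local system, and then use only the Chern--Weil isomorphism for the compact \emph{connected} group $G_d^0$, paying for this with the extra bookkeeping you flag: identifying the monodromy on $H^*(BG_d^0;\R)$ with the residual adjoint action and checking that Chern--Weil is equivariant for it, so that $G_d^0$-invariants followed by $\pi_0$-invariants give $G_d$-invariants. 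The trade-off is that the paper's route is shorter but quietly relies on the disconnected-group form of Chern--Weil (whose usual proof is exactly your reduction to the identity component, without the twist), whereas your route is more self-contained, needing only the connected case plus standard facts about finite groups acting on real vector spaces; both hinge on the same vanishing $H^{>0}(\text{finite group};\,\R\text{-vector space})=0$ and the same final identification of invariants, and both are equally valid here since $G_d$ is assumed compact, so $\pi_0(G_d)$ is finite.
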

\begin{proof}
Let $K_d$ be the kernel of the homomorphism $ \mathrm{det} \circ \rho_d \colon G_d \to \{\pm 1\}$. 
In the case where $G_d = K_d$, the $G_d$-module $\R_{G_d}$ is trivial $\R$, and the desired results follow from the Chern-Weil isomorphism for $G_d$. 
In the case where $G_d \neq K_d$, apply the Leray-Serre spectral sequence for the fibration
\begin{align*}
   BK_d \to BG_d \to B\{\pm 1\}. 
\end{align*}
We get the isomorphism $H^*(BG_d; EG_d \times_{G_d} \R_{G_d} ) \simeq (H^*(BK_d; \R) \otimes_\R \R_{G_d})^{\{\pm 1\}} = (H^*(BK_d; \R) \otimes_\R \R_{G_d})^{G_d}$. 
Using the Chern-Weil isomorphism for $K_d$, we get the result. 
\end{proof}

We denote 
\begin{align}\label{eq_inv_poly}
    N_G^\bullet := H^\bullet(MTG; \R) = \varprojlim_{d} H^\bullet(BG_d; \R_{G_d}) = \varprojlim_{d}(\mathrm{Sym}^{\bullet/2}\mathfrak{g}_d^* \otimes_\R \R_{G_d})^{G_d}. 
\end{align}
In the case where $G$ is oriented, i.e., the image of $\rho_d$ lies in $\mathrm{SO}(d, \R)$ for each $d$, the $G_d$-module $\R_{G_d}$ is trivial and $N_G^\bullet$ is the projective limit of invariant polynomials on $\mathfrak{g}_d$. 
In general cases, $N_G^\bullet$ can be regarded as the projective limit of polynomials on $\mathfrak{g}_d$ which change the sign by the action of $G_d$.

For any CW-complex $X$, by the K\"unneth formula and the Hurewicz theorem we have
\begin{align}
    H^*(X; N_G^\bullet) \simeq \Hom (\Omega^G_*(X), \R), 
\end{align}
so the fourth arrow in \eqref{eq_exact_IOmegaG} gives the transformation
\begin{align}\label{eq_ch_N_G}
    \mathrm{ch}' \colon (I\Omega^G)^* \to H^*(-; N_G^\bullet). 
\end{align}
In general, $N_G^\bullet$ is not isomorphic to $V_{I\Omega^G}^\bullet = (I\Omega^G)^\bullet(\pt)\otimes \R$. 
The relation is the following. 
Let $E$ be any spectrum. 
Applying the exact sequence \eqref{eq_exact_DE} to $X = \pt$, we get the following short exact sequence, 
\begin{align}
    0 \to \mathrm{Ext}(E_{n-1}(\pt), \Z) \to IE^n(\pt) \to \Hom(E_n(\pt), \Z) \to 0. 
\end{align}
Since $\R$ is a flat $\Z$-module, we get the short exact sequence
\begin{align}
    0 \to \mathrm{Ext}(E_{n-1}(\pt), \Z)\otimes \R \to V_{IE}^n \to \Hom(E_n(\pt), \Z) \otimes \R \to 0. 
\end{align}
The group $\mathrm{Ext}(E_{n-1}(\pt), \Z)\otimes \R$ does not vanish in general, but it vanishes for example if $E_{n-1}(\pt)$ is finitely generated. 
Furthermore, we have a canonical map
\begin{align}
    \Hom(E_n(\pt), \Z) \otimes \R \to \Hom(E_n(\pt), \R),
\end{align}
and it is an isomorphism if $E_n(\pt)$ is finitely generated. 
Thus we see that
\begin{prop}\label{prop_N_G_canonical}
For any tangential structure groups $G$, we have a canonical homomorphism
\begin{align}\label{eq_exact_V_N}
    q \colon V_{I\Omega^G}^\bullet \to N_G^\bullet. 
\end{align}
It is an isomorphism if $\Omega^G_n(\pt)$ is finitely generated for all $n$. 
\end{prop}
The finiteness condition in Proposition \ref{prop_N_G_canonical} is satisfied in examples we are usually interested in. 

\subsection{The models}\label{subsec_model}

\subsubsection{The differential models}
In this subsubsection, 
we define the models. 
The definition uses a variant of the Chern-Weil construction, as we now explain.  

\begin{defn}\label{def_chern_weil}
\begin{enumerate}
    \item 
Let $W$ be a manifold and $(P, \nabla)$ be a principal $G_d$-bundle with connection. 
Let $n$ be an even nonnegative integer and $\phi \in N_G^n$. 
Apply the forgetful map\footnote{
Here we denoted by $G_d = \{(G_d)_{d'}, (s_d)_{d'}, (\rho_d)_{d'}\}_{d' \in \Z_{\ge 0}}$ the tangential structure groups defined by truncating $G$ at degree $d$, i.e., $(G_{d})_{d'} := G_d'$ for $d' \le d$ and $(G_{d})_{d'} := G_d$ for $d' \ge d$, with $(s_d)_{d'} = \mathrm{id}$ for $d' \ge d$ and the other structure maps are obvious ones. 
}
\begin{align}\label{eq_fgt_coeff}
    N_G^\bullet \to N_{G_d}^\bullet = (\mathrm{Sym}^{\bullet/2}\mathfrak{g}_d^* \otimes_\R \R_{G_d})^{G_d}.
\end{align}
to $\phi$ and denote by $\phi_d$ the resulting element. 
We set
\begin{align}\label{eq_def_cw_1}
    \mathrm{cw}_{\nabla} (\phi) =\mathrm{cw}_{\nabla} (\phi_d) := \phi_d(F_\nabla) \in \Omega_{\mathrm{clo}}^n(W; P \times_{G_d}\R_{G_d}), 
\end{align}
where $F_\nabla \in \Omega_{\mathrm{clo}}^2(W; P \times_{\mathrm{ad}}\mathfrak{g}_d)$ is the curvature form for $(P, \nabla)$, and we used the identification $\R_{G_d}=\R_{G_d}^*$ of $G_d$-modules. 
This construction gives a homomorphism
$\mathrm{cw}_\nabla \colon N_G^n \to \Omega^n_\mathrm{clo}(W; P \times_{G_d}\R_{G_d})$. 
Extending it $\Omega^*(W)$-linearly, we get a homomorphism of $\Z$-graded real vector spaces, 
\begin{align}\label{eq_def_cw_hom_nabla}
    \mathrm{cw}_\nabla \colon \Omega^*(W; N_G^\bullet) \to \Omega^*(W; P \times_{G_d}\R_{G_d}), 
\end{align}
which restricts to a homomorphism $\mathrm{cw}_\nabla \colon \Omega_{\mathrm{clo}}^*(W; N_G^\bullet) \to \Omega_{\mathrm{clo}}^*(W;P \times_{G_d}\R_{G_d})$. 

\item
If $V \to W$ is a real vector bundle with a differential stable $G$-structure $g$ represented by $\widetilde{g} = (d, P, \nabla, \psi)$,  the isomorphism $\psi$ induces the isomorphism $\psi \colon P\times_{G_d} \R_{G_d}\simeq \mathrm{Ori}(V)$. 
Composing the homomorphism \eqref{eq_def_cw_hom_nabla} with this $\psi$ on the coefficients, we define
a homomorphism of $\Z$-graded real vector spaces, 
\begin{align}\label{eq_def_cw_hom}
    \mathrm{cw}_g := \psi \circ \mathrm{cw}_\nabla \colon \Omega^*(W; N_G^\bullet) \to \Omega^*(W; \mathrm{Ori}(V)), 
\end{align}
which restricts to the homomorphism $\mathrm{cw}_g \colon \Omega_{\mathrm{clo}}^*(W; N_G^\bullet) \to \Omega_{\mathrm{clo}}^*(W; \mathrm{Ori}(V))$.
\end{enumerate}
\end{defn}
In particular, if we have a differential stable tangential $G$-structure $g$ on $W$, the homomorphism \eqref{eq_def_cw_hom} becomes
\begin{align*}
    \mathrm{cw}_g \colon \Omega^*(W; N_G^\bullet) \to \Omega^*(W; \mathrm{Ori}(W)).
\end{align*}

\begin{rem}\label{rem_cw_dR_htpy}
In Definition \ref{def_chern_weil} (2), the homomorphism $\mathrm{cw}_g$ actually depends on $g$ only through its {\it homotopy class} (Definition \ref{def_diff_Gstr_vec} (4)). 
This is because we only used $\psi$ to construct the isomorphism $\psi \colon P\times_{G_d} \R_{G_d}\simeq \mathrm{Ori}(V)$, and it does not change when we replace $\psi$ to a homotopic one. 
\end{rem}

\begin{rem}\label{rem_cw_coefficient}
Definition \ref{def_chern_weil} admits the following generalization, which we use in Section \ref{sec_module} and Section \ref{sec_push}, as well as in the next paper \cite{Yamashita2021}. 
Let $\mathcal{V}^*$ be any $\Z$-graded vector space over $\R$. 
We can generalize $N_G^\bullet = H^\bullet(MTG; \R)$ in the above definition to $H^\bullet(MTG; \mathcal{V}^*)$. 
Then the homomorphism \eqref{eq_def_cw_hom} becomes
\begin{align}\label{eq_def_cw_hom_coefficient}
    \mathrm{cw}_g \colon \Omega^n(W;H^\bullet(MTG; \mathcal{V}^*)) \to \Omega^n(W; \mathrm{Ori}(V)\otimes_\R \mathcal{V}^*). 
\end{align}
The construction is basically by just applying the above procedure $\mathcal{V}^*$-linearly. 
But we need some care because we need to allow $\mathcal{V}^*$ to be infinite-dimensional, since $H^\bullet(MTG_d; \mathcal{V}^*) \neq (N_{G_d}^{*'} \otimes_\R \mathcal{V}^*)^\bullet$ in general. 
To fix this point, in the construction corresponding Definition \ref{def_chern_weil} (1), take a closed manifold $\mathcal{B}$ with a $(\dim W + 1)$-connected map
$\mathcal{B} \to BG_d$. 
Then consider the pullback 
\begin{align}\label{eq_rem_cw_coeff}
    H^\bullet(MTG_d; \mathcal{V}^*) \to H^\bullet(\mathcal{B} ; (\mathcal{B} \times_{G_d} \R_{G_d}) \otimes_\R  \mathcal{V}^*) = \oplus_{k = 0}^{\dim \mathcal{B}} H^k(\mathcal{B} ; \mathcal{B} \times_{G_d} \R_{G_d})\otimes_\R \mathcal{V}^{*-k}. 
\end{align}
We have $N_{G_d}^k \simeq H^k(\mathcal{B} ; \mathcal{B} \times_{G_d} \R_{G_d})$ for $0 \le k \le \dim W$. 
Thus we can apply the image under the composition \eqref{eq_rem_cw_coeff} to the curvature $F_\nabla$ to get the Chern-Weil form. 
The result does not depend on the choice of $\mathcal{B}$. 
\end{rem}

Suppose we are given a pair of manifolds $(X, Y)$ and a form $\omega \in \Omega^*(X, Y; N_G^\bullet)$. 
If $W$ is a manifold equipped with a differential stable tangential $G$-structure $g$, 
given a map $f \in C^\infty((W, \varnothing), (X, Y))$, the homomorphism \eqref{eq_def_cw_hom} gives 
\begin{align*}
    \mathrm{cw}_{g}(f^*\omega) \in \Omega^*(W, f^{-1}(Y); \mathrm{Ori}(W)). 
\end{align*}
If $\omega\in \Omega_{\mathrm{clo}}^n(X, Y; N_G^\bullet)$, the resulting form is closed, so
the above construction induces a homomorphism
\begin{align}\label{eq_def_cw_omega_mor}
    \mathrm{cw}(\omega) \colon \Hom_{h\Bord^{G_\nabla}_{n-1}(X, Y)}((M_-, g_-, f_-), (M_+, g_+, f_+)) &\to \R \\
    [W, g_W, f_W] &\mapsto \int_W \mathrm{cw}_{g_W}((f_W)^*\omega),  \notag
\end{align}
for each pair of objects $(M_\pm, g_\pm, f_\pm)$ in $h\Bord^{G_\nabla}_{n-1}(X, Y)$. 
In particular, applying this to $(M_\pm, g_\pm, f_\pm) = \varnothing$, we get
\begin{align}\label{eq_def_cw_omega_closed}
    \mathrm{cw}(\omega) \colon \Omega_n^G(X, Y) \to \R, \quad
    [W, g_W, f_W] \mapsto  \int_W \mathrm{cw}_{g_W}((f_W)^*\omega). 
\end{align}
The above map only depends on the cohomology class of $\omega$ and $\mathrm{Rham}(\omega) \mapsto \cw(\omega)$ gives the isomorphism $H^n(X, Y; N_G^\bullet) \simeq \mathrm{Hom}(\Omega^G_n(X, Y), \R)$.

\begin{defn}[{$(\widehat{I\Omega^G_{\mathrm{dR}}})^*$ and $(I\Omega^G_{\mathrm{dR}})^*$}]\label{def_hat_DOmegaG}
Let $(X, Y)$ be a pair of manifolds and $n$ be a nonnegative integer. 
\begin{enumerate}
    \item 
Define $(\widehat{I\Omega^G_{\mathrm{dR}}})^n(X, Y)$ to be an abelian group consisting of pairs $(\omega, h)$, such that
\begin{enumerate}
    \item $\omega$ is a closed $n$-form $\omega\in \Omega_{\mathrm{clo}}^n(X, Y; N_G^\bullet)$. 
    \item $h$ is a group homomorphism
    $h \colon \mathcal{C}^{G_\nabla}_{n-1}(X, Y) \to \R/\Z$. 
\item $\omega$ and $h$ satisfy the following compatibility condition. 
Assume that we are given two objects $(M_-, g_-, f_-)$ and $(M_+, g_+, f_+)$ in $h\Bord^{G_\nabla}_{n-1}(X, Y)$ and a morphism $[W, g_W, f_W]$ from the former to the latter. 
Then we have
\begin{align}\label{eq_compatibility_dR}
    h([M_+, g_+, f_+]) - h([M_-, g_-, f_-]) = \mathrm{cw}(\omega)([W, g_W, f_W]) \pmod \Z, 
\end{align}
where the right hand side is defined in \eqref{eq_def_cw_omega_mor}. 
\end{enumerate}
Abelian group structure on $(\widehat{I\Omega^G_{\mathrm{dR}}})^n(X, Y)$ is defined in the obvious way. 

\item
We define a homomorphsim of abelian groups, 
\begin{align}\label{eq_def_DOmega_a}
    a \colon \Omega^{n-1}(X, Y; N_G^\bullet)/\mathrm{Im}(d) &\to  (\widehat{I\Omega^G_{\mathrm{dR}}})^n(X, Y) \\
    \alpha &\mapsto (d\alpha, \mathrm{cw}(\alpha)). \notag
\end{align}
Here the homomorphism $\mathrm{cw}(\alpha) \colon \mathcal{C}^{G_\nabla}_{ n-1}(X, Y) \to \R/\Z$ is defined by (see Remark \ref{rem_cw_dR_htpy})
\begin{align}\label{eq_def_cw_alpha}
    \mathrm{cw}(\alpha) ([M, g, f]) := \int_M \mathrm{cw}_g(f^*\alpha) \pmod \Z. 
\end{align}
We set
\begin{align*}
    (I\Omega^G_{\mathrm{dR}})^n(X, Y) := (\widehat{I\Omega^G_{\mathrm{dR}}})^n(X, Y)/ \mathrm{Im}(a). 
\end{align*}

\end{enumerate}
For negative integers $n$, we set $(\widehat{I\Omega^G_{\mathrm{dR}}})^n(X, Y) := 0$ and $(I\Omega^G_{\mathrm{dR}})^n(X, Y) := 0$. 

For a smooth map $\phi \in C^\infty((X, Y), (X', Y'))$ between two pairs of manifolds, by the pullback we get the homomorphisms $\phi^* \colon (\widehat{I\Omega^G_{\mathrm{dR}}})^n(X', Y')\to (\widehat{I\Omega^G_{\mathrm{dR}}})^n(X, Y)$ and $\phi^* \colon (I\Omega^G_{\mathrm{dR}})^n(X', Y')\to (I\Omega^G_{\mathrm{dR}})^n(X, Y)$. 
Thus we get contravariant functors, 
\begin{align*}
    (\widehat{I\Omega^G_{\mathrm{dR}}})^*, \ (I\Omega^G_{\mathrm{dR}})^*
    \colon \mathrm{MfdPair}^{\mathrm{op}} \to \mathrm{Ab}^\Z. 
\end{align*}
\end{defn}

In the case where $G_d = 1$ for all $d$, i.e., the case of the stably framed bordism theory $\Omega^{\mathrm{fr}}$, the corresponding Madsen-Tillmann spectrum is the sphere spectrum and $(I\Omega^{\mathrm{fr}})^* = I\Z^*$. 
So in this case, we also denote $ (\widehat{I\Z}_{\mathrm{dR}})^* := (\widehat{I\Omega^{\mathrm{fr}}_{\mathrm{dR}}})^*$ and $(I\Z_{\mathrm{dR}} )^*:= (I\Omega^{\mathrm{fr}}_{\mathrm{dR}})^*$. 

We sometimes say that ``evaluate $(\omega, h)$ on $[M, g, f]$'' to just mean getting the value $h([M, g, f])$. 

% \begin{rem}\label{rem_omega_condition}
% In the above formulation, the differential form $\omega$ is actually a {\it condition} for $h$, rather than {\it data}. 
% Indeed, given two elements $(\omega, h)$ and $(\omega', h')$ in $(\widehat{I\Omega^G_{\mathrm{dR}}})^n(X, Y)$ with $h = h'$, we should also have $\omega = \omega'$, as is easily deduced from the compatibility condition in Definition \ref{def_hat_DOmegaG} (1) (c). 
% This corresponds to the analogous fact about the Cheeger-Simons' differential character group $\widehat{H}_{\mathrm{CS}}$ (Example \ref{ex_diff_character}). 
% In other words, we may as well use the following definition. 

% \begin{defn'}[{An alternative definition of $(\widehat{I\Omega^G_{\mathrm{dR}}})^*$ }]
% Define $(\widehat{I\Omega^G_{\mathrm{dR}}})^n(X, Y) $ to be an abelian group consisting of group homomorphisms
% \begin{align*}
%     h \colon \mathcal{C}^{G_\nabla}_{n-1}(X, Y) \to \R/\Z, 
% \end{align*}
% such that there exists a closed $n$-form $\omega\in \Omega_{\mathrm{clo}}^n(X, Y; V_{I\Omega^G}^\bullet)$ with the compatibility condition as in Definition \ref{def_hat_DOmegaG} (1) (c). 
% \end{defn'}
% The reason for us to use the formulation in terms of the pair $(\omega, h)$ is that we would like to make the associated differential forms explicit. 
% \end{rem}

The functor $(\widehat{I\Omega^G_{\mathrm{dR}}})^*$ is equipped with the following structure maps. 
In Theorem \ref{thm_IOmega_dR=IOmega}, we will see that the functors $R$, $I$ and $a$ make $(\widehat{I\Omega^G_{\mathrm{dR}}})^*$ into a differential extension of $\left((I\Omega^G)^*, \mathrm{ch}'\right)$ where $\mathrm{ch}'$ is defined in \eqref{eq_ch_N_G}. 

\begin{defn}[{Structure maps for $(\widehat{I\Omega^G_{\mathrm{dR}}})^*$ and $(I\Omega^G_{\mathrm{dR}})^*$}]\label{def_str_map_IOmega_dR}
We define the following maps natural in $(X, Y)$. 
The well-definedness is easy by Definition \ref{def_hat_DOmegaG}. 
\begin{itemize}
    \item We denote the quotient map by
\begin{align*}
    I \colon (\widehat{I\Omega^G_{\mathrm{dR}}})^*(X, Y) \to  (I\Omega^G_{\mathrm{dR}})^*(X, Y). 
\end{align*}
\item We define
\begin{align*}
    R \colon (\widehat{I\Omega^G_{\mathrm{dR}}})^*(X, Y) \to \Omega_{\mathrm{clo}}^*(X, Y; N_G^\bullet) , \quad
    (\omega, h) \mapsto \omega. 
\end{align*}
\item We define
\begin{align*}
    \mathrm{ch}' \colon (I\Omega^G_{\mathrm{dR}})^*(X, Y) \to H^n(X, Y; N_G^\bullet) \left(\simeq \mathrm{Hom}(\Omega^G_{*}(X, Y), \R)\right), \quad
    I((\omega, h)) \mapsto \mathrm{Rham}(\omega)\left(\mapsto \mathrm{cw}(\omega)\right). 
\end{align*}
\item We define
\begin{align*}
    p \colon \mathrm{Hom}(\Omega^G_{*-1}(X, Y), \R/\Z) \to (I\Omega^G_{\mathrm{dR}})^*(X, Y), \quad 
    h \mapsto I((0, h)),
\end{align*}
where we regard $h \in \mathrm{Hom}(\Omega^G_{n-1}(X, Y), \R/\Z)$ as a group homomorphism $h \colon \mathcal{C}^{G_\nabla}_{ n-1}(X, Y) \to \R/\Z$. 

\end{itemize}

\end{defn}

In the physical interpretation of the group $(\widehat{I\Omega^G_{\mathrm{dR}}})^n(X)$ explained in Subsection \ref{subsec_intro_phys}, an invertible $(n-1)$-dimensional QFT on manifolds equipped with differential stable tangential $G$-structures and maps to $X$ gives an element $(\omega, h) \in (\widehat{I\Omega^G_{\mathrm{dR}}})^n(X)$. 
In this picture, the value $\exp(2\pi \sqrt{-1}h([M, g, f]))$ corresponds to the value of partition function applied to $[M, g, f] \in \mathcal{C}^{G_\nabla}_{n-1}(X)$.

Now we make some easy observations on these models. 
An important consequence of the compatibility condition of $\omega$ and $h$ in Definition \ref{def_hat_DOmegaG} is the following integrality condition.  
\begin{lem}[The integrality condition]\label{lem_integrality}
We have 
\begin{align}\label{eq_integrality}
    \mathrm{Im} (\mathrm{ch}') \subset \mathrm{Hom}(\Omega^G_{*}(X, Y), \Z),
\end{align}
i.e., any element $(\omega, h) \in (\widehat{I\Omega^G_{\mathrm{dR}}})^*(X, Y)$ induces a $\Z$-valued homomorphism $\mathrm{cw}(\omega)\colon \Omega^G_{*}(X, Y)\to \Z $. 
\end{lem}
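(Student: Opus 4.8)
The plan is to deduce the integrality condition \eqref{eq_integrality} directly from the compatibility condition in Definition \ref{def_hat_DOmegaG} (1)(c). Fix $(\omega, h) \in (\widehat{I\Omega^G_{\mathrm{dR}}})^n(X, Y)$ and an element of $\Omega^G_n(X, Y)$ represented by a closed differential stable tangential $G$-cycle $(W, g_W, f_W)$ of dimension $n$ over $(X, Y)$; here $W$ is a closed $\langle 0 \rangle$-manifold, i.e. $\del W = \varnothing$. The key observation is that such a closed $W$ can be regarded as a morphism in $h\Bord^{G_\nabla}_{n-1}(X, Y)$ from the empty cycle $\varnothing$ to itself, simply by taking $\del_{0,-}W = \del_{0,+}W = \varnothing$ (with the empty collar data), and with $\del_1 W = \varnothing$ as well. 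Then $[W, g_W, f_W]$ is an endomorphism of the unit object $\varnothing$.

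First I would apply the compatibility condition \eqref{eq_compatibility_dR} to this morphism, with $(M_\pm, g_\pm, f_\pm) = \varnothing$. Since $h(\varnothing) = 0$ (as $h$ is a group homomorphism), the left-hand side vanishes, and we obtain
\begin{align*}
    0 = \mathrm{cw}(\omega)([W, g_W, f_W]) = \int_W \mathrm{cw}_{g_W}(f_W^*\omega) \pmod \Z.
\end{align*}
By the definition \eqref{eq_def_cw_omega_closed} of the homomorphism $\mathrm{cw}(\omega) \colon \Omega^G_n(X, Y) \to \R$, the right-hand side is exactly the value of $\mathrm{cw}(\omega)$ on the bordism class $[W, g_W, f_W] \in \Omega^G_n(X, Y)$. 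Hence $\mathrm{cw}(\omega)([W, g_W, f_W]) \in \Z$. Since the homomorphism $\mathrm{ch} \colon (I\Omega^G_{\mathrm{dR}})^*(X, Y) \to \mathrm{Hom}(\Omega^G_*(X, Y), \R)$ sends $I((\omega, h))$ to $\mathrm{cw}(\omega)$ under the identification $H^n(X, Y; V_{I\Omega^G}^\bullet) \simeq \mathrm{Hom}(\Omega^G_n(X, Y), \R)$, this shows $\mathrm{Im}(\mathrm{ch}) \subset \mathrm{Hom}(\Omega^G_*(X, Y), \Z)$, as claimed.

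The argument is essentially immediate once one identifies closed cycles with endomorphisms of the unit object; the only point requiring a little care is the bookkeeping that a closed $n$-manifold genuinely defines a legitimate morphism $\varnothing \to \varnothing$ in the Picard groupoid $h\Bord^{G_\nabla}_{n-1}(X, Y)$ — i.e. that the axioms in Definition \ref{def_bordism_Gcyc} are satisfied vacuously in this case — and that the well-definedness of the resulting value (independence of the representative $W$ within its bordism class) is precisely the content of the already-established fact that $\mathrm{cw}(\omega)$ descends to $\Omega^G_n(X, Y)$ via Stokes' theorem and $d\,\mathrm{cw}_{g}(\omega) = \mathrm{cw}_g(d\omega) = 0$. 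I do not expect any serious obstacle here; this is the kind of ``easy observation'' flagged in the text.
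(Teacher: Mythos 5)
Your argument is correct and is essentially the paper's own proof, which is exactly the one-line observation that the compatibility condition in Definition \ref{def_hat_DOmegaG} (1)(c) applied with $M_\pm = \varnothing$ (a closed $(W, g_W, f_W)$ viewed as a morphism $\varnothing \to \varnothing$) forces $\int_W \mathrm{cw}_{g_W}(f_W^*\omega) \in \Z$. The extra bookkeeping you spell out (the vacuous collar data and the independence of the representative within its bordism class) is fine and consistent with the paper's setup.
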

\begin{rem}
    The inclusion \eqref{eq_integrality} is actually an equality by Proposition \ref{prop_exact_DOmegaG_dR}. 
\end{rem}
    
\begin{proof}
This follows from the condition (c) in Definition \ref{def_hat_DOmegaG} (1) applied to $M = \varnothing$. 
\end{proof}

Now we show that $I\Omega^G_{\mathrm{dR}}$ fits into the same exact sequence as in \eqref{eq_exact_IOmegaG}, which is an important property of the Anderson dual.

\begin{prop}\label{prop_exact_DOmegaG_dR}
\item For any pair of manifolds $(X, Y)$ and integer $n$, the following sequence is exact.  
    \begin{align}\label{eq_prop_exact_dR}
         \mathrm{Hom}(\Omega^G_{n-1}(X, Y), \R)\to \mathrm{Hom}(\Omega^G_{n-1}(X, Y), \R/\Z) \xrightarrow{p} (I\Omega^G_{\mathrm{dR}})^n(X, Y)\\ \xrightarrow{\mathrm{ch}'}
        \mathrm{Hom}(\Omega^G_n(X, Y), \R)
        \to \mathrm{Hom}(\Omega^G_{n}(X, Y), \R/\Z). \notag
    \end{align}
\end{prop}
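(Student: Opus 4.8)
The plan is to verify exactness of \eqref{eq_prop_exact_dR} at each of the three middle spots by direct manipulation of the pair-representatives $(\omega,h)$, using the integrality condition (Lemma~\ref{lem_integrality}) and the fact that $\mathrm{cw}$ realizes the isomorphism $H^n(X,Y;V_{I\Omega^G}^\bullet)\simeq\Hom(\Omega^G_n(X,Y),\R)$ recorded after \eqref{eq_def_cw_omega_closed}. First I would treat exactness at $\Hom(\Omega^G_{n-1}(X,Y),\R/\Z)$: an element $h$ in this group, viewed as a homomorphism on $\mathcal{C}^{G_\nabla}_{n-1}(X,Y)$ by pulling back along $\mathcal{C}^{G_\nabla}_{n-1}(X,Y)\to\Omega^G_{n-1}(X,Y)$, satisfies $p(h)=I((0,h))=0$ iff $(0,h)\in\mathrm{Im}(a)$, i.e.\ iff $h=\mathrm{cw}(\alpha)\bmod\Z$ for some closed $\alpha$ (the form part forces $d\alpha=0$). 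Since $\mathrm{cw}$ of a closed form is the reduction mod $\Z$ of the $\R$-valued homomorphism $\mathrm{cw}(\alpha)\colon\Omega^G_{n-1}(X,Y)\to\R$, and every element of $\Hom(\Omega^G_{n-1}(X,Y),\R)$ arises as $\mathrm{cw}(\alpha)$ for a suitable closed $\alpha$ by the cited de Rham/Hurewicz identification, this is exactly the image of $\Hom(\Omega^G_{n-1}(X,Y),\R)\to\Hom(\Omega^G_{n-1}(X,Y),\R/\Z)$. One subtlety to handle carefully: one must check that $\mathrm{cw}(\alpha)$ on $\mathcal{C}^{G_\nabla}_{n-1}(X,Y)$ only depends on the bordism class, which is where the closedness of $\alpha$ and Stokes' theorem enter.

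Next, exactness at $(I\Omega^G_{\mathrm{dR}})^n(X,Y)$. The composite $\mathrm{ch}\circ p$ sends $h$ to $\mathrm{Rham}(0)=0$, so $\mathrm{Im}(p)\subset\ker(\mathrm{ch})$. Conversely, given $[(\omega,h)]$ with $\mathrm{ch}([(\omega,h)])=0$, we have $\mathrm{Rham}(\omega)=0$, so $\omega=d\alpha$ for some $\alpha\in\Omega^{n-1}(X,Y;V_{I\Omega^G}^\bullet)$; subtracting $a(\alpha)$ we may assume $\omega=0$. Then the compatibility condition \eqref{eq_compatibility_dR} with $\omega=0$ says precisely that $h$ is bordism-invariant, hence descends to a homomorphism $\Omega^G_{n-1}(X,Y)\to\R/\Z$, and then $[(0,h)]=p(h)$. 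Here I must be slightly careful that the representative $(0,h)$ I obtain after subtracting $a(\alpha)$ still has a well-defined class, and that ``descends'' uses that every morphism in $h\Bord^{G_\nabla}_{n-1}$, i.e.\ every relative bordism, is accounted for — including bordisms over $(X,Y)$ with nonempty $\del_1 W$ — but this is built into the definition of $\mathcal{C}^{G_\nabla}$ and $h\Bord^{G_\nabla}$.

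Then, exactness at $\Hom(\Omega^G_n(X,Y),\R)$. The composite sending $\Hom(\Omega^G_n(X,Y),\R/\Z)$-target: $\mathrm{Hom}(\Omega^G_n,\R)\to\mathrm{Hom}(\Omega^G_n,\R/\Z)$ is just reduction mod $\Z$, and by Lemma~\ref{lem_integrality} any $\varphi\in\mathrm{Im}(\mathrm{ch})$ takes values in $\Z$, hence maps to $0$; this gives $\mathrm{Im}(\mathrm{ch})\subset\ker(\text{mod }\Z)=\Hom(\Omega^G_n(X,Y),\Z)$. For the reverse inclusion, take $\varphi\in\Hom(\Omega^G_n(X,Y),\Z)$; regarding it in $\Hom(\Omega^G_n(X,Y),\R)\simeq H^n(X,Y;V_{I\Omega^G}^\bullet)$ via $\mathrm{cw}$, pick a closed $\omega\in\Omega^n_{\mathrm{clo}}(X,Y;V_{I\Omega^G}^\bullet)$ with $\mathrm{cw}(\omega)=\varphi$. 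The point is to produce an $h$ making $(\omega,h)$ a valid element: define $h$ on a cycle $[M,g,f]$ by choosing a relative bordism $(W,g_W,f_W)$ from $\varnothing$ to $(M,g,f)$ — such exists because $\Omega^G_{n-1}$ of a point-based construction is the homotopy group of a spectrum and the relevant bordism is nullbordant over $(X,Y)$ precisely when $[M,g,f]$ is, so one first subtracts a topological representative — and set $h([M,g,f]):=\int_W\mathrm{cw}_{g_W}(f_W^*\omega)\bmod\Z$; well-definedness follows from $\varphi$ being $\Z$-valued on closed $n$-manifolds (gluing two bordisms gives a closed manifold, and the difference of the two integrals is $\varphi$ of its class, an integer). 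I expect this last step — constructing $h$ from $\omega$, i.e.\ showing surjectivity of $\mathrm{ch}$ onto $\Hom(\Omega^G_n(X,Y),\Z)$ — to be the main obstacle, since it requires a genuine geometric input (realizing the integrality via actual bordisms and checking independence of all choices), as opposed to the formal kernel/image bookkeeping in the other spots; the cleanest route is probably to invoke Lemma~\ref{lem_cat_equivalence} and Fact~\ref{fact_IZ_model} to identify $(I\Omega^G_{\mathrm{dR}})^n$ abstractly, but a self-contained argument along the lines above should also work.
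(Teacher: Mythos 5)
Your treatment of the first two spots is exactly the paper's argument (closedness of $\alpha$ forcing an $\R$-valued lift of $\cw(\alpha)$, and subtracting $a(\alpha)$ to reduce to $\omega=0$ so that the compatibility condition makes $h$ bordism-invariant), so the only thing to examine is the surjectivity of $\mathrm{ch}$ onto $\Hom(\Omega^G_n(X,Y),\Z)$, which you correctly identify as the main obstacle — and this is where your sketch has a genuine gap. The compatibility condition, together with your integral formula over a null-bordism, only defines $h$ on the subgroup $K:=\ker\bigl(\mathcal{C}^{G_\nabla}_{n-1}(X,Y)\to\Omega^G_{n-1}(X,Y)\bigr)$ of null-bordant cycles; but an element of $(\widehat{I\Omega^G_{\mathrm{dR}}})^n(X,Y)$ requires a group homomorphism $h$ on all of $\mathcal{C}^{G_\nabla}_{n-1}(X,Y)$ (Definition \ref{def_hat_DOmegaG}). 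Your recipe of ``first subtracting a topological representative'' does not produce one: a set-theoretic section $s$ of the surjection $\pi\colon\mathcal{C}^{G_\nabla}_{n-1}(X,Y)\to\Omega^G_{n-1}(X,Y)$ is not additive, and the assignment $x\mapsto h_0(x-s(\pi x))$ fails additivity by the term $h_0\bigl(s(\pi x)+s(\pi y)-s(\pi x+\pi y)\bigr)$, which need not vanish (the surjection need not split, e.g.\ because of torsion in $\Omega^G_{n-1}(X,Y)$). The paper closes exactly this gap with one further input: $h_0$ is a homomorphism on the subgroup $K$, and since $\R/\Z$ is divisible, hence an injective abelian group, it extends (non-canonically) to a homomorphism on all of $\mathcal{C}^{G_\nabla}_{n-1}(X,Y)$; any such extension is compatible with $\omega$ because the compatibility condition only constrains differences of bordant cycles, and these lie in $K$.

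A smaller imprecision in the same step: your well-definedness argument ``gluing two bordisms gives a closed manifold'' tacitly assumes one can reverse the differential $G$-structure on a bordism and glue, which is not available for general tangential structure groups (the paper comments on this right after Corollary \ref{cor_hBord_Pic}); moreover in the relative setting the glued object is a morphism $\varnothing\to\varnothing$, i.e.\ a compact manifold with boundary mapped to $Y$, representing a class in $\Omega^G_n(X,Y)$ rather than a closed manifold. The paper's Lemma \ref{lem_welldef_int_omega} handles this by composing one bordism with the Picard-groupoid inverse of the other and using additivity of $\cw(\omega)$ together with its $\Z$-valuedness on $\Omega^G_n(X,Y)$ — the same idea as yours, but phrased so that it works for every $G$. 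With these two repairs (the injectivity extension and the Picard-groupoid formulation of well-definedness), your argument becomes the paper's proof.
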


To prove Proposition \ref{prop_exact_DOmegaG_dR}, we need the following lemma. 

\begin{lem}\label{lem_welldef_int_omega}
Let $(X, Y)$ be a pair of manifolds and $n$ be a nonnegative integer. 
Let $\omega \in \Omega_{\mathrm{clo}}^n(X, Y; N_G^\bullet)$ be a closed form 
such that the associated homomorphism $\cw(\omega)$ in \eqref{eq_def_cw_omega_closed} is $\Z$-valued,  $\mathrm{cw}(\omega)  \colon \Omega^G_{n}(X, Y)\to \Z$. 

Let $(M_-, g_-, f_-)$ and $(M_+, g_+, f_+)$ be two objects in $h\Bord^{G_\nabla}_{n-1}(X, Y)$. 
Given any two morphisms $[W, g_W, f_W], [W', g_{W'}, f_{W'}] \colon (M_-, g_-, f_-) \to (M_+, g_+, f_+)$, we have
\begin{align}\label{eq_lem_welldef_int_omega}
     \cw(\omega)([W, g_W, f_W]) =\cw(\omega)([W', g_{W'}, f_{W'}])  \pmod \Z. 
\end{align}
\end{lem}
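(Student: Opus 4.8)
The plan is to reduce the statement to a closed-manifold computation by gluing $W$ and $W'$ along their common boundary. Given the two morphisms $[W, g_W, f_W]$ and $[W', g_{W'}, f_{W'}]$ from $(M_-, g_-, f_-)$ to $(M_+, g_+, f_+)$, I would form the closed $n$-manifold
\[
    \widetilde{W} := W \cup_{M_- \sqcup M_+} (W')_{\mathrm{op}},
\]
obtained by reversing the differential stable tangential $G$-structure on $W'$ (equivalently, on the level of bordism classes, taking the inverse morphism $[W', g_{W'}, f_{W'}]^{-1}$ guaranteed by Corollary \ref{cor_hBord_Pic}) and gluing along the boundary collars, which match by Definition \ref{def_bordism_Gcyc}. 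The maps $f_W$ and $f_{W'}$ agree on the collars with $f_\pm \circ \mathrm{pr}_{M_\pm}$, so they glue to a map $f_{\widetilde{W}} \in C^\infty((\widetilde{W}, \varnothing), (X, Y))$, giving a class $[\widetilde{W}, g_{\widetilde{W}}, f_{\widetilde{W}}] \in \Omega^G_n(X, Y)$.

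The key step is then additivity of the Chern-Weil integral under this gluing. Since $\mathrm{cw}_{g_W}(f_W^*\omega)$ is locally constructed from the curvature of the connection, and the connections on $W$ and $(W')_{\mathrm{op}}$ match on the overlapping collar, we have
\[
    \int_{\widetilde{W}} \mathrm{cw}_{g_{\widetilde{W}}}(f_{\widetilde{W}}^*\omega) = \int_W \mathrm{cw}_{g_W}(f_W^*\omega) - \int_{W'} \mathrm{cw}_{g_{W'}}(f_{W'}^*\omega),
\]
where the minus sign comes from the orientation reversal implicit in passing to the opposite structure on $W'$ (recall $\mathrm{cw}_g$ takes values in $\Omega^*(-;\mathrm{Ori}(-))$, and reversing the stable $G$-structure reverses the orientation line; cf. the definition of $g_{\mathrm{op}}$). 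By the hypothesis that $\mathrm{cw}(\omega) \colon \Omega^G_n(X, Y) \to \Z$, the left-hand side lies in $\Z$, which gives exactly \eqref{eq_lem_welldef_int_omega}.

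The main obstacle I expect is the bookkeeping around the gluing and the orientation signs: one must check that the collar-matching in Definition \ref{def_bordism_Gcyc} really does allow a smooth gluing of the $\langle 2\rangle$-manifolds into a closed manifold (the collars have the product differential $G$-structure $(g_\pm)_J$, so this is fine but needs to be said), and that reversing $g_{W'}$ to form $(g_{W'})_{\mathrm{op}}$ along $M_-$ and $M_+$ simultaneously produces a genuine differential stable tangential $G$-structure on $(W')_{\mathrm{op}}$ compatible with the collars—this is subtle for general $G$ where no naive ``reversal'' exists, but on the level of bordism classes it is handled by Corollary \ref{cor_hBord_Pic}, and the integral $\int_{W'}\mathrm{cw}_{g_{W'}}(f_{W'}^*\omega)$ depends only on the homotopy class of the structure (Remark \ref{rem_cw_dR_htpy}), so the value is insensitive to these choices. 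Alternatively, and perhaps more cleanly, one can avoid forming the opposite structure entirely: glue $W$ and $W'$ along $M_+$ only to get a bordism $W \cup_{M_+} W' \colon (M_- \sqcup M_-, \ldots) \to \varnothing$ together with the identity cylinder, and use additivity of $\mathrm{cw}$ under composition of morphisms in $h\Bord^{G_\nabla}_{n-1}(X,Y)$ — but the doubling argument above is the most transparent.
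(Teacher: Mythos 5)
Your proposal is correct in substance, but it takes a more geometric route than the paper. The paper never glues manifolds or forms an opposite structure: it takes the categorical inverse $[W',g_{W'},f_{W'}]^{-1}$ supplied by Corollary \ref{cor_hBord_Pic}, and uses only additivity of $\cw(\omega)$ under composition of morphisms, the vanishing $\cw(\omega)(\mathrm{id})=0$ (the cylinder integrand is pulled back from an $(n-1)$-manifold), and the integrality hypothesis applied to the endomorphism $[W,g_W,f_W]\circ[W',g_{W'},f_{W'}]^{-1}$ of $(M_+,g_+,f_+)$ --- the latter implicitly using that in a Picard groupoid every endomorphism is $\mathrm{id}\sqcup u$ for a morphism $u\colon\varnothing\to\varnothing$, i.e.\ a closed cycle. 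Your doubling $W\cup_{M_-\sqcup M_+}(W')_{\mathrm{op}}$ is the geometric shadow of the same computation: it has the advantage that integrality is applied verbatim to an honest class in $\Omega^G_n(X,Y)$, but it has to pay for this in gluing bookkeeping, and this is exactly where the paper's caveat after Corollary \ref{cor_hBord_Pic} bites: for general $G$ there is no genuine reversal, and even the ``op'' structure restricts on the collars to the $(g_\pm)_{\mathrm{op}}$-product, which matches the $g_\pm$-product collar of $W$ only after the collar-coordinate flip, i.e.\ only up to homotopy of the structure. You flag this, and both of your proposed fixes do work, but each needs one more line than you wrote: (i) if you interpolate the homotopic structures over a small collar to get an actual $G$-structure on the closed double, you should note the interpolation region contributes $0$ to the integral (the Chern--Weil form of the pulled-back connection is pulled back from the $(n-1)$-dimensional slice), and Remark \ref{rem_cw_dR_htpy} plus $\cw_{g_{\mathrm{op}}}=-\cw_g$ then gives your displayed identity with the stated sign; (ii) if instead you replace $(W')_{\mathrm{op}}$ by a representative of the categorical inverse $[W']^{-1}$ (whose collars do match on the nose), you additionally need $\cw(\omega)([W']^{-1})=-\cw(\omega)([W'])$, which is precisely the paper's first display and follows from additivity under composition and $\cw(\omega)(\mathrm{id})=0$. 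Your parenthetical alternative (gluing along $M_+$ only) runs into the same reversal issue at the remaining boundary and as written does not produce a morphism of the category, but it is an aside and not needed.
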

\begin{proof}[Proof of Lemma \ref{lem_welldef_int_omega}]
By Corollary \ref{cor_hBord_Pic} (also see the remarks following it), we can take the inverse $[W', g_{W'}, f_{W'}]^{-1}$ of the morphism. 
We have
\begin{align*}
    &\cw(\omega)([W', g_{W'}, f_{W'}]) + \cw(\omega)([W', g_{W'}, f_{W'}]^{-1}) \\
    &= \cw(\omega)\left([W', g_{W'}, f_{W'}] \circ [W', g_{W'}, f_{W'}]^{-1}
    \right) \\
    &= \cw(\omega)\left(\mathrm{id}_{(M_+, g_+, f_+)}\right) \\
    &= 0, 
\end{align*}
by the obvious additivity of $\cw(\omega)$ under composition of morphisms. 
We also have
\begin{align*}
    &\cw(\omega)([W, g_W, f_W]) + \cw(\omega)([W', g_{W'}, f_{W'}]^{-1}) \\
    &= \cw(\omega)\left([W, g_W, f_W] \circ [W', g_{W'}, f_{W'}]^{-1}
    \right) \in \Z, 
\end{align*}
by the integrality assumption of $\omega$. 
Combining these, we get Lemma \ref{lem_welldef_int_omega}. 
\end{proof}

\begin{proof}[Proof of Proposition \ref{prop_exact_DOmegaG_dR}]
The composition at $\mathrm{Hom}(\Omega^G_n(X, Y), \R)$ is zero by Lemma \ref{lem_integrality}. 
The other compositions are obviously zero. 

First we show the exactness at $\mathrm{Hom}(\Omega^G_{n-1}(X, Y), \R/\Z)$. 
Suppose that $h \in \mathrm{Hom}(\Omega^G_{n-1}(X, Y), \R/\Z)$ satisfies $I((0, h)) = 0$. 
Then there exists $\alpha \in \Omega_{\mathrm{clo}}^{n-1}(X, Y; N_G^\bullet)/\mathrm{Im}(d)$ with $h = \mathrm{cw}(\alpha) $. 
Since the homomorphism $\mathrm{cw}(\alpha)$ lifts to an $\R$-valued homomorphism defined by the same formula as \eqref{eq_def_cw_alpha}, we see that $h$ is in the image from $\mathrm{Hom}(\Omega^G_{n-1}(X, Y), \R)$. 

Next we show the exactness at $(I\Omega^G_{\mathrm{dR}})^n(X, Y)$. 
Suppose that $I((\omega, h)) \in (I\Omega^G_{\mathrm{dR}})^n(X, Y)$ satisfies $\mathrm{Rham}(\omega) = 0 $. 
There exists $\alpha \in \Omega^{n-1}(X, Y; N_G^\bullet )$ such that $\omega = d\alpha$. 
Thus we have $I((\omega, h)) = I((0, h-\mathrm{cw}(\alpha)))$. 
This implies $p\left(h-\mathrm{cw}(\alpha)\right) = I((\omega, h))$. 

Finally we show the exactness at $\mathrm{Hom}(\Omega^G_n(X, Y), \R)$. 
It is equivalent to the claim that $\mathrm{ch}' \colon (I\Omega^G_{\mathrm{dR}})^n(X, Y)\to \mathrm{Hom}(\Omega^G_n(X, Y), \Z)$ is surjective. 
Take any element in $\mathrm{Hom}(\Omega^G_n(X, Y), \Z) \subset \mathrm{Hom}(\Omega^G_n(X, Y), \R) \simeq H^n(X, Y;N_G^\bullet )$ and take a representative $\omega \in \Omega^n_{\mathrm{clo}}(X, Y; N_G^\bullet)$. 
We would like to find a group homomorphism $h \colon {\mathcal{C}}^{G_\nabla}_{n-1}(X, Y) \to \R/\Z$ which satisfies the compatibility condition in Definition \ref{def_hat_DOmegaG} (1) (c) with $\omega$. 

The compatibility condition with $\omega$ already determines the value of $h$ on the kernel of the forgetful map ${\mathcal{C}}^{G_\nabla}_{n-1}(X, Y) \to \Omega^G_{n-1}(X, Y)$. 
Namely, given a differential smooth stable tangential $G$-cycle $(M, g, f)$ over $(X, Y)$ of dimension $(n-1)$ which is null-bordant, take any morphism $[W, g_W, f_W] \colon \varnothing \to (M, g, f)$ in $h\Bord^{G_\nabla}_{n-1}(X, Y)$ and set
\begin{align}\label{eq_proof_exact_seq}
    h([M, g, f]) := \cw(\omega)([W, g_W, f_W]) \pmod \Z.
\end{align}
The right hand side does not depend on the choice of $[W, g_W, f_W]$ by Lemma \ref{lem_welldef_int_omega}. 
The fact that the formula \eqref{eq_proof_exact_seq} induces a  
group homomorphism $h$ on the kernel of the forgetful map ${\mathcal{C}}^{G_\nabla}_{n-1}(X, Y) \to \Omega^G_{n-1}(X, Y)$ can be checked easily. 
Since $\R/\Z$ is an injective group, there exists a group homomorphism $h \colon {\mathcal{C}}^{G_\nabla}_{n-1}(X, Y) \to \R/\Z$ extending it, so we get the result.
\end{proof}

In Subsection \ref{subsec_proof_isom} we show that $I\Omega^G_\dR$ is a model for the Andeson dual to the $G$-bordism theory. 
The following result, combined with this result, implies that the quadruple $((\widehat{I\Omega^G_{\mathrm{dR}}})^*, R, I, a)$ is the differential extension of the pair $\left((I\Omega^G)^*, \mathrm{ch}' \right)$ with $\mathrm{ch}'$ in \eqref{eq_ch_N_G}. 

\begin{prop}\label{prop_axiom_diffcoh}
\begin{enumerate}
\item We have $R \circ a = d$. 
\item For any pair of manifolds $(X, Y)$, the following diagram commutes. 
\begin{align*}
    \xymatrix{
    (\widehat{I\Omega^G_{\mathrm{dR}}})^*(X, Y) \ar[r]^R \ar[d]^I & \Omega^*_{\mathrm{clo}}(X, Y; N_G^\bullet) \ar[d]^{\mathrm{Rham}} \\
    (I\Omega^G_{\mathrm{dR}})^{*}(X, Y) \ar[r]^{\mathrm{ch}'} & H^*(X, Y; N_G^\bullet)
    }. 
\end{align*}

    \item For any pair of manifolds $(X, Y)$, the following sequence is exact. 
\begin{align}
    (I\Omega^G_{\mathrm{dR}})^{*-1}(X, Y) \xrightarrow{\mathrm{ch}'} 
    \Omega^{*-1}(X, Y; N_G^\bullet)/\mathrm{Im}(d) \xrightarrow{a} (\widehat{I\Omega^G_{\mathrm{dR}}})^*(X, Y)
    \xrightarrow{I}(I\Omega^G_{\mathrm{dR}})^*(X, Y) \to 0. 
\end{align}
\end{enumerate}

\end{prop}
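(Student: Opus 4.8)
The plan is to read off (1) and (2) directly from the definitions of $a$, $R$, $I$, $\mathrm{ch}$, and to reduce (3) to Proposition \ref{prop_exact_DOmegaG_dR} together with a Stokes-theorem factorization.

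Item (1) is the identity $R(a(\alpha)) = R(d\alpha, \mathrm{cw}(\alpha)) = d\alpha$, immediate from \eqref{eq_def_DOmega_a}. For (2), the map $\mathrm{ch}$ sends $I((\omega,h))$ to $\mathrm{Rham}(\omega) = \mathrm{Rham}(R(\omega,h))$ by its very definition, so the square commutes as soon as $\mathrm{ch}$ is well defined; and that is clear, since changing $(\omega,h)$ within an $I$-fibre means adding some $a(\alpha)$, which replaces $\omega$ by $\omega+d\alpha$ and hence does not change the de Rham class.

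For (3), surjectivity of $I$ and exactness at $(\widehat{I\Omega^G_{\mathrm{dR}}})^*(X,Y)$ are built into the construction, since by definition $(I\Omega^G_{\mathrm{dR}})^*(X,Y) = (\widehat{I\Omega^G_{\mathrm{dR}}})^*(X,Y)/\mathrm{Im}(a)$ with $I$ the quotient map. The substance is exactness at $\Omega^{*-1}(X,Y;V_{I\Omega^G}^\bullet)/\mathrm{Im}(d)$, where $\mathrm{ch}$ is read as landing in the subgroup $H^{*-1}(X,Y;V_{I\Omega^G}^\bullet) = \Omega^{*-1}_{\mathrm{clo}}/\mathrm{Im}(d)$. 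The key point is that for a \emph{closed} $\alpha \in \Omega^{n-1}_{\mathrm{clo}}(X,Y;V_{I\Omega^G}^\bullet)$ the homomorphism $\mathrm{cw}(\alpha)\colon \mathcal{C}^{G_\nabla}_{n-1}(X,Y)\to\R/\Z$ of \eqref{eq_def_cw_alpha} factors through the forgetful map to $\Omega^G_{n-1}(X,Y)$: indeed, by \eqref{eq_def_cw_omega_closed} applied in degree $n-1$, the $\R$-valued integral $[M,g,f]\mapsto\int_M\mathrm{cw}_g(f^*\alpha)$ depends only on the bordism class in $\Omega^G_{n-1}(X,Y)$ (a Stokes argument using that $\mathrm{cw}$ commutes with $d$, that $d\alpha=0$, and that $\alpha|_Y=0$ kills the contribution of the faces mapped into $Y$), and under $H^{n-1}(X,Y;V_{I\Omega^G}^\bullet)\simeq\mathrm{Hom}(\Omega^G_{n-1}(X,Y),\R)$ it represents $\mathrm{Rham}(\alpha)$. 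Therefore $a([\alpha])=0$ holds precisely when $d\alpha=0$ and this $\R$-valued homomorphism $\mathrm{cw}(\alpha)$ takes values in $\Z$; the set of such classes, seen inside $\mathrm{Hom}(\Omega^G_{n-1}(X,Y),\R)\simeq H^{n-1}(X,Y;V_{I\Omega^G}^\bullet)$, is exactly $\mathrm{Hom}(\Omega^G_{n-1}(X,Y),\Z)$.

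It then remains to match this with $\mathrm{Im}(\mathrm{ch})$. By Proposition \ref{prop_exact_DOmegaG_dR} applied in degree $n-1$, exactness at $\mathrm{Hom}(\Omega^G_{n-1}(X,Y),\R)$ identifies $\mathrm{Im}(\mathrm{ch})$ with the kernel of $\mathrm{Hom}(\Omega^G_{n-1}(X,Y),\R)\to\mathrm{Hom}(\Omega^G_{n-1}(X,Y),\R/\Z)$, which is $\mathrm{Hom}(\Omega^G_{n-1}(X,Y),\Z)$ (this also upgrades the inclusion in Lemma \ref{lem_integrality} to an equality). Hence $\ker(a)=\mathrm{Hom}(\Omega^G_{n-1}(X,Y),\Z)=\mathrm{Im}(\mathrm{ch})$, giving the desired exactness. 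I expect the only genuinely non-formal ingredient to be the surjectivity half of Proposition \ref{prop_exact_DOmegaG_dR}, which is already established; the rest is bookkeeping with the definition of $a$ and with the quotient defining $(I\Omega^G_{\mathrm{dR}})^*$, plus the Stokes-type factorization recalled above.
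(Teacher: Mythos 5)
Your proof is correct and follows essentially the same route as the paper, which simply records (1) and (2) as obvious and reduces the exactness at $\Omega^{*-1}(X,Y;V_{I\Omega^G}^\bullet)/\mathrm{Im}(d)$ to the exactness of Proposition \ref{prop_exact_DOmegaG_dR} at the $\mathrm{Hom}(\Omega^G_\bullet(X,Y),\R)$ spot. The extra details you supply (Stokes/bordism-invariance of $\mathrm{cw}(\alpha)$ for closed $\alpha$, and the identification of both $\ker(a)$ and $\mathrm{Im}(\mathrm{ch})$ with $\mathrm{Hom}(\Omega^G_{n-1}(X,Y),\Z)$ inside $H^{n-1}(X,Y;V_{I\Omega^G}^\bullet)$) are exactly what the paper leaves implicit.
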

\begin{proof}
(1) and (2) are obvious. For (3), 
the exactness at $\Omega^{*-1}(X, Y; N_G^\bullet)/\mathrm{Im}(d)$ easily follows from the exactness of \eqref{eq_prop_exact_dR} at $\mathrm{Hom}(\Omega^G_n(X, Y), \R)$. 
The remaining parts are exact by definition. 
\end{proof}

Our differential model $(\widehat{I\Omega^G_{\mathrm{dR}}})^*$ also has an $S^1$-integration. 
As shown in Theorem \ref{thm_IOmega_dR=IOmega}, it makes our model a {\it differential extension with $S^1$-integration} in the sense of \cite[Definition 2.12]{BSDiffKSurvey} (Definition \ref{def_diff_integration}). 
Actually, as we will see in Remark \ref{rem_integration_push}, the $S^1$-integration map is a special case of the differential pushforwards which we introduce in Section \ref{sec_push}. 

In order to define the $S^1$-integration map, we need some preparation. 
Let $M$ be an $n$-dimensional $\langle k \rangle$-manifold and $g$ be a differential stable tangential $G$-structure on $M$ represented by $\widetilde{g}=(d,P,\nabla,\psi)$ with $d \ge n+2$.
For the 2-dimensional disk $D^2 = \{ (x,y) \in \R^2\, |\, x^2+y^2 \le 1 \}$,
let $g_{D^2}$
be the differential stable tangential $G$-structure on $D^2 \times M$ represented by $\widetilde{g}_{D^2} := (d, \mathrm{pr}_{M}^*P, \mathrm{pr}_{M}^*\nabla, \mathrm{pr}_M^*\psi)$, where we identify
$\underline{\R}^{d-n-2} \oplus T(D^2\times  M) \simeq \mathrm{pr}_{M}^*( \underline{\R}^{d-n-2} \oplus \underline{\R}^2 \oplus T M) = \mathrm{pr}_{M}^*( \underline{\R}^{d-n} \oplus T M)$. 
We can take the obvious collar structure near the boundary $\partial D^2 \times M$ which is induced by the polar coordinates $(x,y)=(r\cos\theta, r\sin\theta)$.
Then we get an isomorphism
\begin{align}\label{eq_bounding_str}
    \psi_{S^1} \colon  \mathrm{pr}_{M}^*P \times_{\rho_d}  \underline{\R}^{d} \simeq    \underline{\R}^{d-n-1} \oplus T(S^1 \times  M)
\end{align}
such that $(D^2 \times M, g_{D^2},  f \circ \mathrm{pr}_M)$ is a bordism from $\varnothing$ to $(S^1 \times M, g_{S^1},  f \circ \mathrm{pr}_M)$ for any $f \colon M \to X$,
where $g_{S^1}$ is represented by $\widetilde{g}_{S^1} := (d, \mathrm{pr}_{M}^*P, \mathrm{pr}_{M}^*\nabla, \psi_{S^1})$.

\begin{defn}[The bounding differential stable tangential structure]\label{def_Gstr_S1}
Let $M$ be an $n$-dimensional manifold and $g$ be a differential stable tangential $G$-structure on $M$ represented by $\widetilde{g}=(d,P,\nabla,\psi)$ with $d \ge n+2$. 
The {\it bounding differential stable tangential $G$-structure} $g_{S^1}$ on $S^1 \times M$ is represented by
\begin{align*}
    \widetilde{g}_{S^1} := (d, \mathrm{pr}_{M}^*P, \mathrm{pr}_{M}^*\nabla, \psi_{S^1}),
\end{align*}
where $\psi_{S^1}$ is defined in \eqref{eq_bounding_str}. 
\end{defn}

\begin{defn}[{The $S^1$-integration map for $(\widehat{I\Omega^G_{\mathrm{dR}}})^*$}]\label{def_integration_dR}
Let $n$ be a nonnegative integer. We define the following map natural in $(X, Y)$,
\begin{align*}
    \int \colon (\widehat{I\Omega^G_{\mathrm{dR}}})^{n+1}( S^1 \times (X, Y) ) \to (\widehat{I\Omega^G_{\mathrm{dR}}})^{n}(X, Y),
\end{align*}
by mapping $(\omega, h)$ to $(\int \omega, \int h )$, where (see Remark \ref{rem_welldef_int_dR})
\begin{itemize}
    \item $\int \omega$ is the image of $\omega$ under the $S^1$-integration of differential forms \eqref{eq_int_form}. 
    \item We define the homomorphism $\int h \colon\mathcal{C}^{G_\nabla}_{n-1}(X, Y) \to \R/\Z $ by
    \begin{align}\label{eq_def_S^1_integration}
       \left(\int h\right) ([M, g, f])
       := -h([S^1 \times M, g_{S^1},\mathrm{id}_{S^1} \times f]). 
    \end{align}
    Here $g_{S^1}$ is given by Definition \ref{def_Gstr_S1}. %considering $g_{[-1, 1]}$ on $[-1, 1] \times M$ (see \eqref{eq_Gstr_cylinder}) and identifying the endpoints. 
\end{itemize}
The natural transformation $\int$ induces a natural transformation on the topological level, also denoted by
\begin{align}\label{eq_def_S1integration_dR}
    \int \colon (I\Omega^G_{\mathrm{dR}})^{n+1}(S^1 \times -) \to (I\Omega^G_{\mathrm{dR}})^{n}(-). 
\end{align}
We call them the {\it $S^1$-integration map} for $(\widehat{I\Omega^G_{\mathrm{dR}}})^*$ and $(I\Omega^G_{\mathrm{dR}})^*$, respectively. 
\end{defn}

\begin{rem}\label{rem_welldef_int_dR}
The minus sign in \eqref{eq_def_S^1_integration} is due to the fact that the assignment $g \mapsto g_{S^1}$ does not preserve the bordism relation. 
Rather, we need an additional automorphism on $ \underline{\R}^{d-n-1} \oplus T(S^1 \times  M)$ for objects which reverses the orientation. 
Also we have
\begin{align}\label{eq_minus_susp_reason}
\cw_{g_W}\left(\int \omega\right)
=\int\cw_{(g_W)_{S^1}}(\omega). 
\end{align}
Using these, the compatibility of the pair $(\int \omega, \int h)$ can be checked easily.

% Here we explain that the pair $(\int \omega, \int h)$ in Definition \ref{def_integration_dR} satisfies the compatibility condition in Definition \ref{def_hat_DOmegaG} (1) (c). 
% Recall the minus-suspension functor $-\Sigma^+ \colon \hBord_{n-1}(X, Y) \to \hBord_{n}(S^1 \times(X, Y))$ defined in Definition \ref{def_susp_functor} and corresponding homomorphism \eqref{eq_susp_functor_hom}. 
% By the definition we have $\int h = h \circ (-\Sigma^+)$. 
% Moreover, by the sign convention \eqref{eq_int_form_sign} we can check that
% \begin{align}\label{eq_minus_susp_reason}
% \cw\left(\int \omega\right)[W, g_W, f_W] 
% =\cw(\omega)\left(-\Sigma^+[W, g_W, f_W]\right). 
% \end{align}
% Thus the required compatibility follows from the compatibility condition for $(\omega, h)$ applied to the image of the functor $\Sigma^+$. 
\end{rem}

\subsubsection{The models in terms of equivalent Picard subgroupoids of $\hBord_-(-)$}\label{sec_physical_app}
In the definition of the model $I\Omega^G_\dR$ so far, we have used the Picard groupoid $\hBord_-(-)$. 
However, in some cases the partition function $h$ is naturally defined only for objects of a Picard subgroupoid of it, and it has enough information to define an element in $I\Omega^G_\dR$. 

Let $\mathcal{D} \subset \hBord_{n-1}(X, Y)$ be a Picard subcategory such that the inclusion is an equivalence. 
We define $\mathcal{C}_{\mathcal{D}} \subset \mathcal{C}^{G_\nabla}_{n-1}(X, Y)$ to be the subgroup generated by the isomorphism classes of objects in $\mathcal{D}$. 
Then, consider the following group. 

\begin{defn}[{$(\widehat{I\Omega^G_{\mathrm{dR}, \mathcal{D}}})^n(X, Y)$ and $(I\Omega^G_{\mathrm{dR}, \mathcal{D}})^n(X, Y)$}]\label{def_sub_model}
In the above settings, we define $(\widehat{I\Omega^G_{\mathrm{dR}, \mathcal{D}}})^n(X, Y)$ and $(I\Omega^G_{\mathrm{dR}, \mathcal{D}})^n(X, Y)$ to be the abelian groups defined by replacing $\hBord_{n-1}(X, Y)$ with $\mathcal{D}$ and $\mathcal{C}^{G_\nabla}_{n-1}(X, Y)$ with $\mathcal{C}_{\mathcal{D}}$ in Definition \ref{def_hat_DOmegaG}. 
\end{defn}
Thus, an element in $(\widehat{I\Omega^G_{\mathrm{dR}, \mathcal{D}}})^n(X, Y)$ is a pair $(\omega, h_{\mathcal{D}})$ where $\omega\in \Omega_{\mathrm{clo}}^n(X, Y; N_G^\bullet)$ as before, but the domain of $h_{\mathcal{D}}$ is now smaller, $h_{\mathcal{D}} \colon \mathcal{C}_{\mathcal{D}} \to \R/\Z$. 
We show that the resulting groups are isomorphic. 

\begin{prop}\label{prop_sub_model_isom}
The obvious forgetful maps by the restriction of $h$, 
\begin{align}
    \mathrm{fgt} \colon (\widehat{I\Omega^G_{\mathrm{dR}}})^n(X, Y) &\to (\widehat{I\Omega^G_{\mathrm{dR}, \mathcal{D}}})^n(X, Y) \label{eq_fgt_sub_model_hat}\\
    \mathrm{fgt} \colon({I\Omega^G_{\mathrm{dR}}})^n(X, Y) &\to ({I\Omega^G_{\mathrm{dR}, \mathcal{D}}})^n(X, Y) \label{eq_fgt_sub_model}
\end{align}
are isomorphisms. 
\end{prop}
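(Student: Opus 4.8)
The plan is to construct explicit inverses to the forgetful maps \eqref{eq_fgt_sub_model_hat} and \eqref{eq_fgt_sub_model}. The essential point is that the inclusion $\mathcal{D} \hookrightarrow h\Bord^{G_\nabla}_{n-1}(X,Y)$ being an equivalence of Picard groupoids means every object $(M,g,f)$ of $h\Bord^{G_\nabla}_{n-1}(X,Y)$ is isomorphic to an object of $\mathcal{D}$, and every morphism between objects of $\mathcal{D}$ is already present in $\mathcal{D}$ up to the chosen identifications. So given $(\omega, h_\mathcal{D}) \in (\widehat{I\Omega^G_{\mathrm{dR},\mathcal{D}}})^n(X,Y)$, I would define an extension $h$ on all of $\mathcal{C}^{G_\nabla}_{n-1}(X,Y)$ as follows: for a cycle $(M,g,f)$, choose an isomorphism $[\Phi] \colon (M,g,f) \to (M',g',f')$ in $h\Bord^{G_\nabla}_{n-1}(X,Y)$ with $(M',g',f')$ an object of $\mathcal{D}$ (possible since the inclusion is essentially surjective), and set $h([M,g,f]) := h_\mathcal{D}([M',g',f'])$.

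The first key step is to check that this value is independent of the choice of $(M',g',f')$ and of the isomorphism $[\Phi]$. If $[\Phi_1]\colon (M,g,f)\to (M_1',g_1',f_1')$ and $[\Phi_2]\colon (M,g,f)\to (M_2',g_2',f_2')$ are two such choices, then $[\Phi_2]\circ[\Phi_1]^{-1}$ is a morphism in $h\Bord^{G_\nabla}_{n-1}(X,Y)$ between objects of $\mathcal{D}$; since the inclusion is full, it is represented by a morphism $[W,g_W,f_W]$ which can be taken inside $\mathcal{D}$, and the compatibility condition for $(\omega, h_\mathcal{D})$ in $\mathcal{D}$ gives $h_\mathcal{D}([M_2',g_2',f_2']) - h_\mathcal{D}([M_1',g_1',f_1']) = \cw(\omega)([W,g_W,f_W])$. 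But an isomorphism is an invertible morphism, so the underlying bordism $W$ can be taken to be a cylinder, on which $\int_W \cw_{g_W}(f_W^*\omega) = 0$ (here one uses that for a cylinder bordism the Chern-Weil integral vanishes, exactly as in the identity-morphism computation in the proof of Lemma \ref{lem_welldef_int_omega}). Hence the two values agree mod $\Z$. Next I would check $h$ is a group homomorphism (immediate, since isomorphisms are compatible with disjoint union and $h_\mathcal{D}$ is a homomorphism), that the pair $(\omega,h)$ satisfies the compatibility condition of Definition \ref{def_hat_DOmegaG}(1)(c) for \emph{arbitrary} morphisms (given a morphism $[W,g_W,f_W]$ between general objects, transport its source and target into $\mathcal{D}$ via isomorphisms, use fullness to represent the conjugated morphism inside $\mathcal{D}$, apply the compatibility in $\mathcal{D}$, and use additivity of $\cw(\omega)$ under composition together with the vanishing on the cylinder isomorphisms), and that $h$ restricts back to $h_\mathcal{D}$ on $\mathcal{C}_\mathcal{D}$. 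This gives a well-defined map $(\widehat{I\Omega^G_{\mathrm{dR},\mathcal{D}}})^n(X,Y) \to (\widehat{I\Omega^G_{\mathrm{dR}}})^n(X,Y)$, and by construction it is a two-sided inverse to $\mathrm{fgt}$ in \eqref{eq_fgt_sub_model_hat}: composing one way recovers $h_\mathcal{D}$ by the last point, and composing the other way recovers $h$ because the extension of $\mathrm{fgt}(\omega,h) = (\omega, h|_{\mathcal{C}_\mathcal{D}})$ agrees with $h$ on every cycle (transport into $\mathcal{D}$ and use the compatibility of $h$ itself on the isomorphism).

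For the quotient statement \eqref{eq_fgt_sub_model}, I would observe that the isomorphism just constructed intertwines the maps $a$ on both sides: the forgetful map clearly commutes with $a$, and its inverse does too, since $a(\alpha) = (d\alpha, \cw(\alpha))$ and the $\R/\Z$-homomorphism $\cw(\alpha)$ extended from $\mathcal{C}_\mathcal{D}$ by the above recipe coincides with the globally-defined $\cw(\alpha)$ of \eqref{eq_def_cw_alpha} (again by transporting a cycle into $\mathcal{D}$ along an isomorphism and using the behaviour of $\cw(\alpha)$ under such isomorphisms, which is the relative version of the cylinder computation). Hence the isomorphism on the $\widehat{(-)}$-level descends to an isomorphism on the quotients, which is precisely a two-sided inverse to \eqref{eq_fgt_sub_model}.

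The main obstacle I anticipate is purely bookkeeping rather than conceptual: carefully tracking the Picard-groupoid identifications (essential surjectivity and fullness of $\mathcal{D}\hookrightarrow h\Bord^{G_\nabla}_{n-1}(X,Y)$) and making sure that at each stage the ``transport along an isomorphism'' really contributes $0$ to the Chern-Weil integral. This last fact is the crux; it follows because an isomorphism in the bordism groupoid is represented (up to the chosen equivalences) by a cylinder-type bordism, on which the pulled-back Chern-Weil form integrates to zero by Stokes, exactly as used implicitly for identity morphisms in the proof of Lemma \ref{lem_welldef_int_omega}. Once that is isolated as a lemma, the rest is routine diagram-chasing.
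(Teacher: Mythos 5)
There is a genuine gap, and it sits at the crux of your argument. You define the extension by $h([M,g,f]) := h_{\mathcal{D}}([M',g',f'])$ after transporting $(M,g,f)$ to an object $(M',g',f')$ of $\mathcal{D}$ along a morphism of $h\Bord^{G_\nabla}_{n-1}(X,Y)$, and you justify well-definedness (and later the compatibility for arbitrary morphisms) by claiming that such a ``transport along an isomorphism'' contributes $0$ to the Chern--Weil integral because the bordism ``can be taken to be a cylinder''. This is false. By Corollary \ref{cor_hBord_Pic} the bordism category is a groupoid, so \emph{every} morphism is invertible; invertibility does not force the underlying bordism to be a product cylinder, and even a cylinder $[0,1]\times M$ carrying non-product data (e.g.\ an interpolation between two connections or metrics on $M$) has $\int_W \mathrm{cw}_{g_W}(f_W^*\omega)\neq 0$ in general --- that nonvanishing is precisely the content of the compatibility condition \eqref{eq_compatibility_dR}. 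The only case where the integral vanishes is the identity morphism $(I\times M, g_I, f\circ\mathrm{pr}_M)$, where the integrand is pulled back from the $(n-1)$-dimensional $M$. Concretely, for $\mathcal{D}=\hBordph$ and the element of Example \ref{ex_complex_eta}, two metrics on the same $M$ give objects connected by an interpolating cylinder, and $\overline{\eta}$ changes by the integral of the Todd form over that cylinder (the APS variation), which is generically nonzero; so your formula is neither independent of the choices nor compatible with $\omega$.

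The repair is forced by the compatibility condition itself: define
\begin{align*}
  h([M,g,f]) := h_{\mathcal{D}}([M',g',f']) - \mathrm{cw}(\omega)([W,g_W,f_W]) \pmod \Z
\end{align*}
for \emph{any} morphism $[W,g_W,f_W]\colon (M,g,f)\to (M',g',f')$ with target in $\mathcal{D}$ (such a morphism exists by essential surjectivity). Independence of the choice then follows not from any vanishing, but from fullness of $\mathcal{D}\hookrightarrow h\Bord^{G_\nabla}_{n-1}(X,Y)$ together with the compatibility of $(\omega,h_{\mathcal{D}})$: two choices differ by a morphism between objects of $\mathcal{D}$, on which $h_{\mathcal{D}}$ jumps by exactly the corresponding $\mathrm{cw}(\omega)$-integral, cancelling the discrepancy (additivity of $\mathrm{cw}(\omega)$ under composition, as in Lemma \ref{lem_welldef_int_omega}). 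With this corrected formula the rest of your outline (homomorphism property, compatibility for arbitrary morphisms, two-sided inverse to $\mathrm{fgt}$, descent to the quotient \eqref{eq_fgt_sub_model}) goes through, and is exactly the paper's argument.
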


\begin{proof}
We can construct the inverse of \eqref{eq_fgt_sub_model_hat} easily as follows. 
Given an element $(\omega, h_{\mathcal{D}})\in (\widehat{I\Omega^G_{\mathrm{dR}, \mathcal{D}}})^n(X, Y)$, 
we need to extend $h_{\mathcal{D}}$ to $h \colon \mathcal{C}^{G_\nabla}_{n-1}(X, Y) \to \R/\Z$. 
Since $\mathcal{D} \hookrightarrow \hBord_{n-1}(X, Y)$ is an equivalence, any object in $\hBord_{n-1}(X, Y)$ is bordant to an object in $\mathcal{D}$. 
By the compatibility condition in Definition \ref{def_hat_DOmegaG} (1) (c), we are forced to define the value of $h$ using such a bordism. 
The well-definedness follows from the fact that $\mathcal{D} \hookrightarrow \hBord_{n-1}(X, Y)$ is full, and the compatibility of $(\omega, h_{\mathcal{D}})$. 
It is obvious that this assignment gives the inverse of \eqref{eq_fgt_sub_model_hat}. 
The result for \eqref{eq_fgt_sub_model} also follows from this. 
\end{proof}

A typical class of examples of such situations is the following. 
In the context of unitary QFT's in physics, we are usually interested in $G$ with the following properties. 
First we require that the image of $\rho_d$ contains at least $\mathrm{SO}(d,\R)$,
\begin{align}\label{eq_FHgroup_1}
\mathrm{SO}(d,\R) \subset \rho_d(G_d) .
\end{align}
Next we require that the following commutative diagram is a pullback diagram,
\begin{align}\label{eq_FHgroup_2}
    \xymatrix{
G_d \ar[r]^-{\rho_d} \ar[d]^{s_d} \pullbackcorner & \mathrm{O}(d, \R) \ar[d] \\
G_{d+1} \ar[r]^-{\rho_{d+1}} & \mathrm{O}(d+1, \R) 
}. 
\end{align}
In Example \ref{ex_Gstr}, (2), (3) and (4) satisfy these assumtions, but (1) does not. 
For $G$ satisfying these properties, we define
\begin{defn}\label{def_phys_Gstr}
Let $M$ be an $n$-dimensional manifold. 
A {\it physical tangential $G$-structure} on $M$ is a triple $g_{\mathrm{ph}}=(P,\nabla,\psi)$, where
\begin{itemize}
\item The quadruple $(n, P,\nabla,\psi)$ is a representative of differential stable $G$-structure (Definition \ref{def_diff_Gstr_vec}) for $TM$. 
In particular, there is no stabilization of $TM$. 
\item We have a Riemannian metric on $TM$ induced from the standard metric on $P \times_{\rho_d} \R^{\dim M} $ by 
the isomorphism $\psi : P \times_{\rho_d} \R^{\dim M} \simeq TM$.
The connection induced on $P \times_{\rho_d} \R^{\dim M} \simeq TM$ from $\nabla$ coincides with the Levi-Civita connection of the Riemannian metric.
\end{itemize}
\end{defn}

A physical tangential $G$-structure can be regarded as a differential stable tangential $G$-structure in the obvious way. 
Then, we define $\hBordph_{n-1}(X, Y)$ to be the full subcategory of $\hBord_{n-1}(X, Y)$ spanned by the objects with physical tangential $G$-structures. 
It is a standard fact that the inclusion is an equivalence, by the requirements \eqref{eq_FHgroup_1} and \eqref{eq_FHgroup_2}. 
We remark that any morphism in $\hBordph_{n-1}(X, Y)$ can be represented by a bordism with physical tangential $G$-structure by the same reason. 

We often encounter such situations. 
For example in \cite{Freed:2016rqq}, they require the conditions \eqref{eq_FHgroup_1} and \eqref{eq_FHgroup_2} in the definition of ``symmetry types''. 
Also see Examples \ref{ex_complex_eta} and \ref{ex_real_eta} below. 
Since they are so typical, we use the notations
$(\widehat{I\Omega^G_{\mathrm{ph}}})^*$ and $(I\Omega^G_{\mathrm{ph}})^*$ for the groups in Definition \ref{def_sub_model} in the case $\mathcal{D} = \hBordph_{-}(-)$. 

We also encounter another type of $\mathcal{D}$ in Section \ref{sec_module}. 
There, we use Pirard subgroupoids spanned by objects $(M, g, f)$ such that $f$ satisfies certain transversality conditions.

\subsection{The proof of the isomorphism $(I\Omega^G_\mathrm{dR})^* \simeq (I\Omega^G)^*$}\label{subsec_proof_isom}
In this subsection we prove the main result of this section, Theorem \ref{thm_IOmega_dR=IOmega}. 

First we relate our models with functors from the bordism Picard categories. 
Recall that, as explained in Subsection \ref{subsec_anderson}, a homomorphism $\del \colon A \to B$ between abelian groups associates a Picard groupoid $(A \xrightarrow{\del} B)$. 
Given an element $(\omega, h) \in (\widehat{I\Omega^G_{\mathrm{dR}}})^n(X, Y)$, we get the associated functor of Picard groupoids, 
\begin{align}\label{eq_associated_functor}
        F_{(\omega, h)} \colon h\Bord^{G_\nabla}_{n-1}(X, Y) \to (\R \to \R/\Z)
    \end{align}
by $F_{(\omega, h)}(M, g, f) := h([M, g, f])$ on objects and $F_{(\omega, h)}([W, g_W, f_W]) := \cw(\omega)([W, g_W, f_W])$ on morphisms. 
Moreover, given two elements $(\omega, h)$ and $(\omega', h')$, and an element $\alpha \in \Omega^{n-1}(X, Y; N_G^\bullet)/\mathrm{Im}(d)$ so that $(\omega', h') - (\omega, h) = a(\alpha)$, we get the associated natural transformation, 
\begin{align}\label{eq_associated_transformation}
    F_\alpha \colon F_{(\omega, h)} \Rightarrow F_{(\omega', h')}, 
\end{align}
by $F_\alpha (M, g, f) := \mathrm{cw}(\alpha)(M, g, f)$. 
Summarizing, we get the following. 

\begin{lem}\label{lem_functor}
The assignment \eqref{eq_associated_functor} and \eqref{eq_associated_transformation} gives
a symmetric monoidal functor
\begin{align}
    F_{(X, Y)} \colon \left( \Omega^{n-1}(X, Y; N_G^\bullet)/\mathrm{Im}(d) \xrightarrow{a}  (\widehat{I\Omega^G_{\mathrm{dR}}})^n(X, Y) \right) 
    \to \mathrm{Fun}_{\mathrm{Pic}} \left(h\Bord^{G_\nabla}_{n-1}(X, Y), (\R \to \R/\Z) \right)
\end{align}
which is natural in $(X, Y)$. 
Here $\mathrm{Fun}_{\mathrm{Pic}}$ is regarded as a symmetric monoidal category. 
In particular, passing to the isomorphism classes of objects, we get the following natural transformation of functors $\mathrm{MfdPair}^{\mathrm{op}} \to \mathrm{Ab}$, 
\begin{align}\label{eq_lem_functor}
    F \colon (I\Omega^G_\dR)^n(-) \to \pi_0  \mathrm{Fun}_{\mathrm{Pic}} \left(h\Bord^{G_\nabla}_{n-1}(-), (\R \to \R/\Z) \right). 
\end{align}
\end{lem}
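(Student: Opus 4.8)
This is essentially a bookkeeping lemma, and the plan is to check that the assignments \eqref{eq_associated_functor} and \eqref{eq_associated_transformation} are compatible with all the relevant structure, after which the passage to $\pi_0$ is automatic. Concretely, I would verify, in order: (a) that each $F_{(\omega,h)}$ is a symmetric monoidal functor $h\Bord^{G_\nabla}_{n-1}(X,Y)\to(\R\to\R/\Z)$; (b) that each $F_\alpha$ is a monoidal natural isomorphism of such functors; (c) that these assemble into a symmetric monoidal functor $F_{(X,Y)}$ of Picard groupoids, natural in $(X,Y)$; and (d) that applying $\pi_0$ produces the natural transformation $F$.

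For (a), recall that in $(\R\to\R/\Z)$ the morphisms $b\to b'$ are the reals $a$ with $b'-b\equiv a\pmod\Z$, with both composition and monoidal product of morphisms given by addition. Thus the content is: on a morphism $[W,g_W,f_W]\colon(M_-,g_-,f_-)\to(M_+,g_+,f_+)$ the assigned real $\cw(\omega)([W,g_W,f_W])$ must satisfy $h([M_+,g_+,f_+])-h([M_-,g_-,f_-])\equiv\cw(\omega)([W,g_W,f_W])\pmod\Z$, which is exactly the compatibility condition \eqref{eq_compatibility_dR} in Definition \ref{def_hat_DOmegaG}; gluing of bordisms must be sent to addition and the cylinder $\mathrm{id}_{(M,g,f)}$ to $0$, which follows from additivity of $\int_W\cw_{g_W}(f_W^*\omega)$ under gluing along a common face (using Remark \ref{rem_cw_dR_htpy} to match Chern--Weil forms across collars) together with the fact that on a cylinder the Chern--Weil form is the pullback of $\cw_g(f^*\omega)$, a degree-$n$ form on the $(n-1)$-manifold $M$, which vanishes; and symmetric monoidality follows from additivity of $h$ and of $\cw(\omega)$ under disjoint union together with $h(\varnothing)=0$. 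The well-definedness of $\cw(\omega)$ on bordism classes, already presupposed in \eqref{eq_def_cw_omega_mor}, is itself Stokes' theorem for the closed form $\cw_{g_N}(f_N^*\omega)$ on a bordism-of-bordisms $N$: the faces $\simeq[0,1]\times M_\pm$ of $\del_1N$ contribute nothing for the same reason, and the $\del_2$-face contributes nothing since $\omega|_Y=0$.

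For (b), if $(\omega',h')-(\omega,h)=a(\alpha)$ then $h'=h+\cw(\alpha)$, so by \eqref{eq_def_cw_alpha} the real $F_\alpha(M,g,f)=\int_M\cw_g(f^*\alpha)$ is a morphism from $h([M,g,f])$ to $h'([M,g,f])$ in $(\R\to\R/\Z)$, and monoidality of $F_\alpha$ is again additivity of $\cw(\alpha)$ under disjoint union. The one genuinely non-formal point is the naturality square of $F_\alpha$: for a morphism $[W,g_W,f_W]$ it unwinds, once one uses that composition in $(\R\to\R/\Z)$ is addition, to the identity
\begin{align*}
\int_W\cw_{g_W}(f_W^*(d\alpha))=\int_{M_+}\cw_{g_+}(f_+^*\alpha)-\int_{M_-}\cw_{g_-}(f_-^*\alpha),
\end{align*}
which is Stokes' theorem for the $\mathrm{Ori}(W)$-twisted form $\cw_{g_W}(f_W^*\alpha)$ on the $\langle2\rangle$-manifold $W$: the $\del_1W$ part of the boundary contributes nothing because $f_W(\del_1W)\subset Y$ and $\alpha$ restricts to $0$ on $Y$, while the $\del_0W$ part is $M_+$ and $M_-$ (the latter with opposite induced orientation) up to the collar isomorphisms, across which $\cw_{g_W}$ is replaced by $\cw_{g_\pm}$ via Remark \ref{rem_cw_dR_htpy}.

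For (c) and (d): the source Picard groupoid $\bigl(\Omega^{n-1}(X,Y;V^\bullet_{I\Omega^G})/\mathrm{Im}(d)\xrightarrow{a}(\widehat{I\Omega^G_{\mathrm{dR}}})^n(X,Y)\bigr)$ composes its morphisms by addition of forms, so I would check $F_0=\mathrm{id}$ and $F_{\alpha_1+\alpha_2}=F_{\alpha_2}\circ F_{\alpha_1}$ pointwise, giving functoriality of $F_{(X,Y)}$; symmetric monoidality then follows because $\omega\mapsto\omega$, $h\mapsto h$ and $\alpha\mapsto\alpha$ are additive and the monoidal structure on $\mathrm{Fun}_{\mathrm{Pic}}\bigl(h\Bord^{G_\nabla}_{n-1}(X,Y),(\R\to\R/\Z)\bigr)$ is the pointwise one (with $F_{(0,0)}$ the constant functor $0$, so units are preserved). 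Naturality in $(X,Y)$ follows from the naturality of $(X,Y)\mapsto h\Bord^{G_\nabla}_{n-1}(X,Y)$ (Lemma \ref{lem_cat_equivalence}) and the commutation of pullback with $d$ and with the Chern--Weil construction. Finally, $\pi_0$ of a symmetric monoidal functor of Picard groupoids is a homomorphism of abelian groups; since $\pi_0$ of the source is $(\widehat{I\Omega^G_{\mathrm{dR}}})^n(X,Y)/\mathrm{Im}(a)=(I\Omega^G_{\mathrm{dR}})^n(X,Y)$ and $\pi_0$ of the target is $\pi_0\mathrm{Fun}_{\mathrm{Pic}}\bigl(h\Bord^{G_\nabla}_{n-1}(X,Y),(\R\to\R/\Z)\bigr)$, and both identifications are natural in $(X,Y)$, this yields \eqref{eq_lem_functor}. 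The hardest points — indeed the only ones requiring more than definition-chasing — will be the two applications of Stokes' theorem: the bordism-invariance and gluing-additivity of $\cw(\omega)$ on morphisms, and the naturality square for $F_\alpha$.
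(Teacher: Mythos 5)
Your proposal is correct and matches the paper's (implicit) argument: the paper states this lemma as a direct summary of the constructions \eqref{eq_associated_functor} and \eqref{eq_associated_transformation}, with exactly the checks you spell out — the compatibility condition \eqref{eq_compatibility_dR} giving functoriality of $F_{(\omega,h)}$, Stokes' theorem on the $\langle 2\rangle$-manifold $W$ (with $\del_1 W$ mapped into $Y$) giving the naturality of $F_\alpha$, and additivity under disjoint union plus passage to $\pi_0$ giving \eqref{eq_lem_functor} — left to the reader. Your write-up simply makes these routine verifications explicit, so there is nothing to correct.
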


Now we show the main result of this section. 

\begin{thm}\label{thm_IOmega_dR=IOmega}
There is a natural isomorphism of the functors $\mathrm{MfdPair}^{\mathrm{op}} \to \mathrm{Ab}^\Z$, 
\begin{align*}
    F \colon I\Omega^G_\dR \simeq I\Omega^G, 
\end{align*}
which fits into the following commutative diagram. 
\begin{align}\label{diag_mainthm}
    \xymatrix{
 0 \ar[r]& \mathrm{Ext}(\Omega^G_{n-1}(-), \Z) \ar[r]^-p \ar@{=}[d] & (I\Omega^G_{\mathrm{dR}})^n \ar[r]^-{\mathrm{ch}'} \ar[d]_-{\simeq}^-{F} & \mathrm{Hom}(\Omega^G_n(-), \Z) \ar[r]\ar@{=}[d] & 0  \\
 0 \ar[r]& \mathrm{Ext}(\Omega^G_{n-1}(-), \Z) \ar[r] & (I\Omega^G)^n \ar[r] & \mathrm{Hom}(\Omega^G_n(-), \Z) \ar[r] & 0
}
\end{align}
Here we use $\mathrm{Ext}(-, \Z) \simeq \Hom(-, \R/\Z)/\Hom(-, \R)$. 
Moreover, the quintuple $(\widehat{I\Omega^G_\dR}, R, I, a, \int)$ in Definitions \ref{def_str_map_IOmega_dR} and \ref{def_integration_dR} is a differential extension of $\left((I\Omega^G)^*, \mathrm{ch}'\right)$ with integration, where $\mathrm{ch}'$ is defined in \eqref{eq_ch_N_G}. 
In particular, if $\Omega^G_n(\pt)$ is finitely generated for all $n$, it gives a differential extension with integration, with respect to the Chern-Dold homomorphism $\mathrm{ch} \colon (I\Omega^G)^* \to H^*(-; V_{I\Omega^G}^\bullet)$. 
\end{thm}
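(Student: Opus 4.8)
The plan is to combine the Hopkins-Singer model for $I\Z$ (Fact \ref{fact_IZ_model}), the Pontryagin-Thom equivalence of Picard groupoids (Lemma \ref{lem_cat_equivalence}), and the natural transformation $F$ from Lemma \ref{lem_functor}. First I would observe that, by Definition \ref{def_Anderson_dual}, $(I\Omega^G)^n(X,Y) = [\Sigma^\infty(X/Y) \wedge MTG, \Sigma^n I\Z]$, which Fact \ref{fact_IZ_model} identifies with $\pi_0\mathrm{Fun}_{\mathrm{Pic}}(\pi_{\le 1}(L((X/Y)\wedge MTG)_{1-n}), (\R \to \R/\Z))$ (here using that the smash-spectrum $(X/Y)\wedge MTG$ has $(1-n)$-th space whose fundamental Picard groupoid is $\pi_{\le 1}(L(\ldots)_{1-n})$). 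By Lemma \ref{lem_cat_equivalence} this groupoid is equivalent to $h\Bord^{G_\nabla}_{n-1}(X,Y)$, naturally in $(X,Y)$. Hence the target of the transformation $F$ in \eqref{eq_lem_functor} is naturally isomorphic to $(I\Omega^G)^n(-)$, and composing gives a natural transformation $F \colon (I\Omega^G_\dR)^n \to (I\Omega^G)^n$. The main work is to show $F$ is an isomorphism, which I would do via the five lemma applied to the commutative ladder \eqref{diag_mainthm}, whose rows are the short exact sequences extracted from \eqref{eq_exact_IOmegaG} (via the universal coefficient / Ext description) and from Proposition \ref{prop_exact_DOmegaG_dR}.

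For the ladder, I would check that $F$ is compatible with the structure maps $\mathrm{ch}$ and $p$: on the $\Hom(\Omega^G_n(-),\Z)$ side this is the statement that the form $\omega$ recovered from $(\omega,h)$ induces, via $\mathrm{cw}(\omega)$, the same homomorphism $\Omega^G_n \to \Z$ as the composite $(I\Omega^G)^n \to H^n(-;V^\bullet_{I\Omega^G}) \simeq \Hom(\Omega^G_n,\R)$ reads off, which follows from the compatibility diagram \eqref{diag_fact_IZ_model} in Fact \ref{fact_IZ_model} together with the identification of $\mathrm{cw}(\omega)$ with the de Rham class of $\omega$ given right after \eqref{eq_def_cw_omega_closed}. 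On the $\mathrm{Ext}$-side the compatibility with $p$ is immediate from the bottom row of \eqref{diag_fact_IZ_model} and the definition of $p$ in Definition \ref{def_str_map_IOmega_dR}. The outer vertical maps in \eqref{diag_mainthm} being identities, the five lemma then gives that $F$ is an isomorphism degreewise, hence a natural isomorphism of $\Z$-graded functors.

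It remains to upgrade this to the assertion that $(\widehat{I\Omega^G_\dR}, R, I, a, \int)$ is a differential extension with $S^1$-integration. The axioms of Definition \ref{def_diffcoh} are exactly Proposition \ref{prop_axiom_diffcoh}, once we know $(I\Omega^G_\dR)^* \simeq (I\Omega^G)^*$ and that $V_{I\Omega^G}^\bullet$ as defined in \eqref{eq_inv_poly} agrees with $(I\Omega^G)^*(\pt)\otimes \R$ — which is Lemmas \ref{lem_IOmega_R} and \ref{lem_sym_poly}, and the transformation $\mathrm{ch}$ of Definition \ref{def_str_map_IOmega_dR} is checked to be the Chern-Dold homomorphism by its characterizing property on $\pt$. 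For the $S^1$-integration axioms of Definition \ref{def_diff_integration}: the orientation-reversal and degeneracy identities follow from the corresponding elementary facts about $g\mapsto g_{S^1}$ (the minus sign in \eqref{eq_def_S^1_integration} is explained in Remark \ref{rem_welldef_int_dR}), and commutativity of \eqref{diag_diff_integration} reduces on the form level to \eqref{eq_int_form_sign}, on the $a$ part to \eqref{eq_minus_susp_reason}, and on the topological level to the fact that the isomorphism $F$ intertwines $\int$ with the topological $S^1$-integration of $I\Omega^G$ — the latter I would verify by tracing through the Pontryagin-Thom picture, where the bounding structure $g_{S^1}$ of Definition \ref{def_Gstr_S1} corresponds precisely to the suspension map.

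The hard part will be the bookkeeping identifying $F$ on the nose with the Hopkins-Singer model so that the ladder \eqref{diag_mainthm} genuinely commutes — in particular checking that the de Rham-theoretic map $\mathrm{ch}$ in our model corresponds, under the equivalence of Picard groupoids of Lemma \ref{lem_cat_equivalence}, to the map $\pi_0\mathrm{Fun}_{\mathrm{Pic}}(\ldots) \to H^n(-;\R)$ appearing in \eqref{diag_fact_IZ_model}; this amounts to matching the Chern-Weil representative of a characteristic form with its cohomology class under the Thom-isomorphism identification $H^n((X/Y)\wedge MTG;\R) \simeq H^n(X,Y;V^\bullet_{I\Omega^G})$, which is where the naturality of all constructions in $(X,Y)$ must be used carefully. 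Everything else is a routine assembly of the results already proved in this section.
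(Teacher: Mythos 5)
Your proposal is correct and follows essentially the same route as the paper: construct $F$ by combining Lemma \ref{lem_functor} with the Hopkins--Singer identification of Fact \ref{fact_IZ_model} and the Pontryagin--Thom equivalence of Lemma \ref{lem_cat_equivalence}, check compatibility with $p$ and $\mathrm{ch}$ via the commutative square \eqref{diag_fact_IZ_model}, and conclude by the five lemma using the exact rows from \eqref{eq_exact_IOmegaG} and Proposition \ref{prop_exact_DOmegaG_dR}; the differential-extension axioms are then Proposition \ref{prop_axiom_diffcoh}, exactly as you say. The one place you diverge is the $S^1$-integration: the paper disposes of it by observing that $\int$ is the special case of the differential pushforward of Section \ref{sec_push} applied to the canonical homomorphism $G\times\fr\to G$, so that the compatibility with the topological integration is subsumed in Theorem \ref{thm_push}, whereas you verify the axioms of Definition \ref{def_diff_integration} directly, reducing the diagram \eqref{diag_diff_integration} to \eqref{eq_int_form_sign}, \eqref{eq_minus_susp_reason}, and the fact that under Pontryagin--Thom the bounding structure $g_{S^1}$ of Definition \ref{def_Gstr_S1} realizes the suspension of the unit in $\Omega^\fr_1(S^1,\pt)$ while mapping to zero in $\Omega^\fr_1(\pt)$. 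This is precisely the ``direct proof'' the paper sketches in the last lines of its argument, so both routes are valid: yours is self-contained within Section \ref{sec_model} and more elementary, while the paper's choice avoids repeating the suspension bookkeeping at the cost of a forward reference to the pushforward machinery.
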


\begin{proof}
With Lemma \ref{lem_functor} in hand, the proof is essentially the same as a part of the proof of \cite[Proposition 5.24]{HopkinsSinger2005}. 
By Fact \ref{fact_IZ_model}, there is an isomorphism
\begin{align}\label{eq_proof_mainthm_1}
    (I\Omega^G)^n(X, Y) \simeq \pi_0\mathrm{Fun}_{\mathrm{Pic}} (\pi_{\le 1}(L((X/Y)\wedge MTG)_{1-n}), (\R \to \R/\Z))
\end{align}
natural in $(X, Y)$. 
Combining the isomorphism \eqref{eq_proof_mainthm_1} and Lemma \ref{lem_cat_equivalence} with the transformation \eqref{eq_lem_functor}, we get the natural transformation $F \colon I\Omega^G_\dR \simeq I\Omega^G$. 
Moreover, by construction it makes the diagram \eqref{diag_mainthm} commutative. 
Evaluating on each $(X, Y)$, the bottom row of \eqref{diag_mainthm} is exact by \eqref{eq_exact_IOmegaG}, and the top row is also exact by Proposition \ref{prop_exact_DOmegaG_dR}. 
By the five lemma, we see that $F$ gives the desired natural isomorphism. 

For the remaining statement, the fact that $(\widehat{I\Omega^G_\dR}, R, I, a)$ is a differential extension follows from Proposition \ref{prop_axiom_diffcoh}. 
For the $S^1$-integration $\int$, as we will see in Remark \ref{rem_integration_push}, $\int$ is a special case of the differential pushforwards in Section \ref{sec_push}. 
The statement on the $S^1$-integration follows by Theorem \ref{thm_push}. 
But we also remark that it is easy to give a direct proof in this case, using the fact that the bounding stable fr-structure $g_{S^1}$ on $S^1 = S^1 \times \pt$ in Definition \ref{def_Gstr_S1} defines the element in $\Omega^\fr_1(S^1)$ which maps to the suspension of the unit in $\Omega^\fr_1(S^1, \pt)$ and maps to the trivial element in $\Omega^\fr_1(\pt)$. 
The last statement follows from Proposition \ref{prop_N_G_canonical}. 
This completes the proof. 
\end{proof}

\subsection{Examples of elements in $(\widehat{I\Omega^G_\mathrm{dR}})^*$}\label{subsec_examples}
In this subsection, we give examples of elements in $(\widehat{I\Omega^G_\mathrm{dR}})^*(-)$ along with the corresponding invertible QFT's. 
In this subsection we only list examples. 
In the subsequent paper \cite{Yamashita2021} we will give topological characterization of some of the examples. 

\begin{ex}[The holonomy theory (1)]\label{ex_hol_target}
In this example we consider $G = \mathrm{SO}$. 
Fix a manifold $X$ and a hermitian line bundle with unitary connection $(L, \nabla)$ over $X$. 
Then we get an element
\begin{align*}
    (c_1(\nabla), \mathrm{Hol}_{\nabla}) \in (\widehat{I\Omega^{\mathrm{SO}}_{\mathrm{dR}}})^{2}(X). 
\end{align*}
Here, 
\begin{itemize}
    \item $c_1(\nabla) = \frac{\sqrt{-1}}{2\pi} F_\nabla \in \Omega_{\mathrm{clo}}^2(X)$ is the first Chern form of $\nabla$. 
    Identifying $\R$ with the degree-zero component of $N_{\mathrm{SO}}^\bullet$, we regard $\Omega_{\mathrm{clo}}^2(X) \subset  \Omega_{\mathrm{clo}}^2(X; N_{\mathrm{SO}}^\bullet)$. 
    \item The homomorphism $\mathrm{Hol}_\nabla \colon \mathcal{C}^{\mathrm{SO}_\nabla}_{1}(X) \to \R/\Z$ is given by the holonomy along the closed curve in $X$. 
    More precisely, an element $[M, g, f]$ in $\mathcal{C}^{\mathrm{SO}_\nabla}_{ 1}(X)$ consists of a closed oriented one-dimensional manifold $M$ with a map $f \in C^\infty(M, X)$, together with additional information on metric and connection. 
    Regarding it just as an oriented closed curve in $X$, we define $\mathrm{Hol}_\nabla([M, g, f])$ to be the holonomy of $(L, \nabla)$ along the curve, by identifying $\R/\Z \simeq \mathrm{U}(1)$. 
\end{itemize}
\end{ex}

\begin{ex}[The holonomy theory (2)]\label{ex_hol_internal}
In this example we consider $G = \mathrm{SO} \times \mathrm{U}(1)$. 
Here $\mathrm{U}(1)$ is the {\it internal symmetry group} explained in Example \ref{ex_Gstr} (4). 
We have an element
\begin{align*}
    (1 \otimes c_1, \mathrm{Hol}) \in (\widehat{I\Omega^{\mathrm{SO}\times \mathrm{U}(1)}_{\mathrm{dR}}})^{2}(\pt). 
\end{align*}
Here, 
\begin{itemize}
    \item We have $N_{\mathrm{SO} \times \mathrm{U}(1)}^\bullet =\left( \varprojlim_d(\mathrm{Sym} ( \mathfrak{so}(d, \R))^*)^{\mathrm{SO}(d; \R)} \otimes_\R (\mathrm{Sym} (\mathfrak{u}(1))^*)^{\mathrm{U}(1)}\right)^\bullet$. The first Chern polynomial $c_1 \in \left((\mathrm{Sym} (\mathfrak{u}(1))^*)^{\mathrm{U}(1)}\right)^2$ gives the element $1 \otimes c_1 \in \Omega^2_{\mathrm{clo}}(\pt; N_{\mathrm{SO} \times \mathrm{U}(1)}^\bullet) = N_{\mathrm{SO} \times \mathrm{U}(1)}^2$. 
    \item The homomorphism $\mathrm{Hol} \colon \mathcal{C}^{\mathrm{SO} \times \mathrm{U}(1)_\nabla}_{ 1}(\pt) \to \R/\Z$ is given by the holonomy of the internal $\mathrm{U}(1)$-connection. 
    More precisely, an element $[M, g]$ in $\mathcal{C}^{\mathrm{SO}\times \mathrm{U}(1)_\nabla}_{ 1}({\pt})$ consists of a closed oriented one-dimensional manifold $M$ with a principal $\mathrm{U}(1)$-bundle with connection, together with other data. 
    We define $\mathrm{Hol}([M, g])$ to be the holonomy of the $\mathrm{U}(1)$-connection, by identifying $\R/\Z \simeq \mathrm{U}(1)$.
\end{itemize}
\end{ex}

\begin{ex}[The classical Chern-Simons theory]\label{ex_CCS}
Fix a compact Lie group $H$ and an element $\lambda \in H^n(BH; \Z)$. 
The corresponding {\it classical Chern-Simons theory} (\cite{FreedCCS1}, \cite{FreedCCS2}) is an invertible QFT on $(n-1)$-dimensional manifolds equipped with orientations and principal $H$-bundles with connection. 
This generalizes Example \ref{ex_hol_internal}, which corresponds to $c_1 \in H^2(B\mathrm{U}(1); \Z)$. 
Its partition functions are given by the {\it Chern-Simons invariants} of $H$-connections. 
Here we recall its definition. 

Let $\lambda_\R\in H^*(BH; \R)$ be the $\R$-reduction of the element $\lambda$. 
Consider the category $\mathcal{C}_H$ of triples $(P, M, \nabla)$, where $P \to M$ is a smooth principal $H$-bundle over a manifold and $\nabla$ is a $H$-connection on $P$. 
We fix the following data. 
\begin{enumerate}
    \item An object $(\mathcal{E}, \mathcal{B}, \nabla_{\mathcal{E}})$ which is {\it $(n+1)$-classifying}, i.e., any object $(P, M, \nabla)$ in $\mathcal{C}_H$ with $\dim M\le n$ admits a morphism to $(\mathcal{E}, \mathcal{B}, \nabla_{\mathcal{E}})$, and any such morphisms $\phi_1$ and $\phi_2$ are smoothly homotopic. By the theorem of Narasimhan-Ramanan \cite{NarasimhanRamanan1961} such an
object exists. 
    \item A differential lift $\widehat{\lambda}\in \widehat{H}^n(\mathcal{B}; \Z)$ of the element $\lambda \in H^n(\mathcal{B}; \Z) \simeq H^n(BH;\Z)$ such that $R(\widehat{\lambda}) = \mathrm{cw}_{\nabla_{\mathcal{E}}}(\lambda_\R) \in \Omega^n_{\mathrm{clo}}(\mathcal{B})$. 
    Here $\widehat{H}^n(-; \Z)$ is the differential ordinary cohomology group, for example given by the Cheeger-Simons model explained in Example \ref{ex_diff_character}. 
\end{enumerate}

% In general, for a manifold $X$, an element $\widehat{\alpha} \in \widehat{H}^n(X; \Z)$ associates the {\it higher holonomy functional} $\chi_{\widehat{\alpha}}$, which assigns an $\R/\Z$-value for a closed $(n-1)$-dimensional manifold $M$ with an orientation $o$ and a smooth map $f \colon M \to X$. 
% If we use the Cheeger-Simons model $\widehat{H}_{CS}^n(X; \Z)$ for the differential ordinary cohomology, this is simply given as follows.  Write $\widehat{\alpha} = (\omega, k)$ in the notation in Example \ref{ex_diff_character}. 
% Choosing any representative of smooth singular fundamental class $t_M$ of $(M, o)$, we get ${f_*}t_M \in Z_{\infty, n-1}(X; \Z)$. 
% Set
% \begin{align*}
%     \chi_{\widehat{\alpha}}(M, o, f) := k({f_*}t_M) \in \R/\Z. 
% \end{align*}
% This value does not depend on the choice of $t_M$. 
The Chern-Simons invariants are defined using the pushforward in differential ordinary cohomology $\widehat{H\Z}$. 
In terms of the Cheeger-Simons model $\widehat{H\Z}_{\mathrm{CS}}$ in Example \ref{ex_diff_character}, the pushforward map $(p_M, o)_* \colon \widehat{H}^{\dim M + 1}(M; \Z) \to \widehat{H}^{1}(\pt; \Z) \simeq \R/\Z$ for a closed oriented manifold $(M, o)$ is given by the evaluation on the fundamental cycle. 

\begin{defn}[{The Chern-Simons invariants}]
Let $\lambda \in H^n(BH; \Z)$ and fix the data (1) and (2) above. 
Let $M$ be an $(n-1)$-dimensional closed manifold equipped with an orientation $o_M$ and a principal $H$-bundle with connection $(P, \nabla)$. 
Choose a morphism $\phi \colon (M, P, \nabla) \to (\mathcal{E}, \mathcal{B}, \nabla_{\mathcal{E}})$ in $\mathcal{C}_H$.   
We define the {\it Chern-Simons invariant} of $(M, o, P, \nabla)$ by
\begin{align}\label{eq_def_CS_inv}
    h_{\mathrm{CS}_{\widehat{\lambda}}} (M, o, P, \nabla) := (p_M, o)_*\phi^* \widehat{\lambda} \in \widehat{H}^1(\pt; \Z) \simeq \R/\Z. 
\end{align}
The value \eqref{eq_def_CS_inv} does not depend on the choice of $\phi$. 
\end{defn}

The classical Chern-Simons theory corresponds to the element
\begin{align}\label{eq_elem_CCS}
    (1 \otimes\lambda_\R, h_{\mathrm{CS}_{\widehat{\lambda}}} )\in (\widehat{I\Omega^{\mathrm{SO}\times H}_{\mathrm{dR}}})^{n}(\pt). 
\end{align}
Here $1 \otimes \lambda_\R$ is as in Example \ref{ex_hol_internal}, and $h_{\mathrm{CS}_{\widehat{\lambda}}}$ is regarded as a homomorphism from $\mathcal{C}^{\mathrm{SO} \times H_\nabla}_{n-1}(\pt)$. 

Now we analyze the dependence on the choice of a lift $\widehat{\lambda}$ of $\lambda$ in (2). 
By the axioms of differential cohomology (Definition \ref{def_diffcoh}), we see that two choices $\widehat{\lambda}_1$ and $\widehat{\lambda}_2$ differs by an element in $H^{n-1}(\mathcal{B}; \R) \simeq H^{n-1}(BH; \R)$, i.e., there exists an element $\alpha \in H^{n-1}(\mathcal{B}; \R)$ with
\begin{align}\label{eq_ex_CCS_difference}
    a_{\mathrm{CS}}(\alpha) = \widehat{\lambda}_1 - \widehat{\lambda}_2. 
\end{align}
In particular, if $n$ is even, the lift $\widehat{\lambda}$ is unique because $H^{\mathrm{odd}}(BH; \R) = 0$. 
In general it is possible that the difference \eqref{eq_ex_CCS_difference} is nonzero, and in such a case the two elements \eqref{eq_elem_CCS} constructed from them are different. 
But they define the same element in $(I\Omega^{\mathrm{SO}\times H}_{\mathrm{dR}})^{n}(\pt)$, 
\begin{align*}
    I(1 \otimes\lambda_\R, h_{\mathrm{CS}_{\widehat{\lambda}_1}} )
    = I(1 \otimes\lambda_\R, h_{\mathrm{CS}_{\widehat{\lambda}_2}} )
    \in (I\Omega^{\mathrm{SO}\times H}_{\mathrm{dR}})^{n}(\pt). 
\end{align*}
This is because 
\begin{align*}
    (1 \otimes\lambda_\R, h_{\mathrm{CS}_{\widehat{\lambda}_1}} ) - (1 \otimes\lambda_\R, h_{\mathrm{CS}_{\widehat{\lambda}_2}} )
    = a(1 \otimes \alpha). 
\end{align*}
Here the domain of $a$ in \eqref{eq_def_DOmega_a} in this case is 
$\Omega^{n-1}(\pt; N_{\mathrm{SO} \times H}^\bullet) / \mathrm{Im} d = (H^*(B\mathrm{SO}; \R) \otimes_\R H^*(BH; \R))^{n-1}$.  
Thus we see that, the deformation class
\begin{align}\label{eq_elem_CCS_def}
     I(1 \otimes\lambda_\R, h_{\mathrm{CS}_{\widehat{\lambda}}} )
    \in (I\Omega^{\mathrm{SO}\times H}_{\mathrm{dR}})^{n}(\pt). 
\end{align}
is independent of the choice of the lift $\widehat{\lambda}$. 
\end{ex}

\begin{ex}[The theory of massive free complex fermions]\label{ex_complex_eta}
In this example we consider $G = \mathrm{Spin}^c$. 
Let $k$ be a positive integer. 
Recall that we have constructed a model $(\widehat{I\Omega^G_{\mathrm{ph}}})^*$ in Subsubsection \ref{sec_physical_app} which is isomorphic to $(\widehat{I\Omega^G_{\mathrm{dR}}})^*$ by Proposition \ref{prop_sub_model_isom}. 
We are going to construct an element in $\left(\widehat{I\Omega^{\mathrm{Spin}^c}_\mathrm{ph}}\right)^{2k}$. 

Given a closed $(2k-1)$-dimensional manifold $M$ with a physical tangential $\mathrm{Spin}^c$-structure $g$ (Definition \ref{def_phys_Gstr}), we set
\begin{align*}
    \overline{\eta}(M, g) := \overline{\eta}(D_{M})
    = \frac{\eta(D_{M}) + \dim \ker D_{M}}{2} \in \R. 
\end{align*}
where $D_{M}$ is the $\mathrm{Spin}^c$-Dirac operator on $M$ with respect to $g$ and $\eta(D_M) \in \R$ is its eta invariant. 
Note that we have used the assumption that the connection in $g$ is compatible with the Levi-Civita connection. 

Recall that the Atiyah-Patodi-Singer index theorem (\cite{APS1}, \cite{APS2}, \cite{APS3}) says that, if $(W, g_W)$ is a compact $2k$-dimensional manifold with boundary with a collar structure equipped with a geometric $\mathrm{Spin}^c$-structure which is compatible with Levi-Civita connection, we have
\begin{align}\label{eq_APS}
    \mathrm{Ind}_{\mathrm{APS}}(D_{W})= \int_{W}\mathrm{Todd}(g_W) - \overline{\eta}(\del W, \del g_W). 
\end{align}
Here the left hand side of \eqref{eq_APS} is the Atiyah-Patodi-Singer index of the Dirac operator on $W$, which is an integer. 
Thus, regarding $\overline{\eta}$ as a homomorphism $\overline{\eta} \colon \mathcal{C}_{\mathcal{D}}(\pt) \to \R/\Z$ with $\mathcal{D} = h\mathrm{Bord}^{\mathrm{Spin}^c_{\mathrm{ph}}}_{2k-1}(\pt)$, we get the element
\begin{align*}
    (( \mathrm{Todd})|_{2k}, \overline{\eta}) \in \left(\widehat{I\Omega^{\mathrm{Spin}^c}_{\mathrm{ph}}}\right)^{2k}(\pt) \simeq \left({I\Omega^{\mathrm{Spin}^c}_{\mathrm{ph}}}\right)^{2k}(\pt) \simeq \left({I\Omega^{\mathrm{Spin}^c}_{\mathrm{dR}}}\right)^{2k}(\pt). 
\end{align*}

This example can be generalized to include target spaces. 
Fix a manifold $X$ and a hermitian vector bundle with unitary connection $(E, h^E, \nabla^E)$ over $X$. 
Then, using the reduced eta invariants $\overline{\eta}_{\nabla^E}$ for Dirac operators twisted by the pullback of $(E, h^E, \nabla^E)$, we get the element
\begin{align*}
    \left((\mathrm{Ch}(\nabla^E) \otimes \mathrm{Todd})|_{2k}, \overline{\eta}_{\nabla^E}\right) \in \left(\widehat{I\Omega^{\mathrm{Spin}^c}_{\mathrm{ph}}}\right)^{2k}(X) \simeq \left(\widehat{I\Omega^{\mathrm{Spin}^c}_{\mathrm{dR}}}\right)^{2k}(X). 
\end{align*}
Its deformation class in $\left({I\Omega^{\mathrm{Spin}^c}_{\mathrm{ph}}}\right)^{2k}(X)$ only depends on the class $[E]\in K^0(X)$. 
\end{ex}

\begin{ex}[The theory of massive free real fermions]\label{ex_real_eta}
Here we consider the real version of Example \ref{ex_complex_eta}. 
Now $G = \mathrm{Spin}$. 
We consider the theory on $(8m+3)$-dimension with nonnegative integer $m$. This is the dimension where the difference from Example \ref{ex_complex_eta} appears. 
On Spin manifolds, the Atiyah-Patodi-Singer index theorem \eqref{eq_APS} becomes
\begin{align}\label{eq_APS_real}
    \mathrm{Ind}_{\mathrm{APS}}(D_{W})= \int_{W}\widehat{A}(g_W) - \overline{\eta}(\del W, \del g_W). 
\end{align}
Moreover, if $\dim W \equiv 4 \pmod 8$, the APS index is an {\it even} integer. 
This allows us to define the element
\begin{align*}
    \left( \frac{1}{2}\widehat{A}|_{8m+4}, \frac{1}{2}\overline{\eta}\right) \in \left(\widehat{I\Omega^{\mathrm{Spin}}_{\mathrm{ph}}}\right)^{8m+4}(\pt) \simeq \left({I\Omega^{\mathrm{Spin}}_{\mathrm{ph}}}\right)^{8m+4}(\pt) \simeq \left({I\Omega^{\mathrm{Spin}}_{\mathrm{dR}}}\right)^{8m+4}(\pt). 
\end{align*}
\end{ex}

\subsection{The refinement of the Anderson self-duality of $H\Z$}\label{subsec_self_duality}
In this subsection, we relate our model $I\Z_\mathrm{dR}:= I\Omega^\fr_\mathrm{dR}$ (see the end of Definition \ref{def_hat_DOmegaG}) with the ordinary cohomology theory. 
The ordinary cohomology theory $H\Z$ is Anderson self-dual, with the self-duality element $\gamma_H \in [H\Z, I\Z] $, whose multiplication gives the isomorphism $H\Z \simeq IH\Z$. This element corresponds to a generator in $[H\Z, I\Z] = I\Z^0(H\Z) = \Hom(\pi_0(H\Z), \Z) =\Z$. 
Using the obvious analogy of the Cheeger-Simons differential character model $\widehat{H\Z}_{\mathrm{CS}}$ (Example \ref{ex_diff_character}) and our differential model $\widehat{I\Z_{\dR}}$, we can refine $\gamma_H \colon H\Z \to I\Z$ to a transformation $\widehat{\gamma}_\dR \colon \widehat{H}_{\mathrm{CS}}^*(-; \Z) \to (\widehat{I\Z}_\mathrm{dR})^*(-)$. 

First we recall the definition of $\gamma_H \colon H\Z \to I\Z$. 
Since we have $[H\Z, I\Z] = I\Z^0(H\Z) = \Hom(\pi_0(H\Z), \Z) =\Z$ and we have $[H\Z, I\Z] = \varprojlim_{n}[H\Z_n, I\Z_n]$ by the vanishing of phantoms $\varprojlim_{n}[H\Z_{n}, I\Z_{n-1}] = 0$ (\cite[Chapter III, Theorem 4.21]{rudyak1998}), 
the transformation of cohomology theories $H\Z \to I\Z$ on CW-pairs are classified by its value on $\pt$. 

In order to define $\widehat{\gamma}_\mathrm{dR}$, we remark the following. 
Let $(\omega, k) \in \widehat{H}_{\mathrm{CS}}^n(X, Y; \Z)$. 
Then we get a group homomorphism (here $\fr_\nabla = \fr$ in the obvious sense), 
\begin{align*}
    k|_{\fr} \colon \mathcal{C}^\fr_{n-1}(X, Y) \to \R/\Z
\end{align*}
by, given a (differential) smooth stable tangential $\fr$-cycle $(M, g, f)$ of dimension $(n-1)$ over $(X, Y)$, choosing any representative $t_M \in Z_{\infty, n-1}(X, Y; \Z)$ of the fundamental class and applying $k$ to $t_M$. 
This value does not depend on the choice of $t_M$ because of the compatibility condition for $(\omega, k)$. 

\begin{defn}[{$\widehat{\gamma}_\mathrm{dR}$ and $\gamma_\mathrm{dR}$}]\label{def_gamma_ph}
For a pair of manifolds $(X, Y)$ and $n \in \Z$, we define a homomorphism 
\begin{align*}
    \widehat{\gamma}_\mathrm{dR}^n \colon\widehat{H}_{\mathrm{CS}}^n(X, Y; \Z) \to (\widehat{I\Z_\mathrm{dR}})^n(X, Y)
\end{align*}  
by sending an element $(\omega, k)$ to $(\omega, k|_\fr)$. 
The compatibility condition (Definition \ref{def_hat_DOmegaG} (1) (c)) for the pair $(\omega, k|_\fr)$ follows from the compatibility condition \eqref{eq_compatibility_diffchar} for the pair $(\omega, k)$. 

We easily see that we have $a_{\mathrm{dR}} =\widehat{\gamma}^n_\mathrm{dR}\circ a_{\mathrm{CS}}$, so it induces the homomorphism on the quotient, 
\begin{align*}
    {\gamma}_\mathrm{dR}^n \colon H^n(X, Y; \Z) \to ({I\Z_\mathrm{dR}})^n(X, Y). 
\end{align*} 
We also easily see that these homomorphisms are functorial, so give natural transformations $\widehat{\gamma}_\mathrm{dR}\colon \widehat{H}_{\mathrm{CS}}^*(-; \Z) \to I\Z_\mathrm{dR}^*$ and $\gamma_\mathrm{dR}\colon H\Z^* \to I\Z_\mathrm{dR}^*$ between functors $\mathrm{MfdPair}^{\mathrm{op}} \to \mathrm{Ab}^\Z$. 
\end{defn}

\begin{prop}\label{prop_duality_HZ}
Under the isomorphism $I\Z_\dR \simeq I\Z$ in Theorem \ref{thm_IOmega_dR=IOmega}, the natural transformation $\gamma_\dR$ coincides with the self-duality map $\gamma_H \colon H\Z \to I\Z$. 
\end{prop}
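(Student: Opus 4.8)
The plan is to reduce the claim to a statement about the fundamental Picard groupoids of the relevant spectra, using the model for $I\Z$ from Fact \ref{fact_IZ_model} together with the description of $I\Omega^G_\dR$ via $F$ in Lemma \ref{lem_functor} and Theorem \ref{thm_IOmega_dR=IOmega}. Concretely, under the chain of identifications
\begin{align*}
(I\Z_\dR)^n(X,Y) \xrightarrow[\simeq]{F} \pi_0\mathrm{Fun}_{\mathrm{Pic}}\big(h\Bord^{\fr_\nabla}_{n-1}(X,Y), (\R\to\R/\Z)\big) \xrightarrow[\simeq]{\text{Lem.\ }\ref{lem_cat_equivalence}} \pi_0\mathrm{Fun}_{\mathrm{Pic}}\big(\pi_{\le 1}(L((X/Y)\wedge \mathbb{S})_{1-n}), (\R\to\R/\Z)\big) \simeq I\Z^n(X,Y),
\end{align*}
I want to identify the composite $\gamma_\dR$ with $\gamma_H$. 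Since $\gamma_H\colon H\Z\to I\Z$ is, by definition, the Anderson self-duality map coming from the universal coefficient theorem, I should first recall that it too admits a description on fundamental Picard groupoids: the Cheeger--Simons model $\widehat{H}_{\mathrm{CS}}$ realizes $H\Z^n$ via functors $\pi_{\le 1}((X/Y\wedge H\Z)_{-n})\to(\R\to\R/\Z)$ (this is essentially Hopkins--Singer, cf.\ the discussion around Fact \ref{fact_IZ_model}), and $\gamma_H$ is induced by the Hurewicz/evaluation pairing $\pi_{\le 1}((X/Y)_{-n}) \to \pi_{\le 1}((X/Y)\wedge H\Z)_{-n}$, i.e.\ by the unit $\mathbb{S}\to H\Z$.

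The key steps, in order, are as follows. First, unwind $\widehat{\gamma}_\dR$ at the level of differential cycles: an element $(\omega,k)\in\widehat{H}^n_{\mathrm{CS}}(X,Y;\Z)$ is sent to the pair $(\omega,k)$ where now $k$ is regarded as a homomorphism on $\mathcal{C}^{\fr}_{n-1}(X,Y)$ by evaluating on fundamental cycles $t_M\in Z_{\infty,n-1}(X,Y;\Z)$ of the closed framed $(n-1)$-manifolds $M$; this is well-defined by \eqref{eq_compatibility_diffchar}. Second, observe that a closed framed $(n-1)$-manifold with a map $f$ to $(X,Y)$, together with its fundamental cycle, is precisely the geometric incarnation of the Hurewicz image: the Pontryagin--Thom class of $(M,\fr,f)$ in $\pi_{n-1}^{\st}(X,Y) = \pi_{n-1}(L((X/Y)\wedge\mathbb{S}))$ maps under the unit $\mathbb{S}\to H\Z$ to $[f_*t_M]\in H_{n-1}(X,Y;\Z)$. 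Third, chase this through the models: the functor $F_{(\omega,k)}$ on $h\Bord^{\fr_\nabla}_{n-1}(X,Y)$ assigns to $(M,\fr,f)$ the number $k(f_*t_M)$, which is exactly the composite of the Cheeger--Simons character $k\colon H_{n-1}(X,Y;\Z)\to\R/\Z$ (on the torsion part; more precisely the full differential character) with the Hurewicz map, and the latter composite is by definition the functor representing $\gamma_H$. Fourth, since both $\gamma_\dR$ and $\gamma_H$ are compatible with the $a$-maps (this is recorded for $\gamma_\dR$ in Definition \ref{def_gamma_ph}, and the analogous fact for $\gamma_H$ follows from the homotopy-fiber description $\Sigma^{-1}I\R/\Z\to I\Z\to H\R$ discussed after Fact \ref{fact_IZ_model}), and both induce the same map on the quotient $H^n(X,Y;\Z)$, the naturality and the five-lemma squeeze (using the exact sequences for $I\Z_\dR$ and $I\Z$ as in \eqref{diag_mainthm}) finish the identification.

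The main obstacle I expect is the bookkeeping in the second and third steps: matching the \emph{geometric} fundamental cycle $t_M$ of a framed bordism cycle with the \emph{homotopy-theoretic} Hurewicz image under Pontryagin--Thom, compatibly with the equivalence of Lemma \ref{lem_cat_equivalence}, and checking that the functor $\pi_{\le 1}(L((X/Y)\wedge\mathbb{S})_{1-n})\to(\R\to\R/\Z)$ one obtains from $(\omega,k)$ agrees on the nose (not just up to natural isomorphism) with the one defining $\gamma_H\cdot(\omega,k)$ under Fact \ref{fact_IZ_model}. This requires being careful that the unit $\mathbb{S}\to H\Z$ induces on fundamental Picard groupoids the map sending a stable framed $0$-cycle to its underlying integer, and tracking signs and degree shifts ($1-n$ versus $-n$) consistently; once that compatibility is pinned down, the rest is formal.
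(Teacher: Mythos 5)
Your overall framework (pass to the Picard-groupoid models, unwind $\widehat{\gamma}_\dR$ on cycles, relate framed cycles to fundamental cycles via Pontryagin--Thom) is the same as the paper's, but the load-bearing step is missing. You assert that the composite ``evaluate on fundamental cycles / precompose with the unit $\mathbb{S}\to H\Z$'' is \emph{by definition} the functor representing $\gamma_H$. It is not: $\gamma_H$ is defined abstractly as the Anderson self-duality element in $[H\Z, I\Z]$, and identifying it with the explicit evaluation functor in the Hopkins--Singer model is precisely the content of the paper's Lemma \ref{lem_duality_HZ_singular}. Its proof is not formal bookkeeping about the unit map: one must check that the explicit construction assembles into a transformation of cohomology theories on CW-pairs, invoke $[H\Z, I\Z]=\Hom(\pi_0(H\Z),\Z)=\Z$ together with the vanishing of phantom maps $\varprojlim_n[H\Z_n, I\Z_{n-1}]=0$ to know that such transformations are determined by their value on $\pt$, and then compare on $\pt$. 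Your proposal contains none of this (and your preliminary claim that $\widehat{H}_{\mathrm{CS}}$ ``realizes $H\Z^n$ via functors on $\pi_{\le 1}((X/Y\wedge H\Z)_{-n})$'' already presupposes the self-duality you are trying to identify, so it risks circularity). The ``compatibility to be pinned down'' that you defer at the end is therefore the theorem itself, not sign- and degree-tracking.

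Two further problems. First, in your third step you treat $k$ as if it paired with homology classes via the Hurewicz map; but $k$ is defined only on cycles and does not descend to $H_{n-1}$ unless $\omega=0$. To compare the $\R/\Z$-valued functor determined by $(\omega,k)$ with the $\Z$-valued functor $\mathrm{ev}_c$ for an integral cocycle $c$ representing $I_{\mathrm{CS}}(\omega,k)$, one needs a cochain $h_\R$ with $\delta h_\R = c-\omega$, i.e.\ a Hopkins--Singer differential cocycle, which is exactly what furnishes the natural isomorphism between the two Picard-groupoid functors in the paper's proof; your proposal has no substitute for this datum. Second, the concluding ``five-lemma squeeze'' is not a valid mechanism: the five lemma shows that a map in a commutative ladder is an isomorphism, it does not show that two given transformations are equal, and knowing that two maps into $I\Z^n(X,Y)$ agree after composing with $\mathrm{ch}$ and agree on the image of the $\Ext$-part does not force them to coincide. (If your functor-level identification in step three were actually established, this last step would in any case be superfluous.)
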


To show Proposition \ref{prop_duality_HZ}, we need to describe the Anderson self-duality homomorphism $\gamma_H \colon H\Z \to I\Z$ in the model of $I\Z$ by Hopkins-Singer (Fact \ref{fact_IZ_model}). 
We work in the category of CW-pairs. 
We use the topological variant of the framed bordism Picard groupoid $h\mathrm{Bord}^\fr_{n-1}(X, Y)_{\mathrm{top}}$, which is defined for any CW-pair $(X, Y)$, by requiring the map to $(X, Y)$ to be continuous rather than smooth in Definition \ref{def_Bord_Pic_cat}. 
By the theorem of Pontryagin-Thom, we have an equivalence
\begin{align}\label{eq_equivalence_cat_top}
    h\mathrm{Bord}^\fr_{n-1}(X, Y)_{\mathrm{top}} \simeq \pi_{\le 1}((L(X/Y))_{1-n}). 
\end{align}

\begin{lem}\label{lem_duality_HZ_singular}
Let $(X, Y)$ be a CW-pair.  
Choose a functor of Picard groupoids, 
\begin{align}\label{eq_functor_triangulation}
    T_{X, Y} \colon h\mathrm{Bord}^\fr_{n-1}(X, Y)_{\mathrm{top}} \to \left(C_n(X, Y; \Z)/B_n(X, Y; \Z) \xrightarrow{\del} Z_{n-1}(X, Y; \Z)\right), 
\end{align}
where $C_*$, $Z_*$ and $B_*$ denote the singular chains, cycles and boundaries, 
by choosing fundamental cycles on objects and morphisms of $h\mathrm{Bord}^\fr_{n-1}(X, Y)_{\mathrm{top}}$. 

Given a singular cohomology class $[c]  \in H^n(X, Y; \Z)$, 
take a representative by a singular $n$-cocycle $c \in Z^n(X, Y; \Z)$. 
Consider the functor of Picard groupoids, 
\begin{align}\label{eq_functor_ev_sing}
   \mathrm{ev}_c \colon  \left(C_n(X, Y; \Z)/B_n(X, Y; \Z) \xrightarrow{\del} Z_{n-1}(X, Y; \Z)\right) \to (\Z \to 0), 
\end{align}
defined by the evaluation of $c$ on morphisms. 
Then the natural isomorphism class of the composition of the functors $\mathrm{ev}_c \circ T_{X, Y}$ is independent of the choice of $T_{X, Y}$ and the cocycle $c$ representing $[c]$, and defines a homomorphism
\begin{align}\label{eq_lem_duality_HZ}
    H^n(X, Y; \Z) &\to \pi_0 \mathrm{Fun}_{\mathrm{Pic}} \left(h\mathrm{Bord}^\fr_{n-1}(X, Y)_{\mathrm{top}}, (\Z \to 0)\right) \simeq I\Z^n(X, Y)  \\
     [c] &\mapsto [\mathrm{ev}_c \circ T_{X, Y}], \notag
\end{align}
where the last isomorphism use the Fact \ref{fact_IZ_model} and the equivalences $(\Z \to 0) \simeq (\R \to \R/\Z)$ and \eqref{eq_equivalence_cat_top}. 
Moreover, the homomorphism \eqref{eq_lem_duality_HZ} coincides with the transformation given by the Anderson self-duality element,
\begin{align*}
    \gamma_H \colon H^n(X, Y; \Z) \to I\Z^n(X, Y) , 
\end{align*}

\end{lem}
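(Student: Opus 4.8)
The plan is to compute both sides inside the Picard-groupoid model of Fact~\ref{fact_IZ_model} and to match them there. Recall that the Anderson self-duality element is the homotopy class of a map of spectra $\gamma_H\colon H\Z\to I\Z$, and the transformation in question is the induced $(\gamma_H)_*\colon H^n(X,Y;\Z)=[\Sigma^\infty(X/Y),\Sigma^n H\Z]\to I\Z^n(X,Y)$. Write $\mathcal{B}:=h\mathrm{Bord}^{\fr}_{n-1}(X,Y)_{\mathrm{top}}$. Combining Fact~\ref{fact_IZ_model}, the Pontryagin--Thom equivalence \eqref{eq_equivalence_cat_top}, and $\pi_{\le1}(I\Z_1)\simeq(\R\to\R/\Z)$ gives a natural isomorphism
\begin{align*}
I\Z^n(X,Y)\ \simeq\ \pi_0\mathrm{Fun}_{\mathrm{Pic}}\bigl(\mathcal{B},\,(\R\to\R/\Z)\bigr)
\end{align*}
sending the class of a map $\Sigma^\infty(X/Y)\to\Sigma^n I\Z$ to the functor it induces on fundamental Picard groupoids. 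Unravelling this for $[c]\in H^n(X,Y;\Z)$, the class $(\gamma_H)_*[c]$ corresponds to the composite
\begin{align*}
\mathcal{B}\ \simeq\ \pi_{\le1}\bigl((L(X/Y))_{1-n}\bigr) &\xrightarrow{\ [c]_*\ } \pi_{\le1}\bigl((H\Z)_1\bigr) \\
&\xrightarrow{(\gamma_H)_1} \pi_{\le1}(I\Z_1)\ \simeq\ (\R\to\R/\Z),
\end{align*}
where $[c]_*$ is the functor induced by the classifying map $\Sigma^\infty(X/Y)\to\Sigma^n H\Z$ of $[c]$ and $(\gamma_H)_1\colon(H\Z)_1\to I\Z_1$ is the map of first spaces induced by $\gamma_H$. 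So it suffices to prove: \textbf{(i)} under \eqref{eq_equivalence_cat_top} the functor $[c]_*$, valued in $\pi_{\le1}((H\Z)_1)\simeq(\Z\xrightarrow{0}0)$, is naturally isomorphic to $\mathrm{ev}_c\circ T_{X,Y}$; and \textbf{(ii)} $(\gamma_H)_1$ induces, under the equivalences $\pi_{\le1}((H\Z)_1)\simeq(\Z\xrightarrow{0}0)$ and $\pi_{\le1}(I\Z_1)\simeq(\R\to\R/\Z)$, the standard inclusion $(\Z\xrightarrow{0}0)\hookrightarrow(\R\to\R/\Z)$ used in \eqref{eq_lem_duality_HZ}. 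Granting these, $(\gamma_H)_*[c]$ is represented by $\mathrm{ev}_c\circ T_{X,Y}$, which is the defining representative of \eqref{eq_lem_duality_HZ}; and since $[c]_*$ is manifestly a well-defined functor, \textbf{(i)} also yields the well-definedness of $[\mathrm{ev}_c\circ T_{X,Y}]$ asserted in the lemma.

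Part \textbf{(ii)} is formal. By Definition~\ref{def_IZ}, $I\Z$ is the homotopy fibre of $\pi\colon H\R\to I(\R/\Z)$, so there is a map $q\colon I\Z\to H\R$ whose effect on $\pi_0$ is the inclusion $\Z=\ker(\R\xrightarrow{\mathrm{mod}\ \Z}\R/\Z)\hookrightarrow\R$. Since $[H\Z,\Sigma^{-1}I(\R/\Z)]=\Hom(\pi_{-1}(H\Z),\R/\Z)=0$, the map $\gamma_H$ is the unique lift of the canonical map $H\Z\to H\R$ through $q$; on $\pi_0$ this forces $\pi_0(\gamma_H)=\mathrm{id}_\Z$. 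Now $\pi_{\le1}((H\Z)_1)$ and $\pi_{\le1}(I\Z_1)$ are both Picard groupoids with trivial $\pi_0$ and $\pi_1=\Z$, and $(\gamma_H)_1$ induces $\mathrm{id}_\Z$ on $\pi_1$; hence, under $\pi_{\le1}((H\Z)_1)\simeq(\Z\xrightarrow{0}0)$ and $\pi_{\le1}(I\Z_1)\simeq(\R\to\R/\Z)$---both sending the object to $0\in\R/\Z$ and identifying the automorphism group $\Z$ with $\ker(\R\to\R/\Z)$, compatibly with the equivalences used in Fact~\ref{fact_IZ_model}---the functor $(\gamma_H)_1$ is naturally isomorphic to the standard inclusion $(\Z\xrightarrow{0}0)\hookrightarrow(\R\to\R/\Z)$.

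The substantial step, and the main obstacle, is \textbf{(i)}: identifying, through Pontryagin--Thom, the functor $[c]_*\colon\mathcal{B}\to\pi_{\le1}((H\Z)_1)\simeq(\Z\xrightarrow{0}0)$ with $\mathrm{ev}_c\circ T_{X,Y}$. This is where one essentially repeats a part of the proof of \cite[Proposition 5.24]{HopkinsSinger2005}. The point is that $(H\Z)_1\simeq K(\Z,1)$, so $[c]_*$ is the functor induced by a map $(L(X/Y))_{1-n}\to K(\Z,1)$ classifying $[c]$ (which the suspension isomorphism reidentifies with a singular cohomology class); tracing the Pontryagin--Thom construction \eqref{eq_equivalence_cat_top}, an object $(M,g,f)$ of $\mathcal{B}$ (a closed framed $(n-1)$-manifold with a map to $(X,Y)$) and a morphism of $\mathcal{B}$ (a framed bordism) are carried to a point and a path of $(L(X/Y))_{1-n}$ whose induced $\Z$-valued displacement under this class is the pairing of a singular cocycle representative $c$ of $[c]$ with the image in $C_*(X,Y;\Z)$ of a fundamental cycle of the underlying framed manifold---exactly the effect of $T_{X,Y}$ followed by $\mathrm{ev}_c$. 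The same analysis shows $\mathrm{ev}_c\circ T_{X,Y}$ is a genuine functor of Picard groupoids whose natural-isomorphism class does not depend on the chosen fundamental cycles defining $T_{X,Y}$ nor on the cocycle representing $[c]$---the last two independences being routine chain-homotopy arguments using that $c$ is a cocycle---which establishes the well-definedness part of the lemma. Combined with \textbf{(ii)}, this gives the asserted equality of \eqref{eq_lem_duality_HZ} with $(\gamma_H)_*$.
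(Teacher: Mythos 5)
Your proposal is correct in substance, but it takes a genuinely different route from the paper. You compute $(\gamma_H)_*[c]$ directly inside the Picard-groupoid model: using the explicit description of the isomorphism in Fact \ref{fact_IZ_model}, you unravel the composite $\Sigma^\infty(X/Y)\to\Sigma^n H\Z\to\Sigma^n I\Z$ on $(1-n)$-th spaces, verify that $(\gamma_H)_1$ induces the inclusion $(\Z\xrightarrow{0}0)\hookrightarrow(\R\to\R/\Z)$ (your step (ii), which is fine, given the compatibility of the fibre sequence with the groupoid identifications noted after Fact \ref{fact_IZ_model}), and reduce everything to your step (i): that the Pontryagin--Thom equivalence \eqref{eq_equivalence_cat_top} intertwines the functor induced by the classifying map of $[c]$ with $\mathrm{ev}_c\circ T_{X,Y}$. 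The paper avoids this global tracing altogether: after the (easy) independence of choices, it checks that \eqref{eq_lem_duality_HZ} is natural in $(X,Y)$ and compatible with the relative coboundary maps, hence is a transformation of cohomology theories on CW-pairs; since $[H\Z,I\Z]=\Hom(\pi_0(H\Z),\Z)=\Z$ and phantom maps vanish, such a transformation is determined by its value on $\pt$, where the comparison with $\gamma_H$ is an immediate count of signed framed points. The trade-off is clear: the paper's reduction buys a very light verification (naturality, coboundary compatibility, and the point case only), while your route is more explicit and self-contained but concentrates all the content in step (i), which you only sketch --- to make it complete you must check, in the relative setting and at the level of morphisms (bordisms) as well as objects, that Pontryagin--Thom followed by the Eilenberg--MacLane classifying map computes evaluation of $c$ on pushed-forward fundamental chains, a Hurewicz-type compatibility that is standard but is precisely the bookkeeping the paper's argument is designed to bypass; your pointer to the proof of \cite[Proposition 5.24]{HopkinsSinger2005} is looser than the paper's own use of that reference, which concerns Theorem \ref{thm_IOmega_dR=IOmega} rather than this identification. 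If you keep your route, spell out step (i) at least on $\pi_0$ and $\pi_1$ and note that agreement of the two functors up to natural isomorphism is all that \eqref{eq_lem_duality_HZ} requires.
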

\begin{proof}
The first claim is easy, because both the natural isomorphism classes of the functors $T_{X, Y}$ and $\mathrm{ev}_c$ are independent of the choices. 
We can easily check that the homomorphism \eqref{eq_lem_duality_HZ} is functorial and compatible with the relative coboundary maps,  thus defining a transformation $H\Z \to I\Z$ of cohomology theories on CW-pairs. 

Since we have $[H\Z, I\Z] = I\Z^0(H\Z) = \Hom(\pi_0(H\Z), \Z) =\Z$ and we have $[H\Z, I\Z] = \varprojlim_{n}[H\Z_n, I\Z_n]$ by the vanishing of phantoms $\varprojlim_{n}[H\Z_{n}, I\Z_{n-1}] = 0$ (\cite[Chapter III, Theorem 4.21]{rudyak1998}), 
the transformation of cohomology theories $H\Z \to I\Z$ on CW-pairs is classified by its value on $\pt$. 
We can easily check that the transformation given by \eqref{eq_lem_duality_HZ} coincides with $\gamma_H$ on $\pt$, thus we conclude that it coincides with $\gamma_H$ as a transformation of cohomology theories. 
This finishes the proof. 
\end{proof}

Now we prove Proposition \ref{prop_duality_HZ}. 
\begin{proof}[Proof of Proposition \ref{prop_duality_HZ}]

Here we use the model $\widehat{H\Z}_\HS$ of the ordinary differential cohomology theory in terms of {\it differential cocycles} \cite{HopkinsSinger2005}. 
An element in $\widehat{H}_\HS^n(X, Y; \Z)$ is represented by a triple $(c, h_\R, \omega) \in Z^n(X, Y; \Z) \times C^{n-1}(X, Y; \R) \times \Omega_{\mathrm{clo}}^n(X, Y)$ such that $\delta h_\R = c - \omega$ (as smooth singular $\R$-cochains). 
Such a triple is called a differential cocycle. 
The forgetful functor $I$ is given by
\begin{align*}
    I \colon \widehat{H}_\HS^n(X, Y; \Z) \to H^n(X, Y; \Z), &\ ([c, h_\R, \omega]) \mapsto [c]. 
\end{align*}
The models $\widehat{H\Z}_\HS$ and $\widehat{H\Z}_{\mathrm{CS}}$ are isomorphic with the isomorphism given by
\begin{align}\label{eq_HS_CS_isom}
    \widehat{H}_\HS^n(X, Y; \Z) \simeq \widehat{H}_{\mathrm{CS}}^n(X, Y; \Z),  \ [c, h_\R, \omega] \mapsto (\omega, h), 
\end{align}
where we set $h := h_\R \pmod \Z \colon Z_{n-1}(X, Y; \Z) \to \R/\Z$, and its restriction to $Z_{\infty, n-1}(X, Y; \Z)$ is denoted by the same symbol. 

Assume that we are given an element $(\omega, h) \in \widehat{H}_{\mathrm{CS}}^n(X, Y; \Z)$. 
Take a differential cocycle $(c, h_\R, \omega)$ which maps to $(\omega, h)$ under the map \eqref{eq_HS_CS_isom}. 
Then consider the diagram of functors, 
\begin{align}\label{diag_proof_duality_HZ}
\xymatrix{
    \left(C_{\infty, n}(X, Y; \Z)/B_{\infty, n}(X, Y; \Z) \xrightarrow{\del} Z_{\infty, n-1}(X, Y; \Z)\right)
    \ar[r]^-{\mathrm{ev}_c} \ar[rd]_-{\mathrm{ev}_{(\omega, h)}} & (\Z \to 0) \ar[d]^-{\simeq} \\
    & (\R \to \R/\Z)
    }
\end{align}
Here the top arrow is the restriction of \eqref{eq_functor_ev_sing} to smooth singular chains, and the rightdown arrow is given by the evaluation of $h$ on objects and $\omega$ on morphisms. 
We can easily check that the two compositions of functors in \eqref{diag_proof_duality_HZ} are naturally isomorphic, with natural transformation given by $h_\R$. 
Moreover, by definition of $\widehat{\gamma}_\dR$ we see that the functor \eqref{eq_associated_functor} $F_{\widehat{\gamma}_\dR(\omega, h)}$ associated to the element $\widehat{\gamma}_\dR(\omega, h) \in \widehat{I\Z}_\dR^n(X, Y)$ satisfies
\begin{align*}
    F_{\widehat{\gamma}_\dR(\omega, h)} = \mathrm{ev}_{(\omega, h)} \circ T_{\infty, X, Y}, 
\end{align*}
where $T_{\infty, X, Y}$ denotes an obvious smooth singular version of the functor \eqref{eq_functor_triangulation}. 
As explained in Subsection \ref{subsec_proof_isom}, the isomorphism $I\Z_{\dR} \simeq I\Z$ sends the class $\gamma_\dR([\omega, h])$ to the class of the associated functor $F_{\widehat{\gamma}_\dR(\omega, h)}$. 
By the natural isomorphism between two compositions in \eqref{diag_proof_duality_HZ} and Lemma \ref{lem_duality_HZ_singular}, together with the equivalence $(C_{\infty,n}(X, Y; \Z)/B_{\infty,n}(X, Y; \Z) \xrightarrow{\del} Z_{\infty,n-1}(X, Y; \Z) )  \simeq (C_n(X, Y; \Z)/B_n(X, Y; \Z) \xrightarrow{\del} Z_{n-1}(X, Y; \Z) ) $, we see that the class of the functor $F_{\widehat{\gamma}_\dR(\omega, h)}$ coincides with the class $\gamma_H([c])$. 
This completes the proof. 

\end{proof}

\subsection{The normal case}\label{subsec_normal}

So far we have focused on the {\it tangential} $G$-bordism theories and its Anderson duals. 
However, by a straightforward modification, we can construct the corresponding models for the Anderson duals $(I\Omega^{G^\perp})^*$ to the {\it normal} $G$-bordism theories $\Omega^{G^\perp}$ corresponding to the Thom spectrum $MG$. 
In this subsection we outline the construction. 

\begin{defn}[{Differential stable normal $G$-structures on vector bundles}]\label{def_diff_Gstr_vec_normal}
Let $V$ be a real vector bundle of rank $n$ over a manifold $M$. 
\begin{enumerate}
    \item[(1)]
A {\it representative of differential stable normal $G$-structure} on $V$ is a quadruple $\widetilde{g}^\perp = (d, P, \nabla, \psi)$, where $d \ge n$ is an integer, 
$(P, \nabla)$ is a principal $G_{d-n}$-bundle with connection over $M$ and
$\psi \colon (P \times_{\rho_{d-n}} \R^{d-n}) \oplus V \simeq  \underline{\R}^{d}  $ is an isomorphism of vector bundles over $M$. 
\item[(2), (3)] We define the {\it stabilization} of such $\widetilde{g}^\perp$ in the same way as Definition \ref{def_diff_Gstr_vec}, and a {\it differential stable normal $G$-structure} $g^\perp$ on $V$ is defined to be a class of representatives under the stabilization relation. 
\item[(4)] We define the {\it homotopy} relation between two such $g^\perp$'s also in the same way. 
\end{enumerate}
\end{defn}

\begin{defn}[{Differential stable normal $G$-structures}]\label{def_diff_Gstr_mfd_normal}
Let $M$ be a manifold. 
A {\it differential stable normal $G$-structure} is a differential stable normal $G$-structure on the tangent bundle $TM$. 
\end{defn}

Then, the various objects introduced in Section \ref{sec_Gstr} can be modified to the normal case easily. 
We get the notion of {\it differential stable normal $G$-cycles} $(M, g^\perp, f)$, the abelian groups $\mathcal{C}_n^{G_\nabla^\perp}(X, Y)$, the bordism relations and the Picard groupoids $h\mathrm{Bord}_n^{G_\nabla^\perp}(X, Y)$.

First note that we have
\begin{align*}
     N_{G^\perp}^\bullet := H^*(MG; \R) = \varprojlim_{d} H^*(BG_d; EG_d \times_{G_d}\R_{G_d})
     =\varprojlim_{d}(\mathrm{Sym}^{\bullet/2}\mathfrak{g}_d^* \otimes_\R \R_{G_d})^{G_d}. 
\end{align*}
The proof is the same as that of Lemma \ref{lem_IOmega_R}, where now the Madsen-Tillmann spectrum $MTG$ is replaced by the Thom spectrum $MG$. 
Note that we have $N_G^\bullet = N_{G^\perp}^\bullet$.
This is because the orientation bundles of a vector bundle and its normal bundle are canonically identified. 
We use the transformation analogous to \eqref{eq_ch_N_G}, 
\begin{align}\label{eq_ch_N_G_perp}
    \mathrm{ch}' \colon (I\Omega^{G^\perp})^* \to H^*(-; N_{G^\perp}^\bullet) \simeq \Hom(\Omega^{G^\perp}_*(-), \R). 
\end{align}

The variant of the Chern-Weil construction in Definition \ref{def_chern_weil} also applies to the normal settings. 
Given a differential stable normal $G$-structure $g^\perp$ on a vector bundle $V \to W$, by the same procedure to the tangential case in Definition \ref{def_chern_weil} we get a homomorphism
\begin{align}\label{eq_cw_perp}
    \mathrm{cw}_{g^\perp}  \colon \Omega^*\left(W; N_{G^\perp}^\bullet\right) \to \Omega^*(W; \mathrm{Ori}(V)). 
\end{align}
Applied to $V = TW$, \eqref{eq_cw_perp} induces the homomorphisms corresponding to \eqref{eq_def_cw_omega_mor} and \eqref{eq_def_cw_omega_closed}. 

\begin{defn}[{$(\widehat{I\Omega^{G^\perp}_{\mathrm{dR}}})^*$ and $(I\Omega^{G^\perp}_{\mathrm{dR}})^*$}]\label{def_hat_DOmegaG_normal}
Let $(X, Y)$ be a pair of manifolds and $n$ be a nonnegative integer. 
\begin{enumerate}
    \item 
Define $(\widehat{I\Omega^{G^\perp}_{\mathrm{dR}}})^n(X, Y)$ to be an abelian group consisting of pairs $(\omega, h)$, such that
\begin{enumerate}
    \item $\omega$ is a closed $n$-form $\omega\in \Omega_{\mathrm{clo}}^n\left(X, Y; N_{G^\perp}^\bullet\right)$. 
    \item $h$ is a group homomorphism
    $h \colon \mathcal{C}^{G^\perp_\nabla}_{n-1}(X, Y) \to \R/\Z$. 
\item $\omega$ and $h$ satisfy the compatibility condition analogous to Definition \ref{def_hat_DOmegaG} (1) (c) with respect to morphisms in $h\Bord^{G^\perp_\nabla}_{n-1}(X, Y)$. 
\end{enumerate}

\item
We define a homomorphsim of abelian groups, 
\begin{align*}
    a \colon \Omega^{n-1}\left(X, Y; N_{G^\perp}^\bullet\right)/\mathrm{Im}(d) &\to  (\widehat{I\Omega^{G^\perp}_{\mathrm{dR}}})^n(X, Y) \\
    \alpha &\mapsto (d\alpha, \mathrm{cw}(\alpha)).
\end{align*}
We set
\begin{align*}
    (I\Omega^{G^\perp}_{\mathrm{dR}})^n(X, Y) := (\widehat{I\Omega^{G^\perp}_{\mathrm{dR}}})^n(X, Y)/ \mathrm{Im}(a). 
\end{align*}

\end{enumerate}
For negative integers $n$ we set $(\widehat{I\Omega^{G^\perp}_{\mathrm{dR}}})^n(X, Y) := 0$ and $(I\Omega^{G^\perp}_{\mathrm{dR}})^n(X, Y) := 0$. 
\end{defn}

The structure homomorphisms $I$, $R$, $a$ and $p$, and the $S^1$-integration map $\int$, are also defined in the same way as Definitions \ref{def_str_map_IOmega_dR} and \ref{def_integration_dR}. 
We can check that the following sequence is exact,
 \begin{align*}
         \mathrm{Hom}(\Omega^{G^\perp}_{n-1}(X, Y), \R)\to \mathrm{Hom}(\Omega^{G^\perp}_{n-1}(X, Y), \R/\Z) \xrightarrow{p} (I\Omega^{G^\perp}_{\mathrm{dR}})^n(X, Y)\\ \xrightarrow{\mathrm{ch}'}
        \mathrm{Hom}(\Omega^{G^\perp}_n(X, Y), \R)
        \to \mathrm{Hom}(\Omega^{G^\perp}_{n}(X, Y), \R/\Z) \qquad \mbox{(exact)}. 
    \end{align*}
The normal version of Theorem \ref{thm_IOmega_dR=IOmega}, which says that the above $I\Omega^{G^\perp}_\dR$ is indeed a model for $I\Omega^{G^\perp}$, and that the quintuple $(\widehat{I\Omega^{G^\perp}_\dR}, R, I, a, \int)$ is its differential extension of $\left((I\Omega^{G^\perp})^*, \mathrm{ch}' \right)$ with $S^1$-integration, 
can be shown by the exactly same proof, replacing $MTG$ with $MG$.

\section{The multiplications by the bordism cohomology theories}\label{sec_module}
Assume we are given three tangential structure groups $G_i := \{(G_i)_d, (s_i)_d, (\rho_i)_d\}_{d \in \Z_{\ge 0}}$ for $i = 1, 2, 3$, and a {\it homomorphism} 
\begin{align}\label{eq_multi_G}
   \mu \colon G_1 \times G_2 \to  G_3
\end{align}
of tangential structure groups. Then we get a morphism between the Madsen-Tillmann spectra, 
\begin{align}\label{eq_multi_MTG}
    MTG_1 \wedge MTG_2 \to MTG_3. 
\end{align}
Here a homomorphism \eqref{eq_multi_G} is defined in a fairly obvious way, whose precise definition is given in Remark \ref{rem_def_multi_G} below. 
It consists of group homomorphisms
$(G_1)_{d} \times (G_2)_{d'} \to (G_3)_{d+d'}$, which are compatible with the structure homomorphisms $(s_i)_d, (\rho_i)_d$'s and the multiplicative structure on $\mathrm{O}$. 
There are many interesting examples as follows. 
\begin{ex}\label{ex_G123}
\begin{enumerate}
    \item An important class of examples arises from {\it multiplicative} $G$, where $MTG$ is a ring spectrum. 
    In this case we set $G = G_1 = G_2 = G_3$. 
For example $\mathrm{O}$, $\mathrm{SO}$, $\mathrm{Spin}$ and $\fr$ are equipped with the natural multiplicative structure. 
\item The case $G_1 = G_3 = \mathrm{Pin}^+$ and $G_2 = \mathrm{Spin}$. 
\item For any $G$, we have a homomorphism $G \times \fr \to G$. 
Here recall that $\fr = \{1\}_{d \in \Z}$. 
The group homomorphism $G_d \times \fr_{d'} = G_d \times 1 \to G_{d + d'}$ is the composition $s_{d+d'-1} \circ \cdots \circ s_d$ of the stabilization homomorphisms in $G$. 
Actually, as we will see in Remark \ref{rem_integration_push}, the differential pushforwards we introduce in Section \ref{sec_push} in this case recovers the $S^1$-integration map $\int$ of $\widehat{I\Omega_\dR^G}$ (Definition \ref{def_integration_dR}). 
\end{enumerate}
\end{ex}

In general, if we have a morphism of spectra
\begin{align*}
   t \colon  E_1 \wedge E_2 \to E_3, 
\end{align*}
we get the following morphism on the Anderson duals. 
\begin{align}\label{eq_module_IE_general}
    IE_3 \wedge E_2 \xrightarrow{It \wedge \id_{E_2}} I(E_1 \wedge E_2) \wedge E_2 \xrightarrow{\mathrm{ev}_{E_2}} IE_1. 
\end{align}
Here $It$ is the Anderson dual to $t$, and the second arrow is the evaluation on $E_2$ (recall $I(E_1 \wedge E_2) = [E_1 \wedge E_2, I\Z]$). 
Applying \eqref{eq_module_IE_general} to \eqref{eq_multi_MTG}, we get 
the morphism
\begin{align}\label{eq_module_IMTG}
    IMTG_3 \wedge MTG_2 \to IMTG_1, 
\end{align}
inducing the following homomorphism for each pair of integers $(n, r)$, 
\begin{align}\label{module_IOmega}
     (I\Omega^{G_3})^n(X, Y) \otimes (\Omega^{G_2})^{-r}(X)  \to (I\Omega^{G_1})^{n-r}(X, Y), 
\end{align}
which is natural in $(X, Y)$. 
The purpose of this section is to get its differential refinement, 
\begin{align}\label{eq_module_intro}
    (\widehat{I\Omega^{G_3}_{\mathrm{dR}}})^n(X, Y)\otimes (\widehat{\Omega^{G_2}})^{-r}(X) \xrightarrow{\cdot} (\widehat{I\Omega^{G_1}_{\mathrm{dR}}})^{n-r}(X, Y).
\end{align}
Here, for $(\widehat{\Omega^{G_2}})^{-r}(X)$ we use a cycle-model constructed by Bunke, Schick, Schr\"{o}der and Wiethaup \cite{BunkeSchickSchroderMU}, which we explain in Subsection \ref{subsec_hat_OmegaG}.

Before proceeding, we explain the rough idea of the construction. 
A particularly nice class of elements in $(\widehat{\Omega^{G_2}})^{-r}(X)$ are represented by cycles of the form $(p \colon N \to X, g_p)$, where $p$ is a fiber bundle whose fibers are closed $r$-dimensional manifold and $g_p$ is a differential stable $G_2$-structure on the relative tangent bundle. 
The multiplication \eqref{eq_module_intro} by such an element is given as follows. 
For an object $[M, g_M, f] \in \mathcal{C}^{(G_1)_\nabla}_{n-r-1}(X, Y)$, we can define an object $\mu\left([M, g_M, f] \times_X [p \colon N \to X, g_p] \right)\in \mathcal{C}^{(G_3)_\nabla}_{n-1}(X, Y)$ by the fiber product over $X$ (the pullback of the bundle) in a fairly obvious way. 
Then, for an element $(\omega, h) \in (\widehat{I\Omega^{G_3}_{\mathrm{dR}}})^n(X, Y)$, we assign an element in $(\widehat{I\Omega^{G_1}_{\mathrm{dR}}})^{n-r}(X, Y)$ whose evaluation on $[M, g_M, f] \in \mathcal{C}^{(G_1)_\nabla}_{n-r-1}(X, Y)$ is given by the evaluation of $h$ on this fiber product. 
As explained in Subsubsection \ref{subsubsec_compactification}, in a physical interpretation, this process corresponds to {\it compactification} of QFT's.

\begin{rem}\label{rem_def_multi_G}
Here we give the definition of {\it homomorphism} $\mu \colon G_1 \times G_2 \to G_3$ in \eqref{eq_multi_G}. 
$\mu$ consists of group homomorphisms
$\mu_{d, d'} \colon (G_1)_{d} \times (G_2)_{d'} \to (G_3)_{d+d'}$ for each $(d, d')$ with the following conditions. 
\begin{enumerate}
    \item For any $(d, d')$, the following diagram commutes. 
    \begin{align}\label{eq_cond1_multi_G}
        \xymatrix{
        (G_1)_d \times (G_2)_{d'} \ar[r]^-{\mu_{d, d'}} \ar[d]^{(\rho_1)_d \times (\rho_2)_{d'}} & (G_3)_{d+d'} \ar[d]^-{(\rho_3)_{d+d'}} \\
        \mathrm{O}(d, \R) \times \mathrm{O}(d', \R) \ar[r] & \mathrm{O}(d+d', \R)
        }, 
    \end{align}
    where the bottom arrow is the diagonal map in $\mathrm{O}$. 
    \item For any $(d, d')$, the left diagram below commutes, and the right diagram commute up to confugation by an element of $(G_3)_{d+d'+1}$ in the unit component. 
    \begin{align}\label{eq_cond2_multi_G}
        \xymatrix{
        (G_1)_d \times (G_2)_{d'} \ar[r]^-{\mu_{d, d'}} \ar[d]^{(s_1)_d \times \id} & (G_3)_{d+d'} \ar[d]^-{(s_3)_{d+d'}} \\
        (G_1)_{d+1} \times (G_2)_{d'} \ar[r]^-{\mu_{d+1, d'}} & (G_3)_{d+d'+1}
        }
        \xymatrix{
        (G_1)_d \times (G_2)_{d'} \ar[r]^-{\mu_{d, d'}} \ar[d]^{\id \times (s_2)_{d'}} & (G_3)_{d+d'} \ar[d]^-{(s_3)_{d+d'}} \\
        (G_1)_d \times (G_2)_{d'+1} \ar[r]^-{\mu_{d, d'+1}} & (G_3)_{d+d'+1}
        }
    \end{align}
\end{enumerate}

Here, for Condition (2), we may as well assume only the homotopy-commutativity also for the left diagram in order to produce the morphism of Madsen-Tillmann spectra in \eqref{eq_multi_MTG}. 
However, the strict-commutatibity is satisfied in most of the examples of interest. 
(In contrast to this, for the right diagram we can only expect the homotopy-commutatibity because of our convention for the stabilization \eqref{eq_stabilization}. )
We choose the above formulation to simplify the construction below. 
\end{rem}

\subsection{A preliminary--A cycle-model for $(\widehat{\Omega^G})^*$ following \cite{BunkeSchickSchroderMU}}\label{subsec_hat_OmegaG}
Bunke, Schick, Schr\"{o}der and Wiethaup \cite{BunkeSchickSchroderMU} gave a model for a differential extension of $(\widehat{\Omega^{G^\perp}})^*$ ({\it normal} $G$-bordism cohomology theory, represented by $MG$). 
They provide the detail for the case of complex bordisms, but as they note, their construction directly generalizes to any $G$. 
Moreover, their construction can be modified to give a model for $(\widehat{\Omega^G})^*$ ({\it tangential} $G$-bordisms). 
In this subsection we briefly explain it. 
For further details see \cite{BunkeSchickSchroderMU}. 
Sometimes we use different definitions from those for corresponding objects in \cite{BunkeSchickSchroderMU} for the compatibility with the conventions in the main body of this paper. 
The differences are not essential. 

\begin{defn}[Stable relative tangent bundles]\label{def_rel_tang_bundle}
Let $p \colon N \to X$ be a smooth map between manifolds with relative dimension $r := \dim N - \dim X$. 
Let us choose $(k, \phi)$, where $\phi \colon \underline{\R}^k \to p^*TX$ is a map of vector bundles over $N$ such that $\phi \oplus dp \colon \underline{\R}^k \oplus T_xN \to T_{p(x)}X$ is surjective for all $x \in N$. 
Given such $(k, \phi)$, we define the associated {\it stable relative tangent bundle for $p$ associated to $(k, \phi)$} to be the following real vector bundle of rank $(k + r)$ over $N$. 
\begin{align*}
    T(\phi, p) := \ker (\phi \oplus dp \colon \underline{\R}^k \oplus TN \to p^*TX). 
\end{align*}
\end{defn}

Using stable relative tangent bundles, we define the relative version of differential stable tangential $G$-structures as follows. 

\begin{defn}[{Differential stable relative tangential $G$-structures}]\label{def_diff_rel_Gstr}
Let $p \colon N \to X$ be a smooth map between manifolds. 
\begin{enumerate}
    \item A {\it representative of a differential stable relative tangential $G$-structure on $p$} consists of $\widetilde{g}_p =(k, \phi, P, \nabla, \psi)$, where $(k, \phi)$ is a choice as in Definition \ref{def_rel_tang_bundle}, 
    $P$ is a principal $G_{k+r}$-bundle over $N$ and $\psi \colon P \times_{G_{k+r}}\R^{k+r} \simeq T(\phi, p)$ is an isomorphism of vector bundles over $N$. 
    \item For such a $\widetilde{g}_p$, we can define its stabilization $\widetilde{g}_p(1)$ in the obvious way. 
    \item
A {\it differential stable relative tangential $G$-structure $g_p$ on $p$} is a class of such representatives under the relation $\widetilde{g}_p \sim \widetilde{g}_p(1)$. 
% \item Suppose two representatives only differ in $\psi$ and they are homotopic. 
% Then the corresponding differential stable relative tangential $G$-structures are said to be {\it homotopic}. 
\end{enumerate}
\end{defn}

In particular, if $p \colon N \to X$ is a submersion, we can take $\phi = 0 \colon \underline{\R}^k \to p^*TX$ and we have $T(0, p) = \underline{\R}^k \oplus T(p)$, the stabilization of the relative tangent bundle $T(p) := \ker dp$. 
Thus a differential stable $G$-structure (Definition \ref{def_diff_Gstr_vec}) on $T(p)$ can be regarded as a special case of differential stable relative tangential $G$-structure on $p$. 
But note that the latter notion is more general. 

Recall that our manifolds are allowed to have corners. 
To define the differential stable relative tangential $G$-cycles, we need to use the following class of maps between manifolds with corners. 
\begin{defn}[{Neat maps, \cite[Appendix C]{HopkinsSinger2005}}]\label{def_neat_map}
A smooth map $p \colon N \to X$ between manifolds is called {\it neat} if it preserves the depth of points, and the map
\begin{align}
    dp \colon T_x N / T_x S^k (N) \to T_{p(x)}(X) / T_{p(x)} S^k(X)
\end{align}
is an isomorphism for all points $x \in N$, where $k := \mathrm{depth}(x) = \mathrm{depth}(p(x))$. 
\end{defn}

For example, the map $p_X \colon X \to \pt$ is neat only if $X$ has no boundary. 
The map $[0, \infty) \to [0, \infty)$, $x \mapsto x^2$, is not neat. 
We collect necessary facts on neat maps in Subsubsection \ref{subsubsec_transverse_category} below. 
As we explain there, neatness guarantees a nice theory on fiber products. 
% It is important to note that a neat map is in particular a ``smooth map'' in \cite[Definition 3.1]{JoyceCorners} (see Footnote \ref{footnote_weakly_smooth}). 

\begin{defn}[Differential stable relative tangential $G$-cycles]\label{def_diff_rel_Gcyc}
Let $X$ be a manifold and $r$ be an integer. 
A {\it differential stable relative tangential $G$-cycle of dimension $r$} over $X$ is a pair $\widehat{c} = (p \colon N \to X, g_p)$, where $p$ is a proper neat map
with relative dimension $r$ and $g_p$ is a differential stable relative tangential $G$-structure on $p$. 
A {\it representative of a differential stable relative tangential $G$-cycle of dimension $r$} over $X$ is a pair $\widetilde{c} := (p \colon N \to X, \widetilde{g}_p)$, where $\widetilde{g}_p$ is now a representative. 
\end{defn}

\begin{defn}[{Differential stable relative tangential $G$-bordism data}]
Let $X$ be a manifold and $r$ be an integer. 
A {\it differential stable relative tangential $G$-bordism data of dimension $r$} is a differential relative stable tangential $G$-cycle $\widehat{b} = (q \colon W \to \R \times X, g_q)$ of dimension $r$ over $\R \times X$ such that $q$ is transverse (Definition \ref{def_transversality}) to $\{0\} \times X$ and $q^{-1}((-\infty, 0] \times X)$ is compact. 
It defines a differential relative stable tangential $G$-cycle $\del \widehat{b} := \left(q|_{q^{-1}(\{0\} \times X)} , g_q|_{q^{-1}(\{0\} \times X)}\right)$ over $X$ by Proposition \ref{prop_fiber_product}.   
\end{defn}

On the topological level, we have the Chern-Dold homomorphism
\begin{align}\label{eq_ch_MTG}
    \mathrm{ch} \colon (\Omega^G)^{-r}(X) \to H^{-r}(X; V_{\Omega^G}^\bullet). 
\end{align}
% We note that $V_{I\Omega^G}^n \simeq \Hom_\R\left(V_{\Omega^G}^{-n}, \R\right)$ canonically. 
A differential relative stable tangential $G$-cycle $\widehat{c}$ over $X$ represents a class $[\widehat{c}] \in (\Omega^G)^{-r}(X)$, so we get a class $\mathrm{ch}([\widehat{c}]) \in H^{-r}(X; V_{\Omega^G}^\bullet) $. 

On the differential level, the homomorphism \eqref{eq_ch_MTG} is refined as follows. 
Applying \eqref{eq_ch_MTG} to the identity element $\mathrm{id}_{MTG} \in(\Omega^G)^{0}(MTG)$, we have an element $\mathrm{ch}(\mathrm{id}_{MTG}) \in H^0(MTG; V_{\Omega^G}^\bullet)$. 
Given a differential relative stable tangential $G$-cycle $\widehat{c} = (p \colon N \to X, g_p)$ over $X$,
% with a representative $\widetilde{g}_p =(k, \phi, P, \nabla, \psi)$
by the Chern-Weil construction in Remark \ref{rem_cw_coefficient} with coefficient $\mathcal{V}^* = V_{\Omega^G}^*$, we get\footnote{
The element \eqref{eq_cw_gp_1} can be understood as follows. 
It induces a degree-preserving $\R$-linear homomorphism from $N_G^\bullet$ to $\Omega^\bullet(N;  \Ori(T(p)))$. 
This homomorphism coincides with $\cw_{g_p}$ in \eqref{eq_def_cw_hom}. 
}
\begin{align}\label{eq_cw_gp_1}
    \cw_{g_p}(\mathrm{ch}(\mathrm{id}_{MTG})) \in \Omega^0(N;  \Ori(T(p))\otimes_\R V_{\Omega^G}^\bullet) . 
\end{align}
Here we abuse the notation to write $\Ori(T(p)) := \Ori(T(\phi, p))$ for any choice of a representative of $g_p$, since the orientation bundles for stable relative tangent bundles do not depend on the choice of $\phi$.

We would like to integrate it along the fiber of $p \colon N \to X$. 
Note that unless $p$ is a submersion, the resulting form on $X$ is singular, so we need to deal with differentiable currents $\Omega_{-\infty}^*$. 
Recall that in general the fiber integration of differentiable currents along the map $p \colon N \to X$ is the following. 
\begin{align}\label{eq_current_integration}
    p_! \colon \Omega_{-\infty}^*(N; \Ori(T(p))) \to \Omega_{-\infty}^{*-r}(X). 
\end{align}
Here we choose the sign of the integration \eqref{eq_current_integration} so that it is compatible with the {\it left} $\Omega^\bullet(X)$-module structure, i.e., 
\begin{align}\label{eq_current_integration_sign}
   \eta \wedge p_!(\omega) = p_! (p^*\eta \wedge \omega)
\end{align}
for any $\eta \in \Omega^\bullet(X)$ and $\Omega_{-\infty}^*(N; \Ori(T(p)))$. 
Remark that it is different from the sign convention on the $S^1$-integration in \eqref{eq_int_form} and \eqref{eq_int_form_sign}. 
We set
\begin{align}\label{def_T(hatc)}
    T(\widehat{c}) := p_! (\cw_{g_p}(\mathrm{ch}(\mathrm{id}_{MTG}))) \in \Omega_{-\infty}^{-r}(X; V_{\Omega^G}^\bullet). 
\end{align}
This current represents the class $\mathrm{ch}([\widehat{c}])$ under the isomorphism between the de Rham and the currential cohomologies.

\begin{defn}[{$(\widehat{\Omega^G})^{*}$-cycles}]\label{def_hat_MTG_cycle}
Let $X$ be a manifold and $r$ be an integer. 
An {\it $(\widehat{\Omega^G})^{-r}$-cycle} over $X$ is a pair $(\widehat{c}, \alpha)$, where $\widehat{c}$ is an $r$-dimensional differential relative stable tangential $G$-cycle over $X$ and $\alpha \in \Omega_{-\infty}^{-r-1}(X; V_{\Omega^G}^\bullet)$ such that
\begin{align}\label{eq_R(calpha)}
    R(\widehat{c}, \alpha):=T(\widehat{c}) - d\alpha \in \Omega^{-r}(X; V_{\Omega^G}^\bullet). 
\end{align}
\end{defn}
The role of $\alpha$ in Definition \ref{def_hat_MTG_cycle} is to replace $T(\widehat{c})$ with a smooth differential form, without changing the cohomology class. 

The set of isomorphism classes of $(\widehat{\Omega^G})^{-r}$-cycles over $X$ is denoted by $(Z\widehat{\Omega^G})^{-r}(X)$. 
It has a structure of an abelian semigroup by the disjoint union on cycles and the addition on currents.  

For an $r$-dimensional bordism data $\widehat{b} = (q \colon W \to \R \times X, g_q)$, we define 
\begin{align}\label{eq_T(hatb)}
T(\widehat{b}) :=\int_{(-\infty, 0]} q_! \left(\cw_{g_q}(\mathrm{ch}(\mathrm{id}_{MTG}))|_{q^{-1}((-\infty, 0] \times X)}\right) \in \Omega_{-\infty}^{-r-1}(X; V_{\Omega^G}^\bullet). 
\end{align}
Then we can show that $(\del \widehat{b}, T(\widehat{b})) \in (Z\widehat{\Omega^G})^{-r}(X)$. 

\begin{defn}[{$(\widehat{\Omega^G})^*(X)$}]\label{def_hat_OmegaG}
Let $X$ be a manifold and $r$ be an integer. 
On $(Z\widehat{\Omega^G})^{-r}(X)$ we introduce the equivalence relation $\sim$ generated by $(\del \widehat{b}, T(\widehat{b})) \sim 0$ for a bordism data $\widehat{b}$. 
We define
\begin{align*}
    (\widehat{\Omega^G})^{-r}(X):=(Z\widehat{\Omega^G})^{-r}(X)/\sim. 
\end{align*}
We denote the class of $(\widehat{c}, \alpha)$ in $(\widehat{\Omega^G})^{-r}(X)$ by $[\widehat{c}, \alpha]$. 
\end{defn}

We can define the structure maps $R$, $a$ and $I$ for $(\widehat{\Omega^G})^*(X)$ in an analogous way to \cite{BunkeSchickSchroderMU}. 
The fact that the quadruple $(\widehat{\Omega^G}, R, a, I)$ is a differential extension of $\Omega^G$ can be easily checked as in the normal case. 

\subsection{The differential multiplication by $(\widehat{\Omega^{G_2}})^*$}
Now assume we are given a homomorphism $\mu \colon G_1 \times G_2 \to G_3$. 
The definition of the transformation \eqref{eq_module_intro} uses the fiber products between differential relative stable tantgential $G_2$-cycles and differential stable tangential $G_1$-cycles. 
To form the fiber products, we want to restrict our attention to differential stable tangential $G_1$-cycles $(M, g, f)$ with $f$ satisfying certain transversality conditions. 
For this, the result of Subsubsection \ref{sec_physical_app} is useful. 
In Subsubsection \ref{subsubsec_transverse_category} below, we construct a certain equivalent subcategory of $\hBordone_{n-r}(X, Y)$ consisting of objects with a suitable transversality. 
This point is technical, and the reader who is willing to admit the existence of a nice subcategory to form fiber products can go directly to Subsubsection \ref{subsubsec_module}. 

\subsubsection{A technical point : The construction of $\hBordone_{n-r}(X, Y)_{\pitchfork \widehat{c}}$}\label{subsubsec_transverse_category}
There are substantial technicalities concerning fiber products between manifolds with corners. 
For example see \cite{JoyceCorners}\footnote{
Be careful that ``smooth'' in this paper corresponds to ``weakly smooth'' in \cite{JoyceCorners}. 
The neatness in Definition \ref{def_neat_map} implies the ``smoothness'' in that paper. 
Using this, it is also possible to obtain the results in this subsubsection by applying the results in \cite[Section 6]{JoyceCorners}.
But since neat maps can be treated in an elementary way, we take a direct approach here. 
}. 
But recall that we required the neatness (Definition \ref{def_neat_map}) of the map in Definition \ref{def_diff_rel_Gcyc}. 
As we explain now, the theory on fiber products is very simple for neat maps. 

First we explain a useful local picture of neat maps. 
The following lemma directly follows by Definition \ref{def_neat_map}. 
\begin{lem}\label{lem_local_extension}
Let $p \colon N \to X$ be a neat map between manifolds. 
Let $x \in N$ be any point. 
Then there exist open neighborhoods $x \in V \subset N$ and $p(x) \in U \subset X$,
manifolds without boundaries $\widehat{V}$ and $\widehat{U}$ with embeddings $V \hookrightarrow \widehat{V}$ and $U \hookrightarrow \widehat{U}$, 
a smooth map $\widehat{p} \colon \widehat{V} \to \widehat{U}$ extending $p|_V$ such that
\begin{itemize}
    \item $\widehat{p}^{-1}(U) = V$, and
    \item $\widehat{p}$ is transverse to corners of $U$. 
\end{itemize}
Conversely, neatness is characterized by this local property. 
\end{lem}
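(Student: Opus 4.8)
The plan is to prove the forward implication by passing to coordinate charts around $x$ and $p(x)$, deducing a local normal form for a neat map there, and then extending the ingredients by hand; the converse is a short unwinding of the definitions, which I would leave mostly to the reader.

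First I would choose charts $\varphi \colon V \to \R^n_{\le 0}$ centered at $x$ and $\psi \colon U_0 \to \R^m_{\le 0}$ centered at $p(x)$, where $n = \dim N$, $m = \dim X$. Since $p$ is \emph{neat} it preserves depth, so $k := \mathrm{depth}(x) = \mathrm{depth}(p(x))$ and in particular $m \ge k$; after shrinking and rescaling I may assume the chart images are boxes $B_N = (-\delta,0]^k \times (-\delta,\delta)^{n-k}$ and $B_X = (-\delta,0]^k \times (-\delta,\delta)^{m-k}$, with $p(V) \subset U_0$. Writing $\widetilde{p} = (\widetilde{p}_1, \dots, \widetilde{p}_m) := \psi \circ p \circ \varphi^{-1}$, the heart of the argument is to show that, after permuting the first $k$ coordinates on $X$, one has $\widetilde{p}_i = x_i\, u_i$ for $1 \le i \le k$ with $u_i \colon B_N \to (0,\infty)$ smooth, the remaining components being arbitrary smooth functions. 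To produce this normal form I would argue: depth preservation forces $p$ to send each (connected, if $\delta$ is small) open stratum $\{x_i = 0,\ x_j < 0\ (j \le k,\ j \neq i)\}$ of $B_N$ into a single open stratum $\{y_{\sigma(i)} = 0,\ y_l < 0\ (l \le k,\ l \neq \sigma(i))\}$ of $B_X$, defining $\sigma \colon \{1,\dots,k\} \to \{1,\dots,k\}$, so that $\widetilde{p}_{\sigma(i)}$ vanishes on $\{x_i = 0\}$; if $\sigma(i) = \sigma(i')$ with $i \neq i'$ then $\widetilde{p}_{\sigma(i)}$ vanishes on $\{x_i = 0\} \cup \{x_{i'} = 0\}$, hence is divisible by $x_i x_{i'}$ by Hadamard's lemma on the box, hence has vanishing differential at $0$; but the neatness condition says the $k \times k$ matrix $\big(\partial \widetilde{p}_l / \partial x_j(0)\big)_{l,j \le k}$ — which is exactly the matrix of $dp$ modulo the corner tangent spaces in the bases $\{[\partial_{x_j}]\}$, $\{[\partial_{y_l}]\}$ of $T_xN/T_xS^k(N)$ and $T_{p(x)}X/T_{p(x)}S^k(X)$ — is invertible, so it has no zero row, a contradiction. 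Hence $\sigma$ is a bijection; after the permutation I take $\sigma = \mathrm{id}$, write $\widetilde{p}_i = x_i u_i$ by Hadamard, note $u_i(0) \neq 0$ by invertibility of that matrix, and observe that $\widetilde{p}_i = x_i u_i \le 0$ together with $x_i \le 0$ on $B_N$ forces $u_i \ge 0$, so $u_i > 0$ after one final shrink.

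Granting the normal form, the extension step is routine. I would take $\widehat{V} := (-\delta,\delta)^n$ and $\widehat{U} := (-\delta,\delta)^m$, with $V$ and $U$ included as the corresponding corner regions, extend each $\log u_i$ to a smooth function on $\widehat{V}$ (Seeley/Whitney extension) and set $\widehat{u}_i$ to be its exponential — still everywhere positive — extend $\widetilde{p}_{k+1}, \dots, \widetilde{p}_m$ to smooth functions $q_{k+1}, \dots, q_m$ on $\widehat{V}$, and define $\widehat{p} := (x_1 \widehat{u}_1, \dots, x_k \widehat{u}_k, q_{k+1}, \dots, q_m)$, shrinking $\widehat{V}$ so its image lies in $\widehat{U}$. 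Then $\widehat{p}$ extends $\widetilde{p}$ by construction; since $\widehat{u}_i > 0$ one gets $\widehat{p}^{-1}(U) = \{x_1, \dots, x_k \le 0\} = V$; and along $\{y_l \circ \widehat{p} = 0\} = \{x_l = 0\}$ one has $d(x_l \widehat{u}_l) = \widehat{u}_l\, dx_l \neq 0$, with the analogous computation on the multi-intersections, so $\widehat{p}$ is transverse to all corner strata of $U$. Transporting this data back through $\varphi$ and $\psi$ gives the statement. For the converse I would just note that the existence of such a local model makes both defining properties of a neat map visible at every point: transversality of $\widehat{p}$ to the faces $G_l$ of $U$ together with $\widehat{p}^{-1}(U) = V$ exhibits the strata $S^j(N)$ locally as transverse preimages of the $S^j(X)$, which forces depth preservation and, by a dimension count, the isomorphism induced by $dp$ on the normal spaces to the strata. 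The step I expect to be the real obstacle is the local normal form — specifically, proving that the face correspondence $\sigma$ is a bijection and that the diagonal factors $u_i$ are positive; once those are in place the rest is bookkeeping with Hadamard's lemma and standard smooth extension theorems.
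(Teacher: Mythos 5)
Your argument is correct, and it is considerably more detailed than what the paper offers: the paper gives no proof at all, simply asserting that the lemma ``directly follows by Definition \ref{def_neat_map}''. Your chart-level normal form $\widetilde{p}_i = x_i u_i$ with $u_i>0$ (for $i\le k$, after a permutation of the constrained target coordinates) is precisely the content that assertion leaves implicit, and your derivation of it is sound: depth preservation plus connectedness of the open codimension-one strata of the box gives the face correspondence $\sigma$; the double Hadamard division shows $\sigma(i)=\sigma(i')$ would kill a row of the matrix $\bigl(\partial\widetilde{p}_l/\partial x_j(0)\bigr)_{l,j\le k}$, which is indeed the matrix of $dp$ on $T_xN/T_xS^k(N)\to T_{p(x)}X/T_{p(x)}S^k(X)$; and the sign constraint on the box forces $u_i>0$ after shrinking. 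The extension step and the transversality check $d(x_l\widehat{u}_l)=\widehat{u}_l\,dx_l$ along $\{x_l=0\}$ (and its multi-index version) are fine. For the converse, which you leave as a sketch, two points deserve an explicit line: (i) that the full-dimensional embedding $U\hookrightarrow\widehat{U}$ locally presents $U$ near $p(x)$ as $\{g_1\le 0,\dots,g_j\le 0\}$ with $dg_1,\dots,dg_j$ independent, $j=\mathrm{depth}(p(x))$ --- this follows by applying the inverse function theorem to a local smooth extension of the embedding; and (ii) that the corner structure which transversality puts on $\widehat{p}^{-1}(U)=\{g_1\circ\widehat{p}\le 0,\dots\}$ agrees with the structure $V$ inherits from $N$: both make the inclusion into the boundaryless $\widehat{V}$ a full-dimensional embedding, so their sheaves of smooth functions, hence their depth functions, coincide (depth is a diffeomorphism invariant but not a topological one, so this identification is what actually licenses ``depth is preserved''); injectivity of the induced map on the $k$-dimensional normal quotients, hence the required isomorphism, then follows from $dG_i=dg_i\circ d\widehat{p}$ exactly as in your dimension count.
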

In the following we call a set of data appearing in Lemma \ref{lem_local_extension} a {\it local extension} of $p$. 
If we have a smooth map $f \colon M \to U$ from another manifold, the property $\widehat{p}^{-1}(U) = V$ implies that
\begin{align}\label{eq_fiber_product_local}
    M \times_U V \simeq M \times_{\widehat{U}}\widehat{V}
\end{align}
as a topological space. 
Using \eqref{eq_fiber_product_local}, we can reduce the local theory on fiber products between a smooth map $f \colon M \to X$ and a neat map $p \colon N \to X$ to the case where $N$ and $X$ are boundaryless. 

Now we introduce the transversality condition. 
\begin{defn}[{Transversality between smooth maps}]\label{def_transversality}
Let $f \colon M \to X$ and $p \colon N \to X$ be smooth maps. 
We say that $f$ is {\it transverse} to $p$ or $f$ and $p$ are {\it transverse},  if for any points $x \in M$ and $y \in N$ with $f(x) = p(y) = z$, we have
\begin{enumerate}
       \item The images of $df \colon T_x M \to T_{z}X$ and $dp \colon T_yN \to T_{z} X$ span $T_zX$, and
   \item The images of $df\colon T_x S^k(M) \to T_{z}S^j(X)$ and $dp \colon T_yS^l(N) \to T_{z} S^j(X)$ span $T_z S^j(X)$. 
       Here $k$, $j$ and $l$ are the depths of the corresponding points. 
       \end{enumerate}
\end{defn}
If $p$ is neat, the condition (2) in Definition \ref{def_transversality} is equivalent to the condition that the images of $df\colon T_x S^k(M) \to T_{z}X$ and $dp \colon T_y N \to T_{z} X$ span $T_z X$. 
This implies that, in a local extension in Lemma \ref{lem_local_extension}, the transversality of $f$ and $p$ is equivalent to the transversality of $f$ and $\widehat{p}$. 

\begin{prop}\label{prop_fiber_product}
Let $f \colon M \to X$ be a smooth map and $p \colon N \to X$ be a neat map. 
Assume $f$ is transverse to $p$, and we assume one of the following condition for $f$. 
\begin{enumerate}
    \item $f$ is an embedding. 
    \item $M$ is equipped with a structure of $\langle k \rangle$-manifold with $(\del_0 M, \del_{1} M, \cdots, \del_{k-1} M)$, and there exists a collar structure near each $\del_j M$ on which $f$ is constant in the collar direction. 
\end{enumerate}
Then in the fiber product
\begin{align}\label{diag_fiber_product}
    \xymatrix{
     M \times_X N \ar[r]^-{\widetilde{f}} \ar[d]^-{\widetilde{p}}& N \ar[d]^-{p}\\
    M \ar[r]^-{f} &X,
    }
\end{align}
the space $M \times_X N$ is equipped with a canonical structure of manifold with corners so that
\begin{align*}
    S^k( M \times_X N) = S^k(M) \times_X N, 
\end{align*}
and the map $\widetilde{p}$ is neat. 
Moreover in the case (2) above, $M \times_X N$ is also equipped with a canonical structure of $\langle k \rangle$-manifold with $\del_j(M \times_X N) = \del_jM \times_X N$. 
\end{prop}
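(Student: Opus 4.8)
The plan is to reduce the global statement to a local one via the local extensions provided by Lemma \ref{lem_local_extension}, and then invoke the standard transversality theorem for manifolds without boundary. First I would fix a point $(x, y) \in M \times_X N$ with $f(x) = p(y) = z$, and choose a local extension $(\widehat{p}\colon \widehat{V} \to \widehat{U})$ of $p$ near $y$ as in Lemma \ref{lem_local_extension}, where $\widehat{V}, \widehat{U}$ are boundaryless. Shrinking $U$ if necessary we may assume $f^{-1}(U)$ is a neighborhood of $x$; in case (1), where $f$ is an embedding, we may further arrange that $f$ restricts to an embedding $M \cap f^{-1}(U) \hookrightarrow \widehat{U}$, and in case (2) we use the collar structure to extend $f$ locally to a smooth map $\widehat{f}\colon \widehat{M} \to \widehat{U}$ from a boundaryless manifold, by extending in the collar directions where $f$ is constant. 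As noted after Definition \ref{def_transversality}, transversality of $f$ and $p$ (including the corner-stratum condition) is equivalent to transversality of the extended map (resp. of $f$ itself) and $\widehat{p}$ in the boundaryless sense. Hence by the classical transversality theorem, $\widehat{M} \times_{\widehat{U}} \widehat{V}$ (resp. $M\cap f^{-1}(U) \times_{\widehat{U}} \widehat{V}$) is a boundaryless submanifold, and by the identity \eqref{eq_fiber_product_local} we have $M\times_U V \simeq M \times_{\widehat{U}} \widehat{V}$ as topological spaces, giving $M \times_X N$ a local chart structure near $(x,y)$.

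Next I would verify that these local manifold structures patch together: since the extensions agree on overlaps up to the topological identification \eqref{eq_fiber_product_local} and the transition maps are restrictions of smooth maps between the boundaryless ambient manifolds, the resulting atlas is well defined, making $M \times_X N$ a manifold with corners. To identify the corner strata, observe that locally depth is read off from the $\widehat{U}$-coordinates: a point of $M \times_X N$ lies in $S^k$ iff its image in $X$ lies in $S^k(X)$ in such a way that the transversality condition (2) of Definition \ref{def_transversality} pins down the depth contributions. Because $p$ is neat, it preserves depth, so the depth of $(x,y)$ equals the depth of $x$; this yields $S^k(M \times_X N) = S^k(M) \times_X N$. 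Neatness of $\widetilde p\colon M \times_X N \to M$ then follows because, in the local extension, $\widetilde p$ is the base-change of $\widehat p$ along $\widehat f$ (or along the embedding of $M$), and base-change of a map transverse to the corners of the target is again transverse to the corners and depth-preserving, which is precisely the local characterization of neatness in Lemma \ref{lem_local_extension}.

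Finally, in case (2) I would check the $\langle k \rangle$-structure. The faces $\del_j(M \times_X N)$ are defined as $\del_j M \times_X N$; since $f$ is constant in the collar direction near $\del_j M$, the transversality of $f|_{\del_j M}$ with $p$ is inherited, so each $\del_j M \times_X N$ is a manifold with faces of codimension one by the first part of the proposition applied to $\del_j M$. Conditions (1) and (2) of Definition \ref{def_k_mfds} for $M \times_X N$ follow from the corresponding conditions for $M$ together with the identification $\del_I(M\times_X N) = \del_I M \times_X N$, which is a formal consequence of $S^k(M\times_X N) = S^k(M)\times_X N$ and the manifold-with-faces hypothesis on $M$.

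The main obstacle I anticipate is the patching/consistency step: one must confirm that the smooth structures obtained from different choices of local extensions $(\widehat V, \widehat U, \widehat p)$ and different local extensions of $f$ agree, so that $M\times_X N$ really carries a well-defined global manifold-with-corners structure rather than merely a topological one. This requires a careful but routine check that the ambiguity in the local extensions lies in the directions transverse to $U$ inside $\widehat U$ (and transverse to $V$ inside $\widehat V$), which are exactly the directions killed by the fiber product identification \eqref{eq_fiber_product_local}; granting Lemma \ref{lem_local_extension} this is bookkeeping, but it is where all the content sits.
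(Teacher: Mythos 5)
Your proposal is correct and follows essentially the same route as the paper: the paper's own proof consists precisely of reducing to the case where $N$ and $X$ are boundaryless via the local extensions of Lemma \ref{lem_local_extension} (together with the identification \eqref{eq_fiber_product_local}), and then invoking the standard boundaryless transversality/fiber-product result, with a pointer to \cite[Appendix C.22]{HopkinsSinger2005}. You simply spell out the patching, stratum, and $\langle k \rangle$-structure bookkeeping that the paper leaves as "well-known and easy to prove."
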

\begin{proof}
We can reduce to the case where $N$ and $X$ are boundaryless as explained above, and for such cases this result is well-known and easy to prove. 
We also remark that a part of this proposition follows from \cite[Appendix C.22]{HopkinsSinger2005}. 
\end{proof}

Now we define the subcategory $\hBordone_{m}(X, Y)_{\pitchfork \widehat{c}}$ of $\hBordone_{m}(X, Y)$. 

\begin{defn}[{$\hBordone_{m}(X, Y)_{\pitchfork \widehat{c}}$}]\label{def_transverse_subcategory}
Let $(X, Y)$ be a pair of manifolds and $\widehat{c} = (p \colon N \to X, g_p)$ be a differential stable relative tangential $G_2$-cycle over $X$. 
We define a Picard subcategory $\hBordone_{m}(X, Y)_{\pitchfork \widehat{c}}$ of $\hBordone_{m}(X, Y)$ spanned by objects $(M, g, f)$ with the following conditions. 
\begin{enumerate}
    \item There exists a collar structure near $\del M$ of $M$ on which the restriction of the map $f \colon (M, \del M) \to (X, Y)$ is constant in the collar direction.
    \item The map $f \colon M \to X$ is transverse to $p \colon N \to X$. 
\end{enumerate}
\end{defn}

In particular if $p$ is a submersion, we have $\hBordone_{m}(X, Y)_{\pitchfork \widehat{c}}=\hBordone_{m}(X, Y)$. 
Proposition \ref{prop_fiber_product} implies the following. 
\begin{cor}\label{cor_fiber_product_corner}
For any object $(M, g, f) \in \hBordone_{m}(X, Y)_{\pitchfork \widehat{c}}$ with $\widehat{c} = (p \colon N \to X, g_p)$, the fiber product $M \times_X N$ is equipped with a structure of a $\langle 1 \rangle$-manifold with $\del (M \times_X N) =\del M \times_X N$, with map $\tilde{f} \colon (M \times_X N, \del (M \times_X N)) \to (N, p^{-1}(Y))$. 
\end{cor}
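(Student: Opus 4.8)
The plan is to deduce this as an immediate consequence of Proposition \ref{prop_fiber_product}. First I would unwind the hypotheses. An object $(M, g, f)$ of $\hBordone_{n-r}(X, Y)$ has $M$ a compact $\langle 1 \rangle$-manifold with $\del_0 M = \del M$ (Definition \ref{def_geom_smooth_Gcyc}) and $f \in C^\infty((M, \del M), (X, Y))$; membership in $\hBordone_{n-r}(X, Y)_{\pitchfork \widehat{c}}$ adds, by Definition \ref{def_transverse_subcategory}, that there is a collar neighborhood of $\del M$ on which $f$ is constant in the collar direction and that $f$ is transverse to $p$ in the sense of Definition \ref{def_transversality}. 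On the other side, $p \colon N \to X$ is a proper neat map by Definition \ref{def_diff_rel_Gcyc}.

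Second, I would apply Proposition \ref{prop_fiber_product} in case (2), with $k = 1$: the transversality of $f$ to the neat map $p$, together with the collar structure near $\del M = \del_0 M$ on which $f$ is constant, gives that $M \times_X N$ carries a canonical structure of $\langle 1 \rangle$-manifold with $\del(M \times_X N) = \del_0(M \times_X N) = \del_0 M \times_X N = \del M \times_X N$, and that the projections in diagram \eqref{diag_fiber_product}, in particular $\widetilde{p} \colon M \times_X N \to M$ and $\widetilde{f} \colon M \times_X N \to N$, are smooth (with $\widetilde{p}$ neat). Compactness of $M \times_X N$ follows at once: $\widetilde{p}$ is the pullback of the proper map $p$, hence proper, and $M$ is compact.

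Third, I would verify the target condition for $\widetilde{f}$. For a point $(x, y) \in \del(M \times_X N) = \del M \times_X N$ we have $x \in \del M$, so $f(x) \in Y$ because $f(\del M) \subset Y$; hence $p(y) = f(x) \in Y$, i.e. $\widetilde{f}(x,y) = y \in p^{-1}(Y)$. Therefore $\widetilde{f}$ restricts to a map $(M \times_X N, \del(M \times_X N)) \to (N, p^{-1}(Y))$, which is the assertion. There is essentially no obstacle here beyond routine bookkeeping; the only point requiring a moment's care is to confirm that hypothesis (1) of Definition \ref{def_transverse_subcategory} matches exactly hypothesis (2) of Proposition \ref{prop_fiber_product} for a $\langle 1 \rangle$-manifold, i.e. that the face label $\del_0$ and the induced collar are correctly identified under the fiber product.
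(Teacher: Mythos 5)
Your proof is correct and matches the paper's approach: the paper treats this as an immediate consequence of Proposition \ref{prop_fiber_product} (case (2), using the collar and transversality conditions from Definition \ref{def_transverse_subcategory}), and the check that $\widetilde{f}$ carries $\del M \times_X N$ into $p^{-1}(Y)$ is the same routine bookkeeping you carry out.
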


Given a pair of manifolds $(X, Y)$, 
We say that a differential stable relative tangential $G_2$-cycle $\widehat{c} = (p \colon N \to X, g_p)$ is {\it transverse to $Y$} if the underlying map $p$ is transverse to $Y$.  
Remark that $(\widehat{\Omega^{G_2}})^{-r}(X)$ is generated by $(\widehat{c}, \alpha)$ with $\widehat{c}$ satisfying this transversality. 

\begin{lem}\label{lem_genericity_transversality}
If $\widehat{c}$ is transverse to $Y$, the inclusion $\hBordone_{m}(X, Y)_{\pitchfork \widehat{c}} \subset \hBordone_{m}(X, Y)$ is an equivalence. 
\end{lem}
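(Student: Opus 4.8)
The plan is to show that every object and every morphism of $\hBordone_{m}(X, Y)$ can be deformed, within its isomorphism class resp.\ bordism class, to one satisfying conditions (1) and (2) of Definition \ref{def_transverse_subcategory}. Since $\hBordone_{m}(X, Y)_{\pitchfork \widehat c}$ is a full subcategory (it is defined by conditions on objects only), proving that the inclusion is an equivalence amounts to proving that it is essentially surjective. So the heart of the argument is a genericity/transversality statement: given an arbitrary object $(M, g, f)$, produce an isomorphic object (i.e.\ one related by a bordism that is an isomorphism — a cylinder-type bordism) satisfying (1) and (2).

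First I would handle condition (1), the collar-constancy of $f$ near $\partial M$. This is a standard maneuver: choose a collar $[0,\epsilon)\times\partial M\hookrightarrow M$ compatible with the differential stable tangential $G_1$-structure $g$ (such collars exist by the usual collar neighbourhood theorem, here in the $\langle 1\rangle$-manifold setting), and replace $f$ on a smaller collar by $f|_{\partial M}\circ\mathrm{pr}_{\partial M}$, interpolating smoothly in between. One must check this does not disturb the property that $f(\partial M)\subset Y$, which is automatic since the new $f$ agrees with the old one on $\partial M$; and one must exhibit the requisite bordism realizing the isomorphism in $\hBordone_{m}(X,Y)$ between $(M,g,f)$ and $(M,g,f')$, which is the cylinder $I\times M$ with the interpolating map. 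This is routine.

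Next, condition (2): transversality of $f\colon M\to X$ to $p\colon N\to X$. Here I would invoke the parametric transversality theorem (Thom's jet-transversality / the Thom–Sard theorem in the manifold-with-corners setting), using the local extension picture of neat maps from Lemma \ref{lem_local_extension}: since $p$ is neat, transversality of $f$ to $p$ is locally equivalent to transversality of $f$ to the boundaryless extension $\widehat p$, as noted after Definition \ref{def_transversality}. One perturbs $f$ slightly — keeping $f(\partial M)\subset Y$ and the collar-constancy arranged in the previous step, which is possible because transversality is a $C^\infty$-open dense condition and we may localize the perturbation away from $\partial M$ (or, near $\partial M$, perturb $f|_{\partial M}$ within $Y$ first and extend by collar-constancy, using that $\widehat c$ is transverse to $Y$ so that the relevant transversality on the boundary stratum also holds generically). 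Because the perturbation can be taken arbitrarily small, it is realized by a cylinder bordism $(I\times M, g_I, F)$ with $F$ a homotopy from $f$ to the perturbed map, giving the desired isomorphism in $\hBordone_{m}(X,Y)$. Finally one assembles: apply step (1) then step (2), noting that step (2) can be done while preserving the collar-constancy from step (1), to conclude essential surjectivity, hence that the inclusion is an equivalence of Picard groupoids.

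The main obstacle is the interaction of the two conditions near $\partial M$: the perturbation achieving transversality to $p$ must be compatible with the collar-constancy of $f$ and with $f(\partial M)\subset Y$, and one must ensure that transversality to $p$ on the interior and transversality of $f|_{\partial M}$ to $p|_{p^{-1}(Y)}$ within $Y$ can be achieved simultaneously. This is precisely where the hypothesis that $\widehat c$ is transverse to $Y$ is used — it guarantees that the boundary stratum $p^{-1}(Y)\to Y$ is again a neat map to which one can apply the genericity argument, so that a generic $f|_{\partial M}$ valued in $Y$ is transverse to it, and then the collar-constant extension into the interior is generically transverse to $p$. I would organize the perturbation in two stages (first on $\partial M$ inside $Y$, extend by the collar, then a further interior perturbation fixing a neighbourhood of $\partial M$) to keep the bookkeeping clean; everything else is a routine application of standard transversality technology adapted to manifolds with corners via Lemma \ref{lem_local_extension} and Proposition \ref{prop_fiber_product}.
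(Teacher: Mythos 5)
Your proposal is correct and follows essentially the same route as the paper: both reduce the equivalence to essential surjectivity of the full inclusion and then run a boundary-first transversality argument — make the boundary datum transverse to the neat map $p^{-1}(Y)\to Y$ inside $Y$ (this is where transversality of $\widehat{c}$ to $Y$ and Proposition \ref{prop_fiber_product} (1) enter), observe that together with transversality of $p$ to $Y$ this forces transversality to $p$ in $X$ for the collar-constant extension, and finish with relative interior genericity. The only cosmetic difference is that you perturb the given map $f$ by a small homotopy realized as a cylinder bordism, while the paper re-chooses a representative of the bordism class $e$ (starting from a transverse representative of $\partial e$ in $Y$); since isomorphism in the Picard groupoid is precisely bordism, these amount to the same thing.
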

\begin{proof}
It is enough to show that any elements in the relative bordism group $\Omega^{G_1}_m(X, Y)$ can be represented by an object in $\hBordone_{m}(X, Y)_{\pitchfork \widehat{c}}$. 
Take any element $e\in \Omega^{G_1}_m(X, Y)$. 
First we consider its image of the boundary map, $\del e \in \Omega^{G_1}_{m-1}(Y)$. 
Applying Proposition \ref{prop_fiber_product} case (1) to the embedding $\iota \colon Y \hookrightarrow X$ and $p \colon N \to X$, we get a canonical structure of a manifold with corners on $p^{-1}(Y)$ so that the map $p \colon p^{-1}(Y) \to Y$ is neat. 
We claim that we can represent $\del e$ by a smooth map $f_{\del} \colon M_\del \to Y$ transverse to $p\colon p^{-1}(Y) \to Y$. 
Indeed, using the homotopy equivalence $\mathring{Y} \sim Y$, it is easy to reduce to the genericity of transversality in the case without boundary. 

By the transversality of $p \colon N \to X$ with $\iota\colon Y \hookrightarrow X$, we see that the composition $\iota \circ f_{\del} \colon M_\del \to X$ is transverse to $p \colon N \to X$. 
This also implies that the map $f_\del \circ \mathrm{pr}_{M_\del} \colon (-1, 0] \times M_\del \to X$ is transverse to $p \colon N \to X$, and we use it as a collar of the desired object. 
Now that we have defined the maps on the collar, the desired object in $\hBordone_{m}(X, Y)_{\pitchfork \widehat{c}}$ which represents $f_\del$ can be obtained by reducing to the case where $N$ and $X$ are boundaryless as before, and using the usual genericity of transversality in the relative form. 
\end{proof}

By Lemma \ref{lem_genericity_transversality}, fixing $\widehat{c}$ which is transverse to $Y$, we can apply the machinery of Subsubsection \ref{sec_physical_app} to the equivalent Picard subcategory $\hBordone_{m}(X, Y)_{\pitchfork \widehat{c}}$. 
We use the notation $\mathcal{C}^{(G_1)_\nabla}_{m}(X, Y)_{\pitchfork \widehat{c}} $ for the corresponding subgroup of $\mathcal{C}^{(G_1)_\nabla}_{m}(X, Y)$, which is denoted by $\mathcal{C}_{\mathcal{D}}$ with $\mathcal{D} = \hBordone_{m}(X, Y)_{\pitchfork \widehat{c}}$ in Subsubsection \ref{sec_physical_app}.

\subsubsection{The differential multiplication by $(\widehat{\Omega^{G_2}})^*$}\label{subsubsec_module}

Now we proceed to construct the transformation \eqref{eq_module_intro}. 
Let $(X, Y)$ be a pair of manifolds and let $\widehat{c} = (p \colon N \to X, g_p)$ be a differential stable relative tangential $G_2$-cycle of dimension $r$ which is transverse to $Y$. 
First we define a functor between bordism Picard categories, essentially given by the fiber product over $X$ composed with the homomorphism $\mu$, but technically speaking we need to take a little care regarding stabilizations. 
In order to define a functor, we need the following additional choices (which eventually yields the same homomorphisms between $\mathcal{C}^{G_\nabla}$'s). 
\begin{itemize}
    \item A representative $\widetilde{c} = (p \colon N \to X, \widetilde{g}_p)$ of $c$ with $\widetilde{g}_p = (k, \phi, P_p, \nabla_p, \psi_p)$, and we require that $k$ is even. 
    \item A subbundle $H_p \subset \underline{\R}^k \oplus TN$ over $N$ which induces a splitting $\underline{\R}^k \oplus TN = H_p \oplus  T(\phi, p)$. 
    \item A Riemannian metric $g_X^{\mathrm{met}}$ on $X$. 
\end{itemize}
Then we define a functor
\begin{align}\label{eq_functor_fiber_product}
     \times_X (\widetilde{c}, H_p, g_X^{\mathrm{met}}) \colon \hBordone_{n-r-1}(X, Y)_{\pitchfork \widehat{c}} \to \hBordthree_{n-1}(N, p^{-1}(Y)),
\end{align}
as follows. 
For an object $(M, g_M, f)$ in $\hBordone_{n-r-1}(X, Y)_{\pitchfork \widehat{c}}$, we consider the fiber product $M \times_X N$ and use the notation of maps as in \eqref{diag_fiber_product}. 
Then by Corollary \ref{cor_fiber_product_corner}, $M \times_X N$ is a smooth $\langle 1 \rangle$-manifold with $\del (M \times_X N) = \del M \times_X N$ of dimension $(n-1)$. 
Moreover, the horizontal subbundle $H_p$ and the Riemannian metric $g_X^{\mathrm{met}}$ gives the identification\footnote{
The identification \eqref{eq_splitting} is explicitely given as follows. 
Notice that $TM$ and $TN$ are equipped with Riemannian metrics by the data $g_M$, $g_p$, $g_X^{\mathrm{met}}$ and $H_p$. 
This gives a splitting of vector bundles over $M \times_X N$ of the form
\begin{align}\label{eq_splitting_fiber_product}
    \widetilde{p}^*TM \oplus \widetilde{f}^*TN = T(M \times_X N) \oplus H'. 
\end{align}
The restriction of $df -dp \colon \widetilde{p}^*TM \oplus \widetilde{f}^*TN \to (p \circ \tilde{f})^*TX$ to $H'$ is an isomorphism, 
\begin{align*}
    (df - dp)|_{H'} \colon H' \simeq (p \circ \tilde{f})^*TX. 
\end{align*}
Denote by $h^\perp \colon \underline{\R}^k \oplus TN \to T(\phi, p)$ the projection induced by $H_p$. 
Then the map \eqref{eq_splitting} is given by mapping an element
\begin{align*}
    (v, m, n) \in \underline{\R}^k \oplus T(M \times_X N) \subset \underline{\R}^k \oplus TM \oplus TN
\end{align*}
to the element
\begin{align*}
    \left( m + \mathrm{pr}_{\widetilde{p}^*TM}((df - dp)|_{H'}^{-1}(\phi (v))), \ 
    h^{\perp}\left( v, n + \mathrm{pr}_{\widetilde{f}^*TN}((df - dp)|_{H'}^{-1}(\phi (v)))
    \right)
    \right). 
\end{align*}
We can check that this is indeed an isomorphism. 
Here we note that when $p$ is a submersion, the resulting identification \eqref{eq_splitting} does not depend on the choice of $g_X^{\mathrm{met}}$ because the splitting \eqref{eq_splitting_fiber_product} becomes the obvious one induced by $H_p$. 
\label{footnote_splitting}
}
\begin{align}\label{eq_splitting}
    \underline{\R}^k \oplus T(M \times_X N) \simeq \widetilde{p}^*TM \oplus \widetilde{f}^* T(\phi, p) . 
\end{align}

Take a representative $\widetilde{g}_M = (d, P_M, \nabla_M, \psi_M)$ for $g_M$ so that $d \ge n-r =  \dim M + 1$. 
We use the following isomorphism $\Psi$ defined by the composition, 
\begin{align}\label{eq_def_Psi}
\Psi \colon \underline{\R}^{d-(n-r-1)+k} \oplus T(M \times_X N)
&= \underline{\R}^{d-(n-r)} \oplus \underline{\R}^{k} \oplus \underline{\R} \oplus T(M \times_X N) \notag \\
&\xrightarrow{\mbox{flip}}  \underline{\R}^{d-(n-r)} \oplus \underline{\R} \oplus \underline{\R}^{k}\oplus T(M \times_X N) \\
&\xrightarrow{\eqref{eq_splitting}}  (\underline{\R}^{d-(n-r)+1} \oplus \widetilde{p}^*TM) \oplus \widetilde{f}^* \ker (\phi, f) . \notag
\end{align}
Here the second map flips $\underline{\R}^k$ and $\underline{\R}$
(the ``flip'' is necessary to make a functor). 
Then the representative $\widetilde{g}_M \times_{X} \widetilde{g}_p$ of differential stable tangential $G_3$-structure on $M \times_X N$ is defined to be
\begin{align}\label{eq_fiber_product_representative}
    \widetilde{g}_M \times_{X} \widetilde{g}_p := \left(d+k, \mu_{d, k}\left(\widetilde{p}^*(P_M, \nabla_M) \times \widetilde{f}^* (P_p, \nabla_p)\right), \Psi^{-1} \circ \left(\widetilde{p}^*(\psi_M) \times \widetilde{f}^* (\psi_p) \right)\right). 
\end{align}
Here for the last item we use the obvious isomorphism of the associated bundles given by the commutativity of \eqref{eq_cond1_multi_G}. 
Then by the commutativity of the left square of \eqref{eq_cond2_multi_G} we have $(\widetilde{g}_M \times_{X} \widetilde{g}_p) (1) =  (\widetilde{g}_M(1)) \times_{X} \widetilde{g}_p $. 
Thus, denoting the resulting differential stable tangential $G_3$-structure by $g_M \times_X \widetilde{g}_p$, we can define the following object of $\hBordthree_{n-1}(N, p^{-1}(Y))$. 
\begin{align*}
    (M, g_M, f) \times_X(\widetilde{c}, H_p, g_X^{\mathrm{met}}) := (M \times_X N, g_M \times_X \widetilde{g}_p,  \widetilde{f}). 
\end{align*}
For morphisms in $\hBordone_{n-r-1}(X, Y)_{\pitchfork \widehat{c}}$, note that we can always take representatives whose underlying maps to $X$ are transverse to $p$ and satisfy the condition (2) in Proposition \ref{prop_fiber_product}. 
Then the corresponding morphisms in $\hBordone_{n-1}(N, p^{-1}(Y))$ is defined in a similar way by the fiber product over $X$ using Proposition \ref{prop_fiber_product}, but in this case we do {\it not} insert the ``flip'' in \eqref{eq_def_Psi}. 
Then we can easily check that we get the functor \eqref{eq_functor_fiber_product} as desired.

As a result, we get a group homomorphism between $\mathcal{C}^{G_\nabla}$'s. 
Notice that the homotopy class (Definition \ref{def_diff_Gstr_vec} (4)) of the representative \eqref{eq_fiber_product_representative} does not depend on the choice of $H_p$ or $g_X^{\mathrm{met}}$. 
Moreover, recall that we have assumed that $k$ is even. 
If we use the two-fold stabilization $\widetilde{g}(2) :=(\widetilde{g}(1))(1)$, by the homotopy commutativity of \eqref{eq_cond2_multi_G} we see that $(\widetilde{g}_M \times_{X} \widetilde{g}_p)(2)$ and $\widetilde{g}_M \times_{X} (\widetilde{g}_p(2))$ are homotopic. 
Thus, for a differential stable relative tangential $G_2$-cycle $\widehat{c}$, we get a group homomorphism
\begin{align}\label{eq_hom_fiber_product}
    \times_X \widehat{c} \colon \mathcal{C}^{(G_1)_\nabla}_{n-r-1}(X, Y) _{\pitchfork \widehat{c}} \to \mathcal{C}^{(G_3)_\nabla}_{n-1}(N, p^{-1}(Y)). 
\end{align}
using any choice of $(\widetilde{c}, H_p, g_X^{\mathrm{met}})$ lifting $\widehat{c}$ as above. 
Recall that we have $N_G^\bullet = \Hom(\Omega^G_\bullet(\pt), \R)$ and $V_{\Omega^G}^\bullet = \Omega^G_{-\bullet} (\pt) \otimes \R$. 
Thus the homomorphism $\mu \colon G_1 \times G_2 \to G_3$ induces the homomorphism
\begin{align}\label{eq_module_coeff}
    \mu \colon N_{G_3}^q \otimes V_{\Omega^{G_2}}^{p} \to N_{G_1}^{q-p}
\end{align}
for each $p$ and $q$.

\begin{defn}\label{def_wedge}
Let $Z$ be a manifold. 
We define the linear maps
\begin{align}
    \wedge_\mu &\colon \Omega^{n}(Z; N_{G_3}^\bullet) \otimes   \Omega^{-r}(Z; V_{\Omega^{G_2}}^\bullet) \to \Omega^{n-r}(Z; N_{G_1}^\bullet), \\
   \wedge_\mu &\colon \Omega^{n}(Z; N_{G_3}^\bullet) \otimes   \Omega^{-r}_{-\infty}(Z; V_{\Omega^{G_2}}^\bullet) \to \Omega^{n-r}_{-\infty}(Z; N_{G_1}^\bullet), \notag
\end{align}
by the wedge product on forms and currents, and the homomorphism $\mu$ in \eqref{eq_module_coeff} on the coefficients. 
\end{defn}

By the definition of the functor \eqref{eq_functor_fiber_product}, for any morphism $[W, g_W, f_W]$ in $\hBordone_{n-r-1}(X, Y)_{\pitchfork \widehat{c}}$ and $\omega \in\Omega_{\mathrm{clo}}^{n}(N, p^{-1}(Y); N_{G_3}^\bullet) $, we have
\begin{align}\label{eq_cw_total}
    \cw(\omega) \left([W, g_W, f_W] \times_X (\widetilde{c}, H_p, g_X^{\mathrm{met}})
    \right)
    = \cw(p_! (\omega \wedge_\mu \cw_{g_p}(\mathrm{ch}(\mathrm{id}_{MTG_2}))))([W, g_W, f_W]), 
\end{align}
where $(\widetilde{c}, H_p, g_X^{\mathrm{met}})$ is any choice lifting $\widehat{c}$,  $\cw_{g_p}(\mathrm{ch}(\id_{MTG_2})) \in \Omega^0(N;  \Ori(T(p))\otimes_\R V_{\Omega^{G_2}}^\bullet) $ is defined in \eqref{eq_cw_gp_1} and $p_!$ is the fiber integration of currents \eqref{eq_current_integration}. 
In the right hand side of \eqref{eq_cw_total}, we use the obvious currential version of \eqref{eq_def_cw_omega_mor}. 
We use this generalization throughout the rest of the paper. 

In particular, for $\omega \in\Omega_{\mathrm{clo}}^{n}(X, Y; N_{G_3}^\bullet) $, we have
\begin{align}\label{eq_cw_T(hatc)}
    \cw(\omega) \circ p_* \left([W, g_W, f_W] \times_X (\widetilde{c}, H_p, g_X^{\mathrm{met}})
    \right)
    = \cw(\omega \wedge_\mu  T(\widehat{c}))([W, g_W, f_W]). 
\end{align}
Here we denoted the functor $p_*$ on the bordism Picard categories over $(N, p^{-1}(Y))$ to $(X, Y)$ given by the composition of $p$. 
We will use the same notation for the corresponding homomorphism between $\mathcal{C}^{(G_1)_\nabla}$'s. 

Now we proceed to define the multiplication \eqref{eq_module_intro}. 
First, we define the multiplication of each element $(\widehat{c}, \alpha) \in (Z\widehat{\Omega^{G_2}})^{-r}(X)$ such that the underlying map $p$ is transverse to $Y \subset X$. 
Given such an element, we set $(\widehat{I\Omega^{G_1}_{\pitchfork \widehat{c}}})^{n-r}(X, Y)$ to be the group in Definition \ref{def_sub_model} for $\mathcal{D} = \hBordone_{n-r-1}(X, Y)_{\pitchfork \widehat{c}}$. 
It is isomorphic to $(\widehat{I\Omega^{G_1}_{\mathrm{dR}}})^{n-r}(X, Y)$ by Proposition \ref{prop_sub_model_isom} and Lemma \ref{lem_genericity_transversality}.

\begin{defn}\label{def_module_str_cyclewise}
Let $(X, Y)$ be a pair of manifolds and $r$ be an integer. 
Given $(\widehat{c}, \alpha) \in (Z\widehat{\Omega^{G_2}})^{-r}(X)$ such that $\widehat{c}$ is transverse to $Y$, we define a linear map
\begin{align}\label{eq_module_str_cyclewise}
   \times_X (\widehat{c}, \alpha)  \colon  (\widehat{I\Omega^{G_3}_{\mathrm{dR}}})^n(X, Y) \to (\widehat{I\Omega^{G_1}_{\pitchfork \widehat{c}}})^{n-r}(X, Y), 
\end{align}
by sending $(\omega, h)$ to $\left(\omega \wedge_\mu R([\widehat{c}, \alpha]) , (\widehat{c}, \alpha)_*h\right)$, 
where 
\begin{align}
    (\widehat{c}, \alpha)_*h := h \circ p_* \circ (\times_X \widehat{c})  - \mathrm{cw}(\omega \wedge_\mu \alpha) \colon \mathcal{C}^{(G_1)_\nabla}_{n-r-1}(X, Y)_{\pitchfork \widehat{c}} \to \R/\Z. 
\end{align}
\end{defn}
The compatibility condition for $\left(\omega \wedge_\mu R([\widehat{c}, \alpha]) , (\widehat{c}, \alpha)_*h\right)$ is checked as follows. 
Take a morphism $[W, g_W, f_W] \colon (M_-, g_-, f_-) \to (M_+, g_+, f_+)$ in $\hBordone_{n-r-1}(X, Y)_{\pitchfork \widehat{c}}$. 
We have, choosing any $(\widetilde{c}, H_p, g_X^{\mathrm{met}})$, 
\begin{align*}
    h\circ p_* \left(([M_+, g_+, h_+] - [M_-, g_-. f_-])\times_X \widehat{c}\right)  &= 
    \cw(\omega)\circ p_*\left( [W, g_W, f_W] \times_X (\widetilde{c}, H_p, g_X^{\mathrm{met}})
    \right) \\
    &= \cw(\omega \wedge_\mu  T(\widehat{c}))([W, g_W, f_W])
\end{align*} 
by the compatibility of $(\omega, h)$ and \eqref{eq_cw_T(hatc)}. 
Moreover we have
\begin{align*}
    \cw(\omega \wedge_\mu \alpha)\left( [M_+, g_+, h_+] - [M_-, g_-. f_-]
    \right)
    = \cw (\omega \wedge_\mu (T(\widehat{c}) - R([\widehat{c}, \alpha])))([W, g_W, f_W])
\end{align*}
by \eqref{eq_R(calpha)}. 
Thus we get the desired compatibility. 

\begin{lem}\label{lem_module_str_welldef}
The composition of the map \eqref{eq_module_str_cyclewise} with the isomorphism $(\widehat{I\Omega^{G_1}_{\pitchfork \widehat{c}}})^{n-r}(X, Y) \simeq (\widehat{I\Omega^{G_1}_{\mathrm{dR}}})^{n-r}(X, Y)$ in Proposition \ref{prop_sub_model_isom} only depends on the class $[\widehat{c}, \alpha] \in (\widehat{\Omega^{G_2}})^{-r}(X)$ of $(\widehat{c}, \alpha)$. 
\end{lem}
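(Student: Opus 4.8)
The plan is to exploit the fact that the map \eqref{eq_module_str_cyclewise} and the equivalence relation defining $(\widehat{\Omega^{G_2}})^{-r}(X)$ are both additive, and thereby to reduce the statement to a single vanishing assertion: for every bordism data $\widehat b = (q\colon W\to\R\times X, g_q)$ with $\partial\widehat b$ transverse to $Y$, and every $(\omega,h)\in(\widehat{I\Omega^{G_3}_{\mathrm{dR}}})^n(X,Y)$, the element $(\omega,h)\times_X(\partial\widehat b, T(\widehat b))$ is zero in $(\widehat{I\Omega^{G_1}_{\mathrm{dR}}})^{n-r}(X,Y)$. For the additivity of \eqref{eq_module_str_cyclewise} in $(\widehat c,\alpha)$ one uses that the fiber product commutes with disjoint unions, that $\wedge_\mu$ (Definition \ref{def_wedge}) is bilinear and $\cw$ additive, and that the identifications $\mathcal{C}^{(G_1)_\nabla}_{n-r-1}(X,Y)_{\pitchfork \widehat c_i}\simeq\mathcal{C}^{(G_1)_\nabla}_{n-r-1}(X,Y)$ coming from Proposition \ref{prop_sub_model_isom} and Lemma \ref{lem_genericity_transversality} are canonical. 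For the reduction one writes two cycles $(\widehat c_1,\alpha_1)$ and $(\widehat c_2,\alpha_2)$ (both transverse to $Y$) with the same class as $(\widehat c_1,\alpha_1)\sqcup(\partial\widehat b_1, T(\widehat b_1))\cong(\widehat c_2,\alpha_2)\sqcup(\partial\widehat b_2, T(\widehat b_2))$, and, by a genericity argument in the spirit of Lemma \ref{lem_genericity_transversality}, arranges the $\widehat b_i$ to be transverse to $\{0\}\times Y$, so that each $\partial\widehat b_i$ is transverse to $Y$ and $\times_X(\partial\widehat b_i)$ is defined.

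For the vanishing itself, I would first observe that the differential-form component already dies: by Definition \ref{def_hat_MTG_cycle} and the definitions \eqref{def_T(hatc)}, \eqref{eq_T(hatb)} of $T$, together with the closedness of $\cw_{g_q}(\mathrm{ch}(\mathrm{id}_{MTG_2}))$ and Stokes' theorem for the fiber integration, one has $R([\partial\widehat b, T(\widehat b)]) = T(\partial\widehat b) - dT(\widehat b) = 0$, hence $\omega\wedge_\mu R([\partial\widehat b, T(\widehat b)]) = 0$. It therefore remains to show that the homomorphism $(\partial\widehat b, T(\widehat b))_*h = h\circ p_*\circ(\times_X\partial\widehat b) - \cw(\omega\wedge_\mu T(\widehat b))$ on $\mathcal{C}^{(G_1)_\nabla}_{n-r-1}(X,Y)_{\pitchfork \partial\widehat b}$ is identically zero modulo $\Z$.

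To see this, fix an object $(M, g_M, f)$ in $\hBordone_{n-r-1}(X,Y)_{\pitchfork \partial\widehat b}$. The compact manifold $W_{\le 0} := q^{-1}((-\infty,0]\times X)$, equipped with the map $\mathrm{pr}_X\circ q$ and with the relative stable tangential $G_2$-structure induced by $g_q$, is a compact relative $G_2$-cycle over $X$ whose boundary is the underlying cycle of $\partial\widehat b$; after choosing auxiliary data $H$, $g_X^{\mathrm{met}}$ and a collar of $\partial M$ along which $f$ is constant, one forms the fiber product $M\times_X W_{\le 0}$, endows it with the differential stable tangential $G_3$-structure built exactly as in \eqref{eq_def_Psi}--\eqref{eq_fiber_product_representative} and with its induced map to $X$, and, invoking Proposition \ref{prop_fiber_product}, obtains a $\langle 2\rangle$-manifold whose $\partial_0$-face is $M\times_X\partial W_{\le 0}$ and whose $\partial_1$-face $\partial M\times_X W_{\le 0}$ maps to $Y$; this is a morphism $\varnothing\to p_*\big((M, g_M, f)\times_X\partial\widehat b\big)$ in $\hBordthree_{n-1}(X,Y)$. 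Applying the compatibility condition of $(\omega,h)$ to this morphism gives $h\big(p_*((M,g_M,f)\times_X\partial\widehat b)\big) \equiv \int_{M\times_X W_{\le 0}}\cw(f_{\mathrm{tot}}^*\omega) \pmod\Z$, and a Fubini computation for the fiber integration along $M\times_X W_{\le 0}\to M$ — the same computation underlying \eqref{eq_cw_total}, now carried out over a manifold with boundary — rewrites the right-hand side as $\int_M\cw_{g_M}\big(f^*(\omega\wedge_\mu T(\widehat b))\big)$ by the very definition \eqref{eq_T(hatb)} of $T(\widehat b)$. Hence $(\partial\widehat b, T(\widehat b))_*h([M, g_M, f]) \equiv 0\pmod\Z$, as desired.

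I expect the main obstacle to be the geometric bookkeeping in the last paragraph: checking that $W_{\le 0}$ is a proper neat cycle over $X$ with boundary $\partial\widehat b$ (this is where transversality of $q$ to $\{0\}\times X$ and to $\{0\}\times Y$ is used), that $M\times_X W_{\le 0}$ genuinely carries a $\langle 2\rangle$-structure with the stated face decomposition and a differential stable tangential $G_3$-structure restricting on $\partial_0$ to $(M, g_M, f)\times_X\partial\widehat b$ (which requires chasing the stabilization and ``flip'' conventions of \eqref{eq_def_Psi}), and that the fiber-integration identity for the Chern--Weil forms holds with orientations and signs matching \eqref{eq_current_integration_sign} and \eqref{eq_T(hatb)}. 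None of this is conceptually deep, but making the corners, collars and orientations consistent is the delicate part; the additivity reduction and the vanishing of the form component are formal.
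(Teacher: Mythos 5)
Your proposal is correct and follows essentially the same route as the paper: reduce by additivity to the generating relation $(\del \widehat{b}, T(\widehat{b})) \sim 0$, and kill the $h$-component by viewing $(M,g,f) \times_X q^{-1}((-\infty,0]\times X)$ as a bordism from $\varnothing$ to $p_*\bigl((M,g,f)\times_X \del\widehat{b}\bigr)$, then combining the compatibility of $(\omega,h)$ with the Fubini identity that produces $\cw(\omega\wedge_\mu T(\widehat{b}))$. The only cosmetic difference is that you dispose of the curvature component directly via Stokes, whereas the paper invokes Remark \ref{rem_omega_condition} to reduce everything to the $h$-component.
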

\begin{proof}
By Definition \ref{def_hat_OmegaG}, it is enough to check that for any $r$-dimensional bordism data $\widehat{b} = (q \colon W \to \R \times X, g_q)$ and any element $[M, g, f] \in \mathcal{C}^{(G_1)_\nabla}_{n-r-1}(X, Y)_{\pitchfork \del \widehat{b}}$, we have
\begin{align}\label{eq_welldef_module}
    h \circ (q|_\del)_*\left([M, g, f] \times_X \del \widehat{b}\right) = \cw \left(\omega \wedge_\mu T( \widehat{b})\right)([M, g, f]) \pmod \Z. 
\end{align}
To check it, take an object $(M, g, f) \in \hBordone_{n-r-1}(X, Y)_{\pitchfork \del \widehat{b}}$ representing $[M, g, f]$ and a data $(\widetilde{b}, H_q, g_{\R \times X}^{\mathrm{met}})$ lifting $\widehat{b}$, where we require $g_{\R \times X}^{\mathrm{met}}$ to be a cylindrical metric induced by some $g_X^{\mathrm{met}}$ on $X$. 
Then we can construct a bordism over $(X, Y)$ (in the sense of Definition \ref{def_bordism_Gcyc}) from $\varnothing$ to $(M, g, f) \times_{X} (\del{\widetilde{b}}, H_q|_{\del}, g_{X}^{\mathrm{met}})$, essentially by ``$(M, g, f) \times_{ X}\left(\mathrm{pr}_X^*((\widetilde{b}, H_q, g_{\R \times X}^{\mathrm{met}})|_{(-\infty, 0] \times X})\right)$''. 
Here the fiber product over $X$ is extended to this case in the obvious way, and we need a suitable deformation to have a collar structure.  
Since any choice are bordant, we abuse the notation and denote the resulting morphism in $\hBordthree_{n-1}(X, Y)$ by $q_* \left[(M, g, f) \times_{ X}\left(\mathrm{pr}_X^*((\widetilde{b}, H_q, g_{\R \times X}^{\mathrm{met}})|_{(-\infty, 0] \times X}) \right)\right]$. 
Now the compatibility condition of $(\omega, h)$ implies that
\begin{align*}
    h \circ (q|_\del)_* \left([M, g, f] \times_X \del \widehat{b}\right) = \cw(\omega)\left(q_*\left[ (M, g, f) \times_{ X}\left(\mathrm{pr}_X^*((\widetilde{b}, H_q, g_{\R \times X}^{\mathrm{met}})|_{(-\infty, 0] \times X}) \right)\right]\right) \\ \pmod \Z. 
\end{align*}
On the other hand, \eqref{def_T(hatc)} and \eqref{eq_T(hatb)} implies that the right hand side is equal to the right hand side of \eqref{eq_welldef_module}. 
This completes the proof. 
\end{proof}

Thus, we can define the following. 

\begin{defn}\label{def_module_str}
Let $(X, Y)$ be a pair of manifolds. 
We define a linear map
\begin{align}\label{eq_def_module_str}
      (\widehat{I\Omega^{G_3}_{\mathrm{dR}}})^n(X, Y) \otimes (\widehat{\Omega^{G_2}})^{-r}(X) \xrightarrow{\cdot} (\widehat{I\Omega^{G_1}_{\mathrm{dR}}})^{n-r}(X, Y). 
\end{align}
by sending $ (\omega, h) \otimes [\widehat{c}, \alpha]  $ to
$(\omega, h) \times_X (\widehat{c}, \alpha) \in (\widehat{I\Omega^{G_1}_{\pitchfork \widehat{c}}})^{n-r}(X, Y) \simeq (\widehat{I\Omega^{G_1}_{\mathrm{dR}}})^{n-r}(X, Y)$. 
This does not depend on the representative $(\widehat{c}, \alpha)$ by Lemma \ref{lem_module_str_welldef}. 
\end{defn}

Finally we show the following. 
\begin{thm}\label{thm_module_str}
The map \eqref{eq_def_module_str} refines the transformation \eqref{module_IOmega} defined by \eqref{eq_module_IE_general}. 
\end{thm}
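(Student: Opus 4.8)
The plan is to unwind both sides—the topological transformation \eqref{module_IOmega} and the differential map \eqref{eq_def_module_str}—in terms of the Hopkins--Singer Picard-groupoid model of the Anderson dual (Fact \ref{fact_IZ_model}), and check that they agree at the level of functors of Picard groupoids. Recall from Lemma \ref{lem_functor} and the proof of Theorem \ref{thm_IOmega_dR=IOmega} that an element of $(I\Omega^{G_i}_\dR)^m(X,Y)$ is, via the equivalence $h\Bord^{(G_i)_\nabla}_{m-1}(X,Y) \simeq \pi_{\le 1}(L((X/Y)\wedge MTG_i)_{1-m})$, the same data as a natural isomorphism class of functors to $(\R \to \R/\Z)$. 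Under this identification, the Anderson dual $I\mu$ of the Madsen--Tillmann map \eqref{eq_multi_MTG} together with the evaluation pairing \eqref{eq_module_IE_general} is, on fundamental Picard groupoids, precisely the operation ``precompose with the functor induced by $\mu$ and the smash product''. So the first step is to identify the functor underlying the topological transformation as: send a bordism class in $h\Bord^{(G_1)_\nabla}_{n-r-1}(X,Y)$, smashed against a class in $(\Omega^{G_2})^{-r}(X)$ represented by a differential relative $G_2$-cycle $\widehat c$, to the bordism class of their fiber product with $G_3$-structure via $\mu$.

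Second, I would observe that this fiber-product operation on the topological level is exactly what the functor \eqref{eq_functor_fiber_product}, $\times_X(\widetilde c, H_p, g_X^{\mathrm{met}})$, computes, after forgetting connections—this is essentially Corollary \ref{cor_fiber_product_corner} and the construction of the group homomorphism \eqref{eq_hom_fiber_product}, whose underlying topological content is independent of the auxiliary choices $(\widetilde c, H_p, g_X^{\mathrm{met}})$ and depends only on $[\widehat c]\in(\Omega^{G_2})^{-r}(X)$ (via the bordism invariance established in Lemma \ref{lem_module_str_welldef} and Lemma \ref{lem_genericity_transversality}). Combined with the compatibility identities \eqref{eq_cw_total} and \eqref{eq_cw_T(hatc)}, which show that on morphisms the functor $\times_X(\widehat c,\alpha)$ acts by wedging $\omega$ with the Chern--Weil representative $T(\widehat c)$ of $\mathrm{ch}([\widehat c])$, we see that the differential map \eqref{eq_module_str_cyclewise} induces on the quotient $(I\Omega^{G_i}_\dR)$ exactly the precompose-with-$\mu$-fiber-product functor. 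The correction term $-\cw(\omega \wedge_\mu \alpha)$ and the differential form datum $\omega\wedge_\mu R([\widehat c,\alpha])$ play no role topologically (they are killed in passing to $I\Omega^{G_1}_\dR$, by definition of the equivalence relation $\mathrm{Im}(a)$), so only the ``$h\circ p_*\circ(\times_X\widehat c)$'' part survives, and that is the functor we want.

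Third, to make the matching of functors precise I would use naturality of the Pontryagin--Thom equivalence (Lemma \ref{lem_cat_equivalence}) in $(X,Y)$ and the fact that the smash product $-\wedge MTG_2$ and the map $MTG_1\wedge MTG_2\to MTG_3$ are realized on fundamental Picard groupoids of the relevant spaces by the fiber-product construction: this is the standard geometric description of the module/ring structure on Madsen--Tillmann spectra via transverse intersection, and it is exactly the construction of Subsection 6.3. Having matched the functors, both sides of the asserted refinement compute the same element of $\pi_0\mathrm{Fun}_{\mathrm{Pic}}$, hence agree under the isomorphism $F$ of Theorem \ref{thm_IOmega_dR=IOmega}; compatibility with the structure maps $R$ and $I$ follows from the explicit formula $(\omega,h)\mapsto(\omega\wedge_\mu R([\widehat c,\alpha]),\,(\widehat c,\alpha)_*h)$ and the corresponding formula for the topological module structure on the curvature/underlying-class level.

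The main obstacle I expect is the bookkeeping in the second step: verifying that the geometric fiber-product functor \eqref{eq_functor_fiber_product}, with all its stabilization shuffles (the ``flip'' in \eqref{eq_def_Psi}, the evenness-of-$k$ hypothesis, the two-fold stabilization argument using homotopy-commutativity of \eqref{eq_cond2_multi_G}), really is a model on fundamental Picard groupoids for the spectrum-level map \eqref{eq_module_IMTG}, rather than that map twisted by some sign or suspension automorphism. This requires carefully tracking the Pontryagin--Thom collapse maps through the fiber product and checking the orientation/framing conventions agree with those implicit in \eqref{eq_module_IE_general}; the cleanest route is probably to reduce to the universal case $(X,Y)=(MTG_1\wedge(\text{pt}^+),\ \text{pt})$ or to a point with $\widehat c$ a single closed $G_2$-manifold, where the identification becomes the classical statement that transverse intersection represents the cup/cap product, and then invoke naturality. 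Everything else—well-definedness, independence of choices, the compatibility identities—is already in place from Subsections 6.2 and 6.3.
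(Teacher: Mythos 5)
Your proposal is correct and takes essentially the same route as the paper: both arguments compare the associated functors in the Hopkins--Singer Picard-groupoid model (Lemma \ref{lem_functor}, Theorem \ref{thm_IOmega_dR=IOmega}), showing that $F_{(\omega,h)\cdot[\widehat{c},\alpha]}$ agrees up to natural isomorphism with $F_{(\omega,h)}\circ p_*\circ(\times_X(\widetilde{c},H_p,g_X^{\mathrm{met}}))$ on the transverse subcategory, and then identify the fiber-product functor via Pontryagin--Thom with the spectrum-level composition $(X/Y)\wedge MTG_1\to (X/Y)\wedge\Sigma^{-r}MTG_3$ whose Anderson dual defines \eqref{module_IOmega}. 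The one point where the paper is more careful than your phrasing: since $\omega\wedge_\mu\alpha$ is only a current, the correction term is not literally killed by $\mathrm{Im}(a)$, but is absorbed as a natural isomorphism $F_{\omega\wedge_\mu\alpha}$ of functors (with \eqref{eq_associated_transformation} extended to currents), which is the precise version of your ``plays no role topologically'' step within the $\pi_0\mathrm{Fun}_{\mathrm{Pic}}$ framework.
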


\begin{proof}
We use the arguments in Subsection \ref{subsec_proof_isom}. 
Recall that for an element $(\omega, h) \in (\widehat{I\Omega^G_\dR})^{N}(X, Y)$ we associated a functor $F_{(\omega, h)}$ in \eqref{eq_associated_functor}. 
By the proof of Theorem \ref{thm_IOmega_dR=IOmega}, the isomorphism $I\Omega^G_\dR \simeq I\Omega^G$ is given by mapping $I((\omega, h))$ to the natural isomorphism class of $F_{(\omega, h)}$.

Take any element $(\widehat{c}, \alpha) \in (Z\widehat{\Omega^{G_2}})^{-r}(X)$ such that $\widehat{c}$ is transverse to $Y$ and $(\omega, h) \in (\widehat{I\Omega^{G_3}_{\mathrm{dR}}})^n(X, Y)$, so that we get $(\omega, h) \cdot [\widehat{c}, \alpha]   \in (\widehat{I\Omega^{G_1}_{\mathrm{dR}}})^{n-r}(X, Y)$. 
Then we claim that the restriction to $\hBordone_{n-r-1}(X, Y)_{\pitchfork \widehat{c}}$ of the associated functor \eqref{eq_associated_functor}, 
\begin{align}\label{eq_proof_module_1}
    F_{ (\omega, h) \cdot [\widehat{c}, \alpha]  } \colon \hBordone_{n-r-1}(X, Y)_{\pitchfork \widehat{c}} 
    \to (\R \to \R/\Z), 
\end{align}
is naturally isormorphic to the composition, 
\begin{align}\label{eq_proof_module_2}
    \hBordone_{n-r-1}(X, Y)_{\pitchfork \widehat{c}} \xrightarrow{p_* \circ (\times_X (\widetilde{c}, H_p, g_X^{\mathrm{met}}))} \hBordthree_{n-1}(X, Y) \xrightarrow{F_{(\omega, h)}}
    (\R \to \R/\Z), 
\end{align}
for any choice of $(\widetilde{c}, H_p, g_X^{\mathrm{met}})$ lifting $\widehat{c}$. 
Indeed, we have a natural isomorphism given by the transformation $F_{\omega \wedge_\mu \alpha}$. 
Here $\omega \wedge_\mu \alpha \in \Omega_{-\infty}^{n-r-1}(X, Y; N_{G_1}^\bullet)$ is now a differential current, and the definition of the natural transformation \eqref{eq_associated_transformation} is extended to currents in the obvious way. 

We use the equivalence of Picard groupoids in Lemma \ref{lem_cat_equivalence}. 
By recalling the Pontryagin-Thom construction, we see that the first arrow in \eqref{eq_proof_module_2} is naturally isomorphic to the induced functor on $\pi_{\le 1}((-)_{-n+r+1})$ to the following compostion of maps of spectra. 
\begin{align}
    (X/Y) \wedge MTG_1 &\xrightarrow{\mathrm{diag} \wedge \id} (X/Y) \wedge X^+ \wedge MTG_1 \\
    &\xrightarrow{\id \wedge [c] \wedge \id} (X/Y) \wedge \Sigma^{-r}MTG_2 \wedge MTG_1 \notag \\
    &\xrightarrow{\id \wedge \mu} (X/Y) \wedge \Sigma^{-r}MTG_3. \notag 
\end{align}
The Anderson dual to this composition is the definition of the multiplication by $[c] \in (\Omega^{G_2})^{-r}(X)$ on the topological level \eqref{module_IOmega} defined by \eqref{eq_module_IE_general}. 
This completes the proof. 
\end{proof}

\section{Differential pushforwards}\label{sec_push}
Let $\mu \colon G_1 \times G_2 \to G_3$ be a homomorphism of tangential structure groups as in Section \ref{sec_module} (Remark \ref{rem_def_multi_G}). 
In this section, we introduce the differential refinement of {\it pushforward maps} associated to $\mu$. 
Here, what we call the pushforwards here is a generalization of pushforward maps (also called {\it integrations}) associated to multiplicative genera, which we explain in Subsection \ref{subsec_push_general}. 
For each pair of manifolds $(X, Y)$ and differential relative stable tangential $G$-cycle (Definition \ref{def_diff_rel_Gcyc}) $\widehat{c}= (p \colon N \to X, g_p)$ of dimension $r$ over $X$ whose underlying map $p$ is a submersion, 
we define a homomorphism which we call the {\it differential pushforward map} along $\widehat{c}$, 
\begin{align}\label{eq_diff_push_IOmega}
    \widehat{c}_* \colon (\widehat{I\Omega^{G_3}_\dR})^n(N, p^{-1}(Y)) \to (\widehat{I\Omega^{G_1}_\dR})^{n-r}(X, Y), 
\end{align}
which refines the {\it topological pushforward map} defined for a topological relative stable tangential $G_2$-cycle $c = (p \colon N \to X, g_p^{\mathrm{top}})$, 
\begin{align}\label{eq_top_push_IOmega}
    c_* \colon (I\Omega^{G_3})^{n}(N, p^{-1}(Y)) \to (I\Omega^{G_1})^{n-r}(X, Y), 
\end{align}
by the general procedure in Subsection \ref{subsec_push_general} applied to the morphism $IMTG_3 \wedge MTG_2 \to IMTG_1$ in \eqref{eq_module_IMTG} associated to $\mu$. 
Actually, the homomorphism \eqref{eq_diff_push_IOmega} is given by a straightforward modification of the multiplication by $\widehat{\Omega^{G_2}}$ in Section \ref{sec_module}. 
As we explain in Remark \ref{rem_integration_push}, we can recover the $S^1$-integration map $\int$ of $\widehat{I\Omega^G_\dR}$ (Definition \ref{def_integration_dR}) as a special case of the construction in this section. 
Also we explain in Remark \ref{rem_push_not_submersion} that if we use the obvious {\it currential} version of $\widehat{I\Omega^G_\dR}$, we can generalize the results in this section to the case where $p$ is not necessarily a submersion. 

\subsection{Generalities on topological pushforwards}\label{subsec_push_general}
Here we introduce the definition of {\it (topological) pushforward maps} which we use in this paper. 
This is a generalization of the most common notion of pushforward maps associated to multiplicative genera. 
We only explain the tangential version, but the normal version also works by just replacing $MTG$ to $MG$. 

The setting is the following. 
Assume we have tangential structure groups $G= \{G_d, s_d, \rho_d\}_{d \in \Z_{\ge 0}}$ and two spectra $E$ and $F$, together with a homomorphism of spectra, 
\begin{align}\label{eq_nu_push}
    \mu \colon E \wedge MTG \to F. 
\end{align}
We claim that \eqref{eq_nu_push} defines, for a topological relative stable $G$-cycle $c = (p \colon N \to X, g_p^{\mathrm{top}})$ of dimension $r$, a homomorphism
\begin{align}\label{eq_def_push_top}
    c_* \colon E^{n}(N, p^{-1}(Y)) \to F^{n-r}(X, Y), 
\end{align}
which we call the {\it (topological) pushforward map} along $c$. 

For simplicity we first explain the absolute case, $Y = \varnothing$. 
Given $c = (p \colon N \to X, g_p^{\mathrm{top}})$ as above, 
choose an embedding $\iota \colon N \hookrightarrow \R^k \times X$ over $X$ (i.e., $\mathrm{pr}_X \circ \iota= p$) for $k$ large enough. 
Choose a tubular neighborhood $U$ of $N$ in $\R^k \times X$ with a vector bundle structure $\pi \colon U \to N$. 
Then $g_p^{\mathrm{top}}$ induces a homotopy class of stable normal $G$-structures $g_{\pi}^{\perp, \mathrm{top}}$ on the vector bundle $\pi \colon U \to N$.   
Thus we get the (universal) Thom element for this vector bundle in $\Omega^{G}$, 
\begin{align*}
    \nu(g_\pi^{\perp, \mathrm{top}}) \in (\Omega^G)^{k-r}(\Thom(\pi \colon U \to N)). 
\end{align*}
The pushforward map \eqref{eq_def_push_top} is defined as the following composition. 
\begin{align}\label{eq_def_diff_push_tangential}
    {E}^n(N) \xrightarrow{\cdot {\nu}(g_\pi^{\perp, \mathrm{top}})} {F}^{n+k-r}(\Thom(\pi \colon U \to N)) \xrightarrow{\iota_*} {F}^{n+k-r}(\Thom(\R^k \times X \to X)) \xrightarrow{\mathrm{desusp}}{F}^{n-r}(X), 
\end{align}
where the first multiplication uses $\mu$, and the middle arrow is associated to the open embedding $\iota \colon U \hookrightarrow \R^k \times X$. 

In the general case $Y \neq \varnothing$, the definition is basically the same, but we need to be careful because $(\R^k \times Y) \cap \pi^{-1}(N \setminus p^{-1}(Y)) \neq \varnothing$ in general.
This subtlety does not arise when $p$ is a submersion, because we can take $\pi$ to be a map over $X$. 
In general, we need to perturb the map $p$ by a homotopy so that $p$ is transverse to the inclusion $Y \hookrightarrow X$. 
Then we can take the choices above so that $(\R^k \times Y) \cap \pi^{-1}(N \setminus p^{-1}(Y)) = \varnothing$. 
Applying this procedure to the morphism $IMTG_3 \wedge MTG_2 \to IMTG_1$ in \eqref{eq_module_IMTG}, we get the topological pushforward maps \eqref{eq_top_push_IOmega}.

An important class of the examples of homomorphisms \eqref{eq_nu_push} comes from {\it multiplicative genera}, i.e., homomorphisms of ring spectra
\begin{align}
    \mathcal{G} \colon MTG \to E, 
\end{align}
for multiplicative $G$ and $E$. 
In this case, $\mathcal{G}$ induces a $MTG$-module structure on $E$, 
\begin{align}
    \mu_{\mathcal{G}} \colon E \wedge MTG \to E. 
\end{align}
Applying the construction in this subsection to $\mu_{\mathcal{G}}$, we recover the usual pushforward in $E$ for tangentially $G$-oriented proper maps.

\subsection{The differential pushforwards}\label{subsec_diff_push_IOmega}
Now let us fix a homomorphism $\mu \colon G_1 \times G_2\to G_3$. 
We construct the differential pushforward maps \eqref{eq_diff_push_IOmega}. 
Actually, we have already prepared the necessary ingredients in Section \ref{sec_module}. 
Recall that, given a differential relative stable tangential $G_2$-cycle $\widehat{c}= (p \colon N \to X, g_p)$ of dimension $r$ such that $p$ is transverse to $Y \subset X$, we defined a group homomorphism \eqref{eq_hom_fiber_product}
\begin{align*}
    \times_X \widehat{c} \colon \mathcal{C}^{(G_1)_\nabla}_{n-r-1}(X, Y)_{\pitchfork \widehat{c}} \to \mathcal{C}^{(G_3)_\nabla}_{n-1}(N, p^{-1}(Y)). 
\end{align*}
If $p$ is a submersion, we have $\mathcal{C}^{(G_1)_\nabla}_{n-r-1}(X, Y)_{\pitchfork \widehat{c}} = \mathcal{C}^{(G_1)_\nabla}_{n-r-1}(X, Y)$. 
Also recall that we have \eqref{eq_cw_gp_1}
\begin{align*}
    \mathrm{cw}_{g_p}(\mathrm{ch}(\mathrm{id}_{MTG_2})) \in \Omega^0(N;  \Ori(T(p))\otimes_\R V_{\Omega^{G_2}}^\bullet) , 
\end{align*}
Given $\omega \in \Omega^{n}(N, p^{-1}(N); N_{G_3}^\bullet)$, using the wedge product in Definition \ref{def_wedge} and the fiber integration \eqref{eq_current_integration}, we have
\begin{align}\label{eq_IOmega_push_form}
    p_! (\omega\wedge_\mu \cw_{g_p}(\mathrm{ch}(\mathrm{id}_{MTG_2}))) \in \Omega_{-\infty}^{n-r}(X, Y; N_{G_1}^\bullet). 
\end{align}
If $p$ is a submersion, the element \eqref{eq_IOmega_push_form} is in $\Omega^{n-r}(X, Y; N_{G_1}^\bullet)$.

\begin{defn}[{The differential pushforward maps associated to $\mu \colon G_1 \times G_2 \to G_3$}]\label{def_diff_push_IOmega}
Let $(X, Y)$ be a pair of manifolds and $n$ and $r$ be nonnegative integers such that $n\ge r$. 
For each differential relative stable tangential $G_2$-cycle $\widehat{c}= (p \colon N \to X, g_p)$ of dimension $r$ over $X$ whose underlying map $p$ is a submersion, 
we define a homomorphism which we call the {\it differential pushforward map} along $\widehat{c}$, 
\begin{align}\label{eq_def_diff_push_IOmega}
    \widehat{c}_* \colon (\widehat{I\Omega^{G_3}_\dR})^n(N, p^{-1}(Y)) \to (\widehat{I\Omega^{G_1}_\dR})^{n-r}(X, Y), 
\end{align}
by mapping $(\omega, h)$ to $\left(p_! (\omega \wedge_{\mu} \cw_{g_p}(\mathrm{ch}(\mathrm{id}_{MTG_2}))), \widehat{c}_* h\right)$, where
\begin{align}
    \widehat{c}_* h := h \circ (\times_X \widehat{c}) \colon \mathcal{C}_{n-r-1}^{(G_1)_\nabla}(X, Y) \to \R/\Z. 
\end{align}
\end{defn}
The compatibility condition for the pair $\left(p_! (\omega \wedge_{\mu} \cw_{g_p}(\mathrm{ch}(\mathrm{id}_{MTG_2}))), \widehat{c}_* h\right)$ can be checked by the same way as the corresponding compatibility checked for Definition \ref{def_module_str_cyclewise}, 
by using \eqref{eq_cw_total} instead of \eqref{eq_cw_T(hatc)}.

\begin{thm}\label{thm_push}
The differential pushforward map \eqref{eq_def_diff_push_IOmega} refines the topological pushforward map \eqref{eq_top_push_IOmega} defined by \eqref{eq_def_diff_push_tangential}. 
\end{thm}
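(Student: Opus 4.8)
The plan is to follow the same strategy that proved Theorem~\ref{thm_IOmega_dR=IOmega} and Theorem~\ref{thm_module_str}: reduce everything to a statement about functors between Picard groupoids, and then recognize the resulting composition as the one defining the topological pushforward. First I would recall that under the isomorphism $F\colon I\Omega^G_\dR \simeq I\Omega^G$ of Theorem~\ref{thm_IOmega_dR=IOmega}, the class $I((\omega,h))$ corresponds to the natural isomorphism class of the associated functor $F_{(\omega,h)}$ from $h\Bord^{G_\nabla}_{\bullet}$ to $(\R \to \R/\Z)$, via Fact~\ref{fact_IZ_model} and Lemma~\ref{lem_cat_equivalence}. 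Therefore it suffices to show that, for $(\omega,h) \in (\widehat{I\Omega^{G_3}_\dR})^n(N, p^{-1}(Y))$, the functor $F_{\widehat{c}_*(\omega,h)}$ on $\hBordone_{n-r-1}(X,Y)$ is naturally isomorphic to a composition of $F_{(\omega,h)}$ with a functor of Picard groupoids realizing, after applying $\pi_{\le 1}(L(-)_{\bullet})$ and Pontryagin--Thom, the spectrum-level map that defines the topological pushforward \eqref{eq_top_push_IOmega}.

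The key steps, in order: (1)~Since $p$ is a submersion, $\hBordone_{n-r-1}(X,Y)_{\pitchfork \widehat{c}} = \hBordone_{n-r-1}(X,Y)$, so no subcategory argument is needed; the fiber-product functor $\times_X \widehat{c}$ of \eqref{eq_hom_fiber_product} is defined on the whole category, and $\widehat{c}_* h = h \circ (\times_X \widehat{c})$. (2)~Using \eqref{eq_cw_total}, for any morphism $[W,g_W,f_W]$ in $\hBordone_{n-r-1}(X,Y)$ one has $\cw(\omega)\bigl([W,g_W,f_W]\times_X(\widetilde c,H_p,g_X^{\mathrm{met}})\bigr) = \cw\bigl(p_!(\omega \wedge_\mu \cw_{g_p}(\mathrm{ch}(\id_{MTG_2})))\bigr)([W,g_W,f_W])$, which is exactly the statement that $F_{\widehat{c}_*(\omega,h)}$ equals the composition $F_{(\omega,h)} \circ (\times_X \widehat c)$ on both objects and morphisms (here the relevant $\alpha$-term is absent because in the pushforward there is no correction current). (3)~As in the proof of Theorem~\ref{thm_module_str}, unwind the Pontryagin--Thom description of the functor $\times_X \widehat c$: it is naturally isomorphic to the functor induced on $\pi_{\le 1}(L(-)_{-n+r+1})$ by the composition of maps of spectra
\begin{align*}
(X/Y)\wedge MTG_1 &\xrightarrow{\mathrm{diag}\wedge\id} (X/Y)\wedge X^+ \wedge MTG_1 \\
&\xrightarrow{\id \wedge \Thom \wedge \id} (X/Y)\wedge \Sigma^{-r}MTG_2 \wedge MTG_1 \\
&\xrightarrow{\id \wedge \mu} (X/Y)\wedge \Sigma^{-r}MTG_3,
\end{align*}
where the Thom-class map for the cycle $c$ plays the role played by $[c]$ in Section~\ref{sec_module}. (4)~Observe that the Anderson dual of this composition, combined with Fact~\ref{fact_IZ_model}, is precisely the topological pushforward map \eqref{eq_top_push_IOmega} obtained by applying the recipe of Subsection~\ref{subsec_push_general} to the morphism $IMTG_3 \wedge MTG_2 \to IMTG_1$ of \eqref{eq_module_IMTG}; this is the content of the identification between the ``fiber product over $X$'' and ``multiplication by the Thom class then desuspend'' descriptions of pushforward. (5)~Conclude by naturality in $(X,Y)$ and the five lemma / functoriality of $F$ that $\widehat{c}_*$ and $c_*$ agree after applying $I$ and $F$; compatibility with $R$ is immediate since $R(\widehat{c}_*(\omega,h)) = p_!(\omega \wedge_\mu \cw_{g_p}(\ldots))$ is by construction the differential-form shadow of the topological pushforward.

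I would also check the two sanity/consistency points that the paper flags: that when $\mu$ is the map $G\times\fr\to G$ of Example~\ref{ex_G123}(3) and $\widehat{c}$ is the bounding cylinder $S^1\times(-)$, Definition~\ref{def_diff_push_IOmega} recovers Definition~\ref{def_integration_dR} (including the sign, which requires tracking the orientation conventions \eqref{eq_int_form_sign} versus \eqref{eq_current_integration_sign} and the ``flip'' in \eqref{eq_def_Psi}); and that the compatibility condition of the output pair was already verified right after Definition~\ref{def_diff_push_IOmega} via \eqref{eq_cw_total}. The main obstacle I anticipate is step~(4): carefully matching the fiber-product/Thom-collapse description of $\times_X \widehat c$ with the embedding-plus-Thom-class description of the topological pushforward in Subsection~\ref{subsec_push_general}, since the former is phrased via $\psi$-isomorphisms of tangent bundles and the latter via normal bundles of embeddings $N\hookrightarrow \R^k\times X$. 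This is a standard but bookkeeping-heavy Pontryagin--Thom duality argument (tangential vs.\ normal data, stabilizations, and the evenness assumption on $k$); everything else is a routine transcription of the proof of Theorem~\ref{thm_module_str} with the correction current $\alpha$ set to zero and $\widehat c$ required to be a submersion.
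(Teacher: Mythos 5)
Your overall scaffolding is the same as the paper's: reduce to the associated functors of Picard groupoids, note that $F_{\widehat{c}_*(\omega,h)}=F_{(\omega,h)}\circ(\times_X\widehat{c})$ (your step (2), which is exactly the paper's identification via \eqref{eq_cw_total}), and then identify the fiber-product functor with a spectrum-level map via Pontryagin--Thom. However, step (3) contains a genuine error: the spectrum-level composition you write down, through the diagonal of $X$ and ending at $(X/Y)\wedge\Sigma^{-r}MTG_3$, is the one used in the proof of Theorem \ref{thm_module_str}, and it realizes the functor $p_*\circ(\times_X(\widetilde{c},H_p,g_X^{\mathrm{met}}))$ landing in $\hBordthree_{n-1}(X,Y)$. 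In the pushforward setting there is no $p_*$: the relevant functor is $\times_X\widehat{c}$ alone, with target $\hBordthree_{n-1}(N,p^{-1}(Y))$, because the class $(\omega,h)$ being pushed forward lives over $(N,p^{-1}(Y))$. A functor $F_{(\omega,h)}$ defined on the bordism groupoid of $(N,p^{-1}(Y))$ cannot be precomposed with the map you propose, and the Anderson dual of your composition is a transformation $(I\Omega^{G_3})^{*}(X,Y)\to(I\Omega^{G_1})^{*-r}(X,Y)$, i.e.\ the module multiplication \eqref{module_IOmega}, not the pushforward \eqref{eq_top_push_IOmega} whose source is $(I\Omega^{G_3})^{n}(N,p^{-1}(Y))$.

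The correct spectrum-level map is the one the paper uses in \eqref{eq_proof_push_3}: choose an embedding $\iota\colon N\hookrightarrow\R^k\times X$ over $X$, collapse $MTG_1\wedge\Sigma^k(X/Y)$ onto the Thom space of the tubular neighborhood, apply the classifying map of the stable normal $G_2$-structure $g_\pi^{\perp,\mathrm{top}}$ \emph{smashed with} $N/p^{-1}(Y)$, and then apply $\mu$; the factor $N/p^{-1}(Y)$ is precisely what records the map to $N$ and makes the Anderson dual land on classes over $(N,p^{-1}(Y))$, matching the definition \eqref{eq_def_diff_push_tangential} of the topological pushforward. Your own flagged ``main obstacle'' in step (4) (matching tangential fiber-product data with the normal/embedding description) is exactly where this correction happens, and it is the step you defer as bookkeeping; as written your identification proves the wrong statement, so the argument does not close without replacing your composition by \eqref{eq_proof_push_3} and checking, as the paper does, that the induced functor on fundamental Picard groupoids is naturally isomorphic to $\times_X(\widetilde{c},H_p)$ under Lemma \ref{lem_cat_equivalence}.
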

\begin{proof}
Let us fix an element $(\omega, h) \in (\widehat{I\Omega^{G_3}_\dR})^n(N, p^{-1}(Y))$. 
As we did in the proof of Theorem \ref{thm_module_str}, the proof is given by checking the natural isomorphism class of the functor \eqref{eq_associated_functor} associated to the element $\widehat{c}_*(\omega, h) \in (\widehat{I\Omega^{G_1}_\dR})^{n-r}(X, Y)$, 
\begin{align}\label{eq_proof_push_1}
    F_{\widehat{c}_*(\omega, h)} \colon \hBordone_{n-r-1}(X, Y)
    \to (\R \to \R/\Z), 
\end{align}
coincides,  
under the equivalence in Lemma \ref{lem_cat_equivalence},  with the element in 
\begin{align*}
   \pi_0 \mathrm{Fun}_{\mathrm{Pic}}\left( \pi_{\le 1}(L((X/Y) \wedge MTG_1)_{1-n+r}) ,
     (\R \to \R/\Z)\right),
\end{align*}
specified by the topological pushforward \eqref{eq_top_push_IOmega}. 

Recall that the group homomorphism \eqref{eq_hom_fiber_product} between $\mathcal{C}^{G_\nabla}$'s comes from the functor between the bordism Picard groupoids \eqref{eq_functor_fiber_product},
\begin{align}\label{eq_proof_push_4}
     \times_X (\widetilde{c}, H_p) \colon \hBordone_{n-r-1}(X, Y) \to \hBordthree_{n-1}(N, p^{-1}(Y)),
\end{align}
by choosing additional data of $(\widetilde{c}, H_p)$ lifting $\widehat{c}$ (In the case here $p$ is submersion, so we do not need the choice of a Riemannian metric $g_X^{\mathrm{met}}$ as remarked at the end of Footnote \ref{footnote_splitting}). 
We easily see that the functor \eqref{eq_proof_push_1} coincides with the following composition. 
\begin{align}\label{eq_proof_push_2}
    \hBordone_{n-r-1}(X, Y) \xrightarrow{\times_X (\widetilde{c}, H_p)} \hBordthree_{n-1}(N, p^{-1}(Y)) \xrightarrow{F_{(\omega, h)}}
    (\R \to \R/\Z). 
\end{align}

On the other hand, the topological pushforward map \eqref{eq_top_push_IOmega} is defined as the composition \eqref{eq_def_diff_push_tangential}. 
On the level of spectra, this homomorphism is given by the Anderson dual of the composition
\begin{align}\label{eq_proof_push_3}
   MTG_1 \wedge \Sigma^k (X/Y) &\xrightarrow{\mathrm{id} \wedge \iota} MTG_1 \wedge \left(\mathrm{Thom}(\pi \colon U \to N)/\pi^{-1}(p^{-1}(Y)) \right)\\
   &\to MTG_1 \wedge \Sigma^{k-r} MTG_2 \wedge (N/p^{-1}(Y))  \notag \\
   &\xrightarrow{\mu \wedge \id} \Sigma^{k-r}MTG_3 \wedge (N/p^{-1}(Y)) , \notag
\end{align}
where the second morphism is the classifying map of the stable normal $G$-structure $g_\pi^{\perp, \mathrm{top}}$ on $\pi \colon U \to N$ and the identity on $MTG_1$. 
Recalling the Pontryagin-Thom construction, we see that the functor between the fundamental Picard groupoids
\begin{align*}
    \pi_{\le 1}(L((X/Y) \wedge MTG_1)_{1-n+r}) \to \pi_{\le 1}(L((N/p^{-1}(N)) \wedge MTG_3)_{1-n})
\end{align*}
induced by \eqref{eq_proof_push_3} is naturally isomorphic to the fiber product functor \eqref{eq_proof_push_4} under the equivalences in Lemma \ref{lem_cat_equivalence}. 
This completes the proof. 
\end{proof}

\begin{rem}\label{rem_integration_push}
We can recover the $S^1$-integration map $\int$ of $\widehat{I\Omega^G_\dR}$ (Definition \ref{def_integration_dR}) as a special case of the construction in this section. 
As explained in Example \ref{ex_G123} (3), for any $G$ we have a canonical homomorphism $G \times \fr \to G$. 
Let $X$ be a manifold. 
The bounding (differential) stable tangential fr-structure $g_{S^1}$ on $S^1 = S^1 \times \pt$ in Definition \ref{def_Gstr_S1} induces the differential stable relative tangential fr-structure on $\mathrm{pr}_X \colon X \times S^1 \to X$. 
We easily see that the differential pushforward along this differential stable relative fr-cycle coincides with the $S^1$ integration map, but up to sign. 
The difference of the sign is due to the fact that the suspension multiplies $S^1$ from the left. 
This difference also appears in the difference between the signs for the $S^1$-integration on forms in \eqref{eq_int_form}, \eqref{eq_int_form_sign} and the fiber integration in \eqref{eq_current_integration}, \eqref{eq_current_integration_sign}. 
\end{rem}

\begin{rem}\label{rem_push_not_submersion}
We have only defined differential pushforwards for proper {\it submersions}. 
This requirement is necessary for the element \eqref{eq_IOmega_push_form} to be a differential form. 
Actually, we can get refinements of pushforwards along general differential stable relative cycles if we introduce {\it currential} refinement of $I\Omega^G$'s. 
In general, currential refinements of cohomology theories are axiomatized by simply replacing forms to currents in Definition \ref{def_diffcoh}. 
Such refinements are used for example in \cite{FL2010} in the case of $K$-theory. 
In our case, it is obvious that we obtain the currential refinement of $I\Omega^G$, which we denote by $\widehat{I\Omega^G_{-\infty}}$, by just allowing $\omega$ in $(\omega, h)$ to be closed currents. 

Then, given any differential stable relative $G_2$-cycle $\widehat{c}= (p \colon N \to X, g_p)$ of dimension $r$ over $X$ such that $p$ is transverse to $Y \subset X$, by the same construction we get the refinement of the pushforward map, 
\begin{align}
    \widehat{c}_* \colon (\widehat{I\Omega^{G_3}_\dR})^n(N, p^{-1}(Y)) \to (\widehat{I\Omega^{G_1}_{-\infty}})^{n-r}(X, Y). 
\end{align}
We remark that in the construction we use the obvious currential version of Subsection \ref{sec_physical_app}. 
\end{rem}

\section{Relation to physics}\label{sec:physics}

In this section, we explain the basic backgrounds and motivations of our results in the context of physics.
This section is mainly aimed at physicists, and the statements will not be mathematically precise.

In the following discussion, whenever we say ``manifolds'',
they are always supposed to be equipped with some differential structure
such as Riemannian metric, bundles and their connections, and so on.
What differential structure we consider should be specified in advance. 
Another remark is that whenever we say ``invertible field theory'' in this subsection,
we only consider non-extended versions of QFT's unless otherwise stated, as opposed to fully extended versions of QFT's as in \cite{Freed:2016rqq}.
In other words, we only consider Hilbert spaces, amplitudes, and partition functions as explained below. 

\subsection{Some backgrounds on QFT and TQFT}
Very roughly speaking, a $D$-dimensional QFT (which is not extended) is a
symmetric monoidal functor from some geometric bordism category to the (super)vector space category as follows.
A QFT assigns a Hilbert space of physical states ${\mathcal H}(N)$ to each $(D-1)$-dimensional closed manifold $N$. 
In particular, we assume that for the empty manifold $N=\varnothing$, we have a canonical isomorphism ${\mathcal H}(\varnothing) \simeq\C$,
and for disjoint unions $N_1 \sqcup N_2$, we have ${\mathcal H}(N_1 \sqcup N_2) \simeq {\mathcal H}(N_1) \otimes {\mathcal H}(N_2)$.
It assigns a linear map $Z(M) : {\mathcal H}(N_1) \to {\mathcal H}(N_2) $
to each $D$-dimensional compact manifold $M$ with boundaries $\partial M =  \overline N_1 \sqcup N_2$ where
$\overline N_1$ is a manifold which has the opposite structure to that of $N_1$ (such as orientation reversal),
and we have assumed that $M$ has appropriate collar structure near the boundaries,
$[0,\epsilon) \times N_1$ and $(-\epsilon,0] \times N_2$ for some $\epsilon>0$.
We do not try to make these axioms precise,
but we remark that they are motivated by (Euclidean) path integrals in physics.

An invertible field theory is a QFT in which the Hilbert space of states ${\mathcal H}(N)$ on any closed manifold $N$
is one-dimensional, $\dim {\mathcal H}(N) =1$.
Invertible field theories play crucial roles in the study of anomalies. (See e.g. \cite{Freed:2014iua,Monnier:2019ytc} for overviews.)
In fact, the classification of deformation classes of invertible QFT's in $D$-dimensions is believed to be the same as the classification of 
anomalies in $(D-1)$-dimensions.\footnote{
We neglect anomalies which do not fit into the general framework, such as Weyl anomalies. 
Also, there may be subtleties in reflection non-positive theories~\cite{Chang:2020aww}. }
(We will explain what we mean by ``deformation classes'' in a little more detail later.)
In the context of condensed matter physics, deformation classes of invertible field theories are also called
invertible phases of matter, and they are the low energy limit of symmetry protected topological (SPT) phases.\footnote{
We warn the reader that the meaning of the terminology ``SPT phases'' depends on contexts and authors.
} 
Anomalous $(D-1)$-dimensional theories appear on the boundaries of these invertible phases
and have various applications in physics. For instance, see \cite{Witten:2015aba}, references therein, and papers citing it, 
for details about fermions which are important both for condensed matter physics and superstring theories.
Therefore, it is an important problem to classify invertible phases.

In the case of topological QFT (TQFT), the classification of invertible phases
has been conjectured to be given by certain cobordism groups~\cite{Kapustin:2014tfa,Kapustin:2014dxa},
and later proved at least for some physically motivated classes of structure types.
It is proved under the axioms of fully extended TQFT with a choice of the target category in~\cite{Freed:2016rqq}, 
and the 1-categorical version of Atiyah-Segal axioms in \cite{Yonekura:2018ufj}. 
Let ${\mathcal S}$ be the structure type under consideration.
For instance, we can consider manifolds equipped with Spin structures, and in that case we denote ${\mathcal S} = {\rm Spin}$. 
The results of this paper are valid for the case that ${\mathcal S}$ is given by a tangential structure
stated in Definition~\ref{def_phys_Gstr}. However, there are structure types not covered in Definition~\ref{def_phys_Gstr},
such as string structure and its generalizations which appear in heterotic string theories. Thus we use ${\mathcal S} $
in this section because we expect that similar results are also true for those other structure types.

Then we may define a bordism group $\Omega^{\mathcal S}_D(\pt)$ of $D$-dimensional manifolds equipped with 
structure of the type ${\mathcal S}$ roughly as follows.
We introduce a monoid structure on the set of (isomorphism classes of) manifolds by disjoint union,
$M_1 \sqcup M_2$. The empty manifold $\varnothing$ is the unit of this monoid since $M \sqcup \varnothing \simeq M$.
Then we divide this monoid by an equivalence relation.
If a closed $D$-manifold $M$ is a boundary of some compact $(D+1)$-manifold $W$, $M = \partial W$,
then it is defined to be equivalent to the empty set, $M \sim \varnothing$. 
Let $\overline{M}$ denote the orientation reversal 
(or its generalization explained in \cite{Freed:2016rqq,Yonekura:2018ufj}) of $M$.
By using the fact that $W = [0,1] \times M$ has the boundary $\partial W = M \sqcup \overline{M}$,
one can see that we get a group whose elements are represented in terms of manifolds $M$ as $[M]$.
In particular, the inverse of $[M]$ is $[\overline M]$. This group is denoted as $\Omega^{\mathcal S}_D(\pt)$.

According to \cite{Kapustin:2014tfa,Kapustin:2014dxa,Freed:2016rqq,Yonekura:2018ufj},
deformation classes of invertible TQFT's are classified by the group $\Hom(( \Omega^{\mathcal S}_D(\pt))_{\rm tor}, \R/\Z)$,
where the subscript ${\rm tor}$ means to take the torsion part of the group.
The reason that we take the torsion part is that we are considering deformation classes.
To explain this point, let us first consider the group $\Hom(  \Omega^{\mathcal S}_D(\pt), \R/\Z)$.
Then the relation between this group and the above axioms of QFT is the following.
If we are given an element $h \in \Hom( \Omega^{\mathcal S}_D(\pt) , \R/\Z)$,
it means that we can assign to each closed $D$-dimensional manifold $M$ a number
\beq\label{eq:PF}
Z(M) = \exp\left( 2\pi \sqrt{-1} h([M]) \right),
\eeq
where $[M] \in  \Omega^{\mathcal S}_D(\pt)$ is the bordism class represented by $M$.
Notice that for a closed manifold $M$, 
a QFT should assign a linear map $Z(M) : {\mathcal H}(\varnothing) \to {\mathcal H}(\varnothing) $.
Since  $ {\mathcal H}(\varnothing) \simeq\C$, the quantity $Z(M)$ can be regarded just as a number $Z(M) \in \C$.
The function which assigns a number $Z(M) \in \C$ to each closed manifold $M$ is called a partition function in physics. 
From an element $h \in \Hom( \Omega^{\mathcal S}_D(\pt) , \R/\Z)$, we can construct a partition function by \eqref{eq:PF}. 

A partition function itself does not give full data for the axioms of QFT.
However, the theorems proved in \cite{Freed:2016rqq,Yonekura:2018ufj}
imply that we can construct a TQFT from a given $h \in \Hom( \Omega^{\mathcal S}_D(\pt) , \R/\Z)$.
(See Theorem~4.3 of \cite{Yonekura:2018ufj} for explicit construction in the case of Atiyah-Segal axioms.)
Conversely, the partition function of any invertible TQFT can be deformed continuously to a partition function
given by some $h \in \Hom( \Omega^{\mathcal S}_D(\pt) , \R/\Z)$. 
Among the elements of $\Hom( \Omega^{\mathcal S}_D(\pt) , \R/\Z)$, the ones which come from 
$\Hom( \Omega^{\mathcal S}_D(\pt) , \R)$ can be deformed continuously.
In this way, we arrive at the classification of
deformation classes of invertible TQFT's by $\Hom(( \Omega^{\mathcal S}_D(\pt))_{\rm tor}, \R/\Z)$.

\subsection{The exact sequence and classification of invertible phases}
How about the cases which are not necessarily topological?
Freed and Hopkins have conjectured a classification of fully extended invertible QFT's~\cite{Freed:2016rqq}
in terms of the Anderson dual of bordism groups as stated in Conjecture~\ref{conj_intro}. 
The authors of the present paper expect that the classification of invertible QFT's 
which are not extended is also given by the same group.
Before going to discuss it, let us first explain some additional background.

Suppose we are given a manifold $X$. 
Then we can consider a new structure given as follows.
In addition to the differential structure already present in a manifold $M$,
we consider an additional datum $f : M \to X$ which is a map from $M$ to $X$.
In the context of invertible phases and anomalies in physics, we can consider various types of $X$.
If the manifold $X$ is taken to be the target space of a sigma model,
it is relevant to sigma model anomalies~\cite{Moore:1984dc,Moore:1984ws,Thorngren:2017vzn}.
On the other hand, if $X$ is taken to be the space of coupling constants,
it is relevant to more subtle anomalies discussed in 
e.g. \cite{Tachikawa:2017aux,Seiberg:2018ntt,Schwimmer:2019efk,Lawrie:2018jut,Cordova:2019jnf,Cordova:2019uob,Hsin:2020cgg}.
Let us denote the new structure type as $({\mathcal S},X)$, where ${\mathcal S}$ is the original one already considered on $M$. 
Then we denote $\Omega_D^{{\mathcal S}}(X):= \Omega_D^{({\mathcal S},X)}(\pt) $.
For appropriate structure types ${\mathcal S}$, it is known that $\Omega_*^{{\mathcal S}}$
gives a generalized homology theory, and $\Omega_*^{{\mathcal S}}(X)$ are the generalized homology groups of $X$.

Given a generalized homology theory $E_*$, we have the Anderson dual cohomology theory $IE^*$ satisfying the exact sequence \eqref{eq_exact_DE_abst}.
This exact sequence is analogous to the one in the ordinary cohomology theory
associated to the short exact sequence of coefficient groups $0 \to \Z \to \R \to \R/\Z \to 0$. 

The conjecture mentioned above, with a generalization including sigma models and more general types of ${\mathcal S}$, is as follows;
deformation classes of invertible field theories (extended or not) with the structure type $({\mathcal S},X)$ is given by 
$(I\Omega^{\mathcal S})^{D+1}(X)$,
where $(I\Omega^{\mathcal S})^*$ is the Anderson dual of $\Omega_*^{{\mathcal S}}$.
It fits into the exact sequence 
\begin{multline}
    \cdots \to \mathrm{Hom}(\Omega^{\mathcal S}_{D}(X), \R) \to \Hom(\Omega^{\mathcal S}_{D}(X), \R/\Z) \to(I\Omega^{\mathcal S})^{D+1}(X) \\  
    \to \mathrm{Hom}(\Omega^{\mathcal S}_{D+1}(X), \R) \to \Hom(\Omega^{\mathcal S}_{D+1}(X), \R/\Z) \to \cdots .  \nonumber 
\end{multline}
Let us explain physical reasons to believe that this conjecture is reasonable, following \cite{Lee:2020ojw}.
(See also \cite{Davighi:2020uab} for some applications.)

First, let us consider elements of $(I\Omega^{\mathcal S})^{D+1}(X)$ which are in the kernel of 
the map $(I\Omega^{\mathcal S})^{D+1}(X) \to \Hom( \Omega^{\mathcal S}_{D+1}(X), \R) $.
We denote the homomorphism $ \Hom( \Omega_{D}^{\mathcal S}(X), \R) \to \Hom( \Omega_{D}^{\mathcal S}(X), \R/\Z)$ as $p$.
By the exact sequence, the kernel is isomorphic to
\beq
\Hom(  \Omega_{D}^{\mathcal S}(X), \R/\Z)/  \mathrm{Im}(p) \simeq\Hom((\Omega^{\mathcal S}_D(\pt))_{\rm tor}, \R/\Z).
\eeq
This is what we have discussed before in the case of TQFT.
Any element of $\Hom(  \Omega_{D}^{\mathcal S}(X), \R/\Z)$ gives a TQFT.
The division by the image of the map 
$p :  \Hom( \Omega_{D}^{\mathcal S}(X), \R) \to \Hom( \Omega_{D}^{\mathcal S}(X), \R/\Z)$
is due to the fact that we are considering deformation classes. The group $ \Hom( \Omega_{D}^{\mathcal S}(X), \R)$
is a vector space over $\R$, and any two elements of this group can be continuously deformed into one another.
Therefore, we should divide $\Hom(  \Omega_{D}^{\mathcal S}(X), \R/\Z)$ by $\mathrm{Im}(p)$ when we consider
deformation classes of TQFT's. 

Next, let us consider the physical meaning of the map 
$(I\Omega^{\mathcal S})^{D+1}(X)  \to \Hom( \Omega^{\mathcal S}_{D+1}(X), \R) $.
In physics, we may expect the following property of invertible QFT.
Suppose that a closed $D$-manifold $M$ is the boundary of a $(D+1)$-manifold $W$
with a collar structure $(-\epsilon, 0] \times M \subset W$ near the boundary, including geometric data
such that $(-\epsilon, 0]$ has the trivial differential structure. 
We expect to have a closed differential $(D+1)$-form $I_{D+1}$ on $W$ (which is sometimes called an anomaly polynomial in the context of anomalies).
It is constructed from geometric data on $W$. For example, $W$ may have connections of some bundles from which
we can construct characteristic forms. Also, $W$ is equipped with a map $f_W : W \to X$
and hence we can pullback differential forms from $X$ to $W$ by using $f_W$.
The closed form $I_{D+1}$ is constructed by using such differential forms, and it is given by $\mathrm{cw}_{g_W}(f_W^*\omega)$ 
which appeared in \eqref{eq_chernweil_intro}
in the case of ${\mathcal S}=G$.
Then, physicists may expect that the
partition function of an invertible QFT evaluated on $M=\partial W$ is given (after some continuous deformation of the theory\footnote{
In generic theories, there can be nonuniversal terms such as the cosmological constant of the background Riemannian metric
and the Euler number term.
We need to eliminate them by continuous deformation of the theory for the following claim to be valid.
This deformation can be done by a procedure similar to \eqref{eq:oneparameter} below.}) 
by
\beq\label{eq:PT2}
Z(\partial W) = \exp\left( 2\pi \sqrt{-1} \int_W I_{D+1} \right).
\eeq
When $I_{D+1}=0$, this equation is precisely the bordism invariance of the partition function as implied by \eqref{eq:PF},
since $[\partial W] = [\varnothing]$ by the definition of bordism groups.

Now, by using $I_{D+1}$, we can define an element of $\Hom( \Omega^{\mathcal S}_{D+1}(X), \R) $
by 
\beq \label{eq:(D+1)map}
\Omega^{\mathcal S}_{D+1}(X) \ni [W] \mapsto \int_W I_{D+1} \in \R
\eeq
for any closed $(D+1)$-manifold $W$.
Its well-definedness (i.e. it only depends on the equivalence class $[W]$ rather than a representative $W$)
is immediate from the Stokes theorem and $dI_{D+1}=0$. 
Moreover, for the partition function \eqref{eq:PT2} to be well-defined,
the integral $\int_W I_{D+1}$ on any closed $W$ must be an integer
since $\partial W = \varnothing$ implies $Z(\partial W) =1$.
Therefore, \eqref{eq:(D+1)map} is actually an element of $\Hom( \Omega^{\mathcal S}_{D+1}(X), \Z) $,
and, equivalently, it is in the kernel of the map $\Hom( \Omega^{\mathcal S}_{D+1}(X), \R)  \to \Hom( \Omega^{\mathcal S}_{D+1}(X), \R/\Z) $.
For appropriate (but not all)\footnote{
The following statement fails when the Chern-Weil construction does not give an isomorphism. It happens in some noncompact groups,
such as ${\rm SL}(2,\R)$. Thus we have assumed that the groups $G_d$ in this paper are compact.
However, there are also groups which are noncompact but the Chern-Weil isomorphism holds.
An example is ${\rm SL}(2,\Z)$ which has a trivial real cohomology $H^*(B{\rm SL}(2,\Z), \R)$.
This group can also have anomalies which are physically relevant~\cite{Seiberg:2018ntt,Hsieh:2019iba,Hsieh:2020jpj}. }
${\mathcal S}$, the fact that any element of $\Hom( \Omega^{\mathcal S}_{D+1}(X), \Z) $ can be realized
in this way by some $I_{D+1}$ will follow from the Chern-Weil theory and the Hurewicz theorem.
This gives the exactness at $\Hom( \Omega^{\mathcal S}_{D+1}(X), \R) $.
Also, if $I_{D+1} = d J_{D}$ for some $J_{D}$ which is constructed by geometric data,
we get $Z(\partial W) = \exp\left( 2\pi \sqrt{-1} \int_{\partial W} J_{D} \right)$.
This kind of contribution can be continuously deformed to zero by considering
a one parameter family of QFT's parametrized by $t \in [0,1]$ as 
\beq
Z_t(M)=\exp\left( 2\pi \sqrt{-1} \int_{M} tJ_{D} \right), \label{eq:oneparameter}
\eeq
so 
it does not contribute to the deformation classes of QFT's.
This corresponds to taking the equivalence classes as in \eqref{eq_equivalence_intro}.
The $\R/\Z$-valued functions $h$ which appeared in the definition of 
elements of $(\widehat{I\Omega_{\mathrm{dR}}^G})^n(X)$ correspond to partition functions
as $Z = \exp(2\pi \sqrt{-1}h)$, and $\mathrm{cw}_{g}(f^*\alpha)$ in \eqref{eq_er_intro} corresponds to $J_D$.

The authors are not aware of completely general proof of the expectation that the partition function can be expressed as \eqref{eq:PT2}. 
However, there is various evidence supporting this claim.
First, \eqref{eq:PT2} is exactly what is used in the construction of Wess-Zumino-Witten terms~\cite{Wess:1971yu}
with the target space $X$ by extending a manifold $M$ to $W$~\cite{Witten:1983tw}. 
In physics literature, Chern-Simons invariants are also described by \eqref{eq:PT2}. (See Example~\ref{ex_CCS} for more precise discussions.) 
Second, invertible field theories constructed from massive fermions in the large mass limit
satisfy \eqref{eq:PT2}. (See \cite{Witten:2019bou} for a systematic discussion).
Third, other nontrivial examples of invertible QFT's 
also satisfy \eqref{eq:PT2}, such as the one relevant for the anomalies of chiral $p$-form fields~\cite{Hsieh:2020jpj}. 
Finally, it is possible to give a physically reasonable derivation of a weaker version of the claim as follows.
The functional derivative of the log of the partition function $\log Z(M)$ in terms of a background field $\phi$ 
(i.e. geometric data such as Riemann metric, connections, etc.)
is given by a one-point function of some local operator $O$, 
\beq
\frac{\partial \log Z(M)}{\partial \phi(x)} = \langle O(x) \rangle. \qquad (x \in M)
\eeq
In theories whose low energy limits are invertible QFT's, there are no light degrees of freedom and all
Feynman propagators are short range. Thus we expect that 
the one-point function $\langle O(x) \rangle$ is given by local geometric data at the point $x \in M$.
Therefore, if two manifolds equipped with differential structure, $M$ and $M'$, are homotopy equivalent,
the ratio of their partition functions is given by an integral of some local quantity.~\footnote{
This argument itself also applies to the case in which the theory is not invertible but is topologically ordered.
Thus this argument does not give a complete proof of the claim.
}

So far, we have argued that the exact sequence satisfied by $(I\Omega^{\mathcal S})^{D+1}(X)$
is physically reasonable. However, the above physical arguments do not tell us anything 
about the group $(I\Omega^{\mathcal S})^{D+1}(X)$ itself beyond the exact sequence. The Anderson dual 
is defined in a very abstract way, and it is hard to find a direct physical interpretation of the Anderson dual.
One of our main results stated in Theorem~\ref{thm_intro_main} is a natural isomorphism of the cohomology theory $I\Omega^{G}$
to the theory $I\Omega_{\mathrm{dR}}^{G}$. 
Here the structure type $\mathcal S$ is taken to be a specific kind specified by $G$.
The cohomology theory $I\Omega_{\mathrm{dR}}^{G}$ is constructed in a way which closely follows the above physical discussions.
Therefore, our results give a very strong support of the conjecture that the deformation classes of invertible QFT's are given by $(I\Omega^{\mathcal S})^{D+1}(X)$.

Although $(I\Omega_{\mathrm{dR}}^{G})^n(X)$ is the group which is believed to classify the deformation classes of invertible field theories,
the group $(\widehat{I\Omega_{\mathrm{dR}}^G})^n(X)$ before taking the deformation classes is also physically very relevant. 
When we make background fields to dynamical fields, invertible field theories give topologically interesting terms in the action of the dynamical fields.
Examples of this kind include topological $\theta$-terms in gauge theories and Wess-Zumino-Witten terms in sigma models.
The elements of $(\widehat{I\Omega_{\mathrm{dR}}^G})^{D+1}(X)$ which are realized by $\alpha \in \Omega^{n-1}(X; \varprojlim_d(\mathrm{Sym}^{\bullet/2}\mathfrak{g}_d^*)^{G_d})$
in Theorem~\ref{thm_intro_main} are, in physics language, the terms in the Lagrangian which are manifestly gauge invariant and local in $D$-dimensions.
These terms are gauge invariant even if we evaluate them on a $D$-manifold $M$ with boundaries, so they do not contribute
to anomalies via anomaly inflow. But they are interesting terms in the path integral.

Let us comment on reflection positivity which is an important physical condition.
In the context of partition functions, reflection positivity is the following requirement. 
Consider a $D$-manifold $M$ with a boundary. We glue $M$ and its opposite $\overline M$ along their boundaries
to get a closed manifold $M \cup \overline M$ which is called a double. Reflection positivity is a requirement that
$Z(M \cup \overline M)$ is a nonnegative real number.
In the case of TQFT, reflection positivity is an important ingredient in the classification of \cite{Freed:2016rqq,Yonekura:2018ufj}.
Indeed, there are counterexamples to the classification
if we do not impose reflection positivity. See \cite{Freed:2016rqq,Hsieh:2020jpj} for these examples.
The partition function on a sphere $S^D$ becomes negative, $Z(S^D)=-1$, although $S^D$ can be constructed as the double of a hemisphere.
These examples have the property that $I_{D+1}$ discussed above contains $\frac12 E$,
where $E$ is the Euler density which gives the Euler number of the manifold when it is integrated.
The Euler density is excluded in this paper by imposing a stability condition which we will define later in this paper.
But the stabilization is not important for the types of $G$ considered in \cite{Freed:2016rqq}. We will make more comments on this point in Subsection \ref{sec_physical_app}.

\subsection{Compactification}\label{subsubsec_compactification}
We can get a QFT in $(D-d)$-dimensions by compactification of another QFT in $D$-dimensions on a $d$-dimensional manifold.
Let us consider a $d$-manifold $L$ on which we will compactify the theory. 
We put a geometric structure on $L$. Actually, it is better to consider a family of geometric structures on $L$.
For example, we can consider various Riemannian metrics on $L$ and connections of a $G$-bundle on $L$,
and the parameter space may be called the moduli space of geometric structures. 
Usually, it is enough for many purposes to consider a finite dimensional approximation
to the full moduli space of geometric structures. For instance, if $L$ is a two dimensional torus $T^2$,
we may only consider its complex structure modulus and the total area as a finite dimensional approximation to
the full moduli space of Riemannian metrics on $T^2$. If there is an internal symmetry group, we may also consider holonomies of connections around cycles on $T^2$.
From the point of view of the lower dimensional theory after compactification,
the space of these parameters are the target space of a new sigma model in lower dimensions. 
If the target space of the $D$-dimensional theory is $X$, the new target space
of the $(D-d)$-dimensional theory may be $X \times \mathcal{Y}$ where $\mathcal{Y}$ is such a moduli space.

Motivated by the above considerations, let us consider a fiber bundle $N \to Y$
in which the fiber is a $d$-manifold $L$ and $Y$ is a smooth base manifold.
Here, $Y$ is what we intuitively want to consider as a moduli space of geometric structures $\mathcal{Y}$, but we remark that $Y$ is 
just an arbitrary smooth manifold. 
We assume that each fiber is equipped with a geometric structure,
and $Y$ may be intuitively regarded as parametrizing a family of geometric structures. 
(We will discuss more general situations in Sec.~\ref{sec_module} than fiber bundles,
where precise definitions are also given.) 
If we are given an element of $(\widehat{I\Omega_{\mathrm{dR}}^G})^{D+1}(X)$,
then the compactification on $L$ with a family of geometric structures $Y$
may be expected to give an element of $(\widehat{I\Omega_{\mathrm{dR}}^G})^{(D-d)+1}(X \times Y)$.
This is an interpretation of \eqref{eq_module_first_intro}. By making the target spaces more explicit,
it is the transformation we construct in Section \ref{sec_module}, \footnote{
This map is in the form of the external product, which is recovered by the internal product we use in Section \ref{sec_module} by pulling back to $X \times Y$ and multiply there. 
Conversely, we can recover the internal product from the external one by pulling back by the diagonal map. 
} 
\begin{align}\label{intro_phys_comp1}
(\widehat{I\Omega^{G_3}_{\mathrm{dR}}})^{D+1}(X)\otimes (\widehat{\Omega^{G_2}})^{-d}(Y) 
\xrightarrow{\cdot} (\widehat{I\Omega^{G_1}_{\mathrm{dR}}})^{(D-d)+1}(X \times Y), 
\end{align} 
Here, the group $G_2$ is the structure type of fiber manifolds $L$, and $(\widehat{\Omega^{G_2}})^{-d}(Y) $ is roughly the abelian group of
fiber bundles $N \to Y$ equipped with geometric structures of fiber manifolds. 
The abelian group structure is given by fiber-wise disjoint union with the fixed base $Y$.
The group $G_1$ is the structure type of $(D-d)$-manifolds $M$,
and we assume that products of manifolds (or more generally fiber bundles $L \to M' \to M$ with base $M$ and fiber $L$) 
with the $G_1$ and $G_2$ structures can be equipped with the $G_3$ structure.
For example, we can obtain a $\mathrm{Pin}^+$ $D$-manifold from a product of a $\mathrm{Pin}^+$ $d$-manifold and a 
$\mathrm{Spin}$ $(D-d)$-manifold. This means that if a theory defined on $\mathrm{Pin}^+$-manifolds
is compactified on a $\mathrm{Pin}^+$-manifold, we get a theory defined on $\mathrm{Spin}$-manifolds. 
See \cite[Section 2.2.7]{tachikawa2021topological} for another example. 

We can also consider a slightly different type of compactification, which corresponds to the construction in Section \ref{sec_push}. 
Let $M'$ be the $D$-manifold which is the fiber bundle with the base $(D-d)$-manifold $M$ and the fiber $d$-manifold $L$.
The compactification obtained by the above procedure is restricted
to the case that the map $M' \to X$ factors as $M' \to M \to X$, where $M' \to M$ is the bundle projection. 
For some applications, it is also useful to have a variant of the above construction.
Consider a fiber bundle $p : X' \to X$ where the fiber is a $d$-manifold $L$ with $G_2$-structure. 
By a similar construction as above, we get a map 
\begin{align}\label{intro_phys_comp2}
 (\widehat{I\Omega^{G_3}_{\mathrm{dR}}})^{D+1}(X') \to  (\widehat{I\Omega^{G_1}_{\mathrm{dR}}})^{(D-d)+1}(X).
 \end{align}
Here, the map $M \to X$ is uplifted to $M' \to X'$. 
A simple example is given by compactification of brane actions in string theory.
We interpret $X'$ as a target spacetime manifold of string theory, $M'$ as the worldvolume of a brane,
and we have a map $f' : M' \to X'$ which describes how the brane is placed in $X'$. 
Now, we compactify the target spacetime from $X'$ to $X$, and ``wrap'' the brane to the fiber $L$ of $X' \to X$.
This situation describes the above map \eqref{intro_phys_comp2}. The left and right hand sides describe the (imaginary part of) the brane actions before and after the compactification, respectively.

Both of \eqref{intro_phys_comp1} and \eqref{intro_phys_comp2} are useful for describing compactification
of different types. There can be more general situations, but we do not discuss them in this paper.

\section*{Acknowledgment}
The authors are grateful to Yosuke Morita for collaboration on many parts of the paper, 
and Kaoru Ono for helpful discussion and comments. 
% They are also grateful to the anonymous referees of the first version for suggesting the possibility of improvements. 
The work of MY is supported by Grant-in-Aid for JSPS KAKENHI Grant Number 20K14307 and JST CREST program JPMJCR18T6.
The work of KY is supported by JST FOREST Program (Grant Number JPMJFR2030, Japan), and JSPS KAKENHI Grant Number 17K14265.

\bibliographystyle{ytamsalpha}
\bibliography{QFT}

\providecommand{\bysame}{\leavevmode\hbox to3em{\hrulefill}\thinspace}
\providecommand{\MR}{\relax\ifhmode\unskip\space\fi MR }
% \MRhref is called by the amsart/book/proc definition of \MR.
\providecommand{\MRhref}[2]{%
  \href{http://www.ams.org/mathscinet-getitem?mr=#1}{#2}
}
\providecommand{\href}[2]{#2}
\providecommand{\doihref}[2]{\href{#1}{#2}}
\providecommand{\arxivfont}{\tt}
\begin{thebibliography}{KTTW14}

\bibitem[APS75a]{APS2}
M.~F. Atiyah, V.~K. Patodi, and I.~M. Singer, \emph{Spectral asymmetry and
  {R}iemannian geometry. {II}},
  \href{https://doi.org/10.1017/S0305004100051872}{Math. Proc. Cambridge
  Philos. Soc. \textbf{78} (1975) 405--432}.

\bibitem[APS75b]{APS3}
\bysame, \emph{Spectral asymmetry and {R}iemannian geometry. {I}},
  \href{https://doi.org/10.1017/S0305004100049410}{Math. Proc. Cambridge
  Philos. Soc. \textbf{77} (1975) 43--69}.

\bibitem[APS76]{APS1}
\bysame, \emph{Spectral asymmetry and {R}iemannian geometry. {III}},
  \href{https://doi.org/10.1017/S0305004100052105}{Math. Proc. Cambridge
  Philos. Soc. \textbf{79} (1976) 71--99}.

\bibitem[BS10]{BunkeSchick2010}
U.~Bunke and T.~Schick, \emph{Uniqueness of smooth extensions of generalized
  cohomology theories}, \href{https://doi.org/10.1112/jtopol/jtq002}{J. Topol.
  \textbf{3} (2010) 110--156}.

\bibitem[BS12]{BSDiffKSurvey}
\bysame,
  \doihref{http://dx.doi.org/10.1007/978-3-642-22842-1\_11}{\emph{Differential
  {K}-theory: a survey}}, Global differential geometry, Springer Proc. Math.,
  vol.~17, Springer, Heidelberg, 2012, pp.~303--357.
  \url{https://doi.org/10.1007/978-3-642-22842-1_11}.

\bibitem[BSSW09]{BunkeSchickSchroderMU}
U.~Bunke, T.~Schick, I.~Schr\"{o}der, and M.~Wiethaup, \emph{Landweber exact
  formal group laws and smooth cohomology theories},
  \href{https://doi.org/10.2140/agt.2009.9.1751}{Algebr. Geom. Topol.
  \textbf{9} (2009) 1751--1790}.

\bibitem[BT06]{BTreldiffchar}
M.~Brightwell and P.~Turner, \emph{Relative differential characters},
  \href{http://projecteuclid.org/euclid.cag/1175790073}{Comm. Anal. Geom.
  \textbf{14} (2006) 269--282}.

\bibitem[CFLS19a]{Cordova:2019jnf}
C.~C\'ordova, D.~S. Freed, H.~T. Lam, and N.~Seiberg, \emph{{Anomalies in the
  Space of Coupling Constants and Their Dynamical Applications I}},
  \doihref{http://dx.doi.org/10.21468/SciPostPhys.8.1.001}{SciPost Phys.
  \textbf{8} (2020) 001}, \href{http://arxiv.org/abs/1905.09315}{{\arxivfont
  arXiv:1905.09315 [hep-th]}}.

\bibitem[CFLS19b]{Cordova:2019uob}
\bysame, \emph{{Anomalies in the Space of Coupling Constants and Their
  Dynamical Applications II}},
  \doihref{http://dx.doi.org/10.21468/SciPostPhys.8.1.002}{SciPost Phys.
  \textbf{8} (2020) 002}, \href{http://arxiv.org/abs/1905.13361}{{\arxivfont
  arXiv:1905.13361 [hep-th]}}.

\bibitem[CL20]{Chang:2020aww}
C.-M. Chang and Y.-H. Lin, \emph{{On Exotic Consistent Anomalies in (1+1)$d$: A
  Ghost Story}}, \href{http://arxiv.org/abs/2009.07273}{{\arxivfont
  arXiv:2009.07273 [hep-th]}}.

\bibitem[CS85]{CheegerSimonsDiffChar}
J.~Cheeger and J.~Simons,
  \doihref{http://dx.doi.org/10.1007/BFb0075216}{\emph{Differential characters
  and geometric invariants}}, Geometry and topology ({C}ollege {P}ark, {M}d.,
  1983/84), Lecture Notes in Math., vol. 1167, Springer, Berlin, 1985,
  pp.~50--80. \url{https://doi.org/10.1007/BFb0075216}.

\bibitem[DL20]{Davighi:2020uab}
J.~Davighi and N.~Lohitsiri, \emph{{The algebra of anomaly interplay}},
  \doihref{http://dx.doi.org/10.21468/SciPostPhys.10.3.074}{SciPost Phys.
  \textbf{10} (2021) 074}, \href{http://arxiv.org/abs/2011.10102}{{\arxivfont
  arXiv:2011.10102 [hep-th]}}.

\bibitem[FH21]{Freed:2016rqq}
D.~S. Freed and M.~J. Hopkins, \emph{Reflection positivity and invertible
  topological phases}, \href{https://doi.org/10.2140/gt.2021.25.1165}{Geom.
  Topol. \textbf{25} (2021) 1165--1330}.

\bibitem[FL10]{FL2010}
D.~S. Freed and J.~Lott, \emph{An index theorem in differential {$K$}-theory},
  \href{https://doi.org/10.2140/gt.2010.14.903}{Geom. Topol. \textbf{14} (2010)
  903--966}.

\bibitem[FMS07]{FMS07}
D.~S. Freed, G.~W. Moore, and G.~Segal, \emph{The uncertainty of fluxes},
  \href{https://doi.org/10.1007/s00220-006-0181-3}{Comm. Math. Phys.
  \textbf{271} (2007) 247--274}.

\bibitem[Fre95]{FreedCCS1}
D.~S. Freed, \emph{Classical {C}hern-{S}imons theory. {I}},
  \href{https://doi.org/10.1006/aima.1995.1039}{Adv. Math. \textbf{113} (1995)
  237--303}.

\bibitem[Fre02]{FreedCCS2}
\bysame, \emph{Classical {C}hern-{S}imons theory. {II}}, Houston J. Math.
  \textbf{28} (2002) 293--310. Special issue for S. S. Chern.

\bibitem[Fre14]{Freed:2014iua}
\bysame, \emph{{Anomalies and Invertible Field Theories}},
  \doihref{http://dx.doi.org/10.1090/pspum/088/01462}{Proc. Symp. Pure Math.
  \textbf{88} (2014) 25--46}, \href{http://arxiv.org/abs/1404.7224}{{\arxivfont
  arXiv:1404.7224 [hep-th]}}.

\bibitem[Fre19]{Freed19}
\bysame, \doihref{http://dx.doi.org/10.1090/cbms/133}{\emph{Lectures on field
  theory and topology}}, CBMS Regional Conference Series in Mathematics, vol.
  133, American Mathematical Society, Providence, RI, 2019.
  \url{https://doi.org/10.1090/cbms/133}. Published for the Conference Board of
  the Mathematical Sciences.

\bibitem[HKT20]{Hsin:2020cgg}
P.-S. Hsin, A.~Kapustin, and R.~Thorngren, \emph{{Berry Phase in Quantum Field
  Theory: Diabolical Points and Boundary Phenomena}},
  \doihref{http://dx.doi.org/10.1103/PhysRevB.102.245113}{Phys. Rev. B
  \textbf{102} (2020) 245113},
  \href{http://arxiv.org/abs/2004.10758}{{\arxivfont arXiv:2004.10758
  [cond-mat.str-el]}}.

\bibitem[HS05]{HopkinsSinger2005}
M.~J. Hopkins and I.~M. Singer, \emph{Quadratic functions in geometry,
  topology, and {M}-theory},
  \href{http://projecteuclid.org/euclid.jdg/1143642908}{J. Differential Geom.
  \textbf{70} (2005) 329--452}.

\bibitem[HTY19]{Hsieh:2019iba}
C.-T. Hsieh, Y.~Tachikawa, and K.~Yonekura, \emph{{Anomaly of the
  Electromagnetic Duality of Maxwell Theory}},
  \doihref{http://dx.doi.org/10.1103/PhysRevLett.123.161601}{Phys. Rev. Lett.
  \textbf{123} (2019) 161601},
  \href{http://arxiv.org/abs/1905.08943}{{\arxivfont arXiv:1905.08943
  [hep-th]}}.

\bibitem[HTY20]{Hsieh:2020jpj}
\bysame, \emph{{Anomaly inflow and $p$-form gauge theories}},
  \href{http://arxiv.org/abs/2003.11550}{{\arxivfont arXiv:2003.11550
  [hep-th]}}.

\bibitem[J{\"a}n68]{Janich1968}
K.~J{\"a}nich, \emph{On the classification of {$O(n)$}-manifolds},
  \href{https://doi.org/10.1007/BF02052956}{Math. Ann. \textbf{176} (1968)
  53--76}.

\bibitem[Joy12]{JoyceCorners}
D.~Joyce, \emph{On manifolds with corners}, Advances in geometric analysis,
  Adv. Lect. Math. (ALM), vol.~21, Int. Press, Somerville, MA, 2012,
  pp.~225--258.

\bibitem[Kap14]{Kapustin:2014tfa}
A.~Kapustin, \emph{{Symmetry Protected Topological Phases, Anomalies, and
  Cobordisms: Beyond Group Cohomology}},
  \href{http://arxiv.org/abs/1403.1467}{{\arxivfont arXiv:1403.1467
  [cond-mat.str-el]}}.

\bibitem[KS21]{Kontsevich:2021dmb}
M.~Kontsevich and G.~Segal, \emph{{Wick Rotation and the Positivity of Energy
  in Quantum Field Theory}},
  \doihref{http://dx.doi.org/10.1093/qmath/haab027}{Quart. J. Math. Oxford Ser.
  \textbf{72} (2021) 673--699},
  \href{http://arxiv.org/abs/2105.10161}{{\arxivfont arXiv:2105.10161
  [hep-th]}}.

\bibitem[KTTW14]{Kapustin:2014dxa}
A.~Kapustin, R.~Thorngren, A.~Turzillo, and Z.~Wang, \emph{{Fermionic Symmetry
  Protected Topological Phases and Cobordisms}},
  \doihref{http://dx.doi.org/10.1007/JHEP12(2015)052}{JHEP \textbf{12} (2015)
  052}, \href{http://arxiv.org/abs/1406.7329}{{\arxivfont arXiv:1406.7329
  [cond-mat.str-el]}}.

\bibitem[LMSN18]{Lawrie:2018jut}
C.~Lawrie, D.~Martelli, and S.~Sch\"afer-Nameki, \emph{{Theories of Class F and
  Anomalies}}, \doihref{http://dx.doi.org/10.1007/JHEP10(2018)090}{JHEP
  \textbf{10} (2018) 090}, \href{http://arxiv.org/abs/1806.06066}{{\arxivfont
  arXiv:1806.06066 [hep-th]}}.

\bibitem[LOT20]{Lee:2020ojw}
Y.~Lee, K.~Ohmori, and Y.~Tachikawa, \emph{{Revisiting Wess-Zumino-Witten
  terms}}, \doihref{http://dx.doi.org/10.21468/SciPostPhys.10.3.061}{SciPost
  Phys. \textbf{10} (2021) 061},
  \href{http://arxiv.org/abs/2009.00033}{{\arxivfont arXiv:2009.00033
  [hep-th]}}.

\bibitem[MN84]{Moore:1984dc}
G.~W. Moore and P.~C. Nelson, \emph{{Anomalies in Nonlinear $\sigma$ Models}},
  \doihref{http://dx.doi.org/10.1103/PhysRevLett.53.1519}{Phys. Rev. Lett.
  \textbf{53} (1984) 1519}.

\bibitem[MN85]{Moore:1984ws}
\bysame, \emph{{The Etiology of $\sigma$ Model Anomalies}},
  \doihref{http://dx.doi.org/10.1007/BF01212688}{Commun. Math. Phys.
  \textbf{100} (1985) 83}.

\bibitem[Mon19]{Monnier:2019ytc}
S.~Monnier, \emph{{A Modern Point of View on Anomalies}},
  \doihref{http://dx.doi.org/10.1002/prop.201910012}{Fortsch. Phys. \textbf{67}
  (2019) 1910012}, \href{http://arxiv.org/abs/1903.02828}{{\arxivfont
  arXiv:1903.02828 [hep-th]}}.

\bibitem[NR61]{NarasimhanRamanan1961}
M.~S. Narasimhan and S.~Ramanan, \emph{Existence of universal connections},
  \href{https://doi.org/10.2307/2372896}{Amer. J. Math. \textbf{83} (1961)
  563--572}.

\bibitem[Rud98]{rudyak1998}
Y.~B. Rudyak, \emph{On {T}hom spectra, orientability, and cobordism}, Springer
  Monographs in Mathematics, Springer-Verlag, Berlin, 1998. With a foreword by
  Haynes Miller.

\bibitem[ST19]{Schwimmer:2019efk}
A.~Schwimmer and S.~Theisen, \emph{{Osborn Equation and Irrelevant Operators}},
  \doihref{http://dx.doi.org/10.1088/1742-5468/ab3284}{J. Stat. Mech.
  \textbf{1908} (2019) 084011},
  \href{http://arxiv.org/abs/1902.04473}{{\arxivfont arXiv:1902.04473
  [hep-th]}}.

\bibitem[STY18]{Seiberg:2018ntt}
N.~Seiberg, Y.~Tachikawa, and K.~Yonekura, \emph{{Anomalies of Duality Groups
  and Extended Conformal Manifolds}},
  \doihref{http://dx.doi.org/10.1093/ptep/pty069}{PTEP \textbf{2018} (2018)
  073B04}, \href{http://arxiv.org/abs/1803.07366}{{\arxivfont arXiv:1803.07366
  [hep-th]}}.

\bibitem[Tho17]{Thorngren:2017vzn}
R.~Thorngren, \emph{{Topological Terms and Phases of Sigma Models}},
  \href{http://arxiv.org/abs/1710.02545}{{\arxivfont arXiv:1710.02545
  [cond-mat.str-el]}}.

\bibitem[TY17]{Tachikawa:2017aux}
Y.~Tachikawa and K.~Yonekura, \emph{{Anomalies involving the space of couplings
  and the Zamolodchikov metric}},
  \doihref{http://dx.doi.org/10.1007/JHEP12(2017)140}{JHEP \textbf{12} (2017)
  140}, \href{http://arxiv.org/abs/1710.03934}{{\arxivfont arXiv:1710.03934
  [hep-th]}}.

\bibitem[TY21]{tachikawa2021topological}
Y.~Tachikawa and M.~Yamashita, \emph{Topological modular forms and the absence
  of all heterotic global anomalies}, 2021.
  \href{http://arxiv.org/abs/2108.13542}{{\arxivfont arXiv:2108.13542
  [hep-th]}}.

\bibitem[Wit83]{Witten:1983tw}
E.~Witten, \emph{{Global Aspects of Current Algebra}},
  \doihref{http://dx.doi.org/10.1016/0550-3213(83)90063-9}{Nucl. Phys. B
  \textbf{223} (1983) 422--432}.

\bibitem[Wit15]{Witten:2015aba}
\bysame, \emph{{Fermion Path Integrals And Topological Phases}},
  \doihref{http://dx.doi.org/10.1103/RevModPhys.88.035001}{Rev. Mod. Phys.
  \textbf{88} (2016) 035001},
  \href{http://arxiv.org/abs/1508.04715}{{\arxivfont arXiv:1508.04715
  [cond-mat.mes-hall]}}.

\bibitem[WY19]{Witten:2019bou}
E.~Witten and K.~Yonekura, \emph{{Anomaly Inflow and the $\eta$-Invariant}},
  {The Shoucheng Zhang Memorial Workshop}, 9 2019.
  \href{http://arxiv.org/abs/1909.08775}{{\arxivfont arXiv:1909.08775
  [hep-th]}}.

\bibitem[WZ71]{Wess:1971yu}
J.~Wess and B.~Zumino, \emph{{Consequences of anomalous Ward identities}},
  \doihref{http://dx.doi.org/10.1016/0370-2693(71)90582-X}{Phys. Lett. B
  \textbf{37} (1971) 95--97}.

\bibitem[Yam21]{Yamashita2021}
M.~Yamashita, \emph{Differential models for the {A}nderson dual to bordism
  theories and invertible {QFT}'s, {II}}, 2021.
  \href{http://arxiv.org/abs/2110.14828}{{\arxivfont arXiv:2110.14828
  [math.AT]}}.

\bibitem[Yon18]{Yonekura:2018ufj}
K.~Yonekura, \emph{{On the cobordism classification of symmetry protected
  topological phases}},
  \doihref{http://dx.doi.org/10.1007/s00220-019-03439-y}{Commun. Math. Phys.
  \textbf{368} (2019) 1121--1173},
  \href{http://arxiv.org/abs/1803.10796}{{\arxivfont arXiv:1803.10796
  [hep-th]}}.

\end{thebibliography}

\end{document}